\theoremstyle{plain}
\newtheorem{theorem}{Theorem}[section]
\newtheorem{corollary}[theorem]{Corollary}
\newtheorem{lemma}[theorem]{Lemma}
\newtheorem{proposition}[theorem]{Proposition}
\newtheorem{problem}[theorem]{Problem}
\newtheorem{construction}[theorem]{Construction}
\theoremstyle{definition}
\newtheorem{algorithm}[theorem]{Algorithm}
\numberwithin{equation}{section}
\def\ldiv{\backslash}
\def\rdiv{/}
\def\genof#1#2{\langle #1\,:\,#2\rangle}
\def\setof#1#2{\{ #1\,:\,#2 \}}
\def\aut#1{\mathrm{Aut}(#1)}
\def\mlt#1{\mathrm{Mlt}(#1)}
\def\inn#1{\mathrm{Inn}(#1)}
\def\fix#1{\mathrm{Fix}(#1)}
\def\rad#1{\mathrm{Rad}(#1)}
\def\lmlt#1{\mathrm{Mlt}_\ell(#1)}
\def\linn#1{\mathrm{Inn}_\ell(#1)}
\def\sym#1{\mathrm{Sym}(#1)}
\def\lad{\mathrm{ad}^{\ell}}
\def\rad{\mathrm{ad}^{r}}
\def\id{\mathrm{id}}
\begin{document}

\title{Three lectures on automorphic loops}

\author{Petr Vojt\v{e}chovsk\'y}

\email{petr@math.du.edu}

\address{Department of Mathematics, University of Denver, 2280 S Vine St, Denver, Colorado 80112, U.S.A.}

\begin{abstract}
These notes accompany a series of three lectures on automorphic loops to be delivered by the author at Workshops Loops '15 (Ohrid, Macedonia, 2015). Automorphic loops are loops in which all inner mappings are automorphisms.

The first paper on automorphic loops appeared in 1956 and there has been a surge of interest in the topic since 2010. The purpose of these notes is to introduce the methods used in the study of automorphic loops to a wider audience of researchers working in nonassociative mathematics.

In the first lecture we establish basic properties of automorphic loops (flexibility, power-associativity and the antiautomorphic inverse property) and discuss relations of automorphic loops to Moufang loops.

In the second lecture we expand on ideas of Glauberman and investigate the associated operation $(x^{-1}\ldiv (y^2x))^{1/2}$ and similar concepts, using a more modern approach of twisted subgroups. We establish many structural results for commutative and general automorphic loops, including the Odd Order Theorem.

In the last lecture we look at enumeration and constructions of automorphic loops. We show that there are no nonassociative simple automorphic loops of order less than $4096$, we study commutative automorphic loops of order $pq$ and $p^3$, and introduce two general constructions of automorphic loops.

The material is newly organized and sometimes new, shorter proofs are given.
\end{abstract}

\thanks{Research partially supported by the Simons Foundation Collaboration Grant 210176.}

\subjclass[2010]{Primary: 20N05.}

\keywords{Automorphic loop, commutative automorphic loop, $\Gamma$-loop, Moufang loop, Bruck loop, Bol loop, Lie ring, twisted subgroup.}

\maketitle

\tableofcontents

\section*{Introduction}

The purpose of these notes is to give a gentle introduction into the theory of automorphic loops that nevertheless captures the main ideas of current investigation. Due to the limited scope of the lectures, not all proofs are included and not all known results about automorphic loops are stated. A survey article on automorphic loops that attempts to remedy both of these shortcomings is under preparation by the author and will appear elsewhere.

\medskip

Let $Q = (Q,\cdot,\ldiv,\rdiv,1)$ be a loop, where we also write $xy$ to denote the product $x\cdot y$. For $x\in Q$, let
\begin{displaymath}
    L_x:Q\to Q,\ L_x(y) = xy\qquad\text{and}\qquad R_x:Q\to Q,\ R_x(y) = yx
\end{displaymath}
be the \emph{left} and \emph{right translation by $x$}, respectively. The permutation group
\begin{displaymath}
    \mlt{Q} = \genof{L_x,\,R_x}{x\in Q}
\end{displaymath}
is called the \emph{multiplication group of $Q$}, and its subloop
\begin{displaymath}
    \inn{Q} = \genof{\varphi\in\mlt{Q}}{\varphi(1)=1}
\end{displaymath}
is the \emph{inner mapping group of $Q$}.

Denote by $\aut{Q}$ the automorphism group of $Q$. An \emph{automorphic loop} (or \emph{A-loop}) is a loop $Q$ in which every inner mapping is an automorphism, that is, $\inn{Q}\le\aut{Q}$. Note that groups are automorphic loops, but the converse is certainly not true.

The following multiplication table specifies a nonassociative automorphic loop of the smallest possible order, which we will call $Q_6$:
\begin{displaymath}
\begin{array}{c|cccccc}
    Q_6&1&2&3&4&5&6\\
    \hline
    1&1&2&3&4&5&6\\
    2&2&1&4&6&3&5\\
    3&3&5&1&2&6&4\\
    4&4&3&6&5&1&2\\
    5&5&6&2&1&4&3\\
    6&6&4&5&3&2&1
\end{array}\ .
\end{displaymath}
Properties of $Q_6$ can be checked in the \texttt{GAP} \cite{GAP} package \texttt{LOOPS} \cite{LOOPS}, for instance.

Bruck proved \cite{Bruck} that in any loop
\begin{displaymath}
    \inn{Q} = \genof{L_{x,y},\,R_{x,y},\,T_x}{x,\,y\in Q},
\end{displaymath}
where
\begin{displaymath}
    L_{x,y}(z) = (yx)\ldiv (y(xz)),\quad R_{x,y}(z) = ((zx)y)\rdiv(xy),\quad\text{and}\quad T_x(y) = x\ldiv(yx).
\end{displaymath}
It is also well known that a mapping between two loops is a homomorphism of loops if and only if it respects the multiplication operation. Because this fact is of crucial importance for automorphic loops, we give a short proof:

Let $f:(A,\cdot_A,\ldiv_A,\rdiv_A,1_A)\to (B,\cdot_B,\ldiv_B,\rdiv_B,1_B)$ be a mapping between loops such that $f(x\cdot_A y) = f(x)\cdot_B f(y)$ for every $x$, $y\in A$. Then $f(x)\cdot_B f(x\ldiv_A y) = f(x\cdot_A (x\ldiv_A y)) = f(y)$ and therefore $f(x\ldiv_A y) = f(x)\ldiv_B f(y)$ for every $x$, $y\in Q$. The argument for right division is dual, and the property $f(1_A) = 1_B$ is obtained by cancelation from $f(1_A) = f(1_A\cdot_A 1_A) = f(1_A)\cdot_B f(1_A)$.

It follows that a loop $Q$ is an automorphic loop if and only if for every $x$, $y\in Q$ the inner mappings $L_{x,y}$, $R_{x,y}$ and $T_x$ respect multiplication. Consequently, the class of automorphic loops is a subvariety of the variety of loops, consisting of all loops satisfying the axioms
\begin{align}
    (yx)\ldiv (y(x(uv))) &= ((yx)\ldiv (y(xu)))((yx)\ldiv (y(xv))),\tag{A$_\ell$}\label{Eq:Al}\\
    (((uv)x)y)\rdiv(xy) &= (((ux)y)\rdiv(xy))(((vx)y)\rdiv(xy)),\tag{A$_r$}\label{Eq:Ar}\\
    x\ldiv((uv)x) &= (x\ldiv(ux))(x\ldiv(vx)).\tag{A$_m$}\label{Eq:Am}
\end{align}
In particular, subloops, factor loops and homomorphic images of automorphic loops are again automorphic loops.

We call a loop \emph{left automorphic} if \eqref{Eq:Al} holds, \emph{right automorphic} if \eqref{Eq:Ar} holds, and \emph{middle automorphic} if \eqref{Eq:Am} holds.

\medskip

The axioms \eqref{Eq:Al}, \eqref{Eq:Ar}, \eqref{Eq:Am} are somewhat long and intimidating, certainly much more so than the axiom
\begin{equation}\label{Eq:Moufang}
    (xy)(zx) = (x(yz))x\tag{M}
\end{equation}
defining Moufang loops, for instance. But the message of the axioms is easy to remember---``inner mappings respect multiplication''---and, as we shall see, automorphic loops are very much amenable to algebraic investigation.

\medskip

Such an investigation started in earnest in 1956 with the work of Bruck and Paige \cite{BrPa}. We will retrace some of their steps, for instance by proving that automorphic loops are power-associative. The main contribution of \cite{BrPa}, which we will not follow here, was to demonstrate that diassociative automorphic loops share many properties with Moufang loops (which are always diassociative, by Moufang's theorem \cite{Moufang}).

The conjecture that every diassociative automorphic loop is Moufang is implicit in \cite{BrPa}, but its proof remained elusive for 45 years. The conjecture was established for the special case of commutative loops by Osborn in 1958 \cite{Osborn}. Since commutative Moufang loops are automorphic by \cite{Bruck} (or see Proposition \ref{Pr:CML}), it follows from Osborn's result that commutative Moufang loops are precisely commutative diassociative automorphic loops. The full conjecture was finally confirmed by Kinyon, Kunen and Phillips in 2002 \cite{KiKuPh}.

Following a few sporadic results in late 1900s and early 2000s, of which we mention \cite{DrapalAloops, Figula, Kikkawa, NaSt, Shchukin}, automorphic loops became one of the focal areas in loop theory after the work of Jedli\v{c}ka, Kinyon and the author on commutative automorphic loops \cite{JeKiVoStructure, JeKiVoConstructions} was circulated. It is worth mentioning that some results of \cite{JeKiVoStructure} were first obtained by automated deduction \cite{McCune}, which remains influential in this field. But once the initial hurdles were cleared, the theory opened up to more traditional modes of investigation.

New results by many authors followed in quick succession. We mention two highlights: Odd Order Theorem for automorphic loops \cite{KiKuPhVo}, and solvability of finite commutative automorphic loops \cite{GrKiNa}.

The field remains active and we hope that these survey notes will attract new researchers to automorphic loops and related areas. Open problems can be found in the last section of this paper.

\medskip

From now on we will employ the following notational conventions in order to save parentheses and improve legibility. The division operations are less binding than juxtaposition, and the explicit $\cdot$ multiplication is less binding than divisions and juxtaposition. For instance, $x\rdiv y \cdot y\ldiv zy$ means $(x\rdiv y)(y\ldiv(zy))$.

\section*{Lecture 1: Basic properties}
\setcounter{section}{1}
\setcounter{theorem}{0}

In this section we establish some basic properties of automorphic loops. Most of these properties were known already to Bruck and Paige \cite{BrPa}, except that they were not aware of the fact that automorphic loops have the antiautomorphic inverse property (see \cite{JoKiNaVo} or Proposition \ref{Pr:AAIP}) and its consequences (one of the axioms \eqref{Eq:Al}, \eqref{Eq:Ar} can be ommitted by Theorem \ref{Th:LMEnough}, and the left and right nuclei coincide by Theorem \ref{Th:Nuclei}). Of course, they also did not know that diassociative automorphic loops are Moufang \cite{KiKuPh}, a result that we have incorporated without proof into Theorem \ref{Th:DAMoufang}.

Many proofs presented in this section shorten older arguments. We do not hesitate to prove even folklore results to better show to the reader that most result in this section can be derived quickly from first principles. In this spirit, consider:

\begin{lemma}\label{Lm:Action}
Let $Q$ be a loop and $\varphi\in\aut{Q}$. Then
\begin{gather*}
    \varphi L_x^{\pm 1}\varphi^{-1} = L_{\varphi(x)}^{\pm 1},\quad \varphi R_x^{\pm 1}\varphi^{-1} = R_{\varphi(x)}^{\pm 1},\\
    \varphi T_x^{\pm 1}\varphi^{-1} = T_{\varphi(x)}^{\pm 1},\quad \varphi L_{x,y}^{\pm 1}\varphi^{-1} = L_{\varphi(x),\varphi(y)}^{\pm 1},\quad \varphi R_{x,y}^{\pm 1}\varphi^{-1} = R_{\varphi(x),\varphi(y)}^{\pm 1}
\end{gather*}
for every $x$, $y\in Q$.
\end{lemma}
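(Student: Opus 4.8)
The plan is to reduce everything to the two base identities $\varphi L_x\varphi^{-1} = L_{\varphi(x)}$ and $\varphi R_x\varphi^{-1} = R_{\varphi(x)}$, exploiting the fact that conjugation by the fixed permutation $\varphi$ is an automorphism of the symmetric group $\sym{Q}$, and hence respects both composition and inversion. Thus once the base identities are in hand, the identities for the inverse translations follow by applying $(\cdot)^{-1}$, and the identities for $T_x$, $L_{x,y}$ and $R_{x,y}$ follow by writing each of these inner mappings as a word in left and right translations and then conjugating term by term.

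For the base cases I would simply evaluate on an arbitrary $z\in Q$. Writing out $(\varphi L_x\varphi^{-1})(z) = \varphi(x\cdot\varphi^{-1}(z))$ and using that $\varphi$ respects multiplication gives $\varphi(x)\cdot\varphi(\varphi^{-1}(z)) = \varphi(x)\cdot z = L_{\varphi(x)}(z)$, which is exactly the claim; the computation for $R_x$ is identical with the sides swapped. The negative powers then require no further work: since the map $\alpha\mapsto\varphi\alpha\varphi^{-1}$ is a group automorphism, we get $\varphi L_x^{-1}\varphi^{-1} = (\varphi L_x\varphi^{-1})^{-1} = L_{\varphi(x)}^{-1}$, and similarly for $R_x^{-1}$.

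For the composite inner mappings I would record the decompositions $T_x = L_x^{-1}R_x$, $L_{x,y} = L_{yx}^{-1}L_yL_x$ and $R_{x,y} = R_{xy}^{-1}R_yR_x$, which are immediate from the defining formulas recalled in the introduction. Conjugating each factor and appealing to the base identities turns, for example, $\varphi L_{x,y}\varphi^{-1}$ into $L_{\varphi(yx)}^{-1}L_{\varphi(y)}L_{\varphi(x)}$. The one point that deserves care---and the only place the automorphism hypothesis is used a second time---is the index bookkeeping: I must invoke $\varphi(yx) = \varphi(y)\varphi(x)$ to rewrite $L_{\varphi(yx)}^{-1}$ as $L_{\varphi(y)\varphi(x)}^{-1}$, after which the three factors reassemble precisely into $L_{\varphi(x),\varphi(y)}$. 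The cases of $T_x$ and $R_{x,y}$ are completely analogous, and the negative-power versions again follow by inversion. I expect no genuine obstacle here: the lemma is essentially a bookkeeping exercise whose whole content is the interplay between the automorphism identity $\varphi(xy)=\varphi(x)\varphi(y)$ and the fact that conjugation is a homomorphism.
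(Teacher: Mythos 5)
Your proof is correct and takes essentially the same route as the paper: establish $\varphi L_x\varphi^{-1}=L_{\varphi(x)}$ (and the $R_x$ analogue) by evaluating at an arbitrary point, pass to inverses via the group automorphism $\alpha\mapsto\varphi\alpha\varphi^{-1}$, and conjugate the decompositions $T_x=L_x^{-1}R_x$, $L_{x,y}=L_{yx}^{-1}L_yL_x$, $R_{x,y}=R_{xy}^{-1}R_yR_x$ factor by factor. The index bookkeeping you flag---rewriting $L_{\varphi(yx)}$ as $L_{\varphi(y)\varphi(x)}$ via $\varphi(yx)=\varphi(y)\varphi(x)$---is precisely what the paper's ``and so on'' leaves implicit, so nothing is missing.
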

\begin{proof}
We have $\varphi L_x\varphi^{-1}(y) = \varphi( x\cdot \varphi^{-1}(y) ) = \varphi(x)\cdot \varphi(\varphi^{-1}(y)) = \varphi(x)\cdot y = L_{\varphi(x)}(y)$, so $\varphi L_x\varphi^{-1} = L_{\varphi(x)}$. Then $\varphi L_x^{-1}\varphi^{-1} = (\varphi L_x\varphi^{-1})^{-1} = L_{\varphi(x)}^{-1}$. The argument for $R_x$ is similar. Then $\varphi T_x\varphi^{-1} = \varphi L_x^{-1}R_x\varphi^{-1} = \varphi L_x^{-1}\varphi^{-1}\varphi R_x\varphi^{-1} = L_{\varphi(x)}^{-1} R_{\varphi(x)} = T_{\varphi(x)}$, and so on.
\end{proof}

Thus in any loop $Q$, the automorphism group $\aut{Q}$ acts on $\mlt{Q}$ and on $\inn{Q}$ by conjugation, mapping left inner mappings to left inner mappings, and so on. If $Q$ is an automorphic loop, then the action of $\aut{Q}$ induces an action of $\inn{Q}$.

\subsection{Flexibility and power-associativity}

A loop $Q$ is \emph{flexible} if $x(yx)=(xy)x$ holds for every $x$, $y\in Q$. A consequence of flexibility is that every element $x$ has a (unique) two-sided inverse $x^{-1}$. Indeed, if $x^\ell$, $x^r$ ar the left and right inverses of $x$, respectively, then $x = x(x^\ell x) = (xx^\ell)x$, so $xx^\ell = 1 = xx^r$ and $x^\ell=x^r$.

\begin{proposition}[{{\cite[p.\@ 311]{BrPa}}}]\label{Pr:Flexible}
Every middle automorphic loop is flexible.
\end{proposition}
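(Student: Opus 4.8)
The plan is to work entirely with the middle inner mapping $T_x(y) = x\ldiv(yx)$, because the middle automorphic axiom \eqref{Eq:Am} asserts precisely that $T_x$ respects multiplication, that is, $T_x(uv) = T_x(u)\,T_x(v)$ for all $u$, $v\in Q$. Two elementary observations about $T_x$ will carry the whole argument. First, applying $L_x$ to the definition gives the identity $x\cdot T_x(y) = yx$ for every $y$. Second, $T_x$ fixes $x$: indeed $T_x(x) = x\ldiv(x\cdot x) = x$ by left cancellation.

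With these in hand, I would compute $(xy)x$ as follows. Using $x\cdot T_x(w) = wx$ with $w = xy$ gives $(xy)x = x\cdot T_x(xy)$. Applying the automorphism property \eqref{Eq:Am} to $T_x(xy)$ together with $T_x(x) = x$ yields $T_x(xy) = T_x(x)\,T_x(y) = x\cdot T_x(y)$, whence $(xy)x = x\cdot\bigl(x\cdot T_x(y)\bigr)$. Finally the inner factor $x\cdot T_x(y)$ equals $yx$ by the first observation, so $(xy)x = x\cdot(yx) = x(yx)$, which is exactly flexibility.

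I expect no genuine obstacle here; the single idea is the reformulation in the opening sentence. Once one recognizes that \eqref{Eq:Am} says $T_x$ is an endomorphism and combines this with the two trivial facts $x\cdot T_x(y) = yx$ and $T_x(x) = x$, the statement falls out of a two-line cancellation with no case analysis. The only point requiring care is parenthesization: the calculation takes place in a nonassociative loop, so the identity $x\cdot T_x(y) = yx$ must be applied to exactly the right subword, and one must resist the temptation to regroup products.
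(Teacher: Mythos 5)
Your proof is correct and is essentially identical to the paper's: both apply the endomorphism property of $T_x$ to the product $xy$, use $T_x(x)=x$ and $x\cdot T_x(w)=wx$, and conclude $(xy)x = x\cdot(x\cdot T_x(y)) = x(yx)$. No gaps; your care about parenthesization is exactly the right instinct, and the paper's computation regroups in precisely the same way.
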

\begin{proof}
Suppose that $Q$ satisfies \eqref{Eq:Am}. Then $T_x(xy) = T_x(x)\cdot T_x(y)$, and multiplying this equality by $x$ on the left yields $(xy)x = x(x\ldiv xx \cdot x\ldiv yx) = x(x\cdot x\ldiv yx) = x(yx)$.
\end{proof}

We remark that there exists a loop (of order $6$) that is both left and right automorphic, yet does not posses two-sided inverses, so is also not flexible.

A loop $Q$ is said to be \emph{power-associative} if for every $x\in Q$ the subloop $\langle x\rangle$ of $Q$ generated by $x$ is associative. For a prime $p$, a power-associative loop $Q$ is said to be a \emph{$p$-loop} if every element of $Q$ has order that is a power of $p$.

Assuming two-sided inverses, a general strategy for proving power-associativity is as follows: Define \emph{nominal powers} $x^{[n]}$ by letting $x^{[0]}=1$, $x^{[k+1]}=xx^{[k]}$ and $x^{[-k]} = (x^{[k]})^{-1}$. Then it is not hard to show by induction that $Q$ is power-associative if and only if
\begin{equation}\label{Eq:PA}
    x^{[i]}(x^{[j]}x^{[k]}) = (x^{[i]}x^{[j]})x^{[k]}
\end{equation}
for all $i$, $j$, $k\in\mathbb Z$. A typical proof of \eqref{Eq:PA} in a given variety of loops is based on a careful induction. In automorphic loops, however, Bruck and Paige \cite{BrPa} employed an ingenious argument that we will essentially follow here.

Note that for any loop $Q$ and a subset $A$ of $\aut{Q}$ the set
\begin{displaymath}
    \fix{A} = \setof{x\in Q}{\varphi(x)=x\text{ for every $\varphi\in A$}}
\end{displaymath}
of common fixed points of automorphisms from $A$ is a subloop of $Q$.

\begin{proposition}[{{\cite[Theorems 2.4 and 2.6]{BrPa}}}]\label{Pr:PowerAssociative}
Every automorphic loop is power-associative and satisfies $(x^iy)x^j = x^i(yx^j)$, $x^i(x^jy) = x^j(x^iy)$, $(yx^i)x^j = (yx^j)x^i$ for every $i$, $j\in\mathbb Z$.
\end{proposition}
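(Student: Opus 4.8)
The plan is to run the Bruck--Paige argument, whose engine is the elementary remark that an automorphism fixing an element fixes the whole subloop it generates. Indeed, if $\varphi\in\aut{Q}$ and $\varphi(a)=a$, then $\varphi(w(a))=w(\varphi(a))=w(a)$ for every loop word $w$, so $\langle a\rangle\subseteq\fix{\varphi}$. In an automorphic loop every inner mapping is such a $\varphi$, so the whole strategy reduces to producing inner mappings that fix a prescribed power of $a$ and then reading off identities. Before starting I would invoke Proposition~\ref{Pr:Flexible} to get flexibility, hence two-sided inverses, so that the nominal powers $a^{[n]}$ are well defined.

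Step one is commutativity of $\langle a\rangle$. By left cancellation $T_a(a)=a\ldiv(a\cdot a)=a$, so $T_a$ fixes $\langle a\rangle$ pointwise; since $T_a(u)=u$ means $au=ua$, this already shows $a$ commutes with every element of $\langle a\rangle$. Bootstrapping, for any $u\in\langle a\rangle$ one has $T_u(a)=u\ldiv(au)=u\ldiv(ua)=a$, so $T_u$ too fixes $\langle a\rangle$; letting $u,v$ range over $\langle a\rangle$ in $T_u(v)=v$ proves $\langle a\rangle$ commutative. This uses only the middle axiom \eqref{Eq:Am} and no associativity.

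The core is then a two-layer induction delivering the commuting relations. The mechanism is the factorization $L_aL_{a^{[m-1]}}=L_{a^{[m]}}L_{a^{[m-1]},a}$, read off the definition of $L_{x,y}$, together with the fact that the inner automorphism $L_{a^{[m-1]},a}$ fixes $a$: a short cancellation using only the commutativity already in hand gives $L_{a^{[m-1]},a}(a)=a^{[m]}\ldiv a^{[m+1]}=a$. Hence $L_{a^{[m-1]},a}$ fixes $\langle a\rangle$ and, by Lemma~\ref{Lm:Action}, commutes with every $L_{a^{[k]}}$. An induction on $m$ then pushes an $L_a$ through $L_{a^{[m-1]}}$ and yields $L_{a^{[m]}}L_a=L_aL_{a^{[m]}}$; a second induction, now on $j$ and using that $L_{a^{[j-1]},a}$ fixes $a^{[i]}$, upgrades this to $L_{a^{[i]}}L_{a^{[j]}}=L_{a^{[j]}}L_{a^{[i]}}$ for all $i,j\ge 0$, that is, the identity $x^i(x^jy)=x^j(x^iy)$. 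The mirror computation with $R_{a,a^{[m-1]}}$ gives $(yx^i)x^j=(yx^j)x^i$.

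To finish I would note that each $T_{a^{[k]}}=L_{a^{[k]}}^{-1}R_{a^{[k]}}$ fixes every $a^{[m]}$ (again by commutativity), so by Lemma~\ref{Lm:Action} it commutes with all $L_{a^{[m]}}$ and $R_{a^{[m]}}$; rearranging this single fact shows the three commuting relations are equivalent, so the mixed identity $(x^iy)x^j=x^i(yx^j)$, i.e. $L_{x^i}R_{x^j}=R_{x^j}L_{x^i}$, follows from the two already proved. Power-associativity is then a one-line corollary: by the first identity $a^{[i]}a^{[j]}=L_{a^{[i]}}L_a(a^{[j-1]})=L_aL_{a^{[i]}}(a^{[j-1]})=a\cdot a^{[i+j-1]}=a^{[i+j]}$, inducting on $j$. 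I expect the main obstacle to be bookkeeping rather than conceptual: organizing the nested induction so that the reassociation consumed at each stage is genuinely provided by the fixed-point property of $L_{a^{[m-1]},a}$ and $R_{a,a^{[m-1]}}$---and not by a circular appeal to the power-associativity being proved---and extending everything from nonnegative to all integer exponents, which I would do by rerunning the same inductions with the generator $a^{-1}$, whose generated subloop is again $\langle a\rangle$.
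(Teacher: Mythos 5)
Your nonnegative-exponent machinery checks out: the factorization $L_aL_{a^{[m-1]}}=L_{a^{[m]}}L_{a^{[m-1]},a}$, the two-layer induction on left translations, and the trick of deriving the mixed identity from the two one-sided commutations via $T_{a^{[k]}}$ are all correct. But the passage to integer exponents, and with it power-associativity itself, has a genuine gap. Power-associativity asserts that the \emph{subloop} $\langle a\rangle$ is associative, and $\langle a\rangle$ contains $a^{-1}$ and all elements reachable by divisions; your ``one-line corollary'' only gives $a^{[i]}a^{[j]}=a^{[i+j]}$ for $i,j\ge 0$, which says the nonnegative nominal powers form a commutative monoid and says nothing about, say, $a^{[2]}a^{[-1]}$, nor even that $\langle a\rangle$ equals the set of nominal powers. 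Your proposed repair---rerunning the inductions with the generator $a^{-1}$---cannot close this: every inner mapping your inductions use ($L_{b^{[m-1]},b}$, $R_{b,b^{[m-1]}}$, $T_{b^{[k]}}$ for a single generator $b$) involves powers of that one generator only, so with $b=a^{-1}$ you reprove the same one-generator facts for $b$ but never obtain any \emph{mixed} relation between $a$-powers and $a^{-1}$-powers---and the mixed cases are exactly what the statements with $i>0>j$ require (for instance $L_aL_{a^{-1}}=L_{a^{-1}}L_a$, or $(x^iy)x^{-j}=x^i(yx^{-j})$). The first mixed factorization one would reach for, $L_{b^{[m]},a}=L_{ab^{[m]}}^{-1}L_aL_{b^{[m]}}$, is unusable at that stage because $a\cdot b^{[m]}$ is not yet known to be a power.

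The missing idea---and the engine of the paper's much shorter proof---is to use the subloop-of-fixed-points observation at full strength: $\fix{\varphi}$ is a subloop, so an inner automorphism fixing $a$ fixes \emph{all} of $\langle a\rangle$, including $a^{-1}$ and every negative nominal power, and each identity then arrives for all integer exponents simultaneously, with no induction on exponents at all. Concretely, the paper notes that flexibility gives $x\in\fix{L_{y,x}}$, hence $\langle x\rangle\le\fix{L_{y,x}}$, which is already $(xy)x^{[j]}=x(yx^{[j]})$ for every $j\in\mathbb Z$; then $R_{yx^{[j]}}^{-1}R_{x^{[j]}}R_y$ fixes $x$, hence every $x^{[i]}$, giving $(x^{[i]}y)x^{[j]}=x^{[i]}(yx^{[j]})$ for all $i,j\in\mathbb Z$, from which \eqref{Eq:PA} and power-associativity drop out as the special case $y=x^{[k]}$; the two commutation identities follow from the analogous bootstrap with $R_{xy}^{-1}L_xR_y$ and its dual. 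If you prefer to salvage your inductive architecture, the cheapest patch is of the same nature: $L_{a^{-1},a}=L_{aa^{-1}}^{-1}L_aL_{a^{-1}}=L_aL_{a^{-1}}$ is inner and fixes $a$ trivially (since $a(a^{-1}a)=a$), hence fixes $\langle a\rangle$ pointwise, yielding $a(a^{-1}u)=u$ on $\langle a\rangle$ and the commutation of $L_a$ with $L_{a^{-1}}$; only with such mixed seeds can your inductions cross signs. (A smaller, fixable ordering wrinkle: your mirror induction needs $R_{a,a^{[m-1]}}$ to fix $a$, i.e. $a^{[2]}a^{[m-1]}=a^{[m+1]}$, so the additivity of nonnegative powers must be extracted from the $L$-commutations \emph{before} the $R$-induction starts, not at the end.)
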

\begin{proof}
Our loop $Q$ is flexible by Proposition \ref{Pr:Flexible}, which implies that $x\in\fix{L_{y,x}}$ and hence $\langle x\rangle\le\fix{L_{y,x}}$. In particular, $(xy)x^{[j]} = x(yx^{[j]})$. (Note that we have not used \eqref{Eq:Ar} yet.) This means that the inner mapping $R_{yx^{[j]}}^{-1}R_{x^{[j]}}R_y$ fixes $x$, thus also $x^{[i]}$, and we have $(x^{[i]}y)x^{[j]} = x^{[i]}(yx^{[j]})$. As a special case we obtain \eqref{Eq:PA}, which implies power-associativity. Then $x^i$ is well-defined, coincides with $x^{[i]}$, and $(x^iy)x^j = x^i(yx^j)$ follows.

The inner mapping $R_{xy}^{-1}L_xR_y$ trivially fixes $x$, so also $x^i$. This shows that $R_{x^iy}^{-1}L_{x^i}R_{y}$ fixes $x$, so also $x^j$, and $x^i(x^jy) = x^j(x^iy)$ follows. The last identity is proved dually.
\end{proof}

Note that the identities of Proposition \ref{Pr:PowerAssociative} say that for a fixed $x$ in an automorphic loop $Q$, the group $\genof{L_{x^i},\,R_{x^i}}{i\in\mathbb Z}$ is commutative.

\subsection{Anti-automorphic inverse property}

A loop with two-sided inverses has the \emph{antiautomorphic inverse property} if it satisfies the identity
\begin{equation}\label{Eq:AAIP}
    (xy)^{-1} = y^{-1}x^{-1}.
\end{equation}
We are now going to show that every automorphic loop has the antiautomorphic inverse property. For reasons that become clear, we prove a seemingly stronger result, assuming only \eqref{Eq:Al} and \eqref{Eq:Am}. We give a shorter proof than in \cite{JoKiNaVo}.

\begin{proposition}[compare {{\cite[Proposition 7.4]{JoKiNaVo}}}]\label{Pr:AAIP}
Every loop that is both left and middle automorphic has the antiautomorphic inverse property.
\end{proposition}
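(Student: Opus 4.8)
The plan is to prove the equivalent operator identity that inversion reverses products. By Proposition~\ref{Pr:Flexible} the middle automorphic hypothesis \eqref{Eq:Am} already gives flexibility, hence two-sided inverses, so the inversion map $J\colon x\mapsto x^{-1}$ is a well-defined involution ($J^2=\id$). Moreover every $\varphi\in\aut{Q}$ commutes with $J$, since $\varphi(z)\varphi(z^{-1})=\varphi(1)=1$ forces $\varphi(z^{-1})=\varphi(z)^{-1}$. The first step is therefore the bookkeeping observation that the antiautomorphic inverse property is equivalent to $JL_xJ=R_{x^{-1}}$ for all $x$: indeed $JL_xJ(y)=(xy^{-1})^{-1}$, so demanding this equal $R_{x^{-1}}(y)=yx^{-1}$ and replacing $y$ by $y^{-1}$ yields exactly $(xy)^{-1}=y^{-1}x^{-1}$. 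Thus the whole proof reduces to producing this single relation between $J$, a left translation, and a right translation.

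Next I would translate the two hypotheses into statements about how $J$ conjugates translations. From \eqref{Eq:Am} the middle inner mapping $T_x=L_x^{-1}R_x$ is an automorphism, so $JT_x=T_xJ$; rearranging $JL_x^{-1}R_x=L_x^{-1}R_xJ$ gives the symmetric relation
\[ L_xJL_x^{-1}=R_xJR_x^{-1}. \]
From \eqref{Eq:Al} the left inner mapping $L_{x,y}=L_{yx}^{-1}L_yL_x$ is an automorphism, so $J$ commutes with it as well; feeding the displayed middle relation into this two-variable identity (and applying Lemma~\ref{Lm:Action} to rewrite the conjugates) produces relations that genuinely mix the two variables. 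The aim of this stage is to specialize the second argument—for instance taking it to be $x^{-1}$ so that the outer translation $L_{yx}$ collapses to the identity—and to compare the result with the general case, thereby isolating how $J$ intertwines a single $L_x$ with a single $R_{x^{-1}}$.

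The main obstacle is precisely this last combination. The middle relation alone is the symmetric statement $L_xJL_x^{-1}=R_xJR_x^{-1}$, which holds in every flexible loop whose $T_x$ are automorphisms and by itself never forces the antiautomorphic inverse property; the asymmetry that distinguishes $L$ from $R$ must be injected by the left automorphic law. The delicate point is that we are working without the left or right inverse properties, so $L_{x^{-1}}\ne L_x^{-1}$ and $R_{x^{-1}}\ne R_x^{-1}$ in general, and one must keep the distinction between $L$ and $R$ translations and between $x^{-1}$ and the operator inverse scrupulously straight while collapsing the permutation identities down to $JL_xJ=R_{x^{-1}}$. I expect the cleanest execution to record a few intermediate element-wise identities (such as $(x^{-1}(zx))^{-1}=x^{-1}(z^{-1}x)$, obtained from the $y=x^{-1}$ specialization) and then assemble them, rather than manipulating the permutations abstractly throughout.
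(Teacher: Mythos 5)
Your scaffolding is sound and consistent with the paper's toolkit: flexibility from \eqref{Eq:Am}, the fact that every automorphism commutes with $J$, the reduction of the antiautomorphic inverse property to the operator identity $JL_xJ=R_{x^{-1}}$, and the relation $L_xJL_x^{-1}=R_xJR_x^{-1}$ extracted from $JT_x=T_xJ$ are all correct. Even your sample identity $(x^{-1}(zx))^{-1}=x^{-1}(z^{-1}x)$ is legitimately available, since $L_{x^{-1}}R_x=L_{x,x^{-1}}T_x$ is a composition of automorphisms and hence commutes with $J$. But the proof proper is absent: the stage where ``feeding the middle relation into the two-variable identity'' and ``comparing the $y=x^{-1}$ specialization with the general case'' is supposed to isolate $JL_xJ=R_{x^{-1}}$ \emph{is} the theorem, and you exhibit no chain of identities that performs it. Worse, there is concrete reason to doubt the specialization you propose: setting the second argument of $L_{x,y}$ to $x^{-1}$ collapses it to the one-parameter family $L_{x^{-1}}L_x$, discarding the genuinely two-variable information at exactly the point where it is needed, and your identity $(x^{-1}(zx))^{-1}=x^{-1}(z^{-1}x)$ is strictly weaker than \eqref{Eq:AAIP} with no mechanism offered to bootstrap from it.

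The paper's proof shows what the missing combination looks like, and it has two ingredients your proposal never supplies. First, the extended flexibility $(xy)x^{-1}=x(yx^{-1})$, which does \emph{not} follow from flexibility alone: one needs that $L_{y,x}$ is an automorphism, so that $\fix{L_{y,x}}$ is a subloop containing $x$ and hence $x^{-1}$ (the fixed-point argument from the proof of Proposition \ref{Pr:PowerAssociative}, using only \eqref{Eq:Al} and \eqref{Eq:Am}); without it, mixed expressions such as $\psi(x^{-1})$ below cannot be collapsed. Second, a genuinely two-parameter inner automorphism evaluated at carefully chosen points: with $u=x\ldiv y$ and $\psi=L_{u,x}=L_y^{-1}L_xL_u$, one computes $\psi(u^{-1})=y\ldiv x$, hence $\psi(u)=(y\ldiv x)^{-1}$ by $J$-equivariance; $\psi(u\ldiv x^{-1})=y\ldiv xx^{-1}=y^{-1}$ directly; and $\psi(x^{-1})=y\ldiv (x\cdot ux^{-1})=y\ldiv (xu\cdot x^{-1})=y\ldiv yx^{-1}=x^{-1}$ by the extended flexibility. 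The homomorphism property of $\psi$ at $u\cdot (u\ldiv x^{-1})=x^{-1}$ then yields $(y\ldiv x)^{-1}y^{-1}=x^{-1}$, and substituting $yx$ for $x$ gives \eqref{Eq:AAIP}. Until your sketch produces an analogue of this assembly---or some other explicit derivation of $JL_xJ=R_{x^{-1}}$ from the relations you list---it remains a plan with the decisive step missing.
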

\begin{proof}
In the proof of Proposition \ref{Pr:PowerAssociative} we established $(xy)x^{[j]} = x(yx^{[j]})$ using only \eqref{Eq:Al} and \eqref{Eq:Am}. In particular, we can use $(xy)x^{-1} = x(yx^{-1})$ below. Consider $\psi = L_y^{-1}L_x L_{x\ldiv y} = L_{x\ldiv y,x}\in\aut{Q}$. Since $\psi((x\ldiv y)^{-1}) = y\ldiv x$, we also have $\psi(x\ldiv y) = (y\ldiv x)^{-1}$. Then $(y\ldiv x)^{-1}\cdot y^{-1} = (y\ldiv x)^{-1}\cdot y\ldiv 1 = \psi(x\ldiv y)\psi((x\ldiv y)\ldiv x^{-1}) = \psi(x^{-1}) = y\ldiv (x\cdot (x\ldiv y)x^{-1}) = y\ldiv (x(x\ldiv y)\cdot x^{-1}) = y\ldiv yx^{-1} = x^{-1}$. Then \eqref{Eq:AAIP} follows by substituting $yx$ for $x$.
\end{proof}

In general, the antiautomorphic inverse property has a similar effect as commutativity in the sense that it allows one to deduce properties about right concepts from properties of left concepts, and vice versa. In the following well-known lemma, let $J$ be the inversion mapping $x\mapsto x^{-1}$.

\begin{lemma}\label{Lm:J}
Let $Q$ be an antiautomorphic inverse property loop. Then the inversion mapping $J$ is an involutory antiautomorphism of $Q$. Moreover, $JL_xJ = R_{x^{-1}}$ and $JL_{x,y}J = R_{x^{-1},y^{-1}}$ for every $x$, $y\in Q$.
\end{lemma}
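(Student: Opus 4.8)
The plan is to verify each assertion by direct computation, using the identity $(xy)^{-1}=y^{-1}x^{-1}$ repeatedly together with the fact that two-sided inverses in a loop are unique. The whole argument reduces to one conjugation formula for single translations, which then propagates to the inner mappings $L_{x,y}$ via their factorization into translations.

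First I would dispose of the claim that $J$ is an involutory antiautomorphism. Since $x\cdot x^{-1}=1=x^{-1}\cdot x$, the element $x$ is the unique two-sided inverse of $x^{-1}$, so $J^2=\id$ and in particular $J$ is a bijection. The antiautomorphic inverse property is precisely the statement $J(xy)=y^{-1}x^{-1}=J(y)J(x)$, so $J$ respects multiplication when regarded as a map from $Q$ into the opposite loop $Q^{\mathrm{op}}$ (same underlying set, product $x\circ y=yx$). By the homomorphism criterion recorded in the Introduction, a map between loops that respects multiplication automatically respects the two division operations; hence $J$ is a loop homomorphism onto $Q^{\mathrm{op}}$, i.e.\ an antiautomorphism of $Q$.

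Next I would establish the single conjugation formula $JL_xJ=R_{x^{-1}}$, from which everything else follows. For $y\in Q$ we have $JL_xJ(y)=(x\cdot y^{-1})^{-1}=(y^{-1})^{-1}x^{-1}=yx^{-1}=R_{x^{-1}}(y)$, using $J^2=\id$ and the antiautomorphic inverse property. Taking inverses gives the companion identity $JL_x^{-1}J=R_{x^{-1}}^{-1}$, which I will need for the leading factor below.

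Finally, for the inner-mapping formula I would expand $L_{x,y}$ in terms of translations. From the definition $L_{x,y}(z)=(yx)\ldiv(y(xz))$ one reads off $L_{x,y}=L_{yx}^{-1}L_yL_x$, and likewise $R_{a,b}=R_{ab}^{-1}R_bR_a$. Inserting $J^2=\id$ between consecutive factors and applying the conjugation formula to each, together with $(yx)^{-1}=x^{-1}y^{-1}$, yields
\[
JL_{x,y}J=(JL_{yx}^{-1}J)(JL_yJ)(JL_xJ)=R_{x^{-1}y^{-1}}^{-1}R_{y^{-1}}R_{x^{-1}}=R_{x^{-1},y^{-1}}.
\]
The only point demanding care is the bookkeeping in this last display: because $J$ reverses products, conjugating the leading factor $L_{yx}^{-1}$ produces $R_{(yx)^{-1}}^{-1}=R_{x^{-1}y^{-1}}^{-1}$, and one must check that the three resulting factors assemble in exactly the order prescribed by the factorization $R_{x^{-1},y^{-1}}=R_{x^{-1}y^{-1}}^{-1}R_{y^{-1}}R_{x^{-1}}$. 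I expect this order-of-composition check to be the main (and rather modest) obstacle; no genuinely hard step arises.
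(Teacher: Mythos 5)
Your proof is correct and follows essentially the same route as the paper: the same pointwise computation $JL_xJ(y)=(xy^{-1})^{-1}=yx^{-1}=R_{x^{-1}}(y)$, followed by the same factorization $L_{x,y}=L_{yx}^{-1}L_yL_x$ conjugated factor by factor, with $(yx)^{-1}=x^{-1}y^{-1}$ making the result match the factorization $R_{x^{-1},y^{-1}}=R_{x^{-1}y^{-1}}^{-1}R_{y^{-1}}R_{x^{-1}}$. The only difference is cosmetic: you spell out why $J$ is an involutory antiautomorphism (uniqueness of two-sided inverses gives $J^2=\id$, and the homomorphism criterion from the Introduction applied to the opposite loop handles the divisions), a step the paper's proof leaves implicit.
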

\begin{proof}
With $x$, $y\in Q$ we have $JL_xJ(y) = (xy^{-1})^{-1} = yx^{-1} = R_{x^{-1}}(y)$, so $JL_xJ = R_{x^{-1}}$. Then $JL_{x,y}J = JL_{yx}^{-1}J\cdot JL_yJ\cdot JL_xJ = R_{(yx)^{-1}}^{-1} R_{y^{-1}} R_{x^{-1}} = R_{x^{-1}y^{-1}}^{-1} R_{y^{-1}}R_{x^{-1}} = R_{x^{-1},y^{-1}}$.
\end{proof}

We now easily arrive at the following important result:

\begin{theorem}[compare {{\cite[Theorem 7.1]{JoKiNaVo}}}]\label{Th:LMEnough}
The following properties are equivalent for a loop $Q$:
\begin{enumerate}
\item[(i)] $Q$ is automorphic,
\item[(ii)] $Q$ is left and middle automorphic,
\item[(iii)] $Q$ is right and middle automorphic.
\end{enumerate}
\end{theorem}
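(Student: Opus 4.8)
The implications (i)$\Rightarrow$(ii) and (i)$\Rightarrow$(iii) are immediate from the definition, since an automorphic loop satisfies all three of \eqref{Eq:Al}, \eqref{Eq:Ar} and \eqref{Eq:Am}. The substance of the theorem is therefore contained in (ii)$\Rightarrow$(i) and (iii)$\Rightarrow$(i), that is, in showing that middle automorphy upgrades one-sided automorphy to two-sided automorphy. My plan is to exploit the antiautomorphic inverse property, which is available for free from Proposition \ref{Pr:AAIP}, together with the conjugation formulas of Lemma \ref{Lm:J}, to transport left inner mappings to right inner mappings via the inversion map $J$.

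For (ii)$\Rightarrow$(i), assume $Q$ is left and middle automorphic. Then $Q$ has the antiautomorphic inverse property by Proposition \ref{Pr:AAIP}, so Lemma \ref{Lm:J} applies: $J$ is an involutory antiautomorphism with $JL_{x,y}J = R_{x^{-1},y^{-1}}$. Replacing $x$, $y$ by $x^{-1}$, $y^{-1}$ and using $J^2=\id$ gives $R_{x,y} = JL_{x^{-1},y^{-1}}J$. Since $Q$ is left automorphic, $L_{x^{-1},y^{-1}}\in\aut{Q}$, so it remains only to check that conjugating an automorphism by $J$ yields an automorphism rather than an antiautomorphism. This is the one genuinely load-bearing step, and I expect it to be the main (very mild) obstacle: for any $\beta\in\aut{Q}$, a direct computation of $J\beta J(uv)$ shows that the two order-reversals contributed by the outer copies of $J$ cancel against each other while $\beta$ preserves order, so $J\beta J(uv) = J\beta J(u)\cdot J\beta J(v)$. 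Hence $R_{x,y}\in\aut{Q}$ for all $x$, $y$, so $Q$ is right automorphic and therefore automorphic.

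The implication (iii)$\Rightarrow$(i) is handled dually. The cleanest route is to pass to the opposite loop $Q^{\mathrm{op}}$ and observe that taking opposites interchanges \eqref{Eq:Al} and \eqref{Eq:Ar}, while fixing both \eqref{Eq:Am} (since $T_x^{\mathrm{op}}$ equals the inverse of $T_x$ and $\aut{Q^{\mathrm{op}}}=\aut{Q}$ as permutation groups) and the antiautomorphic inverse property. Thus a right and middle automorphic $Q$ corresponds to a left and middle automorphic $Q^{\mathrm{op}}$, to which the implication just proved applies; translating back shows $Q$ is automorphic. Alternatively one can argue directly: the dual of Proposition \ref{Pr:AAIP} again supplies the antiautomorphic inverse property, and then $L_{x,y} = JR_{x^{-1},y^{-1}}J$ is an automorphism by the very same $J$-conjugation identity. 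Beyond the bookkeeping of that conjugation the duality is purely formal, so once the self-duality of \eqref{Eq:Am} and the equality $\aut{Q}=\aut{Q^{\mathrm{op}}}$ are recorded, no further difficulty arises.
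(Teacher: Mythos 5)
Your proof is correct and takes essentially the same route as the paper: both deduce the antiautomorphic inverse property from Proposition \ref{Pr:AAIP}, then use Lemma \ref{Lm:J} to express $R_{x^{-1},y^{-1}} = JL_{x,y}J$ as a composition of an automorphism with two antiautomorphisms (hence an automorphism), and dispose of (iii)$\Rightarrow$(i) by duality. Your explicit check that conjugation by $J$ preserves automorphisms and your opposite-loop bookkeeping simply spell out details the paper leaves implicit.
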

\begin{proof}
Thanks to the duality, it suffices to establish the implication (ii) $\Rightarrow$ (i). Suppose that $Q$ is left and middle automorphic. By Proposition \ref{Pr:AAIP}, $Q$ has the antiautomorphic inverse property. By Lemma \ref{Lm:J}, $J$ is an antiautomorphism and $R_{x^{-1},y^{-1}} = JL_{x,y}J$ is an automorphism, being a composition of an automorphism and two antiautomorphisms.
\end{proof}

We can further exploit the inversion mapping $J$.

\begin{lemma}[{{\cite[Lemma 2.7]{KiKuPhVo}}}]\label{Lm:LRT}
Let $Q$ be an automorphic loop. Then $J$ centralizes $\inn{Q}$. Moreover, $L_{x,y}=R_{x^{-1},y^{-1}}$ and $T_x^{-1} = T_{x^{-1}}$ for every $x$, $y\in Q$.
\end{lemma}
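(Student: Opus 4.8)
The plan is to treat the first assertion, that $J$ centralizes $\inn{Q}$, as the engine of the whole lemma and to read off the two displayed identities from it together with Lemma~\ref{Lm:J}. Since $Q$ is automorphic it is in particular middle automorphic, hence flexible by Proposition~\ref{Pr:Flexible}, so every element has a unique two-sided inverse and $J$ is a well-defined involution; moreover $J$ is an antiautomorphism by Proposition~\ref{Pr:AAIP} and Lemma~\ref{Lm:J}. The strategy is therefore: (1) show $J\varphi = \varphi J$ for every $\varphi\in\inn{Q}$; (2) apply this with $\varphi = L_{x,y}$ and compare against $JL_{x,y}J = R_{x^{-1},y^{-1}}$; (3) apply it with $\varphi = T_x$ after computing $JT_xJ$ directly.

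For step~(1) I would use the elementary observation that any automorphism respects two-sided inverses. Indeed, if $\varphi\in\aut{Q}$ then applying $\varphi$ to $xx^{-1}=1=x^{-1}x$ shows that $\varphi(x^{-1})$ is a two-sided inverse of $\varphi(x)$, so by uniqueness $\varphi(x^{-1})=\varphi(x)^{-1}$. Evaluating at an arbitrary point, this says precisely $\varphi J(x) = \varphi(x^{-1}) = \varphi(x)^{-1} = J\varphi(x)$, i.e.\ $\varphi J = J\varphi$. Because $Q$ is automorphic we have $\inn{Q}\le\aut{Q}$, so this applies to every inner mapping and $J$ centralizes $\inn{Q}$. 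Since $J^2=\id$, this is equivalent to $J\varphi J = \varphi$ for all $\varphi\in\inn{Q}$.

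Step~(2) is then immediate: taking $\varphi = L_{x,y}\in\inn{Q}$, centralization gives $JL_{x,y}J = L_{x,y}$, while Lemma~\ref{Lm:J} gives $JL_{x,y}J = R_{x^{-1},y^{-1}}$, and comparing yields $L_{x,y}=R_{x^{-1},y^{-1}}$. For step~(3) I would first record the companion of Lemma~\ref{Lm:J} for right translations, namely $JR_xJ = L_{x^{-1}}$, which is a one-line check using the antiautomorphic inverse property. Writing $T_x = L_x^{-1}R_x$ and inserting $J^2=\id$, I obtain $JT_xJ = (JL_xJ)^{-1}(JR_xJ) = R_{x^{-1}}^{-1}L_{x^{-1}} = T_{x^{-1}}^{-1}$. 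Centralization gives $JT_xJ = T_x$, so $T_x = T_{x^{-1}}^{-1}$, which rearranges to $T_x^{-1}=T_{x^{-1}}$.

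The only conceptual point---which I would flag as the crux rather than a genuine obstacle---is recognizing that the three assertions are not independent: once $J$ is known to centralize $\inn{Q}$, both identities drop out by matching with the conjugation formulas of Lemma~\ref{Lm:J} and a dual computation. Centralization itself requires nothing about the specific axioms beyond $\inn{Q}\le\aut{Q}$, since an automorphism automatically commutes with inversion. Thus there is essentially no identity-chasing to perform, and I anticipate no real difficulty beyond checking that the two-sided inverse is unique (supplied by flexibility) and that $J$ is genuinely an antiautomorphism (supplied by Proposition~\ref{Pr:AAIP}).
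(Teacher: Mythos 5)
Your proof is correct, and its first two steps coincide exactly with the paper's argument: the paper also observes that $\varphi(x^{-1})=\varphi(x)^{-1}$ for any $\varphi\in\aut{Q}$, so $J$ centralizes $\inn{Q}\le\aut{Q}$, and then reads off $L_{x,y}=JL_{x,y}J=R_{x^{-1},y^{-1}}$ from Lemma~\ref{Lm:J}. Where you genuinely diverge is the identity $T_x^{-1}=T_{x^{-1}}$. The paper proves it by a direct computation inside $\mlt{Q}$: using Proposition~\ref{Pr:PowerAssociative} (translations by powers of a fixed $x$ commute) it rearranges $T_xT_{x^{-1}} = L_x^{-1}R_xL_{x^{-1}}^{-1}R_{x^{-1}} = R_xR_{x^{-1}}L_x^{-1}L_{x^{-1}}^{-1} = R_{x^{-1},x}L_{x,x^{-1}}^{-1}$, and then invokes the just-established $L_{x,x^{-1}}=R_{x^{-1},x}$ to conclude $T_xT_{x^{-1}}=1$. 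You instead reuse the centralization of $J$ a second time: from the dual conjugation formula $JR_xJ=L_{x^{-1}}$ (a one-line consequence of the antiautomorphic inverse property, correctly flagged as needed beyond Lemma~\ref{Lm:J} as stated) you get $JT_xJ = R_{x^{-1}}^{-1}L_{x^{-1}} = T_{x^{-1}}^{-1}$, and matching with $JT_xJ=T_x$ gives the claim. Your route is more uniform---both displayed identities fall out of the single fact that $J$ centralizes $\inn{Q}$---and it avoids Proposition~\ref{Pr:PowerAssociative} entirely in this lemma; the paper's route avoids needing the dual formula for $JR_xJ$ and showcases the interplay between the two inner-mapping families, but at the cost of an extra commutation input. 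Both are complete and correct.
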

\begin{proof}
Because $\varphi(x^{-1}) = \varphi(x)^{-1}$ for every $x\in Q$ and $\varphi\in\aut{Q}$, the inversion mapping $J$ centralizes $\inn{Q}\le \aut{Q}$. Combining this with Lemma \ref{Lm:J} yields $L_{x,y} = JL_{x,y}J = R_{x^{-1},y^{-1}}$. Using this fact and Proposition \ref{Pr:PowerAssociative} yields $T_xT_{x^{-1}} = L_x^{-1}R_xL_{x^{-1}}^{-1}R_{x^{-1}} = R_xR_{x^{-1}}L_x^{-1}L_{x^{-1}}^{-1} = R_{x^{-1},x} L_{x,x^{-1}}^{-1} = R_{x^{-1},x}R_{x^{-1},x}^{-1} = 1$.
\end{proof}

\subsection{Nuclei}

As usual, define the \emph{left}, \emph{middle} and \emph{right nucleus} of a loop $Q$ by
\begin{align*}
    N_\ell(Q) &= \setof{a\in Q}{a(xy)=(ax)y\text{ for all }x,\,y\in Q},\\
    N_m(Q) &= \setof{a\in Q}{x(ay)=(xa)y\text{ for all }x,\,y\in Q},\\
    N_r(Q) &= \setof{a\in Q}{x(ya)=(xy)a\text{ for all }x,\,y\in Q},
\end{align*}
respectively, and the \emph{nucleus} of $Q$ by $N(Q) = N_\ell(Q)\cap N_m(Q)\cap N_r(Q)$.

It is easy to observe that all the nuclei are associative subloops of $Q$. In general loops, there is no relationship between the three nuclei $N_\ell(Q)$, $N_m(Q)$ and $N_r(Q)$. On the other hand, it is well known (see below) that in inverse property loops all nuclei coincide.

Recall that a loop with two-sided inverses has the \emph{left inverse property} if $x^{-1}(xy) = y$ holds, the \emph{right inverse property} if $(xy)y^{-1}=x$ holds, and the \emph{inverse property} if it has both the left and right inverse properties.

\begin{proposition}[{{\cite[Theorem VII.2.1]{Bruck}}}]\label{Pr:IPNuclei}
In antiautomorphic inverse property loops the left and right nuclei coincide. In inverse property loops all nuclei coincide.
\end{proposition}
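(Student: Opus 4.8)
The plan is to prove both statements using the antiautomorphic inverse property and the involutory antiautomorphism $J$ from Lemma \ref{Lm:J}, together with a direct unwinding of the nucleus definitions. First I would establish the first claim: in an antiautomorphic inverse property loop $Q$, we have $N_\ell(Q) = N_r(Q)$. The key observation is that $J$ reverses the order of multiplication, so it should swap the ``left'' associativity condition defining $N_\ell$ with the ``right'' condition defining $N_r$. Concretely, suppose $a \in N_\ell(Q)$, so $a(xy) = (ax)y$ for all $x,y$. Applying $J$ and using that $J$ is an antiautomorphism gives $(xy)^{-1}a^{-1} = y^{-1}(x^{-1}a^{-1})$, i.e. writing $u = y^{-1}$, $v = x^{-1}$ this reads $(vu)a^{-1} = u(va^{-1})$, which is precisely the statement that $a^{-1} \in N_r(Q)$. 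Since $J$ is involutory and bijective, this shows $J(N_\ell(Q)) = N_r(Q)$; but the nuclei are subloops closed under inversion (each nucleus is an associative subloop, and in an AAIP loop one checks the inverse of a nuclear element is again nuclear), so in fact $N_\ell(Q) = N_r(Q)$ as sets. I would verify carefully that $J$ maps $N_\ell$ into $N_r$ and vice versa so that the two inclusions combine to equality.

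For the second statement, the plan is to assume the full inverse property (both left and right inverse properties) and show all three nuclei coincide. An inverse property loop is in particular an antiautomorphic inverse property loop, so by the first part $N_\ell(Q) = N_r(Q)$ already, and it remains to show these equal the middle nucleus $N_m(Q)$. Here I would use the inverse property in the form of the standard identities $L_x^{-1} = L_{x^{-1}}$ and $R_x^{-1} = R_{x^{-1}}$, which are equivalent to the left and right inverse properties respectively. The idea is to translate each nuclear condition into a statement about translations: $a \in N_\ell(Q)$ means $L_a L_x = L_{ax}$ for all $x$, $a \in N_r(Q)$ means $R_a R_x = R_{xa}$, and $a \in N_m(Q)$ means $R_a L_x = L_x R_a$ rearranged appropriately (the condition $x(ay) = (xa)y$ says $L_x R_a = R_a L_x$ when read as operators, or more precisely relates $L_{xa}$ and $R_a$). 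I would then use the inverse property identities to convert between left and middle conditions: for instance, starting from $a \in N_\ell(Q)$ and inserting $x^{-1}$ via the inverse property should produce the middle nuclear identity for $a$.

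The main obstacle I anticipate is the inclusion $N_\ell(Q) \subseteq N_m(Q)$ (and its reverse), because the middle nucleus condition has a genuinely different operator form than the left and right conditions, and bridging them requires using the inverse property in a slightly clever substitution rather than a mechanical application of $J$. The cleanest route is likely to show directly that $a \in N_\ell(Q)$ implies $a \in N_m(Q)$: given $a(xy) = (ax)y$, I would substitute a well-chosen expression (replacing $x$ by something like $x a^{-1}$ or using $y \mapsto a^{-1}y$) and apply the left inverse property $a^{-1}(az) = z$ to collapse the result into the form $x(ay) = (xa)y$. I expect the symmetric inclusion to follow either dually using the right inverse property or by applying $J$ again. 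Throughout I would lean on the fact, recorded just before the proposition, that each nucleus is an associative subloop, which guarantees the sets are well-behaved under the manipulations and lets the two-sided inclusions yield genuine equalities.
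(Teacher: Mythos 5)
Your first paragraph is correct and is essentially the paper's own argument for $N_\ell(Q)=N_r(Q)$: the paper also just observes that under \eqref{Eq:AAIP} the condition $ax\cdot y = a\cdot xy$ is equivalent to $y^{-1}\cdot x^{-1}a^{-1} = y^{-1}x^{-1}\cdot a^{-1}$, which is your application of $J$; combined with closure of the nuclei under inversion (they are groups), this gives the equality. Your unproved assertion that inverse property loops satisfy \eqref{Eq:AAIP} is true, and the paper supplies the one-line derivation $(xy)^{-1}x = (xy)^{-1}(xy\cdot y^{-1}) = y^{-1}$; you should include something like this rather than cite it as known.

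The genuine gap is in the inclusion $N_\ell(Q)\subseteq N_m(Q)$: you correctly identify it as the delicate step, but the substitutions you actually propose do not work. Replacing $x$ by $a^{-1}x$ in $a(xy)=(ax)y$ collapses via the left inverse property only to $a^{-1}(xy)=(a^{-1}x)y$, i.e.\ it re-proves $a^{-1}\in N_\ell(Q)$; replacing $x$ by $xa^{-1}$ leaves the opaque factor $a(xa^{-1})$, and $y\mapsto a^{-1}y$ gives $a(x(a^{-1}y))=(ax)(a^{-1}y)$, which does not collapse at all. No single-variable substitution moves $a$ into the middle position; the paper substitutes in \emph{both} variables simultaneously: it first rewrites the nuclear identity as $y = (ax)^{-1}(a\cdot xy)$ using the left inverse property, then sets $x = a^{-1}u^{-1} = (ua)^{-1}$ and $y = ua\cdot v$, so that $ax = u^{-1}$ and $xy = (ua)^{-1}(ua\cdot v) = v$, yielding $ua\cdot v = u\cdot av$. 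Interestingly, your operator-theoretic idea, which you abandoned, actually succeeds mechanically once the operator forms are stated correctly: $a\in N_\ell(Q)$ means $L_aL_x = L_{ax}$ for all $x$, and $a\in N_m(Q)$ means $L_xL_a = L_{xa}$ for all $x$ (your proposed characterization $L_xR_a = R_aL_x$ is in fact the \emph{right} nucleus condition, not the middle one). Inverting $L_aL_x = L_{ax}$ and using $L_z^{-1} = L_{z^{-1}}$ (left inverse property) together with \eqref{Eq:AAIP} gives $L_{x^{-1}}L_{a^{-1}} = L_{x^{-1}a^{-1}}$ for all $x$, i.e.\ $a^{-1}\in N_m(Q)$, and group closure of $N_\ell(Q)$ finishes; the reverse inclusion is the same computation run backwards. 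So your plan is repairable along your own second route, but as written the decisive collapsing step is missing and the concrete substitutions offered would fail.
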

\begin{proof}
Suppose that $Q$ satisfies \eqref{Eq:AAIP}. Then the condition $ax\cdot y = a\cdot xy$ is equivalent to $y^{-1}\cdot x^{-1}a^{-1} = y^{-1}x^{-1}\cdot a^{-1}$, so $N_\ell(Q)=N_r(Q)$. Now suppose that $Q$ has the inverse property. From $(xy)^{-1}x = (xy)^{-1}(xy\cdot y^{-1}) = y^{-1}$ we deduce \eqref{Eq:AAIP}, so it remains to show that $N_\ell(Q)=N_m(Q)$. If $ax\cdot y = a\cdot xy$ then $y = (ax)^{-1}(a\cdot xy)$, and substituting $x=a^{-1}u^{-1}$, $y=ua\cdot v$ yields $ua\cdot v = y = u\cdot av$. The other inclusion follows by a similar argument.
\end{proof}

Suppose that $Q$ is an automorphic loop. We know from Proposition \ref{Pr:AAIP} that $Q$ has the antiautomorphic inverse property, and thus that $N_\ell(Q) = N_r(Q)$ by Proposition \ref{Pr:IPNuclei}. But taking $x=2$ and $y=3$ in $Q_6$ shows that $Q$ does not necessarily have the left or right inverse property, so there is no \emph{a priori} reason why the nuclei of $Q$ should coincide. In fact, there are automorphic loops $Q$ satisfying the strict inclusion $N(Q) = N_\ell(Q)=N_r(Q) < N_m(Q)$. Theorem \ref{Th:Nuclei} shows that no other inclusions among nuclei arise in automorphic loops.

Call a subloop $S$ of a loop $Q$ \emph{characteristic} if $\varphi(S) = S$ for every $\varphi\in\aut{Q}$.

In general loops, nuclei are not necessarily normal subloops, but they are always characteristic subloops. For instance, if $a\in N_\ell(Q)$ and $\varphi\in\aut{Q}$ then $\varphi(a)\cdot \varphi(x)\varphi(y) = \varphi(a\cdot xy) = \varphi(ax\cdot y) = \varphi(a)\varphi(x)\cdot \varphi(y)$ shows that $\varphi(a)\in N_\ell(Q)$.

In automorphic loops, nuclei are therefore normal subloops thanks to this easy but important fact:

\begin{lemma}[{{\cite[Theorem 2.2]{BrPa}}}]\label{Lm:CharNormal}
Let $Q$ be an automorphic loop and $S$ a characteristic subloop of $Q$. Then $S$ is normal in $Q$.
\end{lemma}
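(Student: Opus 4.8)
The plan is to reduce the statement directly to the defining inclusion $\inn{Q}\le\aut{Q}$. First I would recall the standard characterization of normality due to Bruck: a subloop $S$ of a loop $Q$ is normal exactly when it is invariant under the inner mapping group, that is, $\theta(S)=S$ for every $\theta\in\inn{Q}$. (Equivalently one can use $xS=Sx$, $x(yS)=(xy)S$ and $(Sx)y=S(xy)$ for all $x$, $y\in Q$, or that $S$ is the kernel of a loop homomorphism; but the inner-mapping formulation is by far the most convenient here, and it is the one that makes the hypothesis ``automorphic'' bite.)

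With this characterization in hand the argument is immediate. Since $Q$ is automorphic, every inner mapping is an automorphism, so $\inn{Q}\le\aut{Q}$. Since $S$ is characteristic, $\varphi(S)=S$ for every $\varphi\in\aut{Q}$, and in particular $\theta(S)=S$ for every $\theta\in\inn{Q}$. Thus $S$ is invariant under all inner mappings, which is precisely normality, and we are done.

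The only genuine obstacle is pinning down the right formulation of normality: once it is phrased as invariance under $\inn{Q}$, the automorphic hypothesis places the inner mappings inside $\aut{Q}$, and the characteristic hypothesis closes the gap with nothing left to compute. I would emphasize that none of the deeper structural facts established earlier (flexibility, power-associativity, the antiautomorphic inverse property) are needed; this lemma rests solely on the inclusion $\inn{Q}\le\aut{Q}$ together with the observation, already used for nuclei just above, that the relevant subloops are $\aut{Q}$-invariant.
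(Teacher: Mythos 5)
Your proof is correct and follows exactly the paper's (one-line) argument: normality is characterized as invariance under $\inn{Q}$, and then the characteristic hypothesis combined with $\inn{Q}\le\aut{Q}$ finishes immediately. You are also right that no deeper structural results are needed.
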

\begin{proof}
A subloop $S$ is normal in $Q$ if and only if $\varphi(S)=S$ for every $\varphi\in\inn{Q}$.
\end{proof}

\begin{lemma}\label{Lm:Aux}
Let $Q$ be an automorphic loop. Then $T_xT_y(a) = T_{yx}(a)$ for every $a\in N_\ell(Q)=N_r(Q)$.
\end{lemma}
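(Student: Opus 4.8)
The plan is to track where the nuclear element goes under the two inner mappings and to reduce everything to a single operator identity. Write $b=T_y(a)$ and $d=T_x(b)=T_xT_y(a)$. Since $T_y$ and $T_xT_y$ lie in $\inn{Q}\le\aut{Q}$ and each nucleus is a characteristic subloop, both $b$ and $d$ again lie in $N_\ell(Q)=N_r(Q)$; this is what makes the right-nucleus rearrangements below legal. From $T_x(b)=x\ldiv(bx)$ we have $bx=xd$, and $T_{yx}(a)=(yx)\ldiv(a(yx))$ by definition. Thus it suffices to show $a(yx)=(yx)d$, and the whole argument hinges on the operator identity
\begin{equation*}
    L_yL_b=L_aL_y,\qquad\text{equivalently}\qquad y(bz)=a(yz)\ \text{for all }z,
\end{equation*}
where $b=T_y(a)$.

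The hard part will be proving this identity: a naive expansion $a(yz)=(ay)z=(yb)z$ (using $ay=yb$ and $a\in N_\ell(Q)$) only returns the equation in the form $(yb)z=y(bz)$, which is exactly a middle-associativity statement and is \emph{not} available, since we may not assume $N_\ell(Q)=N_r(Q)\subseteq N_m(Q)$. The way past this blockage is to pass to operators and use that $T_y$ is an automorphism. By Lemma \ref{Lm:Action}, $L_b=L_{T_y(a)}=T_yL_aT_y^{-1}$. The key observation is the trivial-looking identity $L_yT_y=L_y(L_y^{-1}R_y)=R_y$, whence
\begin{equation*}
    L_yL_bL_y^{-1}=L_yT_yL_aT_y^{-1}L_y^{-1}=R_yL_aR_y^{-1}.
\end{equation*}
Finally, the defining identity of $N_\ell(Q)$ says precisely that $L_a$ commutes with every right translation, so $R_yL_aR_y^{-1}=L_a$, giving $L_yL_bL_y^{-1}=L_a$, i.e. the desired $L_yL_b=L_aL_y$.

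With the identity in hand the conclusion is a two-line calculation. Evaluating $L_yL_b=L_aL_y$ at $x$ gives $y(bx)=a(yx)$; substituting $bx=xd$ yields $a(yx)=y(xd)$; and since $d\in N_r(Q)$ we may reassociate $y(xd)=(yx)d$. Hence $a(yx)=(yx)d$, which is exactly $d=(yx)\ldiv(a(yx))=T_{yx}(a)$, and since $d=T_xT_y(a)$ this is the claim. I expect the only genuine difficulty to be the middle step, namely recognizing that the $L_yT_y=R_y$ reduction together with Lemma \ref{Lm:Action} converts the troublesome middle-associativity into the harmless left-nucleus commutation $[L_a,R_y]=1$; everything else is bookkeeping that leans on the nuclei being characteristic and hence preserved by the automorphisms $T_y$ and $T_xT_y$.
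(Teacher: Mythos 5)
Your proof is correct, and it takes a genuinely different route from the paper's. The paper argues at the level of elements: it sets $u=T_x(y)$ (so $yx=xu$), first notes $a(xu)=x(uT_{xu}(a))$ using $T_{xu}(a)\in N_r(Q)$, and then expands $T_xT_y(a)=T_x(y\ldiv ay)$ by distributing the automorphism $T_x$ over the left division, arriving at $T_{xu}(a)=T_{yx}(a)$ in one chain of equalities. You instead isolate the operator identity $L_yL_{T_y(a)}=L_aL_y$ for $a\in N_\ell(Q)$, obtained by combining $L_{T_y(a)}=T_yL_aT_y^{-1}$ (Lemma \ref{Lm:Action}, legitimate here since $T_y\in\inn{Q}\le\aut{Q}$), the observation $L_yT_y=R_y$, and the fact that membership in $N_\ell(Q)$ is precisely the commutation $L_aR_y=R_yL_a$ for all $y$; the lemma then falls out by evaluating at $x$ and one $N_r$-rearrangement with $d=T_xT_y(a)$. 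Your version is more transparent about where each hypothesis enters --- the automorphic property only through the conjugation formula, the left nucleus only through $[L_a,R_y]=1$ --- and the identity $L_yL_{T_y(a)}=L_aL_y$ is a reusable tool, whereas the paper's computation is shorter but hides these ingredients inside a single string of equalities. You are also right that the naive reassociation $(yb)z=y(bz)$ would be illegitimate (indeed circular), since $N_\ell(Q)\le N_m(Q)$ is exactly what Theorem \ref{Th:Nuclei} is about to deduce from this lemma. One small remark: you assert that membership of both $b$ and $d$ in $N_\ell(Q)=N_r(Q)$ is needed, but your argument only ever uses $d\in N_r(Q)$; the derivation of $L_yL_b=L_aL_y$ uses nothing about $b$ beyond $b=T_y(a)$.
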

\begin{proof}
We have already shown that $N_\ell(Q)=N_r(Q)$ is a characteristic subloop of $Q$. Let $u=T_x(y)$ (that is, $xu=yx$). Because $a\in N_r(Q)$, we also have $T_{xu}(a)\in N_r(Q)$, and so $x(uT_{xu}(a)) = (xu)T_{xu}(a) = a(xu)$. Since $a\in N_\ell(Q)$, we then have $T_xT_y(a) = T_x(y\ldiv ay) = T_x(y)\ldiv T_x(ay) = T_x(y)\ldiv (x\ldiv (ay)x) = T_x(y)\ldiv (x\ldiv a(yx)) = u\ldiv (x\ldiv a(xu)) = u\ldiv (x\ldiv x(uT_{xu}(a))) = T_{xu}(a) = T_{yx}(a)$.
\end{proof}

\begin{theorem}\label{Th:Nuclei}
Let $Q$ be an automorphic loop. Then $N(Q) = N_\ell(Q) = N_r(Q) \le N_m(Q)$ and all nuclei are normal subloops of $Q$.
\end{theorem}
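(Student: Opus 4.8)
The equalities $N_\ell(Q)=N_r(Q)$ and the normality claims require little genuinely new work. The first is exactly Proposition~\ref{Pr:IPNuclei} combined with the antiautomorphic inverse property from Proposition~\ref{Pr:AAIP}, while normality of each nucleus follows from the remark in the text that nuclei are always characteristic subloops, together with Lemma~\ref{Lm:CharNormal}. Moreover, once the inclusion $N_\ell(Q)=N_r(Q)\le N_m(Q)$ is in hand, we get $N(Q)=N_\ell(Q)\cap N_m(Q)\cap N_r(Q)=N_\ell(Q)$ purely formally. So the whole theorem reduces to the single inclusion $N_\ell(Q)\le N_m(Q)$, and this is where the real work lies.

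My plan is to translate the middle-nucleus condition into the language of right translations. Fix $a\in N_\ell(Q)=N_r(Q)$. Reading the identity $x\cdot ay = xa\cdot y$ as a function of $x$, the statement $a\in N_m(Q)$ is equivalent to $R_{ay}=R_yR_a$ for every $y\in Q$. I would first rewrite the left-hand side using $ay = y\cdot T_y(a)$, which is immediate from $T_y(a)=y\ldiv ay$. Since $N_r(Q)$ is characteristic and every $T_y$ is an automorphism, we have $T_y(a)\in N_r(Q)$, so the defining property of the right nucleus yields $R_{ay}=R_{y\cdot T_y(a)}=R_{T_y(a)}R_y$. Hence it suffices to prove the conjugation identity $R_{T_y(a)}=R_yR_aR_y^{-1}$.

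The crux is therefore this identity, and this is where flexibility enters. By Lemma~\ref{Lm:Action} applied to the automorphism $T_y$ we have $R_{T_y(a)}=T_yR_aT_y^{-1}$, so it is enough to show $T_yR_aT_y^{-1}=R_yR_aR_y^{-1}$, i.e.\ that $R_y^{-1}T_y$ centralizes $R_a$. Now $Q$ is flexible by Proposition~\ref{Pr:Flexible}, so $L_y$ and $R_y$ commute; writing $T_y=L_y^{-1}R_y$ this collapses $R_y^{-1}T_y=R_y^{-1}L_y^{-1}R_y$ all the way down to $L_y^{-1}$. Finally, $a\in N_r(Q)$ says precisely that $R_a$ commutes with every $L_y$, hence with every $L_y^{-1}$, so $R_y^{-1}T_y=L_y^{-1}$ centralizes $R_a$, as required. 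I expect the main obstacle to be not any single computation but finding the right reformulation: recognizing that the middle-nucleus condition becomes a conjugation statement about $R_a$, which flexibility then reduces to the already-available fact that left translations centralize $R_a$ for $a\in N_r(Q)$. In this route I lean on flexibility and Lemma~\ref{Lm:Action}; Lemma~\ref{Lm:Aux} does not appear to be needed for this particular inclusion.
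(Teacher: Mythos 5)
Your proposal is correct --- every step checks out --- but it takes a genuinely different route to the crucial inclusion than the paper does. The paper encodes $(xa)y=x(ay)$ as $M_{x,y}(a)=a$ for $M_{x,y}=L_x^{-1}R_y^{-1}L_xR_y$, factors $M_{x,y}=T_xR_{x,y}^{-1}T_{xy}^{-1}L_{y,x}T_y$, uses that the left and right inner mappings fix $A=N_\ell(Q)=N_r(Q)$ pointwise to reduce the evaluation at $a$ to $T_xT_{xy}^{-1}T_y(a)$, and then finishes with the dedicated Lemma~\ref{Lm:Aux}, $T_xT_y(a)=T_{yx}(a)$ on $A$, whose proof is a separate computation using both nuclear properties of $a$. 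You instead encode $a\in N_m(Q)$ as the translation identity $R_{ay}=R_yR_a$ and prove it directly: $R_{ay}=R_{y\cdot T_y(a)}=R_{T_y(a)}R_y$ (valid since $T_y(a)\in N_r(Q)$, the right nucleus being characteristic and $T_y$ an automorphism), then $R_{T_y(a)}=T_yR_aT_y^{-1}$ by Lemma~\ref{Lm:Action}, and flexibility collapses $R_y^{-1}T_y=R_y^{-1}L_y^{-1}R_y$ to $L_y^{-1}$, which commutes with $R_a$ precisely because $a\in N_r(Q)$; hence $T_yR_aT_y^{-1}=R_yR_aR_y^{-1}$ and $R_{ay}=R_yR_a$ follows. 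Your route buys two things: it bypasses Lemma~\ref{Lm:Aux} entirely, and it consumes only $a\in N_r(Q)$ together with flexibility and $T_y\in\aut{Q}$ --- so it in fact establishes $N_r(Q)\le N_m(Q)$ in every middle automorphic loop, with the equality $N_\ell(Q)=N_r(Q)$ (Propositions~\ref{Pr:AAIP} and~\ref{Pr:IPNuclei}) entering only to phrase the stated conclusion, whereas the paper's argument genuinely uses $a\in N_\ell(Q)\cap N_r(Q)$ throughout. What the paper's detour buys is Lemma~\ref{Lm:Aux} itself, the composition rule for the $T$-mappings on the nucleus, a fact of independent interest. Your handling of the remaining claims --- normality via characteristic subloops and Lemma~\ref{Lm:CharNormal}, $N_\ell(Q)=N_r(Q)$ via the antiautomorphic inverse property, and $N(Q)=N_\ell(Q)$ purely formally --- coincides with the paper's.
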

\begin{proof}
All nuclei are normal by Lemma \ref{Lm:CharNormal}. Let $A=N_\ell(Q) = N_r(Q)$. It remains to prove that $A\le N_m(Q)$. Note that $L_{x,y}$ and $R_{x,y}$ fix $A$ pointwise, while $(xa)y=x(ay)$ holds if and only if $M_{x,y}(a) = a$, where $M_{x,y} = L_x^{-1}R_y^{-1}L_xR_y$.

Given $a\in A$, we want to show that $M_{x,y}(a)=a$. Now,
\begin{displaymath}
    M_{x,y} = (L_x^{-1}R_x)(R_x^{-1}R_y^{-1}R_{xy})(R_{xy}^{-1}L_{xy})(L_{xy}^{-1}L_xL_y)(L_y^{-1}R_y),
\end{displaymath}
and thus $M_{x,y} = T_x R_{x,y}^{-1}T_{xy}^{-1}L_{y,x}T_y$. While evaluating $M_{x,y}$ at $a$, we never leave the normal subloop $A$, so $M_{x,y}(a) = T_xT_{xy}^{-1}T_y(a)$. By Lemma \ref{Lm:Aux}, $M_{x,y}(a)=T_xT_{xy}^{-1}T_y(a) = T_x(T_yT_x)^{-1}T_y(a) = a$.
\end{proof}

The middle nucleus is important in automorphic loops but its role is not fully understood.

\subsection{Diassociativity and the Moufang property}

Up to this point we have carefully proved all the results. In this subsection we skip some proofs and refer the reader to the literature.

A loop has the \emph{left alternative property} if it satisfies $x(xy)=(xx)y$ and the \emph{right alternative property} if $x(yy) =(xy)y$ holds. A loop $Q$ is \emph{diassociative} if any two elements of $Q$ generate an associative subloop.

By Moufang's theorem \cite{Moufang}, Moufang loops are diassociative. The loop $Q_6$ with $x=2$ and $y=3$ shows that automorphic loops need not have the left alternative property nor the right alternative property so, in particular, they need not be diassociative.

Bruck and Paige proved in \cite[Theorem 2.4]{BrPa} that the following properties are equivalent for an automorphic loop $Q$: $Q$ is diassociative; $Q$ satisfies both left and right inverse properties; $Q$ satisfies both left and right alternative properties. Moreover, as we have already mentioned in the introduction, every diassociative automorphic loop is Moufang \cite{KiKuPh}. Thanks to Proposition \ref{Pr:AAIP}, we can refine these results as follows:

\begin{theorem}\label{Th:DAMoufang}
The following properties are equivalent for an automorphic loop $Q$:
\begin{enumerate}
\item[(i)] $Q$ has the left alternative property
\item[(ii)] $Q$ has the right alternative property,
\item[(iii)] $Q$ has the left inverse property,
\item[(iv)] $Q$ has the right inverse property,
\item[(v)] $Q$ is diassociative,
\item[(vi)] $Q$ is Moufang.
\end{enumerate}
\end{theorem}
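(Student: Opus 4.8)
The plan is to reduce the six-way equivalence to two external ingredients and one new observation. The external ingredients are the cited Bruck--Paige result that for an automorphic loop the three conditions ``diassociative'', ``both left and right inverse properties'', and ``both left and right alternative properties'' are equivalent, and the Kinyon--Kunen--Phillips theorem that every diassociative automorphic loop is Moufang. The only genuinely new work is to show that, thanks to the antiautomorphic inverse property of Proposition \ref{Pr:AAIP}, each ``both left and right'' condition collapses to a single one-sided condition. This collapse is exactly what licenses us to list the one-sided properties (i)--(iv) separately rather than in pairs.

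The engine for the collapse is the conjugation formula $JL_xJ = R_{x^{-1}}$ from Lemma \ref{Lm:J}, together with the identity $(x^2)^{-1} = (x^{-1})^2$, which is immediate from \eqref{Eq:AAIP}. First I would establish (i) $\Leftrightarrow$ (ii). Writing the left alternative property as $L_x^2 = L_{x^2}$ and conjugating by $J$ gives $R_{x^{-1}}^2 = R_{(x^{-1})^2}$; as $x$ ranges over $Q$ so does $x^{-1}$, and this is precisely the right alternative property $R_z^2 = R_{z^2}$, the reverse implication being symmetric. Next I would establish (iii) $\Leftrightarrow$ (iv) in the same spirit: the left inverse property reads $L_{x^{-1}}L_x = \id$, and conjugating by $J$ turns it into $R_xR_{x^{-1}} = \id$, which is the right inverse property. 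The only subtlety here is verifying that the quantifier over $x$ survives the substitution $x\mapsto x^{-1}$, which it does because $J$ is a bijection.

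With these two equivalences in hand, (i) and (ii) are interchangeable, as are (iii) and (iv). Consequently the pair ``left and right alternative'' is equivalent to the single condition (i), and the pair ``left and right inverse'' is equivalent to the single condition (iii). Substituting these into the Bruck--Paige equivalences yields (i) $\Leftrightarrow$ (iii) $\Leftrightarrow$ (v), which combined with the already-proved (i) $\Leftrightarrow$ (ii) and (iii) $\Leftrightarrow$ (iv) gives the equivalence of (i)--(v). Finally, (v) $\Leftrightarrow$ (vi) follows by citing the two halves separately: Moufang loops are diassociative by Moufang's theorem \cite{Moufang}, and diassociative automorphic loops are Moufang by \cite{KiKuPh}.

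I expect the substantive difficulty to lie entirely in the cited direction (v) $\Rightarrow$ (vi); proving that a diassociative automorphic loop is Moufang is the deep 2002 theorem and is not something to reprove here. Within the elementary part that remains, there is no real obstacle, only bookkeeping with $J$: one must check that conjugation genuinely interchanges the left and right versions of each axiom, for which the translation of each property into a relation among $L$'s and $R$'s, and the formulas of Lemma \ref{Lm:J} and Proposition \ref{Pr:AAIP}, suffice.
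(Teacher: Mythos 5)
Your proposal is correct and follows essentially the same route as the paper: use the antiautomorphic inverse property of Proposition \ref{Pr:AAIP} to collapse each left/right pair into a single one-sided condition, then cite Bruck--Paige for the equivalence with diassociativity and \cite{KiKuPh} (with Moufang's theorem) for the equivalence with the Moufang property. The only cosmetic difference is that you run the collapse through the operator identity $JL_xJ = R_{x^{-1}}$ of Lemma \ref{Lm:J}, whereas the paper inverts the alternative law elementwise via \eqref{Eq:AAIP}---the same mechanism in two guises.
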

\begin{proof}
Suppose that $Q$ has the left alternative property. Then Proposition \ref{Pr:AAIP} implies that $(yx\cdot x)^{-1} = x^{-1}\cdot x^{-1}y^{-1} = x^{-1}x^{-1}\cdot y^{-1} = (y\cdot xx)^{-1}$, so $Q$ has the right alternative property. A similar argument finishes the equivalence of (i) and (ii), and also proves the equivalence of (iii) and (iv). The rest follows from \cite{BrPa, KiKuPh}.
\end{proof}

We conclude this section with Bruck's proof of the fact that commutative Moufang loops are automorphic. The argument is based on nice observations about autotopisms and companions of pseudo-automorphisms, which we review.

Let $Q$ be a loop. A triple $(f,g,h)$ of bijections $Q\to Q$ is an \emph{autotopism} if $f(x)g(y)=h(xy)$ holds for every $x$, $y\in Q$. It is easy to see that the coordinate-wise product (composition) of autotopisms is an autotopism.

If a bijection $f$ of $Q$ and $c\in Q$ satisfy the identity $f(x)\cdot f(y)c = f(xy)c$, then $f$ is called a \emph{pseudo-automorphism} of $Q$ with \emph{companion} $c$.

\begin{lemma}[compare {{\cite[Lemma VII.2.1]{Bruck}}}]\label{Lm:PrincipalAtp}
Let $Q$ be a loop and $(f,g,h)$ an autotopism of $Q$ such that $f(1)=1$. Then $g=h$ and $g(x)=f(x)c$, where $c=g(1)$. Hence $f$ is a pseudo-automorphism with companion $c=g(1)$.
\end{lemma}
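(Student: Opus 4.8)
The plan is to unpack the autotopism identity $f(x)g(y) = h(xy)$ and exploit the hypothesis $f(1)=1$ by making judicious substitutions. First I would set $x=1$ in the defining identity: since $f(1)=1$, this gives $g(y) = h(y)$ for every $y\in Q$, establishing $g=h$ immediately. This is the cheap observation that collapses the triple from three unknown bijections to two.

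Next I would determine $g$ explicitly in terms of $f$. Setting $y=1$ in the (now simplified) identity $f(x)g(y) = g(xy)$ yields $f(x)\cdot g(1) = g(x)$, that is, $g(x) = f(x)c$ where $c = g(1)$. This is exactly the claimed formula, and it requires nothing beyond the one substitution plus the definition $c=g(1)$.

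Finally I would verify that $f$ is a pseudo-automorphism with companion $c$. Substituting $g(x)=f(x)c$ and $g(xy)=f(xy)c$ back into $f(x)g(y)=g(xy)$ gives $f(x)\cdot(f(y)c) = f(xy)c$, which is precisely the pseudo-automorphism identity. The only remaining detail is that $f$ is a bijection, which is part of the autotopism hypothesis. The proof is essentially three one-line substitutions into the autotopism equation.

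I do not anticipate a genuine obstacle here, since every step is forced by a single evaluation of the autotopism identity at an identity element. The one point demanding a moment's care is bookkeeping: one must be sure to use $f(1)=1$ (not $g(1)=1$ or $h(1)=1$) when extracting $g=h$, and then reuse the resulting simplified identity---rather than the original three-variable one---when peeling off the companion. Because the loop structure is used only through left multiplication by $c$ and no associativity or inverse properties are invoked, the argument holds in an arbitrary loop $Q$, exactly as stated.
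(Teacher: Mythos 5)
Your proposal is correct and follows exactly the paper's argument: substitute $x=1$ with $f(1)=1$ to obtain $g=h$, then $y=1$ to get $g(x)=f(x)c$, and back-substitute to read off the pseudo-automorphism identity. Nothing to add.
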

\begin{proof}
We have $g(x)=f(1)g(x)=h(1\cdot x)=h(x)$, so $g=h$. Also, $f(x)c = f(x)g(1)=h(x) = g(x)$. Finally, $f(x)\cdot f(y)c = f(x)g(y) = h(xy) = g(xy) = f(xy)c$.
\end{proof}

\begin{proposition}[{{\cite[Lemma VII.3.3]{Bruck}}}]\label{Pr:CML}
Commutative Moufang loops are automorphic.
\end{proposition}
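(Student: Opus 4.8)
The plan is to realize each left inner mapping $L_{x,y}$ as the first component of a principal autotopism and then invoke Lemma~\ref{Lm:PrincipalAtp}. Since $Q$ is commutative we have $L_x = R_x$ for all $x$, so $T_x = L_x^{-1}R_x = \id$ is trivially an automorphism; hence $Q$ is middle automorphic for free, and by Theorem~\ref{Th:LMEnough} it suffices to prove that $Q$ is left automorphic, i.e. that $L_{x,y} = L_{yx}^{-1}L_yL_x \in \aut{Q}$ for all $x$, $y$.

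The building block is the observation that in a commutative Moufang loop the triple $\pi_x = (L_x, L_x, L_{x^2})$ is an autotopism. Indeed, applying commutativity to the second factor of the Moufang identity $(xu)(vx) = (x(uv))x$ and then diassociativity (so that $x(x(uv)) = x^2(uv)$, and in particular $L_x^2 = L_{x^2}$) yields
\[
    (xu)(xv) = x^2(uv),
\]
which is precisely $L_x(u)\,L_x(v) = L_{x^2}(uv)$. This is the one genuinely loop-theoretic identity in the argument, and I expect verifying it---pushing the Moufang law into commutative form and invoking the left alternative property---to be the only real obstacle; everything else is formal.

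Once $\pi_x$ is available, the rest is bookkeeping in the group of autotopisms, which is closed under composition (as noted in the excerpt) and under inverses. First I would form
\[
    \pi_{yx}^{-1}\,\pi_y\,\pi_x = \bigl(L_{yx}^{-1}L_yL_x,\ L_{yx}^{-1}L_yL_x,\ L_{(yx)^2}^{-1}L_{y^2}L_{x^2}\bigr),
\]
whose first component is exactly $f = L_{x,y}$. Since $f(1) = (yx)\ldiv(y(x\cdot 1)) = 1$, Lemma~\ref{Lm:PrincipalAtp} applies: $f$ is a pseudo-automorphism with companion $c = g(1)$, where $g = L_{x,y}$ is the (equal) second component, so $c = L_{x,y}(1) = 1$. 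A pseudo-automorphism with companion $1$ satisfies $f(u)f(v) = f(uv)$, and by the homomorphism criterion recorded in the introduction such a bijection is an automorphism. Thus every $L_{x,y}$ lies in $\aut{Q}$, so $Q$ is left automorphic, and with the free middle-automorphic property above, Theorem~\ref{Th:LMEnough} shows $Q$ is automorphic.
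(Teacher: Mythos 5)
Your proof is correct, but it takes a genuinely different route from the paper's. The paper uses the autotopism $\varphi_x=(L_x,R_x,R_xL_x)$, which is an immediate restatement of the Moufang identity \eqref{Eq:Moufang} and requires no alternativity; the composite $\varphi_{yx}^{-1}\varphi_y\varphi_x$ then exhibits $L_{x,y}$ as a pseudo-automorphism with an \emph{unknown} companion $c$, and a separate argument is needed to dispose of $c$: commutativity forces $c\in N_m(Q)$, Proposition \ref{Pr:IPNuclei} gives $c\in N_r(Q)$, and then $c$ cancels. Along the way the paper proves the stronger classical fact that \emph{every} pseudo-automorphism of a commutative Moufang loop is an automorphism. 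You instead use the commutativity-specific autotopism $\pi_x=(L_x,L_x,L_{x^2})$, coming from the identity $(xy)(xz)=x^2(yz)$; since the first two components of $\pi_{yx}^{-1}\pi_y\pi_x$ coincide and fix $1$, Lemma \ref{Lm:PrincipalAtp} yields companion $c=g(1)=L_{x,y}(1)=1$ outright, so the entire nucleus/pseudo-automorphism analysis evaporates. The trade-off is that your autotopism $\pi_x$ is not a formal consequence of \eqref{Eq:Moufang} alone: deriving $(xy)(xz)=x^2(yz)$ needs the left alternative law $x(xw)=x^2w$, which you correctly obtain from diassociativity via Moufang's theorem (cited in the paper), whereas the paper's $\varphi_x$ needs nothing beyond \eqref{Eq:Moufang} itself. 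Your explicit reduction --- $T_x=\id$ gives middle automorphic for free, then Theorem \ref{Th:LMEnough} --- is also cleaner than the paper, which leaves implicit that $T_x=\id$ and $R_{x,y}=L_{x,y}$ in the commutative case. In short: you pay with Moufang's theorem and are rewarded by avoiding Proposition \ref{Pr:IPNuclei} and the companion bookkeeping; the paper pays with the nucleus argument and is rewarded with a more elementary starting autotopism and a stronger intermediate result.
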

\begin{proof}
Let $Q$ be a commutative Moufang loop. Let $f$ be a pseudo-automorphism of $Q$ with companion $c$. Then $f(x)\cdot cf(y) = f(x)\cdot f(y)c = f(xy)c = f(yx)c = f(y)\cdot f(x)c = f(x)c\cdot f(y)$ for every $x$, $y\in Q$, so $c\in N_m(Q)$. Since $Q$ is an inverse property loop, its nuclei coincide by Proposition \ref{Pr:IPNuclei} and we have $c\in N_r(Q)$. Then $c$ can be canceled in $f(x)\cdot f(y)c = f(xy)c$ and $f\in\aut{Q}$ follows.

It therefore suffices to prove that the mappings $L_{x,y}$ are pseudo-automorphisms. The Moufang identity \eqref{Eq:Moufang} is equivalent to the statement that $\varphi_x = (L_x,R_x,R_xL_x)$ is an autotopism of $Q$. Then $\varphi_{yx}^{-1}\varphi_y\varphi_x$ is an autotopism with first component $L_{x,y}$. By Lemma \ref{Lm:PrincipalAtp}, $L_{x,y}$ is a pseudo-automorphism.
\end{proof}

\section*{Lecture 2: Associated operations}
\setcounter{section}{2}
\setcounter{theorem}{0}
\setcounter{subsection}{0}

Many of the concepts presented in this section can be traced to two influential papers \cite{Glauberman1, Glauberman2} on loops of odd order written by Glauberman in the 1960s. In his study of Moufang loops $(Q,\cdot)$ of odd order \cite{Glauberman2}, the most important idea was to associate another loop $(Q,\bullet)$ with $(Q,\cdot)$, defined by $x\bullet y = x^{1/2}yx^{1/2}$, where $x^{1/2}$ is the unique square root of $x$ in $(Q,\cdot)$. The resulting loop $(Q,\bullet)$ is an instance of what would nowadays be called a Bruck loop (or a $K$-loop). This made Glauberman study Bruck loops of odd order and their left multiplication groups in detail \cite{Glauberman1} and establish a number of key results for them (see Theorem \ref{Th:Glauberman}). He then transferred the results from Bruck loops to Moufang loops.

We follow a similar approach but in a more general setting of twisted subgroups. We show how to associate left Bruck loops with uniquely $2$-divisible left Bol loops and with uniquely $2$-divisible automorphic loops. We then follow \cite{Greer} and establish a one-to-one correspondence between left Bruck loops of odd order and a certain class of commutative loops containing commutative automorphic loops of odd order. This will allow us to prove an analog of Theorem \ref{Th:Glauberman} for commutative automorphic loops. Finally, as in \cite{KiKuPhVo} we establish a one-to-one correspondence between uniquely $2$-divisible automorphic loops whose associated left Bruck loop is associative and a certain class of uniquely $2$-divisible Lie rings. This eventually leads to the Odd Order Theorem for automorphic loops. For the convenience of the reader, the correspondence results are visualized in Figure \ref{Fg:Correspondences}.

\subsection{Bruck loops}

A loop $Q$ is a \emph{left Bol loop} if it satisfies the \emph{left Bol identity}
\begin{equation}\label{Eq:LBol}
    x(y(xz)) = (x(yx))z.
\end{equation}
It is well known that left Bol loops have the left inverse property.

The following result gives a nice axiomatization of left Bol loops in the variety of magmas with inverses.

\begin{lemma}[{{\cite[(3.10)]{Kiechle}}} and {{\cite[Theorem 4.1]{PhVoScoop}}}]\label{Lm:LBolGroupoid}
Let $(Q,\cdot)$ be a groupoid with an identity element and two-sided inverses satisfying \eqref{Eq:LBol}. Then $(Q,\cdot)$ is a left Bol loop.
\end{lemma}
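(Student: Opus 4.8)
The plan is to show that $(Q,\cdot)$ is a loop, i.e.\ that every left translation $L_x$ and every right translation $R_x$ is a bijection of $Q$; since $(Q,\cdot)$ then has an identity element and satisfies \eqref{Eq:LBol}, it is a left Bol loop by definition. Throughout I would read \eqref{Eq:LBol} in operator form as $L_xL_yL_x = L_{x(yx)}$, an identity of maps $Q\to Q$ obtained by letting $z$ range over $Q$.

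First I would record two specializations. Taking $y=1$ gives the left alternative law $L_x^2 = L_{xx}$, equivalently $x(xz)=(xx)z$; applying it with $x$ replaced by $x^{-1}$ and $z$ replaced by $x$ yields $(x^{-1}x^{-1})x = x^{-1}(x^{-1}x) = x^{-1}$, whence $x\bigl((x^{-1}x^{-1})x\bigr) = xx^{-1} = 1$. Taking $y=x^{-1}$, and separately applying \eqref{Eq:LBol} to the pair $(x^{-1},x)$, gives the two ``regularity'' relations $L_xL_{x^{-1}}L_x = L_x$ and $L_{x^{-1}}L_xL_{x^{-1}} = L_{x^{-1}}$.

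The main obstacle is to prove that each $L_x$ is injective, since a priori we have neither finiteness nor any cancellation. The key trick I would use is to set $w_0 = x^{-1}x^{-1}$, so that $x(w_0x)=1$ by the previous paragraph. If $xa = xb$, then $w_0(xa)=w_0(xb)$, hence $x(w_0(xa)) = x(w_0(xb))$; but \eqref{Eq:LBol} rewrites the two sides as $(x(w_0x))a = 1\cdot a = a$ and $(x(w_0x))b = b$, so $a=b$. With $L_x$ injective for every $x$, I can left-cancel in the regularity relations to obtain $L_{x^{-1}}L_x = \id = L_xL_{x^{-1}}$; that is, $L_x^{-1}=L_{x^{-1}}$ and $Q$ has the left inverse property. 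In particular every $L_x$ is a bijection, two-sided inverses are unique and involutive, and left division is available.

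It remains to handle the right translations. Applying \eqref{Eq:LBol} with $z=x^{-1}$ gives $(x(yx))x^{-1}=xy$, i.e.\ $R_{x^{-1}}L_xR_x = L_x$ in operator form. Multiplying on the left by $L_x^{-1}=L_{x^{-1}}$ yields $L_{x^{-1}}R_{x^{-1}}L_xR_x = \id$, and replacing $x$ by $x^{-1}$ (legitimate now that inverses are involutive) gives $L_xR_xL_{x^{-1}}R_{x^{-1}} = \id$. The latter exhibits $L_{x^{-1}}R_{x^{-1}}$ as a right inverse of $L_xR_x$, so $L_xR_x$ is surjective, while the former shows it is injective; hence $L_xR_x$ is a bijection, and since $L_x$ is a bijection so is $R_x$. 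Thus all translations are bijections, $Q$ is a loop, and therefore a left Bol loop, recovering the cited results of Kiechle and of Phillips--Vojt\v{e}chovsk\'y.
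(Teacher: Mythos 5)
Your proof is correct, and it is worth noting that the paper itself gives no proof of this lemma at all---it simply cites Kiechle and Phillips--Vojt\v{e}chovsk\'y---so your argument supplies what the paper leaves to the references, in the same operator-computation spirit ($L_xL_yL_x = L_{x(yx)}$) as the standard treatments. All the key steps check out: the specialization $y=1$ gives $L_x^2=L_{xx}$, whence $w_0x = x^{-1}$ for $w_0 = x^{-1}x^{-1}$ and $x(w_0x)=1$; the Bol identity with the triple $(x,w_0,\cdot)$ then yields your injectivity argument; left-cancelling in $L_xL_{x^{-1}}L_x=L_x$ and $L_{x^{-1}}L_xL_{x^{-1}}=L_{x^{-1}}$ gives $L_x^{-1}=L_{x^{-1}}$; and the specialization $z=x^{-1}$, giving $R_{x^{-1}}L_xR_x=L_x$, correctly produces the pair of identities $L_{x^{-1}}R_{x^{-1}}L_xR_x=\id$ and $L_xR_xL_{x^{-1}}R_{x^{-1}}=\id$ (the substitution $x\mapsto x^{-1}$ is legitimate since you have already shown inverses are unique and involutive), from which $L_xR_x$ and hence $R_x$ is bijective. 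One small streamlining you missed: your own computation already shows $L_xL_{w_0}L_x = L_{x(w_0x)} = L_1 = \id$ as maps, which makes each $L_x$ a bijection outright (a left and a right inverse in $\sym{Q}$ at once), so the separate cancellation step extracting surjectivity from the regularity relations is redundant---though you still need those relations to identify $L_x^{-1}$ with $L_{x^{-1}}$, i.e.\ the left inverse property, which your right-translation argument genuinely uses.
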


Consequently, a nonempty subset of a left Bol loop is a subloop if it is closed under mutiplication and inverses.

A \emph{left Bruck loop} is a left Bol loop with the \emph{automorphic inverse property} $(xy)^{-1}=x^{-1}y^{-1}$.

Here is an omnibus result on Bruck loops of odd order compiled from \cite{Glauberman1, Glauberman2}. Recall that the \emph{left multiplication group} of $Q$ is defined by $\lmlt{Q} = \genof{L_x}{x\in Q}$.

\begin{theorem}[Glauberman]\label{Th:Glauberman}
Let $Q$ be a left Bruck loop of odd order. Then $Q$ is solvable. If $H\le Q$ then $|H|$ divides $|Q|$. If $p$ is a prime dividing $|Q|$ then there is $x\in Q$ such that $|x|=p$. Sylow $p$-subloops and Hall $\pi$-subloops of $Q$ exist. The left multiplication group $\lmlt{Q}$ of $Q$ is of odd order.

If also $|Q|=p^k$ for an odd prime $p$, then $Q$ is centrally nilpotent.
\end{theorem}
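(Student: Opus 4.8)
The plan is to transport the whole statement into finite group theory by working inside the left multiplication group $G = \lmlt{Q}$ and reducing everything to the single assertion that $|G|$ is odd. Put $P = \setof{L_x}{x\in Q}$. Since a left Bol loop has the left inverse property, $L_x^{-1} = L_{x^{-1}}\in P$; and the left Bol identity \eqref{Eq:LBol} says exactly that $L_xL_yL_x = L_{x(yx)}$, so $P$ is closed under $(a,b)\mapsto aba$. Thus $P$ is a twisted subgroup of $G$ which generates $G$, and because $L_x(1)=x$ the assignment $L_x\mapsto x$ identifies $P$ with a left transversal of the point stabilizer $H=G_1=\linn{Q}$; in particular $|Q| = |G:H|$. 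A short induction on \eqref{Eq:LBol} gives $L_x^n = L_{x^n}$, so the order of $L_x$ in $G$ equals the order of $x$ in the power-associative loop $Q$. I claim that once $|G|$ is known to be odd, every conclusion of the theorem follows by comparatively formal transfer arguments.

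Assume then that $|G|$ is odd. Each $L_x$ has odd order, hence every element of $Q$ has odd order and $Q$ has no involutions. By the Feit--Thompson odd order theorem $G$ is solvable, and solvability of $Q$ as a loop is read off from $G$ via the correspondence between normal subloops of $Q$ and $G$-invariant block systems on $Q$: the derived series of $G$ descends to a series of normal subloops of $Q$ with abelian factors. For the divisibility statement, a subloop $S\le Q$ gives the subgroup $G_S = \genof{L_x}{x\in S}$ whose orbit through $1$ is $S$, so $|S| = |G_S:G_S\cap H|$ divides $|G:H|=|Q|$ by Lagrange in the group $G$. Sylow and Hall subloops are produced from the Hall subgroups of the solvable group $G$, which exist by Hall's theorem: choosing a Hall $\pi$-subgroup $K\le G$ compatible with $H$, the orbit $K\cdot 1$ has the $\pi$-part of $|Q|$ for its size, and the twisted-subgroup structure is used to check that this orbit is genuinely a subloop. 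The $p$-power case is similar: if $|Q|=p^k$ then $G$ is a $p$-group, hence nilpotent, and the descent of the upper central series of $G$ to $Q$ exhibits $Q$ as centrally nilpotent. Finally Cauchy's property drops out: if $p\mid |Q|$, a Sylow $p$-subloop of $Q$ is a Bruck $p$-loop, centrally nilpotent by the previous sentence, so its nontrivial center---an abelian group---contains an element of order $p$.

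The real content, and what I expect to be the sole serious obstacle, is the assertion that $G=\lmlt{Q}$ has odd order; everything in the preceding paragraph is routine by comparison. In the twisted-subgroup language this reads: a finite group generated by a twisted subgroup of odd cardinality (arising from a Bol loop) has odd order. I would argue by contradiction, taking an involution $t\in G$. Since $t$ acts on the set $Q$, which has odd cardinality $|G:H|$, it fixes a point, so after conjugation $t\in H$; one then studies the action of $t$ on $P$ and on the involutions of $G$, aiming to show that an involution cannot coexist with a generating twisted subgroup of odd order. This is precisely Glauberman's theorem on the multiplication group of a Bruck loop of odd order, reproved in the twisted-subgroup framework by Foguel, Kinyon and Phillips. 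Its proof is not loop-theoretic in nature: controlling the involutions of $G$ is exactly the sort of problem resolved by the Feit--Thompson odd order theorem together with Glauberman's $Z^*$-theorem on isolated involutions, and I expect essentially all of the difficulty to be concentrated in this step.
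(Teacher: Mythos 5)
First, a point of comparison: the paper does not prove this theorem at all. It is explicitly presented as an ``omnibus result \ldots compiled from \cite{Glauberman1, Glauberman2}'', so there is no in-paper argument to match; the honest benchmark is Glauberman's own development. Your proposal correctly identifies the crux --- that $G=\lmlt{Q}$ has odd order is the deep step, resting on Feit--Thompson --- and outsourcing that step to the literature is consistent with what the paper itself does. Your setup (the twisted subgroup $P=\setof{L_x}{x\in Q}$, the identity $L_xL_yL_x=L_{x(yx)}$, $L_x^n=L_{x^n}$, $|Q|=|G:G_1|$) also matches the framework the paper uses in Propositions \ref{Pr:BolBruck} and \ref{Pr:Greer}. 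One historical quibble: Glauberman's 1964 proof of the oddness of $\lmlt{Q}$ uses Feit--Thompson; the $Z^*$-theorem postdates it and enters only in the later Moufang work, so invoking it for this step is anachronistic, though harmless.

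The genuine problem is your claim that, granted $|G|$ odd, ``everything else is routine transfer''; several of those transfers are either false as stated or are themselves theorems of Glauberman. Most concretely, your Lagrange step asserts that $|G_S:G_S\cap H|$ divides $|G:H|$ ``by Lagrange in the group $G$'' --- this is not a valid group-theoretic principle. Take $G=S_3$ acting on $\{1,2,3\}$, $H=G_1$ (order $2$, index $3$) and $K=\langle(1\,2)\rangle$: then $|K:K\cap H|=2$ does not divide $|G:H|=3$. Glauberman's actual Lagrange argument goes through the identification $P=K(\tau)$ for the involutory automorphism $\tau$ induced by inversion and orbit/centralizer counting in odd-order groups, not through a naive index comparison. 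Similarly, ``if $|Q|=p^k$ then $G$ is a $p$-group'' is not formal: $|G:H|=p^k$ says nothing a priori about the primes dividing $|H|$, and the $p$-group property of $\lmlt{Q}$ for Bruck $p$-loops is itself one of Glauberman's theorems. Finally, the descent of solvability and of the upper central series from $G$ to $Q$ via ``$G$-invariant block systems'' is the Albert correspondence for the \emph{full} multiplication group $\mlt{Q}$; for $\lmlt{Q}$ of a Bol loop the dictionary between normal subgroups, blocks, and normal subloops requires separate justification, which is exactly where much of the work in \cite{Glauberman1} lies. So your proposal is a reasonable roadmap with the difficulty correctly located, but the ``comparatively formal'' surrounding steps conceal both an outright false lemma and several results that are themselves substantial parts of Glauberman's papers.
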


\subsection{Twisted subgroups}

A subset $S$ of a group $G$ is a \emph{twisted subgroup} of $G$ if it contains the identity element of $G$, is closed under inverses, and is closed under the binary operation $(x,y)\mapsto xyx$.

Note that a twisted subgroup is not necessarily a subgroup, but every twisted subgroup $S$ is closed under powers. Indeed, it suffices to show that all positive powers of $x\in S$ belong to $S$, and we get this by induction on $k$ from $x^{k+2} = xx^k x$.

Call a subset $U$ of a loop $Q$ \emph{uniquely $2$-divisible} if the squaring map $Q\to Q$, $x\mapsto x^2$ restricts to a bijection on $U$. In this case, for every $x\in U$ there is a unique element $x^{1/2}\in U$ such that $(x^{1/2})^2 = x$. If $U$ happens to be power associative and $x\in U$ has odd order $n$, then $x^{1/2} = x^{(n+1)/2}$, so the square root of $x$ is a positive power of $x$. If $U$ happens to be closed under inverses, then $((x^{-1})^{1/2})^2 = x^{-1} = (x^{1/2}x^{1/2})^{-1} = (x^{1/2})^{-1} (x^{1/2})^{-1} = ((x^{1/2})^{-1})^2$ shows that $(x^{-1})^{1/2}$ is equal to $(x^{1/2})^{-1}$.

\begin{proposition}[compare {{\cite[Lemma 3]{Glauberman1}}}]\label{Pr:AssociatedBruck}
Let $G$ be a group and $S$ a uniquely $2$-divisible twisted subgroup of $G$. Then $(S,\circ)$ with multiplication
\begin{displaymath}
    x\circ y = (xy^2x)^{1/2}
\end{displaymath}
is a left Bruck loop. Moreover, the powers in $(S,\cdot)$ and $(S,\circ)$ coincide.
\end{proposition}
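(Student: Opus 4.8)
The plan is to build everything on the single relation $(x\circ y)^2 = xy^2x$, obtained by squaring the definition, together with the observation that unique $2$-divisibility makes squaring an \emph{injective} map on $S$; this injectivity is the tool that lets one pass from an identity about squares in $G$ to an identity about elements of $S$. First I would check that $\circ$ is well defined: since $S$ is closed under powers, $y^2\in S$, and since $S$ is closed under $(a,b)\mapsto aba$, we get $xy^2x\in S$, so the square root $(xy^2x)^{1/2}$ exists uniquely in $S$. Squaring then gives $(x\circ y)^2 = xy^2x$, and $x\circ y$ is the unique element of $S$ whose square is $xy^2x$.

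Next I would verify the hypotheses of Lemma \ref{Lm:LBolGroupoid}. The group identity lies in $S$ and is a two-sided $\circ$-identity, since $x\circ 1 = (x^2)^{1/2} = x$ and $1\circ y = (y^2)^{1/2} = y$. The group inverse $x^{-1}\in S$ is a two-sided $\circ$-inverse, since $x\circ x^{-1} = (xx^{-2}x)^{1/2} = 1 = x^{-1}\circ x$; in particular the $\circ$-inverse coincides with the group inverse. Thus $(S,\circ,1)$ is a magma with identity and two-sided inverses, and it remains only to establish the left Bol identity.

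For the left Bol identity I would square both sides and compare. Applying the relation repeatedly, the left-hand side satisfies $(x\circ(y\circ(x\circ z)))^2 = x(y\circ(x\circ z))^2x = x\,y(x\circ z)^2y\,x = x\,y\,xz^2x\,y\,x = (xyx)z^2(xyx)$. For the right-hand side, note first that $x\circ(y\circ x) = xyx$: indeed $(x\circ(y\circ x))^2 = x(yx^2y)x = (xyx)^2$, and both $x\circ(y\circ x)$ and $xyx$ lie in $S$, where squaring is injective. Hence $((x\circ(y\circ x))\circ z)^2 = (xyx)z^2(xyx)$ as well. Both sides of the Bol identity lie in $S$ and have equal squares, so they are equal, and Lemma \ref{Lm:LBolGroupoid} shows $(S,\circ)$ is a left Bol loop.

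It then remains to check the automorphic inverse property and the claim about powers. For the former, $(x\circ y)^{-1}$ is the element of $S$ squaring to $(xy^2x)^{-1} = x^{-1}(y^{-1})^2x^{-1}$, which is exactly $x^{-1}\circ y^{-1}$; thus $(x\circ y)^{-1} = x^{-1}\circ y^{-1}$ and $(S,\circ)$ is a left Bruck loop. For the latter, an easy induction gives $x\circ x^n = (x\,x^{2n}\,x)^{1/2} = (x^{2(n+1)})^{1/2} = x^{n+1}$ (using $x^{n+1}\in S$), so the nominal $\circ$-powers agree with the group powers for $n\ge 0$, and the negative case follows because the two inverses coincide. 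The one subtlety to keep in view---and the main obstacle---is that every step which converts ``equal squares'' into ``equal elements'' is legitimate only because both elements lie in $S$, where unique $2$-divisibility makes squaring injective; squaring need not be injective on all of $G$.
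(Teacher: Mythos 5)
Your proof is correct and follows essentially the same route as the paper's: check the identity and two-sided inverses, establish $x\circ(y\circ x)=xyx$, deduce the left Bol identity, invoke Lemma \ref{Lm:LBolGroupoid}, and verify the automorphic inverse property and the coincidence of powers by induction. The only difference is presentational---you phrase each step as ``equal squares of elements of $S$ plus injectivity of squaring,'' while the paper manipulates the square roots directly, which is the same reasoning.
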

\begin{proof}
If $x$, $y\in S$ then $y^2\in S$, $xy^2x\in S$ and $(xy^2x)^{1/2}\in S$. Hence $(S,\circ)$ is a groupoid. Since $1\circ x = x = x\circ 1$ and $x^{-1}\circ x = (x^{-1}x^2x^{-1})^{1/2} = 1 = (xx^{-2}x)^{1/2} = x\circ x^{-1}$, we see that $(S,\circ)$ has identity element $1$ and two-sided inverses. Note that $x\circ (y\circ x) = (xyx^2yx)^{1/2} = ((xyx)^2)^{1/2} = xyx$. Thus $x\circ (y\circ (x\circ z)) = (xyxz^2xyx)^{1/2} = (xyx)\circ z = (x\circ (y\circ x))\circ z$. By Lemma \ref{Lm:LBolGroupoid}, $(S,\circ)$ is a left Bol loop in which inverses coincide with those of $(S,\cdot)$. It is a left Bruck loop thanks to $(x\circ y)^{-1} = ((xy^2x)^{1/2})^{-1} = ((xy^2x)^{-1})^{1/2} = (x^{-1}y^{-2}x^{-1})^{1/2} = x^{-1}\circ y^{-1}$. The inductive step $x\circ x^{n+1} = (xx^{2n+2}x)^{1/2} = x^{n+2}$ shows that powers in $(S,\cdot)$ and $(S,\circ)$ coincide.
\end{proof}

A twisted subgroup of a uniquely $2$-divisible group need not be uniquely $2$-divisible (consider $\mathbb Z$ in $(\mathbb Q,+)$). But note that if $G$ is a group of odd order then any twisted subgroup $S$ of $G$ is uniquely $2$-divisible.

The next result shows that in many varieties of loops the concepts ``uniquely $2$-divisible'' and ``of odd order'' coincide for finite loops.

\begin{lemma}\label{Lm:OddDiv}
Let $Q$ be a finite power-associative loop in which $|x|$ divides $|Q|$ for every $x\in Q$. Then the following conditions are equivalent:
\begin{enumerate}
\item[(i)] $Q$ is uniquely $2$-divisible,
\item[(ii)] $|Q|$ is odd,
\item[(iii)] $|x|$ is odd for every $x\in Q$.
\end{enumerate}
\end{lemma}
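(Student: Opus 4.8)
The plan is to use condition (iii) (all element orders odd) as a hub and prove the four implications (ii)$\Rightarrow$(iii), (iii)$\Rightarrow$(i), (i)$\Rightarrow$(iii) and (iii)$\Rightarrow$(ii); this yields (i)$\Leftrightarrow$(iii) and (ii)$\Leftrightarrow$(iii), hence the full equivalence. Three of these are one-line arguments from the hypotheses at hand, and the genuine content sits in (iii)$\Rightarrow$(ii). I will deliberately avoid invoking any Cauchy-type theorem there (the naive route ``even order forces an element of order $2$'' would need Cauchy, which is not available for general power-associative loops) and instead use a parity-of-fixed-points argument. It is worth noting in advance that the Lagrange-type hypothesis ``$|x|$ divides $|Q|$'' is used \emph{only} in (ii)$\Rightarrow$(iii); everything else needs just power-associativity and finiteness.

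For the easy directions I would argue as follows. For (ii)$\Rightarrow$(iii): if $|Q|$ is odd then each $|x|$, being a divisor of $|Q|$, is odd. For (iii)$\Rightarrow$(i): assuming every order is odd, I show the squaring map $\sigma\colon x\mapsto x^2$ is surjective, for then a surjective self-map of a finite set is bijective and $Q$ is uniquely $2$-divisible. Given $y\in Q$ with $n=|y|$ odd, power-associativity lets me compute inside the cyclic group $\langle y\rangle$, where $(y^{(n+1)/2})^2=y^{n+1}=y$, so $y$ lies in the image of $\sigma$. For (i)$\Rightarrow$(iii): if some $x$ had even order $2m$, then $x^m$ would satisfy $(x^m)^2=1=1^2$ with $x^m\neq1$, contradicting injectivity of $\sigma$; hence all orders are odd.

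The hard part will be (iii)$\Rightarrow$(ii), and here is the argument I intend. Power-associativity supplies a two-sided inverse for each element, so the inversion map $J\colon x\mapsto x^{-1}$ is a well-defined involution of the finite set $Q$, that is, a permutation with $J^2=\mathrm{id}$. Its cycles therefore have length $1$ or $2$, so $|Q|$ is congruent modulo $2$ to the number of fixed points of $J$, and the fixed points are exactly the elements with $x=x^{-1}$, i.e. with $x^2=1$. Under (iii) the only such element is $1$ (an element with $x^2=1$ has order dividing $2$, hence, being odd, order $1$), so $J$ has precisely one fixed point and $|Q|\equiv1\pmod 2$ is odd. Combining the four implications establishes the equivalence of (i), (ii) and (iii).
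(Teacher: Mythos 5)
Your proof is correct and follows essentially the same route as the paper's: the same four implications hubbed on (iii), the same involution/fixed-point parity argument for (iii)$\Rightarrow$(ii), and the same surjectivity-of-squaring argument via $(y^{(n+1)/2})^2=y$ for (iii)$\Rightarrow$(i). The only (harmless) difference is that you spell out explicitly, via $x^m$ for $|x|=2m$, the step the paper compresses into ``$x^2=1$ implies $x=1$'' for (i)$\Rightarrow$(iii).
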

\begin{proof}
Condition (ii) implies (iii) by the assumption that $|x|$ divides $|Q|$. Conversely, if (iii) holds then the inversion mapping $x\mapsto x^{-1}$ is an involution with a unique fixed point $x=1$, so $|Q|$ is odd.

If (i) holds then $x^2=1$ implies $x=1$, so (iii) holds. Conversely, if (iii) holds, then $|x|=2n+1$ implies $(x^{n+1})^2 = x^{2n+2}=x$, so the squaring map is onto $Q$. Thanks to finiteness of $Q$, it is also one-to-one, and (i) follows.
\end{proof}

\subsection{Bruck loops associated with Bol and automorphic loops}

If $G$ is a uniquely $2$-divisible group, Proposition \ref{Pr:AssociatedBruck} with $S=G$ yields a uniquely $2$-divisible left Bruck loop $(G,\circ)$, the \emph{(left) Bruck loop associated with $G$}.

Proposition \ref{Pr:AssociatedBruck} cannot be used directly to associate left Bruck loops with nonassociative loops $Q$. The trick is to work with a certain twisted subgroup $S$ of $\mlt{Q}$ instead and then project the operation $\circ$ from $S$ to $Q$. The classical example is that of uniquely $2$-divisible left Bol loops, which we recall in Proposition \ref{Pr:BolBruck}.

\begin{proposition}[\cite{FoKiPh}]\label{Pr:BolBruck}
Let $(Q,\cdot)$ be a left Bol loop. Then $L_Q = \setof{L_x}{x\in Q}$ is a twisted subgroup of $\lmlt{Q}$ satisfying
\begin{equation}\label{Eq:BolTwistedSubgroup}
    L_xL_yL_x = L_{x(yx)}.
\end{equation}

If $(Q,\cdot)$ is also uniquely $2$-divisible, then $L_Q$ is uniquely $2$-divisible and $(Q,\circ)$ with multiplication
\begin{equation}\label{Eq:BolBruck}
    x\circ y = (x(y^2x))^{1/2}
\end{equation}
is a left Bruck loop in which powers coincide with those of $(Q,\cdot)$. When $Q$ is finite then any subloop of $(Q,\cdot)$ is a subloop of $(Q,\circ)$.
\end{proposition}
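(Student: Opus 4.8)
The plan is to treat $L_Q$ as a twisted subgroup of the group $G=\lmlt{Q}$ and then to invoke Proposition \ref{Pr:AssociatedBruck} directly, transporting its conclusion from $L_Q$ back to $Q$ along the bijection $\lambda\colon Q\to L_Q$, $\lambda(x)=L_x$ (this is a bijection because $L_x(1)=x$). First I would verify that $L_Q$ is a twisted subgroup satisfying \eqref{Eq:BolTwistedSubgroup}. We have $L_1=\id\in L_Q$; since left Bol loops have the left inverse property, $L_x^{-1}=L_{x^{-1}}\in L_Q$, so $L_Q$ is closed under inverses; and reading the left Bol identity \eqref{Eq:LBol} as an identity of left translations gives $L_xL_yL_x(z)=x(y(xz))=(x(yx))z=L_{x(yx)}(z)$, that is, $L_xL_yL_x=L_{x(yx)}$. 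This is exactly closure under $(a,b)\mapsto aba$ together with the displayed relation.

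Now suppose $(Q,\cdot)$ is uniquely $2$-divisible. Taking $y=1$ in \eqref{Eq:BolTwistedSubgroup} yields the special case $L_x^2=L_{x^2}$, so $\lambda$ intertwines the squaring map on $Q$ with the squaring map on $L_Q$; since squaring is a bijection on $Q$ it is a bijection on $L_Q$, whence $L_Q$ is uniquely $2$-divisible and $(L_x)^{1/2}=L_{x^{1/2}}$. Proposition \ref{Pr:AssociatedBruck} then applies to $S=L_Q$, so $(L_Q,\circ)$ with $L_x\circ L_y=(L_xL_y^2L_x)^{1/2}$ is a left Bruck loop. Using $L_y^2=L_{y^2}$ and \eqref{Eq:BolTwistedSubgroup} I compute $L_x\circ L_y=(L_{x(y^2x)})^{1/2}=L_{(x(y^2x))^{1/2}}=L_{x\circ y}$, so $\lambda$ is an isomorphism $(Q,\circ)\to(L_Q,\circ)$, and hence $(Q,\circ)$ is a left Bruck loop. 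That the powers agree I would get directly: left Bol loops are power-associative, so $x\circ x^n=(x(x^{2n}x))^{1/2}=(x^{2n+2})^{1/2}=x^{n+1}$, and induction (with inverses in $(Q,\circ)$ and $(Q,\cdot)$ coinciding by Proposition \ref{Pr:AssociatedBruck}) shows the $\circ$-powers of $x$ coincide with its $\cdot$-powers.

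For the finite case I would first argue that every element of $Q$ has odd order: for $x\in Q$ the map $z\mapsto z^2$ sends the finite set $\langle x\rangle$ into itself and is injective (being a restriction of the injective squaring map on $Q$), hence bijective on $\langle x\rangle$; as $\langle x\rangle$ is a cyclic group this forces $|x|$ to be odd, so $z^{1/2}=z^{(|z|+1)/2}$ is a positive power of $z$ and therefore lies in every subloop of $(Q,\cdot)$ containing $z$. Consequently, if $H$ is a subloop of $(Q,\cdot)$ then $1\in H$, $H$ is closed under the common inverse operation, and for $x,y\in H$ the element $x(y^2x)$ lies in $H$, so $x\circ y=(x(y^2x))^{1/2}\in H$; thus $H$ is a subloop of $(Q,\circ)$.

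The structural heart of the statement is immediate from Proposition \ref{Pr:AssociatedBruck} once the twisted subgroup $L_Q$ is in place, and the isomorphism $\lambda$ carries the Bruck loop structure back to $Q$ with no difficulty. The part requiring the most care is the final bookkeeping for subloops, namely verifying that the square roots appearing in $x\circ y$ remain inside a given $\cdot$-subloop; this is precisely where the odd-order argument (equivalently, ``the square root of $z$ is a power of $z$'') is indispensable, and it is the one place where finiteness of $Q$ genuinely enters.
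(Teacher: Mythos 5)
Your proposal is correct and takes essentially the same route as the paper's proof: verify that $L_Q$ is a twisted subgroup of $\lmlt{Q}$ via the left inverse property and \eqref{Eq:LBol}, invoke Proposition \ref{Pr:AssociatedBruck} for $S=L_Q$, transport the Bruck loop structure along the bijection $L_x\mapsto x$, and in the finite case show subloops are closed under $\circ$ because square roots are positive integral powers (elements having odd order). The only cosmetic differences are that you derive unique $2$-divisibility of $L_Q$ from the $y=1$ case of \eqref{Eq:BolTwistedSubgroup} and check the coincidence of powers directly in $Q$, where the paper instead inducts $L_x^n=L_{x^n}$; this is equivalent bookkeeping.
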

\begin{proof}
We have $1=L_1\in L_Q$, $L_x^{-1} = L_{x^{-1}}\in L_Q$ thanks to the left inverse property, and \eqref{Eq:BolTwistedSubgroup} follows from \eqref{Eq:LBol}. Therefore $L_Q$ is a twisted subgroup of $\lmlt{Q}$. An easy induction with \eqref{Eq:BolTwistedSubgroup} shows that $L_x^n = L_{x^n}$ for every $n\ge 0$.

Suppose that $(Q,\cdot)$ is uniquely $2$-divisible. The mapping $Q\to L_Q$, $x\mapsto L_x$ is a bijection since $L_x(1)=x$. Since $(L_{x^{1/2}})^2 = L_{(x^{1/2})^2} = L_x$, it follows that $L_Q$ is uniquely $2$-divisible with $L_x^{1/2} = L_{x^{1/2}}$. By Proposition \ref{Pr:AssociatedBruck}, $(L_Q,\circ)$ with multiplication $L_x\circ L_y = (L_xL_y^2L_x)^{1/2} = L_{(x(y^2x))^{1/2}}$ is a left Bruck loop with powers coinciding with those of $\lmlt{Q}$.

We claim that $\varphi:(L_Q,\circ)\to (Q,\circ)$, $L_x\mapsto x$ is an isomorphism of loops. Indeed, $\varphi$ is clearly a bijection and $\varphi(L_x\circ L_y) = \varphi( L_{(x(y^2x))^{1/2}}) = (x(y^2x))^{1/2} = x\circ y = \varphi(L_x)\circ \varphi(L_y)$.

Finally, suppose that $Q$ is finite and $S\le (Q,\cdot)$. To show that $S$ is a subloop of $(Q,\circ)$, it suffices to prove that it is closed under inverses and under the multiplication $\circ$. The former is true because the inverses in $(Q,\cdot)$ and $(Q,\circ)$ coincide, and the latter is true because $(S,\cdot)$ is closed under $\cdot$ and square roots (being positive integral powers in the finite case).
\end{proof}

A twisted subgroup in $\mlt{Q}$ is harder to find for automorphic loops. For $x\in Q$ define
\begin{displaymath}
    P_x = R_xL_{x^{-1}}^{-1}.
\end{displaymath}
Note that in automorphic loops we have $P_x = L_{x^{-1}}^{-1}R_x$ by Proposition \ref{Pr:PowerAssociative}.

\begin{proposition}[{{\cite[Proposition 4.2]{KiKuPhVo}}}]\label{Pr:AutomorphicBruck}
Let $(Q,\cdot)$ be an automorphic loop. Then $P_Q = \setof{P_x}{x\in Q}$ is a twisted subgroup of $\mlt{Q}$ satisfying
\begin{equation}\label{Eq:AutomorphicTwistedSubgroup}
    P_xP_yP_x = P_{P_x(y)} = P_{(x^{-1}\ldiv y)x}.
\end{equation}

If $(Q,\cdot)$ is also uniquely $2$-divisible, then $P_Q$ is uniquely $2$-divisible and $(Q,\circ)$ with multiplication
\begin{equation}\label{Eq:AutomorphicBruck}
    x\circ y = ((x^{-1}\ldiv y^2)x)^{1/2} = (x^{-1}\ldiv y^2x)^{1/2}
\end{equation}
is a left Bruck loop in which powers coincide with those of $(Q,\cdot)$. When $Q$ is finite then any subloop of $(Q,\cdot)$ is a subloop of $(Q,\circ)$.
\end{proposition}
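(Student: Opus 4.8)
The plan is to deduce everything from the single identity \eqref{Eq:AutomorphicTwistedSubgroup}; once it is in hand, both the twisted subgroup claim and the Bruck loop claim follow formally from Proposition \ref{Pr:AssociatedBruck} together with a transport of structure along the bijection $x\mapsto P_x$. I would first record the elementary facts. By Proposition \ref{Pr:PowerAssociative} the permutations $R_x$ and $L_{x^{-1}}$ commute, so $P_x=R_xL_{x^{-1}}^{-1}=L_{x^{-1}}^{-1}R_x$ and consequently $P_x(y)=(x^{-1}\ldiv y)x=x^{-1}\ldiv yx$; in particular the two expressions in \eqref{Eq:AutomorphicBruck} coincide. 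Evaluating at special points gives $P_1=\id$, $P_x(1)=(x^{-1}\ldiv 1)x=x^2$, $P_x(x^{-1})=(x^{-1}\ldiv x^{-1})x=x$, and, by power-associativity, $P_x(x^n)=(x^{-1}\ldiv x^n)x=x^{n+2}$.

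The crux is \eqref{Eq:AutomorphicTwistedSubgroup}, that is, $P_xP_yP_x=P_{P_x(y)}$. A useful tool is the equivariance $\varphi P_y\varphi^{-1}=P_{\varphi(y)}$ for every $\varphi\in\aut{Q}$, which is immediate from Lemma \ref{Lm:Action} and $\varphi(y^{-1})=\varphi(y)^{-1}$. I would establish the identity by expanding $P_xP_yP_x$ as a product of translations, substituting $P_x=L_{x^{-1}}^{-1}R_x$, and collapsing the product using the automorphic axioms \eqref{Eq:Al} and \eqref{Eq:Am} (equivalently, that the inner mappings $L_{a,b}$ and $T_a$ are automorphisms), the power-commutativity of Proposition \ref{Pr:PowerAssociative}, and the antiautomorphic inverse property in the form of Lemma \ref{Lm:LRT}. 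This computation is the one genuinely laborious step and I expect it to be the main obstacle: because $P_x$ is not itself an automorphism, the equivariance above cannot be applied directly and the three translation factors must be reorganized by hand.

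Granting \eqref{Eq:AutomorphicTwistedSubgroup}, the twisted subgroup claim is immediate: $P_1=\id$; the relation with $y=x^{-1}$ gives $P_xP_{x^{-1}}P_x=P_{P_x(x^{-1})}=P_x$, whence $P_x^{-1}=P_{x^{-1}}$ and $P_Q$ is closed under inverses; and the relation itself shows $P_xP_yP_x=P_{P_x(y)}\in P_Q$. An induction on $n$ using $P_xP_{x^n}P_x=P_{P_x(x^n)}=P_{x^{n+2}}$, with base cases $P_x^0=\id=P_{x^0}$ and $P_x^1=P_{x^1}$, together with $P_x^{-1}=P_{x^{-1}}$, yields $P_x^n=P_{x^n}$ for every $n\in\mathbb Z$.

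Now assume $(Q,\cdot)$ is uniquely $2$-divisible. The map $x\mapsto P_x$ is a bijection $Q\to P_Q$, surjective by definition and injective because $P_x(1)=x^2$ and squaring is injective. The relation $P_x^n=P_{x^n}$ shows that $P_Q$ is uniquely $2$-divisible with $P_x^{1/2}=P_{x^{1/2}}$ and that powers in $(P_Q,\cdot)$ agree with those of $x$ in $(Q,\cdot)$. Proposition \ref{Pr:AssociatedBruck} applied to $S=P_Q$ then makes $(P_Q,\circ)$, with $P_x\circ P_y=(P_xP_y^2P_x)^{1/2}$, a left Bruck loop in which powers coincide; using $P_y^2=P_{y^2}$ and \eqref{Eq:AutomorphicTwistedSubgroup} we get $P_xP_y^2P_x=P_{(x^{-1}\ldiv y^2)x}$, so $P_x\circ P_y=P_{((x^{-1}\ldiv y^2)x)^{1/2}}$. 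Transporting along the isomorphism $\varphi\colon(P_Q,\circ)\to(Q,\circ)$, $P_x\mapsto x$, turns this into \eqref{Eq:AutomorphicBruck} and makes $(Q,\circ)$ a left Bruck loop whose powers coincide with those of $(Q,\cdot)$. Finally, for finite $Q$ and $S\le(Q,\cdot)$, the argument of Proposition \ref{Pr:BolBruck} applies verbatim: inverses agree in $(Q,\cdot)$ and $(Q,\circ)$, square roots in $S$ are positive powers, and $x\circ y=((x^{-1}\ldiv y^2)x)^{1/2}$ stays in $S$ because $S$ is closed under $\cdot$, $\ldiv$ and square roots.
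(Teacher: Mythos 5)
Your proposal follows essentially the same route as the paper: establish that $P_Q$ is a twisted subgroup, obtain $P_x^n=P_{x^n}$ by induction from \eqref{Eq:AutomorphicTwistedSubgroup} using $P_x(x^i)=x^{i+2}$, lift unique $2$-divisibility to $P_Q$ via the bijection $x\mapsto P_x$ (injective since $P_x(1)=x^2$), apply Proposition \ref{Pr:AssociatedBruck} with $S=P_Q$ to get $P_x\circ P_y=P_{((x^{-1}\ldiv y^2)x)^{1/2}}$, transport along the isomorphism $P_x\mapsto x$, and finish the finite-subloop claim exactly as in Proposition \ref{Pr:BolBruck}. All of these steps match the paper's proof.

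The one divergence is the treatment of the crux identity \eqref{Eq:AutomorphicTwistedSubgroup}, and here you should be careful: the paper does not prove it either, but it explicitly cites \cite[Proposition 3.4]{KiKuPhVo}, whereas you only promise an expand-and-collapse computation and flag it as the main obstacle. That caution is warranted, and indeed your sketch understates the difficulty: the proof in \cite{KiKuPhVo} is a substantial multi-stage computation requiring auxiliary identities, precisely because (as you correctly observe) $P_x$ is not an automorphism, so the equivariance $\varphi P_y\varphi^{-1}=P_{\varphi(y)}$ cannot be invoked; a bare ``substitute $P_x=L_{x^{-1}}^{-1}R_x$ and collapse'' does not constitute a proof, and in a final write-up you must either carry it out in full or cite it as the paper does. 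A minor but pleasant variation on your side: you deduce closure under inverses from the identity itself, taking $y=x^{-1}$ in $P_xP_{x^{-1}}P_x=P_{P_x(x^{-1})}=P_x$ and cancelling $P_x$ in the group $\mlt{Q}$, whereas the paper computes directly $P_xP_{x^{-1}}=T_{x^{-1}}T_x=\id$ via Proposition \ref{Pr:PowerAssociative} and Lemma \ref{Lm:LRT}; the paper's route has the small advantage of not depending on the unproven identity, which is worth preferring given that \eqref{Eq:AutomorphicTwistedSubgroup} is the step you have deferred.
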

\begin{proof}
We have $1 = P_1\in P_Q$. Proposition \ref{Pr:PowerAssociative} and Lemma \ref{Lm:LRT} yield
\begin{displaymath}
    P_xP_{x^{-1}} = R_xL_{x^{-1}}^{-1}R_{x^{-1}}L_x^{-1} = L_{x^{-1}}^{-1}R_{x^{-1}}L_x^{-1}R_x = T_{x^{-1}}T_x=1,
\end{displaymath}
so $P_x^{-1} = P_{x^{-1}}\in P_Q$. The identity \eqref{Eq:AutomorphicTwistedSubgroup} is nontrivial; see \cite[Proposition 3.4]{KiKuPhVo} for a proof. Therefore $P_Q$ is a twisted subgroup of $\mlt{Q}$. An easy induction with \eqref{Eq:AutomorphicTwistedSubgroup} yields $P_x^n = P_{x^n}$ for every $n\ge 0$, using $P_x(x^i) = (x^{-1}\ldiv x^i)x = x^{i+2}$.

Suppose that $(Q,\cdot)$ is uniquely $2$-divisible. The mapping $Q\to P_Q$, $x\mapsto P_x$ is a bijection since $P_x(1)=x^2$. Since $P_{x^{1/2}}^2 = P_{(x^{1/2})^2} = P_x$, it follows that $P_Q$ is uniquely $2$-divisible with $P_x^{1/2} = P_{x^{1/2}}$. By Proposition \ref{Pr:AssociatedBruck}, $(P_Q,\circ)$ with multiplication $P_x\circ P_y = (P_xP_y^2P_x)^{1/2} = P_{((x^{-1}\ldiv y^2)x)^{1/2}}$ is a left Bruck loop with powers coinciding with those of $\mlt{Q}$. Note that $(x^{-1}\ldiv y)x = x^{-1}\ldiv yx$ by Proposition \ref{Pr:PowerAssociative}.

We conclude as in the proof of Proposition \ref{Pr:BolBruck}, using the bijection $P_x\mapsto x$.
\end{proof}

When $(Q,\cdot)$ is a uniquely $2$-divisible automorphic loop, we call $(Q,\circ)$ from Proposition \ref{Pr:AutomorphicBruck} the \emph{left Bruck loop associated with} $(Q,\cdot)$.

It is worth noting that in left Bol loops we have $x^{-1}\ldiv y^2 = xy^2$ thanks to the left inverse property. So, in left Bol loops, the operation \eqref{Eq:BolBruck} of Proposition \ref{Pr:BolBruck} coincides with the operation \eqref{Eq:AutomorphicBruck} of Proposition \ref{Pr:AutomorphicBruck}. But neither result is a special case of the other.

\medskip

We can now easily deduce Cauchy's and Lagrange's theorems for automorphic loops of odd order from Theorem \ref{Th:Glauberman}.

\begin{theorem}\label{Th:ACauchyLagrange}
Let $Q$ be an automorphic loop of odd order. If $S$ is a subloop of $Q$ then $|S|$ divides $|Q|$. If $p$ is a prime dividing $|Q|$ then $Q$ contains an element of order $p$.
\end{theorem}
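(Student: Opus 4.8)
The plan is to transfer Cauchy's and Lagrange's theorems from the associated left Bruck loop $(Q,\circ)$ to the automorphic loop $(Q,\cdot)$, exploiting the fact that Proposition \ref{Pr:AutomorphicBruck} gives us a left Bruck loop on the \emph{same underlying set} with the \emph{same powers}, and that Theorem \ref{Th:Glauberman} already establishes both theorems for left Bruck loops of odd order. First I would observe that since $Q$ is an automorphic loop, it is power-associative by Proposition \ref{Pr:PowerAssociative}, and being of odd order it is uniquely $2$-divisible: the inversion map $x\mapsto x^{-1}$ is an involution fixing only $1$, so for each $x$ with $|x|=2n+1$ the element $x^{n+1}$ is a square root, and the squaring map is a bijection by finiteness (this is exactly the content of Lemma \ref{Lm:OddDiv}, once we know $|x|$ divides $|Q|$---but circularly this is what we are proving, so I would instead argue unique $2$-divisibility directly from oddness via the involution argument). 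Hence Proposition \ref{Pr:AutomorphicBruck} applies and yields the associated left Bruck loop $(Q,\circ)$, which has odd order $|Q|$.

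Next I would handle the Lagrange statement. Given a subloop $S\le (Q,\cdot)$, Proposition \ref{Pr:AutomorphicBruck} tells us that $S$ is also a subloop of $(Q,\circ)$ (this is the final sentence of that proposition, valid since $Q$ is finite). Then $(S,\circ)\le (Q,\circ)$ is a subloop of a left Bruck loop of odd order, so $|S|$ divides $|Q|$ by Theorem \ref{Th:Glauberman}. Since the underlying sets of $S$ in the two operations coincide, $|S|$ is literally the same integer, and Lagrange for $(Q,\cdot)$ follows immediately.

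For Cauchy, let $p$ be a prime dividing $|Q|$. Applying Theorem \ref{Th:Glauberman} to the odd-order left Bruck loop $(Q,\circ)$ produces an element $x\in Q$ with order $p$ \emph{in} $(Q,\circ)$. The crucial point is that powers in $(Q,\cdot)$ and $(Q,\circ)$ coincide, again by Proposition \ref{Pr:AutomorphicBruck}; consequently the order of $x$ is the same in both operations, so $x$ has order $p$ in $(Q,\cdot)$ as well. This completes the proof.

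The step I expect to carry the real weight is verifying unique $2$-divisibility of $Q$ cleanly without circular appeal to Lagrange. The honest route is the direct involution argument: oddness of $|Q|$ forces the squaring map on the power-associative loop $Q$ to be a bijection, because $x\mapsto x^{-1}$ has $1$ as its unique fixed point so every element has odd order, whence $x^{(|x|+1)/2}$ is the unique square root of $x$. Everything else is a transparent transport of structure along the identity set map, and the main conceptual content has already been packaged into Proposition \ref{Pr:AutomorphicBruck} (coincidence of powers and preservation of subloops) and Theorem \ref{Th:Glauberman} (Cauchy and Lagrange for Bruck loops).
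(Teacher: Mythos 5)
Your main argument coincides exactly with the paper's proof: pass to the associated left Bruck loop $(Q,\circ)$ of Proposition \ref{Pr:AutomorphicBruck}, transfer subloops along the identity map, apply Theorem \ref{Th:Glauberman}, and pull the Cauchy statement back using the coincidence of powers. That part is fine. The genuine problem is in the step you yourself flagged as carrying the real weight: your direct argument for unique $2$-divisibility is invalid. An involution on a finite set of odd cardinality has an \emph{odd number} of fixed points, not necessarily exactly one. The fixed points of the inversion map $J$ are precisely the elements with $x^2=1$, so the assertion that $1$ is the unique fixed point of $J$ is literally the assertion that $Q$ has no element of order $2$ --- which is what you were trying to prove; your sentence ``oddness of $|Q|$ forces \dots because $x\mapsto x^{-1}$ has $1$ as its unique fixed point'' assumes its own conclusion. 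The parity argument runs usefully only in the opposite direction, and that is exactly how the proof of Lemma \ref{Lm:OddDiv} uses it: (iii) (all element orders odd) implies (ii) ($|Q|$ odd). The implication (ii) $\Rightarrow$ (iii) in that lemma genuinely needs the hypothesis that $|x|$ divides $|Q|$, and, as you correctly observed, invoking that here would be circular. Nor is the implication a formal triviality: parity only forces the number of elements of order $2$ to be even, and in general power-associative loops, where Lagrange fails, nothing formally prevents a subloop $\{1,x\}\cong\mathbb Z_2$ from sitting inside a loop of odd order.

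The correct repair is an automorphic-specific input: it is a nontrivial result of \cite{KiKuPhVo} that a finite automorphic loop contains an element of order $2$ if and only if its order is even --- equivalently, finite automorphic loops of odd order are uniquely $2$-divisible --- and this is what licenses forming $(Q,\circ)$. With that citation in place of your involution argument, your proof is complete and is the paper's proof. It is worth noting that the paper's own write-up of Theorem \ref{Th:ACauchyLagrange} also passes over this point silently (it simply says ``let $(Q,\circ)$ be the left Bruck loop associated with $Q$''), so your instinct that something needed justifying here was sound; only the patch you supplied does not work.
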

\begin{proof}
Let $(Q,\circ)$ be the left Bruck loop associated with $Q$. If $S\le Q$ then $(S,\circ)\le (Q,\circ)$ by Proposition \ref{Pr:AutomorphicBruck}. By Theorem \ref{Th:Glauberman}, $|S|$ divides $|Q|$. Let $p$ be a prime dividing $|Q|$. Then there is $x\in (Q,\circ)$ of order $p$ by Theorem \ref{Th:Glauberman}. Because powers in $Q$ and $(Q,\circ)$ coincide, $x$ has also order $p$ in $Q$.
\end{proof}

\begin{corollary}\label{Cr:pAut}
Every automorphic loop of prime order is associative.
\end{corollary}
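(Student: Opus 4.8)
The plan is to show that a prime-order automorphic loop is generated by a single element and then invoke power-associativity to force associativity.

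First I would dispose of the case $p=2$ separately. Up to isomorphism there is exactly one loop of order $2$, namely the cyclic group $\mathbb Z_2$: the Latin-square (quasigroup) condition together with the presence of an identity element completely determines the multiplication table, and this table is associative. So the claim is immediate when $p=2$, which matters because Theorem \ref{Th:ACauchyLagrange} is stated only for odd order.

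Now suppose $p$ is odd, so that $Q$ is an automorphic loop of odd order and Theorem \ref{Th:ACauchyLagrange} applies. I would pick any $x\in Q$ with $x\ne 1$ and consider the cyclic subloop $\langle x\rangle$. By the Lagrange property of Theorem \ref{Th:ACauchyLagrange}, $|\langle x\rangle|$ divides $|Q|=p$; since $x\ne 1$ forces $|\langle x\rangle|>1$, the only possibility is $|\langle x\rangle|=p$, whence $\langle x\rangle=Q$. By Proposition \ref{Pr:PowerAssociative} every automorphic loop is power-associative, so $\langle x\rangle$ is associative; as $Q=\langle x\rangle$, we conclude that $Q$ itself is associative, i.e.\ a cyclic group of order $p$.

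I do not expect any genuine obstacle: the argument rests entirely on the Lagrange property for automorphic loops of odd order (Theorem \ref{Th:ACauchyLagrange}) together with power-associativity (Proposition \ref{Pr:PowerAssociative}), and the substance has already been established in proving Theorem \ref{Th:ACauchyLagrange} via Glauberman's results on the associated left Bruck loop. The only point requiring separate care is the even prime $p=2$, where the conclusion is trivial but Theorem \ref{Th:ACauchyLagrange} cannot be cited.
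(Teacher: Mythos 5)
Your proof is correct and is essentially the argument the paper intends: the corollary appears with no separate proof precisely because it follows immediately from the Lagrange part of Theorem \ref{Th:ACauchyLagrange} applied to $\langle x\rangle$ for any $x\ne 1$, together with power-associativity (Proposition \ref{Pr:PowerAssociative}). Your explicit disposal of $p=2$ (every loop of order $2$ is $\mathbb Z_2$) is a careful touch the paper leaves implicit, since Theorem \ref{Th:ACauchyLagrange} indeed covers only odd order.
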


Note that we cannot easily use Proposition \ref{Pr:AutomorphicBruck} to obtain the Odd Order Theorem for automorphic loops from the Odd Order Theorem for Bruck loops, for instance. The difficulty lies in the fact that it is not clear how subloops of $(Q,\circ)$ are related to subloops of $(Q,\cdot)$.

\subsection{Correspondence with Bruck loops}

By Proposition \ref{Pr:AutomorphicBruck}, if $(Q,\cdot)$ is a uniquely $2$-divisible automorphic loop then $P_Q$ is a twisted subgroup of $\mlt{Q}$ satisfying \eqref{Eq:AutomorphicTwistedSubgroup}, which induces a left Bruck loop operation $(Q,\circ)$ by $x\circ y = (x^{-1}\ldiv y^2x)^{1/2}$. However, there exist distinct uniquely $2$-divisible automorphic loops with the same associated left Bruck loops, so it is not possible to find an inverse to the mapping $(Q,\cdot)\mapsto (Q,\circ)$.

In an attempt to find a correspondence between uniquely $2$-divisible left Bruck loops and some class of loops, Greer \cite{Greer} defined a technical variety of loops as follows.

A loop $Q$ is a \emph{$\Gamma$-loop} if it is commutative, has the automorphic inverse property, satisfies $L_xL_{x^{-1}} = L_{x^{-1}}L_x$ and $P_xP_yP_x = P_{P_x(y)}$. Note that the last condition is just \eqref{Eq:AutomorphicTwistedSubgroup}. By \cite[Theorem 3.5]{Greer}, $\Gamma$-loops are power-associative.

Figure \ref{Fg:BGA} gives a Venn diagram of intersections of the varieties of left Bol loops, automorphic loops and $\Gamma$-loops. Here is a full justification for the diagram. If $Q$ is an automorphic $\Gamma$-loop then it is a commutative automorphic loop; conversely, a commutative automorphic loop is certainly automorphic and it satisfies the automorphic inverse property by Proposition \ref{Pr:AAIP}, the relation $L_xL_{x^{-1}} = L_{x^{-1}}L_x$ by Proposition \ref{Pr:PowerAssociative}, and \eqref{Eq:AutomorphicTwistedSubgroup} by \cite[Proposition 3.4]{KiKuPhVo}. If $Q$ is left Bol and automorphic then the antiautomorphic inverse property implies that $Q$ is Moufang (and automorphic); the converse is trivial. If $Q$ is left Bol and a $\Gamma$-loop then it is a commutative Moufang loop. If $Q$ is Moufang and a $\Gamma$-loop then it is a commutative Moufang loop. Finally, a commutative Moufang loop is automorphic by Proposition \ref{Pr:CML}.

\begin{figure}
\begin{tikzpicture}[scale=2]
   \begin{scope}[fill opacity=0.3]
      \fill[red] (0.866,0.5) circle (1.4); 
      \fill[green] (-0.866,0.5) circle (1.4); 
      \fill[blue] (0,-1) circle (1.4); 
   \end{scope}
   \draw (0.866,0.5) circle (1.4);
   \node at (0.866*1.6,0.5*1.6) {$\Gamma$};
   \draw (-0.866,0.5) circle (1.4);
   \node at (-0.866*1.6,0.5*1.6) {left Bol};
   \draw (0,-1) circle (1.4);
   \node at (0,-1*1.6) {automorphic};
   \node at (0,0) {CML};
   \node at (-0.866*0.9,-0.5*0.9) {Mfg A};
   \node at (0.866*0.85,-0.5*0.85) {comm A};
   \node at (0,1*0.82) {$\emptyset$};
\end{tikzpicture}
\caption{Intersections among left Bol loops, automorphic loops and $\Gamma$-loops.}\label{Fg:BGA}
\end{figure}
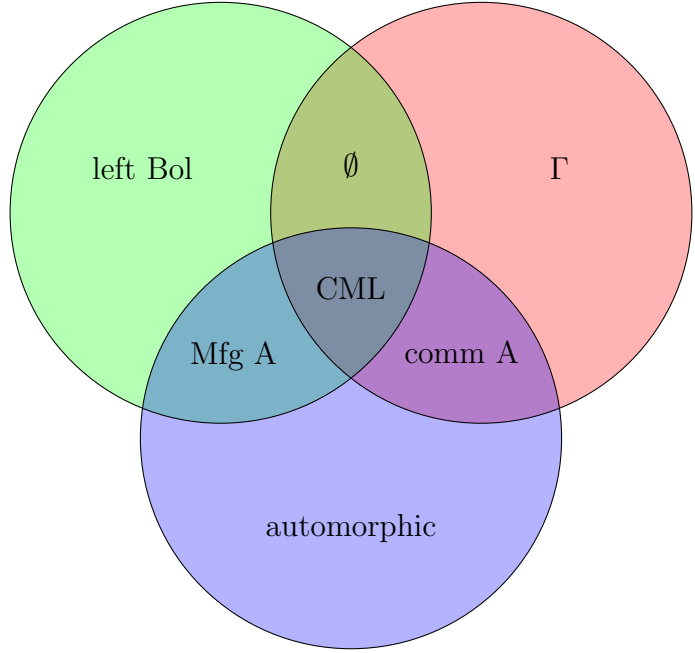

When $(Q,\cdot)$ is a uniquely $2$-divisible $\Gamma$-loop, we can use the same construction as in the case of uniquely $2$-divisible automorphic loops to obtain the \emph{associated left Bruck loop} $(Q,\circ)$, namely $x\circ y = (x^{-1}\ldiv y^2x)^{1/2}$. In the end, the variety of $\Gamma$-loops was chosen so that the proof of this result can mimic the proof in the automorphic case. (For instance, the difficult identity \eqref{Eq:AutomorphicTwistedSubgroup} is part of the definition of $\Gamma$-loops.) See \cite[Theorem 4.9]{Greer} for details.

Following Greer, we will now show how to construct a left Bruck loop $Q$ from a $\Gamma$-loop of odd order. (See the discussion after Lemma \ref{Lm:Greer2} for an obstacle in the more general uniquely $2$-divisible case.) We will actually use the twisted subgroup $L_Q$ again, but with a different operation.

On a uniquely $2$-divisible group $(G,\cdot)$, let
\begin{equation}\label{Eq:Greer}
    x*y = xy[y,x]^{1/2},
\end{equation}
where $[x,y] = x^{-1}y^{-1}xy$ is the usual commutator.

Straightforward, albeit nontrivial calculation with the commutator in groups yields:

\begin{lemma}[{{\cite[Theorem 2.5]{Greer}}}]\label{Lm:Greer1}
Let $(G,\cdot)$ be a uniquely $2$-divisible group. Then $(G,*)$ defined by \eqref{Eq:Greer} is a $\Gamma$-loop. Powers in $(G,\cdot)$ and $(G,*)$ coincide.
\end{lemma}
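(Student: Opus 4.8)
The claim is that on a uniquely $2$-divisible group $(G,\cdot)$, the operation $x*y = xy[y,x]^{1/2}$ defines a $\Gamma$-loop, and that powers in $(G,\cdot)$ and $(G,*)$ coincide. Recall that $\Gamma$-loop means: commutative, automorphic inverse property $(x*y)^{-1} = x^{-1}*y^{-1}$, the relation $L_xL_{x^{-1}} = L_{x^{-1}}L_x$ (for the $*$-translations), and the twisted-subgroup identity $P_xP_yP_x = P_{P_x(y)}$ where $P_x = R_xL_{x^{-1}}^{-1}$ is computed in $(G,*)$. The first thing I would verify is that $(G,*)$ is actually a loop: the identity element is $1$ (since $[1,x]=[x,1]=1$), and one must check that for fixed $a$ the equations $a*y=b$ and $x*a=b$ have unique solutions. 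I expect this to follow from the fact that the squaring and commutator maps are governed by group-theoretic identities, but it is worth isolating, perhaps by exhibiting explicit inverses of the translation maps or by appealing to uniqueness of $2$-divisibility.

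\textbf{Commutativity and the inverse properties.} Commutativity is the cleanest step: since $[x,y]^{-1} = [y,x]$ and $[y,x]^{1/2}$ commutes appropriately, I would compute $x*y = xy[y,x]^{1/2}$ and $y*x = yx[x,y]^{1/2}$ and show they are equal by rewriting $yx = xy[y,x]^{-1}\cdot(\text{conjugate terms})$; the symmetric appearance of the commutator is exactly what forces $x*y=y*x$. Here I would lean on the standard commutator identity $yx = xy[y,x]^{?}$ and on the fact that in a uniquely $2$-divisible group square roots are unique and behave well under conjugation, i.e. $(g^{-1}hg)^{1/2} = g^{-1}h^{1/2}g$. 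For the automorphic inverse property, I would check directly that $(x*y)^{-1}$, computed as $(xy[y,x]^{1/2})^{-1} = [y,x]^{-1/2}y^{-1}x^{-1}$, equals $x^{-1}*y^{-1} = x^{-1}y^{-1}[y^{-1},x^{-1}]^{1/2}$; once commutativity of $*$ is in hand, the inverse map $J$ being an antiautomorphism of $(G,*)$ that is also commutative collapses to an automorphism, so this reduces to a commutator bookkeeping identity relating $[y^{-1},x^{-1}]$ to a conjugate of $[y,x]$.

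\textbf{The two hard relations and the main obstacle.} The relation $L_xL_{x^{-1}} = L_{x^{-1}}L_x$ and especially the twisted-subgroup identity $P_xP_yP_x = P_{P_x(y)}$ are where the real work lies, and I expect the latter to be the main obstacle. My plan is to expand $P_x(y) = (x*y)*(x^{-1}*\,\cdot\,)^{-1}$-style expressions back into the group language $xy[y,x]^{1/2}$, reduce every $*$-product to a single group word in $x,y$ and their half-commutators, and then verify the required equality as an identity of group words. This is precisely the ``straightforward, albeit nontrivial calculation with the commutator'' that the statement advertises: it will involve repeatedly applying the Hall--Witt identity and the compatibility of square roots with conjugation, and careful tracking of which commutators lie in which terms of the lower central series so that the half-powers can be combined. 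I would organize this by first establishing a small lemma bank of rewriting rules—how $[y,x]^{1/2}$ conjugates, how $(ab)^{1/2}$ relates to $a^{1/2}b^{1/2}$ up to commutators—and then grinding the identity mechanically. Finally, coincidence of powers, $x^{*n} = x^n$, I would prove by induction on $n$, observing $x*x = x^2[x,x]^{1/2} = x^2$ and then $x*x^n = x\cdot x^n[x^n,x]^{1/2} = x^{n+1}$ because $x$ commutes with its own powers so the commutator term vanishes.
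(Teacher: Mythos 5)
Your outline disposes of the easy clauses correctly, but note first that the paper itself gives no proof of this lemma --- it is quoted from Greer \cite{Greer} with the remark that the verification is a ``straightforward, albeit nontrivial calculation'' --- so the comparison must be against what a complete proof actually requires. Commutativity is even more immediate than you suggest: since $yx = xy[y,x]$ exactly, one gets $y*x = yx[x,y]^{1/2} = xy[y,x][y,x]^{-1/2} = x*y$ with no ``conjugate terms.'' The identity element, coincidence of inverses (from $[x^{-1},x]=1$) and of powers (from $[x^n,x]=1$), and the automorphic inverse property via $(xy)[x,y](xy)^{-1} = [x^{-1},y^{-1}]$ all go through as you indicate. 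The loop property you flag but do not settle follows from the explicit division formula: $x*a=y$ is equivalent to $a=(x^{-1}yx^{-1}y^{-1})^{1/2}y$, a computation that in fact appears in this paper inside the proof of Lemma \ref{Lm:Greer2}, for a different purpose.

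The genuine gap is in your plan for the two remaining axioms, $L_xL_{x^{-1}}=L_{x^{-1}}L_x$ and $P_xP_yP_x=P_{P_x(y)}$, which is where all the content of the lemma lies. The toolkit you propose --- Hall--Witt together with rules for how $(ab)^{1/2}$ relates to $a^{1/2}b^{1/2}$ ``up to commutators,'' tracking the lower central series --- is not available here: $G$ is an arbitrary uniquely $2$-divisible group, not nilpotent, and there is no controllable expansion of $(ab)^{1/2}$. The only valid rewriting rules are consequences of uniqueness of square roots: $(a^{-1})^{1/2}=(a^{1/2})^{-1}$, $g a^{1/2}g^{-1}=(gag^{-1})^{1/2}$, and the fact that whatever commutes with $a$ commutes with $a^{1/2}$. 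A blind expansion of nested $*$-products produces nested square roots that these rules alone cannot untangle, so the ``mechanical grind'' you describe does not terminate as stated. The step that makes the proof work --- and through which Greer's argument runs --- is the structural identity $P_x(y)=xyx$, computed in $(G,\cdot)$: the division formula gives $L_{x^{-1}}^{-1}(y)=(xyxy^{-1})^{1/2}y$, and then $P_x(y)=L_{x^{-1}}^{-1}(y)*x$ collapses to $xyx$ using only conjugation-compatibility of square roots (the needed commutator identity being $x^{-1}[a,x^{-1}]x=[x,a]$). Once this is in hand, $P_xP_yP_x(z)=(xyx)z(xyx)=P_{P_x(y)}(z)$ is trivial, and since commutativity gives $L_x=P_xL_{x^{-1}}$, the relation $L_xL_{x^{-1}}=L_{x^{-1}}L_x$ reduces to $P_xL_{x^{-1}}=L_{x^{-1}}P_x$, which is the two-line check $x^{-1}[y,x^{-1}]x=[xyx,x^{-1}]$. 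Without this (or an equivalent) simplification, your proposal is missing the key idea rather than merely the labor.
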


Let us now consider a twisted subgroup seemingly unrelated to $L_Q$; see \cite{Aschbacher, FoKiPh, Glauberman1}. For a group $G$ and $\tau\in\aut{G}$ let
\begin{displaymath}
    K(\tau) = \setof{x\in G}{\tau(x)=x^{-1}}.
\end{displaymath}
We claim that $K(\tau)$ is a twisted subgroup of $G$. Indeed, $1\in K(\tau)$ is clear, if $x\in K(\tau)$ then $\tau(x^{-1}) = \tau(x)^{-1} = (x^{-1})^{-1}$, so $x^{-1}\in K(\tau)$, and if $x$, $y\in K(\tau)$ then $\tau(xyx) = \tau(x)\tau(y)\tau(x) = x^{-1}y^{-1}x^{-1} = (xyx)^{-1}$, so $xyx\in K(\tau)$.

\begin{lemma}[compare {{\cite[Theorem 4.3]{FoKiPh}}}]\label{Lm:FoKiPh}
Let $G$ be a group and $\tau\in\aut{G}$. Let $S$ be a twisted subgroup of $G$ such that $S\subseteq K(\tau)$ and $\langle S\rangle = G$. Then $\setof{x^2}{x\in K(\tau)}\subseteq S$. In particular, if $G$ is a uniquely $2$-divisible group then $S=K(\tau)$.
\end{lemma}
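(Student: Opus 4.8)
The plan is to exploit the defining relation $\tau(x)=x^{-1}$ of $K(\tau)$ in order to rewrite one of the two factors of $x^2 = x\cdot x$ as a \emph{reversed} word in $S$. This turns $x^2$ into a palindromic product of elements of $S$, which then lies in $S$ purely by the twisted-subgroup axioms. So the whole argument reduces to one observation (reverse a word using $\tau$) plus two short inductions.

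Concretely, I would fix $x\in K(\tau)$ and use $\langle S\rangle = G$ together with the fact that a twisted subgroup is closed under inverses to write $x = s_1\cdots s_n$ with each $s_i\in S$. Since $s_i\in S\subseteq K(\tau)$ we have $\tau(s_i)=s_i^{-1}$, whence $\tau(x)=s_1^{-1}\cdots s_n^{-1}$ and therefore $\tau(x)^{-1}=s_n\cdots s_1$. The hypothesis $x\in K(\tau)$ gives $\tau(x)=x^{-1}$, so $x = \tau(x)^{-1} = s_n\cdots s_1$ as well. Substituting the original expression for the left copy of $x$ and this reversed expression for the right copy yields
\[
    x^2 = (s_1\cdots s_n)(s_n\cdots s_1).
\]

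It then remains to check that every such palindromic product belongs to $S$, which I would do by induction on $n$. The base case is $s_1^2\in S$, valid because twisted subgroups are closed under powers. For the inductive step, the inner block $s_2\cdots s_n s_n\cdots s_2$ lies in $S$ by the induction hypothesis, and then $s_1\bigl(s_2\cdots s_n s_n\cdots s_2\bigr)s_1\in S$ by closure of $S$ under the operation $(a,b)\mapsto aba$. This establishes $\{x^2 : x\in K(\tau)\}\subseteq S$. For the ``in particular'' statement, assume $G$ is uniquely $2$-divisible and take $x\in K(\tau)$ with unique square root $r=x^{1/2}\in G$; I would show $r\in K(\tau)$ by comparing square roots, since $\tau(r)^2 = \tau(x) = x^{-1} = (r^{-1})^2$ forces $\tau(r)=r^{-1}$ by uniqueness. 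The first part then gives $x=r^2\in S$, so $K(\tau)\subseteq S$ and equality follows from the assumed inclusion $S\subseteq K(\tau)$.

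I do not anticipate a genuine obstacle here: the single nontrivial idea is to use $\tau$ to reverse a word so that $x^2$ becomes palindromic. Once that is noticed, both inductions are routine, and the only point requiring care is invoking unique $2$-divisibility twice—once to guarantee that square roots exist and once (via their uniqueness) to pull the square root of $x$ back into $K(\tau)$.
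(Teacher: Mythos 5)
Your proof is correct and follows essentially the same route as the paper: the paper also writes $x^2 = x\tau(x^{-1})$, uses $\tau(s_i)=s_i^{-1}$ to turn this into the palindromic product $s_1\cdots s_n s_n\cdots s_1$, concludes by the same induction using closure under $(a,b)\mapsto aba$, and handles the uniquely $2$-divisible case by the same square-root comparison showing $x^{1/2}\in K(\tau)$. No gaps; your explicit appeal to injectivity of squaring to get $\tau(r)=r^{-1}$ is exactly the uniqueness argument the paper uses implicitly.
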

\begin{proof}
Let $x\in K(\tau)$. Then $x^2 = x\tau(x^{-1})$. Since $\langle S\rangle = G$, there are $x_1$, $\dots$, $x_n\in S$ such that $x = x_1\cdots x_n$. Then $x\tau(x^{-1}) =  x_1\cdots x_n\tau(x_n^{-1}\cdots x_1^{-1}) = x_1\cdots x_n\tau(x_n^{-1})\cdots \tau(x_1^{-1}) = x_1\cdots x_nx_n\cdots x_1$, where we have used $x_i\in S\subseteq K(\tau)$. An easy induction on $n$ shows that the element $x_1\cdots x_nx_n\cdots x_1$ belongs to the twisted subgroup $S$.

We have proved $\setof{x^2}{x\in K(\tau)}\subseteq S\subseteq K(\tau)$. Suppose that $G$ is uniquely $2$-divisible. The squaring map is then injective on any twisted subgroup, and we claim that it is surjective on $K(\tau)$, so that $K(\tau)$ is uniquely $2$-divisible. Indeed, if $x\in K(\tau)$ then $\tau(x^{1/2}) = \tau(x)^{1/2} = (x^{-1})^{1/2} = (x^{1/2})^{-1}$, so $x^{1/2}\in K(\tau)$. It follows that $K(\tau) = \setof{x^2}{x\in K(\tau)}$, and $S=K(\tau)$.
\end{proof}

\begin{lemma}[compare {{\cite[Lemma 4.3]{Greer}}}]\label{Lm:Greer2}
Let $G$ be a uniquely $2$-divisible group and let $\tau\in\aut{G}$. Then $K(\tau)$ is a subloop of the $\Gamma$-loop $(G,*)$.
\end{lemma}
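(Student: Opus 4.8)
The plan is to bypass the commutator computation in Greer's original argument and instead exhibit $K(\tau)$ as a fixed-point subloop of the $\Gamma$-loop $(G,*)$. Recall that for any loop $Q$ and any subset $A\subseteq\aut{Q}$ the common fixed-point set $\fix{A}$ is a subloop of $Q$. Thus it suffices to produce an automorphism $\sigma$ of $(G,*)$ with $K(\tau) = \fix{\sigma}$.

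First I would verify that the group automorphism $\tau$ is simultaneously an automorphism of $(G,*)$. This is essentially forced by the shape of the defining formula $x*y = xy[y,x]^{1/2}$: the map $\tau$ respects the group product and the commutator, and it commutes with square-root extraction, since unique $2$-divisibility of $G$ together with $\tau(z^{1/2})^2 = \tau(z)$ forces $\tau(z^{1/2}) = \tau(z)^{1/2}$ by uniqueness of square roots. Hence $\tau(x*y) = \tau(x)\tau(y)[\tau(y),\tau(x)]^{1/2} = \tau(x)*\tau(y)$, and as $\tau$ is bijective we get $\tau\in\aut{(G,*)}$.

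Next I would check that the inversion map $J$ is also an automorphism of $(G,*)$. By Lemma \ref{Lm:Greer1}, $(G,*)$ is a $\Gamma$-loop whose powers coincide with those of $(G,\cdot)$, so the loop inverse of $x$ in $(G,*)$ is the group inverse $x^{-1} = J(x)$. By definition a $\Gamma$-loop has the automorphic inverse property $(x*y)^{-1} = x^{-1}*y^{-1}$, which is exactly the statement $J(x*y) = J(x)*J(y)$; being an involutory bijection, $J\in\aut{(G,*)}$.

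Finally, set $\sigma = J\circ\tau$ (apply $\tau$, then $J$). As a composite of two automorphisms of $(G,*)$ it lies in $\aut{(G,*)}$, and $\sigma(x) = \tau(x)^{-1}$, so $\sigma(x)=x$ if and only if $\tau(x)=x^{-1}$; that is, $K(\tau) = \fix{\sigma}$. Invoking that fixed-point sets of automorphisms are subloops then shows $K(\tau)$ is a subloop of $(G,*)$. The only real content is the conceptual observation that both $\tau$ and $J$ act as automorphisms of $(G,*)$; the main (though still routine) obstacle is the verification that $\tau$ commutes with square roots, which is precisely where unique $2$-divisibility is used.
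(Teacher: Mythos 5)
Your proof is correct, and it takes a genuinely different route from the paper's. The paper checks closure of $K(\tau)$ under $*$ directly---computing $\tau(x*y)=\tau(x)\tau(y)[\tau(y),\tau(x)]^{1/2}=x^{-1}*y^{-1}=(x*y)^{-1}$ via the automorphic inverse property---and then handles left division by solving $x*a=y$ explicitly in $(G,\cdot)$, arriving at the group term $a=(x^{-1}yx^{-1}y^{-1})^{1/2}y$ and checking that $K(\tau)$ is closed under it. You instead promote the two ingredients of that first computation to global statements: $\tau\in\aut{G,*}$ (since $\tau$ preserves the group product, the commutator, and, by uniqueness of square roots, the map $z\mapsto z^{1/2}$), and $J\in\aut{G,*}$ (the $*$-inverse coincides with the group inverse because $[x^{-1},x]=1$ gives $x*x^{-1}=1$, and the automorphic inverse property built into the definition of $\Gamma$-loops says exactly $J(x*y)=J(x)*J(y)$); then $K(\tau)=\fix{J\tau}$ is a subloop of $(G,*)$ by the fixed-point fact from Lecture 1. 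This dispenses with the division computation entirely, since fixed-point sets of automorphisms are automatically closed under both divisions, and it isolates a fact of independent interest---every automorphism of $(G,\cdot)$ is an automorphism of $(G,*)$, which is the functoriality underlying Theorem \ref{Th:GreerEquiv}. What the paper's route buys in exchange is the explicit formula for left division in $(G,*)$ as a term in $(G,\cdot)$, which your argument never produces. One small presentational point: if a reader quibbles that ``powers coincide'' in Lemma \ref{Lm:Greer1} does not explicitly cover inverses, the one-line check $x*x^{-1}=xx^{-1}[x^{-1},x]^{1/2}=1$ closes that gap.
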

\begin{proof}
By Lemma \ref{Lm:Greer1}, $(G,*)$ is a $\Gamma$-loop. If $x$, $y\in K(\tau)$ then $\tau(x*y) = \tau(xy[y,x]^{1/2}) = \tau(x)\tau(y)[\tau(y),\tau(x)]^{1/2} = x^{-1}y^{-1}[y^{-1},x^{-1}]^{1/2} = x^{-1}*y^{-1} = (x*y)^{-1}$, where we have used the automorphic inverse property in the last step.

Let us now consider left division in $(G,*)$. The following statements are equivalent: $x*a=y$, $xa[a,x]^{1/2}=y$, $[a,x]=(a^{-1}x^{-1}y)^2$, $ax = ya^{-1}x^{-1}y$, $ay^{-1}a = x^{-1}yx^{-1}$, $(ay^{-1})^2 = x^{-1}yx^{-1}y^{-1}$, $a=(x^{-1}yx^{-1}y^{-1})^{1/2}y$. Since this is a term in $(G,\cdot)$, we can easily show that $K(\tau)$ is closed under left division in $(G,*)$.
\end{proof}

We would now like to apply Lemmas \ref{Lm:Greer1} and \ref{Lm:Greer2}. However, there are examples of uniquely $2$-divisible left Bruck loops $Q$ with $G=\lmlt{Q}$ not uniquely $2$-divisible, so the lemmas cannot be applied directly. We therefore focus on the odd case.

\begin{proposition}[\cite{Greer}]\label{Pr:Greer}
Let $(Q,\cdot)$ be a left Bruck loop of odd order and let $G=\lmlt{Q,\cdot}$. Then $(L_Q,*)$ is a $\Gamma$-loop, and $(Q,*)$ with multiplication
\begin{displaymath}
    x*y = (L_x*L_y)(1) = (L_xL_y[L_y,L_x]^{1/2})(1)
\end{displaymath}
is a $\Gamma$-loop.
\end{proposition}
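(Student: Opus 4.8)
The plan is to realize $(L_Q,*)$ as a subloop of the $\Gamma$-loop $(G,*)$ and then transport the structure to $Q$ via the bijection $L_x\mapsto x$. First I would observe that $G=\lmlt{Q}$ has odd order, which is exactly the last assertion of Theorem \ref{Th:Glauberman} for left Bruck loops of odd order. A finite group of odd order is uniquely $2$-divisible, so Lemma \ref{Lm:Greer1} applies and $(G,*)$ is a $\Gamma$-loop.

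The crux is to exhibit $L_Q$ in the form $K(\tau)$ so that Lemmas \ref{Lm:FoKiPh} and \ref{Lm:Greer2} can be invoked. Here I would use the automorphic inverse property of the left Bruck loop $(Q,\cdot)$: the identity $(xy)^{-1}=x^{-1}y^{-1}$ says precisely that the inversion map $J\colon x\mapsto x^{-1}$ is an involutory automorphism of $(Q,\cdot)$. By Lemma \ref{Lm:Action}, conjugation by $J$ restricts to an automorphism $\tau$ of $\mlt{Q}$, hence of $G=\lmlt{Q}$, with $\tau(L_x)=JL_xJ^{-1}=L_{x^{-1}}=L_x^{-1}$, the last equality coming from the left inverse property of left Bol loops. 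Thus $\tau(L_x)=L_x^{-1}$ for every $x$, i.e. $L_Q\subseteq K(\tau)$.

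Now $L_Q$ is a twisted subgroup of $G$ by Proposition \ref{Pr:BolBruck}, and $\langle L_Q\rangle=\lmlt{Q}=G$ by the very definition of the left multiplication group. Since $G$ is uniquely $2$-divisible, Lemma \ref{Lm:FoKiPh} forces $L_Q=K(\tau)$, and then Lemma \ref{Lm:Greer2} shows that $K(\tau)=L_Q$ is a subloop of the $\Gamma$-loop $(G,*)$. Because the four defining conditions of a $\Gamma$-loop (commutativity, the automorphic inverse property, $L_xL_{x^{-1}}=L_{x^{-1}}L_x$, and $P_xP_yP_x=P_{P_x(y)}$) are all universally quantified identities, $\Gamma$-loops form a variety and the subloop $(L_Q,*)$ is itself a $\Gamma$-loop.

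Finally I would transport this to $Q$. The map $\varphi\colon L_Q\to Q$, $L_x\mapsto L_x(1)=x$ is a bijection, and since $(L_Q,*)$ is closed under $*$ we have $L_x*L_y=L_z$ with $z=(L_x*L_y)(1)=x*y$; hence $\varphi(L_x*L_y)=x*y=\varphi(L_x)*\varphi(L_y)$, so $\varphi$ is an isomorphism $(L_Q,*)\to(Q,*)$ and $(Q,*)$ is a $\Gamma$-loop. The main obstacle is the identification $L_Q=K(\tau)$: everything hinges on producing the automorphism $\tau$ inverting every $L_x$, which is exactly where the automorphic inverse property of the Bruck loop enters decisively; the remaining steps are then bookkeeping through the already-established Lemmas \ref{Lm:Greer1}, \ref{Lm:FoKiPh} and \ref{Lm:Greer2}.
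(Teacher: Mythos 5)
Your proposal is correct and follows essentially the same route as the paper: the same $\tau$ (conjugation by the inversion map $J$), the identity $\tau(L_x)=L_{x^{-1}}=L_x^{-1}$ via the automorphic and left inverse properties, oddness of $G$ from Theorem \ref{Th:Glauberman} plus unique $2$-divisibility, then Lemmas \ref{Lm:FoKiPh} and \ref{Lm:Greer2} to get $L_Q=K(\tau)$ as a subloop of $(G,*)$, and transfer to $Q$ via $L_x\mapsto x$. Your only deviations---deriving $\tau(L_x)=L_{x^{-1}}$ from Lemma \ref{Lm:Action} rather than by direct computation, and making explicit that $\Gamma$-loops form a variety so the subloop property suffices---are harmless refinements of the same argument.
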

\begin{proof}
Proposition \ref{Pr:BolBruck} shows that $L_Q$ is a twisted subgroup of $\lmlt{Q,\cdot}$. Let $\tau$ be the conjugation on $\sym{Q}$ by the inversion map $J$ of $(Q,\cdot)$. For $x$, $y\in Q$, we have $JL_xJ(y) = J(xy^{-1}) = x^{-1}y = L_{x^{-1}}(y) = L_x^{-1}(y)$ by the automorphic inverse property and the left inverse property. Because $\langle L_Q\rangle = G$, the established identity $\tau(L_x) = JL_xJ = L_{x^{-1}} = L_x^{-1}$ shows that $\tau\in\aut{G}$ and also that $L_Q\subseteq K(\tau)$.

By Theorem \ref{Th:Glauberman}, $|G|$ is odd. By Lemma \ref{Lm:OddDiv}, $G$ is uniquely $2$-divisible. Lemma \ref{Lm:FoKiPh} with $S=L_Q$ then gives $L_Q = K(\tau)$. By Lemma \ref{Lm:Greer2}, $(L_Q,*) = (K(\tau),*)$ is a subloop of the $\Gamma$-loop $(G,*)$. Finally, as usual, we transfer the operation $*$ from $(L_Q,*)$ to $(Q,*)$ by the isomorphism $L_x\mapsto x$.
\end{proof}

For a left Bruck loop $(Q,\cdot)$ of odd order, we call $(Q,*)$ from Proposition \ref{Pr:Greer} the \emph{$\Gamma$-loop associated with $(Q,\cdot)$}.

Greer went on to establish the announced one-to-one correspondence, and more:

\begin{theorem}[{{\cite[Theorem 5.2]{Greer}}}]\label{Th:GreerEquiv}
There is a categorical equivalence between left Bruck loops of odd order and $\Gamma$-loops of odd order. Given a $\Gamma$-loop $(Q,\cdot)$ of odd order, we let $(Q,\circ)$ be the associated left Bruck loop with multiplication $x\circ y = (x^{-1}\ldiv y^2x)^{1/2}$. Conversely, given a Bruck loop $(Q,\circ)$ of odd order, we let $(Q,\cdot)$ be the associated $\Gamma$-loop with multiplication $x\cdot y = (L_xL_y[L_y,L_x]^{1/2})(1)$, where $L_x$ is the left translation in $(Q,\circ)$.
\end{theorem}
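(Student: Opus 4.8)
The statement asserts a categorical equivalence, so the plan is to exhibit two functors and prove they are mutually inverse on objects (and natural on morphisms). The two object-level constructions are already in hand: Proposition \ref{Pr:AutomorphicBruck} (via the $\Gamma$-loop version, valid since $\Gamma$-loops satisfy \eqref{Eq:AutomorphicTwistedSubgroup} by definition) sends a $\Gamma$-loop $(Q,\cdot)$ of odd order to the left Bruck loop $(Q,\circ)$ with $x\circ y = (x^{-1}\ldiv y^2x)^{1/2}$, and Proposition \ref{Pr:Greer} sends a left Bruck loop $(Q,\cdot)$ of odd order to the $\Gamma$-loop $(Q,*)$ with $x*y = (L_xL_y[L_y,L_x]^{1/2})(1)$. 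Note that both constructions preserve the underlying set and the powers (the Bruck-to-$\Gamma$ direction preserves powers because $L_x\mapsto x$ is an isomorphism onto $(G,*)$, whose powers agree with those of $G=\lmlt{Q}$ by Lemma \ref{Lm:Greer1}, and those agree with powers in $(Q,\cdot)$ by Proposition \ref{Pr:BolBruck}), so both send odd-order loops to odd-order loops and the round trips at least fix the carrier set and the order.

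First I would verify one composite, say starting from a $\Gamma$-loop $(Q,\cdot)$ of odd order, forming $(Q,\circ)$, and showing that the $\Gamma$-loop reconstructed from $(Q,\circ)$ recovers the original product. The key computational fact is that the left translations of the associated Bruck loop $(Q,\circ)$ are exactly the maps $P_x^{1/2}=P_{x^{1/2}}$ up to the identification $L_x^\circ(y)=x\circ y=(P_x(y^2))^{1/2}$; more usefully, since powers coincide, one works inside the twisted subgroup $P_Q\subseteq\mlt{Q}$, where Proposition \ref{Pr:AssociatedBruck} gives $P_x\circ P_y=P_{x\circ y}$. The plan is to express the Greer product $x*y=(L_x^\circ L_y^\circ [L_y^\circ,L_x^\circ]^{1/2})(1)$ entirely in terms of the $P$-operators of $(Q,\cdot)$, then simplify using \eqref{Eq:AutomorphicTwistedSubgroup} together with commutativity and the automorphic inverse property of the $\Gamma$-loop, and finally check that the result collapses back to $x\cdot y$. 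The other composite (Bruck $\to$ $\Gamma$ $\to$ Bruck) is handled dually, expressing $x\circ y=(x^{-1}\ldiv y^2x)^{1/2}$ of the reconstructed $\Gamma$-loop in terms of the left translations of the original Bruck loop and simplifying with the left Bol and automorphic inverse identities.

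The honest way to organize this, however, is to cite the identity of twisted subgroups rather than grind commutators by hand. The cleanest route is to observe that \emph{both} constructions factor through a single twisted subgroup carrying two operations. Given a left Bruck loop $(Q,\cdot)$ of odd order with $G=\lmlt{Q}$, Proposition \ref{Pr:Greer} shows $L_Q=K(\tau)$ for the conjugation $\tau$ by $J$, and Lemma \ref{Lm:Greer1} endows $(G,*)$ with a $\Gamma$-loop structure restricting to $L_Q$. One then checks that the $P$-operator construction of Proposition \ref{Pr:AutomorphicBruck}, applied to the resulting $\Gamma$-loop $(Q,*)$, reproduces the original $L_Q$ as its own $P_Q$-twisted subgroup, because in a commutative loop the maps $P_x=L_{x^{-1}}^{-1}R_x$ and $L_x$ are intertwined by the commutativity relation $R_x=L_x$. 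This reduces both round trips to the single statement that the operations $\circ$ (from $*$) and $*$ (from $\circ$) are inverse on the common twisted subgroup, which is a group-theoretic identity about $x*y=xy[y,x]^{1/2}$ versus $x\circ y=(xy^2x)^{1/2}$ inside a uniquely $2$-divisible group.

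\textbf{Main obstacle.} The hard part will be the naturality on morphisms, i.e.\ promoting the bijective object correspondence to a genuine categorical equivalence. A morphism of $\Gamma$-loops need not \emph{a priori} induce a map of the associated left multiplication groups in a way compatible with $\tau$ and with the square-root operation, so one must check that a homomorphism $f\colon (Q_1,\cdot)\to(Q_2,\cdot)$ of odd-order $\Gamma$-loops induces a homomorphism of the associated Bruck loops, using that $f$ commutes with squaring and with the unique square roots (guaranteed by odd order and power-associativity via Lemma \ref{Lm:OddDiv} and \cite[Theorem 3.5]{Greer}), and dually that a Bruck-loop homomorphism is compatible with the commutator term $[L_y,L_x]^{1/2}$. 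Since this functoriality and faithfulness argument is exactly the content of \cite[Theorem 5.2]{Greer}, the cleanest exposition is to prove the object-level bijection from the twisted-subgroup identity above and then cite Greer for the full categorical statement.
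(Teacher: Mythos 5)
First, a point of comparison: the paper offers no proof of this theorem at all --- it is stated as an imported result, with everything deferred to \cite[Theorem 5.2]{Greer} (the surrounding text says only that ``Greer went on to establish the announced one-to-one correspondence''). Since your proposal ends by citing Greer for exactly the parts you do not carry out (the round-trip computations in full and the functoriality on morphisms), your bottom line coincides with the paper's treatment; and your sketch of the object-level correspondence does follow the strategy of Greer's actual proof: realize both constructions on the common twisted subgroup $L_Q=K(\tau)\le\lmlt{Q}$, using Glauberman's Theorem \ref{Th:Glauberman} and Lemma \ref{Lm:OddDiv} to make $\lmlt{Q}$ uniquely $2$-divisible (this is where odd order is indispensable, as the paper notes before Proposition \ref{Pr:Greer}), and compare the group-level operations $x*y=xy[y,x]^{1/2}$ and $x\circ y=(xy^2x)^{1/2}$ there.

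If, however, the sketch were to stand on its own, its pivotal step is asserted with a justification that is wrong as stated. You claim the $P$-construction applied to the reconstructed $\Gamma$-loop recovers the original $L_Q$ ``because in a commutative loop the maps $P_x=L_{x^{-1}}^{-1}R_x$ and $L_x$ are intertwined by the commutativity relation $R_x=L_x$.'' Commutativity gives only $P_x=L_{x^{-1}}^{-1}L_x$, which is not $L_x$ and intertwines nothing by itself. What is actually needed is the nontrivial lemma --- one of the computational cores of Greer's proof --- that in the $\Gamma$-loop $(G,*)$ of a uniquely $2$-divisible group the operators $P^*_x$ are the maps $y\mapsto xyx$ of $(G,\cdot)$, equivalently (on the $\Gamma$ side) that the left translations of the associated left Bruck loop satisfy $L^\circ_x=P_{x^{1/2}}$; this is a genuine calculation with the commutator square root in \eqref{Eq:Greer}, not a formal consequence of $R_x=L_x$. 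Two further facts are used silently: the K-loop identity $(xy)^2=x(y^2x)$, valid in left Bruck loops, which is what makes $(L_xL_y^2L_x)^{1/2}=L_{xy}$ and closes the Bruck-to-$\Gamma$-to-Bruck trip; and the identity $P_x*P_y=P_{x\cdot y}$ on $P_Q$, which closes the other trip. Finally, of the two halves of naturality you defer as the ``main obstacle,'' the $\Gamma$-to-Bruck half is in fact immediate: a homomorphism of odd-order power-associative loops commutes with the unique square roots (each $x^{1/2}$ being a power of $x$), hence with $x\circ y=(x^{-1}\ldiv y^2x)^{1/2}$; only the Bruck-to-$\Gamma$ half, involving $[L_y,L_x]^{1/2}$ inside $\lmlt{Q}$, requires real work. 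In sum: your treatment is compatible with the paper (which proves nothing here) and is a faithful outline of Greer's route, but as a proof it has a mis-justified key lemma and leaves the substantive reductions to the citation.
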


Solvability, Lagrange and Cauchy theorems for commutative automorphic loops of odd order were for the first time established in \cite{JeKiVoStructure}. (See Theorems \ref{Th:EvenCauchyLagrange} and \ref{Th:EvenSolvable} for the even case.) The fact that commutative automorphic loops of odd order $p^k$ ($p$ a prime) are centrally nilpotent was proved independently in \cite{CsorgoNilp} and \cite{JeKiVoNilp}.

Theorem \ref{Th:GreerEquiv} allows us to obtain these and additional results from Glauberman's Theorem \ref{Th:Glauberman} not only for commutative automorphic loops of odd order but also for the larger class of $\Gamma$-loops of odd order.

\begin{theorem}[{{\cite[Section 6]{Greer}}}]\label{Th:Greer}
Let $Q$ be a $\Gamma$-loop of odd order. Then $Q$ is solvable and the Lagrange and Cauchy theorems hold for $Q$. Moreover, there are Sylow $p$- and Hall $\pi$-subloops in $Q$.

If also $|Q|=p^k$ for an odd prime $p$, then $Q$ is centrally nilpotent.
\end{theorem}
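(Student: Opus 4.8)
The plan is to transfer every assertion from Glauberman's Theorem~\ref{Th:Glauberman} for left Bruck loops of odd order to $\Gamma$-loops of odd order by means of the categorical equivalence of Theorem~\ref{Th:GreerEquiv}. So let $(Q,\cdot)$ be a $\Gamma$-loop of odd order and let $(Q,\circ)$, with $x\circ y=(x^{-1}\ldiv y^2x)^{1/2}$, be its associated left Bruck loop. The two loops share the same underlying set, so $|Q|$ is the same whether computed in $(Q,\cdot)$ or in $(Q,\circ)$, and their powers coincide by the same induction as in Proposition~\ref{Pr:AutomorphicBruck}, the identity \eqref{Eq:AutomorphicTwistedSubgroup} used there being one of the defining axioms of a $\Gamma$-loop. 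Since $(Q,\circ)$ is a left Bruck loop of odd order, Theorem~\ref{Th:Glauberman} applies to it in full, and the entire task is to pull its conclusions back to $(Q,\cdot)$.

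First I would set up a subloop dictionary. The functors of Theorem~\ref{Th:GreerEquiv} are the identity on underlying sets, and a bijection preserving one of the two structures preserves the other: a $\Gamma$-homomorphism respects inverses, left division and (finite, odd-order) square roots, hence respects $\circ$, and conversely. Thus the inclusion $S\hookrightarrow Q$ of a subloop $S\le(Q,\cdot)$ is simultaneously an embedding of Bruck loops, so $(S,\circ)$ is a subloop of $(Q,\circ)$ on the same underlying set, and the inverse functor gives the reverse implication. Subloops of $(Q,\cdot)$ and of $(Q,\circ)$ are therefore literally the same subsets; being kernels of homomorphisms, normal subloops correspond as well, and quotients are carried to quotients. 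Because orders live on underlying sets, this dictionary preserves cardinality.

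With the dictionary in hand, Lagrange, Cauchy, Sylow and Hall are immediate. For $S\le(Q,\cdot)$ we get $(S,\circ)\le(Q,\circ)$, so $|S|$ divides $|Q|$ by Theorem~\ref{Th:Glauberman}. For a prime $p\mid|Q|$, Theorem~\ref{Th:Glauberman} supplies $x\in(Q,\circ)$ with $|x|=p$, and since powers coincide, $x$ has order $p$ in $(Q,\cdot)$ too. Likewise the Sylow $p$- and Hall $\pi$-subloops of $(Q,\circ)$ correspond under the dictionary to subloops of $(Q,\cdot)$ of the same orders, which are therefore Sylow and Hall there. For solvability and, when $|Q|=p^k$, central nilpotence, I would invoke the normal-subloop and quotient correspondence: solvability is witnessed by a normal series with abelian factors and central nilpotence by the terms of the central series, and an abelian group regarded as a $\Gamma$-loop and as a Bruck loop is the same object (the commutator correction in the recovery formula vanishes), so abelian factors transfer and the two series match up.

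The heart of the argument, and the step I expect to cost the most care, is confirming that the equivalence genuinely respects internal structure on the level of underlying sets---that it is the identity on objects' underlying sets and on morphisms---so that subloops, normal subloops and quotients correspond with orders preserved. Granting the subloop dictionary, Lagrange, Cauchy, Sylow and Hall are routine. The one place where more than formal bookkeeping is needed is nilpotence: one must check that the \emph{center} of $(Q,\circ)$ and of $(Q,\cdot)$ is the same subset (equivalently, that centrality of a factor is preserved), since only then does the upper central series transport intact. Establishing that the centers, and hence the whole central series, are carried along by the equivalence is the delicate point, after which the prime-power conclusion follows from Theorem~\ref{Th:Glauberman}.
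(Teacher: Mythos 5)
Your route is exactly the one the paper intends: Theorem \ref{Th:Greer} is stated without proof, cited from Greer's Section 6, and the paragraph preceding it says precisely that the categorical equivalence of Theorem \ref{Th:GreerEquiv} is what transfers Glauberman's Theorem \ref{Th:Glauberman} to $\Gamma$-loops of odd order. Your transfers of Lagrange, Cauchy, Sylow, Hall and solvability are sound as sketched, and you correctly isolate the two places where real work is needed---but you leave both as acknowledged IOUs rather than discharging them, and these are exactly the lemmas Greer proves. First, the converse half of your subloop dictionary, that a subloop $S$ of $(Q,\circ)$ is closed under the recovered operation $x\cdot y=(L_xL_y[L_y,L_x]^{1/2})(1)$, is not covered by your remark that odd-order square roots are positive powers: that remark applies to square roots \emph{in the loop}, whereas here the square root is taken in the group $\lmlt{Q,\circ}$. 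What makes it work is that $\lmlt{Q,\circ}$ has odd order (itself part of Theorem \ref{Th:Glauberman}), hence is uniquely $2$-divisible, so restriction of translations to $S$ is a homomorphism onto $\lmlt{S,\circ}$ between uniquely $2$-divisible groups and therefore carries $[L_y,L_x]^{1/2}$ to the corresponding square root over $S$. Note the paper explicitly warns, right after Corollary \ref{Cr:pAut}, that such subloop correspondences are not automatic, so this point cannot be waved through.

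Second, for central nilpotence you state that one must verify $Z(Q,\cdot)=Z(Q,\circ)$ and call it ``the delicate point,'' but you never establish it; as written, the prime-power claim rests on an unproved lemma. It is true, and is the content of Greer's Section 6: for $a\in Z(Q,\cdot)$, power-associativity of $\Gamma$-loops (\cite[Theorem 3.5]{Greer}, quoted in the paper) gives $a^{-1}\ldiv y^2a=y^2a^2$ and $(ya)^2=y^2a^2$, whence $a\circ y=(y^2a^2)^{1/2}=ya$, after which centrality of $a$ in $(Q,\circ)$ is checked; conversely, for $a\in Z(Q,\circ)$ the translation $L_a$ commutes with every $L_x$, so the commutator correction in the recovery formula vanishes and centrality pulls back to $(Q,\cdot)$. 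Granting that lemma, nilpotence does transport by induction through quotients via your dictionary. In summary: same approach as the paper's source, correct skeleton, but the subloop-dictionary converse and the center identification are flagged rather than proved, and the latter is a genuine gap in the nilpotency part of your argument.
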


\subsection{Correspondence with Lie rings}

The correspondence between left Bruck loops of odd order and $\Gamma$-loops of odd order covered all commutative automorphic loops of odd order as a subclass of $\Gamma$-loops, but it did not cover all automorphic loops of odd order. In \cite{KiKuPhVo}, a one-to-one correspondence was found between uniquely $2$-divisible automorphic loops whose associated left Bruck loop is an abelian group on the one hand, and uniquely $2$-divisible Lie rings satisfying conditions \eqref{Eq:Wright1}, \eqref{Eq:Wright2} on the other hand (see Theorem \ref{Th:AutLie}). This partial correspondence is sufficient to establish the Odd Order Theorem for automorphic loops (Theorem \ref{Th:AOddOrderTheorem}). In this subsection we sketch the proofs of these results.

We start with a construction of Wright \cite{Wright}. Let us call $(Q,+,[.,.])$ an \emph{algebra} if $(Q,+)$ is a an abelian group and $[.,.]$ is biadditive, that is $[x+y,z] = [x,z]+[y,z]$ and $[x,y+z] = [x,y]+[x,z]$ for every $x$, $y$, $z\in Q$. In this situation, for every $x\in Q$ define
\begin{displaymath}
    \lad_x:Q\to Q,\,\lad_x(y) = [x,y],\quad \rad_x:Q\to Q,\,\rad_x(y)=[y,x]
\end{displaymath}
to be the \emph{left} and \emph{right adjoint maps of $x$}, respectively. Note that $\lad_x$, $\rad_x$ are just the left and right translations with respect to the binary operation $[.,.]$, respectively. Finally, for $x\in Q$ define
\begin{displaymath}
    \ell_x = \id_Q - \lad_x,\quad r_x = \id_Q -\rad_x.
\end{displaymath}

\begin{proposition}[see {{\cite[Proposition 8]{Wright}}} and {{\cite[Lemma 5.1]{KiKuPhVo}}}]\label{Pr:Wright}
Let $(Q,+,[.,.])$ be an algebra. Define a groupoid $(Q,\cdot)$ by
\begin{equation}\label{Eq:WrightDot}
    x\cdot y = x+y-[x,y].
\end{equation}
Then $(Q,\cdot)$ is a loop (necessarily with identity element $0$) if and only if
\begin{equation}\label{Eq:Wright1}
    \ell_x\text{ and }r_x\text{ are bijections of }Q
\end{equation}
for every $x\in Q$.

When $(Q,\cdot)$ is a loop with left and right translations $L_x$, $R_x$, respectively, then
\begin{displaymath}
    L_x(y) = x+\ell_x(y),\ R_x(y) = x + r_x(y),\ L_x^{-1}(y) = \ell_x^{-1}(y-x),\ R_x^{-1}(y) = r_x^{-1}(y-x).
\end{displaymath}
Moreover, $L_{x,y} = \ell_{yx}^{-1}\ell_y\ell_x$, $R_{x,y} = r_{xy}^{-1}r_yr_x$ and $T_x = \ell_x^{-1}r_x$.
\end{proposition}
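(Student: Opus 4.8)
The plan is to verify each claim by direct computation from the defining formula \eqref{Eq:WrightDot}, treating the ``only if'' and ``if'' directions of the loop characterization separately. First I would unpack the product using the adjoint maps: since $x\cdot y = x+y-[x,y] = x + (y - \lad_x(y)) = x + \ell_x(y)$, the formula $L_x(y) = x+\ell_x(y)$ is immediate, and dually $R_x(y) = x + r_x(y)$ follows from $x\cdot y = y + (x - \rad_y(x))$ rewritten as $y$-translation — more directly, $x\cdot y = y + x - [x,y] = x + r_x(y)$ after regrouping around $r_x$ acting on the second variable. (I would be slightly careful here: $r_x(y) = y - [y,x]$, so I must check that $x\cdot y = x + r_x(y)$ reads $x + y - [y,x]$, which forces the biadditive bracket to satisfy $[x,y]=[y,x]$ — it does not in general, so in fact $R_x$ is the right translation meaning $R_x(y) = y\cdot x = y + x - [y,x] = x + r_x(y)$, where $r_x(y)=y-[y,x]$. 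This is the correct reading and resolves the apparent tension.)

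\textbf{The loop condition.} For the equivalence, $(Q,\cdot)$ is a loop precisely when all left and right translations are bijections of $Q$. Since $(Q,+)$ is a group, the map $y\mapsto x+\ell_x(y)$ is a bijection if and only if $\ell_x$ is, because translation by $x$ in $(Q,+)$ is already bijective; similarly $R_x$ is a bijection iff $r_x$ is. Thus all $L_x$, $R_x$ are bijections iff all $\ell_x$, $r_x$ are, which is exactly \eqref{Eq:Wright1}. The identity element is $0$ since $x\cdot 0 = x + 0 - [x,0] = x$ using biadditivity (which gives $[x,0]=0$), and dually $0\cdot x = x$.

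\textbf{Inverse translations and inner maps.} The inverse formulas come by solving $L_x(y)=z$, i.e. $x+\ell_x(y)=z$, giving $\ell_x(y)=z-x$ and hence $L_x^{-1}(z)=\ell_x^{-1}(z-x)$; the computation for $R_x^{-1}$ is identical. For the inner mappings I would substitute these formulas into Bruck's definitions. For $T_x = L_x^{-1}R_x$, I compute $R_x(y)=x+r_x(y)$, then $L_x^{-1}(x+r_x(y)) = \ell_x^{-1}((x+r_x(y))-x) = \ell_x^{-1}r_x(y)$, giving $T_x=\ell_x^{-1}r_x$ cleanly. The maps $L_{x,y}=L_{yx}^{-1}L_yL_x$ and $R_{x,y}=R_{xy}^{-1}R_yR_x$ require composing three affine maps of the form $w\mapsto a+\phi(w)$.

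\textbf{The main obstacle.} The delicate step will be the inner-mapping compositions, where I must confirm that all the additive constants cancel. Computing $L_yL_x(w) = y + \ell_y(x+\ell_x(w)) = y + \ell_y(x) + \ell_y\ell_x(w)$ by additivity of $\ell_y$, and then applying $L_{yx}^{-1}(v)=\ell_{yx}^{-1}(v-yx)$ with $yx = y\cdot x = y + \ell_y(x)$, the constant $y+\ell_y(x)$ is subtracted and matches the accumulated constant exactly, leaving $L_{x,y}=\ell_{yx}^{-1}\ell_y\ell_x$. The same bookkeeping yields $R_{x,y}=r_{xy}^{-1}r_yr_x$. The only real subtlety is ensuring the subtracted basepoint $yx$ (resp.\ $xy$) agrees with the constant term produced by the composition; this hinges precisely on the linearity of $\ell$ and $r$ in their argument (from biadditivity of the bracket) and on the identity $yx = y+\ell_y(x)$, so I would state that linearity explicitly as the engine driving every cancellation.
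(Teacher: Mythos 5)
Your proof is correct and takes essentially the same route as the paper: write $L_x(y)=x+\ell_x(y)$ and $R_x(y)=y\cdot x=x+r_x(y)$, reduce bijectivity of the translations to that of $\ell_x$ and $r_x$ (translation by $x$ in the abelian group $(Q,+)$ being bijective), and compose the affine maps so that the constant $yx=y+\ell_y(x)$ (resp.\ $xy=y+r_y(x)$) cancels, with additivity of $\ell_x$, $r_x$ — each a sum of two additive maps — driving the cancellation exactly as in the paper's computation of $L_{x,y}$. Your parenthetical self-correction about $[x,y]$ versus $[y,x]$ resolves the only potential pitfall correctly, and the rest is the same bookkeeping the paper carries out.
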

\begin{proof}
We have $0\cdot x = x = x\cdot 0$ for every $x\in Q$. Note that $x\cdot y = x + \ell_x(y) = y + r_y(x)$. Hence $L_x$ bijects if and only if $\ell_x$ bijects, and $R_y$ bijects if and only if $r_y$ bijects.

The formulas for $L_x$, $R_x$, $L_x^{-1}$, $R_x^{-1}$ are straightforward. Let us calculate $L_{x,y}$. Note that every $\ell_x$ is additive, being a sum of two additive maps. We have
\begin{align*}
    L_{x,y}(z) &= L_{yx}^{-1}L_yL_x(z) = L_{yx}^{-1}L_y(x+\ell_x(z)) = L_{yx}^{-1}(y+\ell_y(x+\ell_x(z)))\\
        &= \ell_{yx}^{-1}(y + \ell_y(x) + \ell_y\ell_x(z)-yx) = \ell_{yx}^{-1}(yx + \ell_y\ell_x(z)-yx)\\
        &= \ell_{yx}^{-1}\ell_y\ell_x(z).
\end{align*}
Similarly for $R_{x,y}$ and $T_x$.
\end{proof}

Following Wright, we call $(Q,\cdot)$ the \emph{linear groupoid} of the algebra $(Q,+,[.,.])$, and the \emph{linear loop} of $(Q,+,[.,.])$ if \eqref{Eq:Wright1} holds. In view of Proposition \ref{Pr:Wright}, it is easy to express but difficult to understand in terms of properties of $[.,.]$ when the linear loop $(Q,\cdot)$ is automorphic. We therefore specialize to the setting of Lie rings.

An algebra $(Q,+,[.,.])$ is \emph{alternating} if $[x,x]=0$ for every $x\in Q$. Every alternating algebra is \emph{skew-symmetric}, that is, $[x,y]=-[y,x]$. (Proof: Expand $0=[x+y,x+y]$.)

We say that an algebra $(Q,+,[.,.])$ is \emph{uniquely $2$-divisible} if the abelian group $(Q,+)$ is uniquely $2$-divisible.

If $(Q,+,[.,.])$ is alternating, then $x\cdot x = x+x-[x,x]=2x$, so the associated linear groupoid is uniquely $2$-divisible if and only if $(Q,+,[.,.])$ is uniquely $2$-divisible.

A \emph{Lie ring} is an alternating algebra $(Q,+,[.,.])$ in which $[.,.]$ satisfies the \emph{Jacobi identity} $[x,[y,z]]+[y,[z,x]]+[z,[x,y]]=0$.

Even for Lie rings it is not easy to characterize when the associated linear loop is automorphic. We therefore analyze a stronger condition, namely $\ell_x$ and $r_x$ being automorphisms.

\begin{lemma}[compare {{\cite[Proposition 5.2]{KiKuPhVo}}}]
Let $(Q,+,[.,.])$ be a Lie ring and let $(Q,\cdot)$ be defined by \eqref{Eq:WrightDot}. Then $(Q,\cdot)$ is a loop and all mappings $\ell_x$, $r_x$ are automorphisms of $(Q,\cdot)$ if and only if conditions \eqref{Eq:Wright1} and
\begin{equation}\label{Eq:Wright2}
    [[x,Q],[x,Q]]=0
\end{equation}
hold for every $x\in Q$. In such a case, $(Q,\cdot)$ is automorphic.
\end{lemma}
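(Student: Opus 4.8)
The plan is to use Proposition~\ref{Pr:Wright} to dispose of the loop condition and then reduce the automorphism condition to the Jacobi identity together with \eqref{Eq:Wright2}. Proposition~\ref{Pr:Wright} already shows that $(Q,\cdot)$ is a loop if and only if \eqref{Eq:Wright1} holds, and since an automorphism is in particular a bijection, I would argue under the standing assumption \eqref{Eq:Wright1}, so that all $\ell_x$ and $r_x$ are bijections and only the question of whether they respect the multiplication \eqref{Eq:WrightDot} remains.

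The heart of the argument is a direct expansion of $\ell_x(y\cdot z)$ against $\ell_x(y)\cdot\ell_x(z)$. Using biadditivity of $[.,.]$ and $\ell_x(y)=y-[x,y]$, I would expand both sides; after cancelling the linear and quadratic terms the identity $\ell_x(y\cdot z)=\ell_x(y)\cdot\ell_x(z)$ reduces to $[x,[y,z]]=[y,[x,z]]+[[x,y],z]-[[x,y],[x,z]]$. Now the Jacobi identity, rewritten via skew-symmetry as $[x,[y,z]]=[y,[x,z]]+[[x,y],z]$, cancels the first three terms and leaves exactly $[[x,y],[x,z]]=0$. As $y$, $z$ range over $Q$ this is precisely \eqref{Eq:Wright2} for the fixed $x$. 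Hence, granting \eqref{Eq:Wright1}, the map $\ell_x$ is an automorphism for every $x$ if and only if \eqref{Eq:Wright2} holds. To avoid repeating the computation for $r_x$, I would note that skew-symmetry gives $r_x(y)=y-[y,x]=y+[x,y]$, so $r_x=\id_Q+\lad_x=\ell_{-x}$; therefore the $r_x$ are automorphisms for all $x$ exactly when the $\ell_x$ are, and both conditions coincide with \eqref{Eq:Wright2}. Combined with Proposition~\ref{Pr:Wright}, this yields the equivalence in both directions: the forward direction reads \eqref{Eq:Wright1} off the loop property and \eqref{Eq:Wright2} off the homomorphism property of $\ell_x$, while the converse builds the loop and its automorphisms from \eqref{Eq:Wright1} and \eqref{Eq:Wright2}.

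For the final assertion that $(Q,\cdot)$ is automorphic, I would invoke the formulas $L_{x,y}=\ell_{yx}^{-1}\ell_y\ell_x$, $R_{x,y}=r_{xy}^{-1}r_yr_x$ and $T_x=\ell_x^{-1}r_x$ from Proposition~\ref{Pr:Wright}. Since each $\ell_x$, $r_x$ is now an automorphism, so are their inverses and all these composites; as the maps $L_{x,y}$, $R_{x,y}$, $T_x$ generate $\inn{Q}$ (Bruck's generators recalled in the introduction), we obtain $\inn{Q}\le\aut{Q}$, i.e.\ $(Q,\cdot)$ is automorphic.

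I expect the only genuinely delicate point to be the bookkeeping in the expansion and the precise application of skew-symmetry and Jacobi to collapse the three cubic bracket terms into the single relation $[[x,y],[x,z]]=0$; once that calculation is in hand, the equivalence and the automorphy conclusion are both formal.
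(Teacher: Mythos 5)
Your proposal is correct and takes essentially the same route as the paper: both arguments dispose of the loop condition via Proposition~\ref{Pr:Wright}, reduce $\ell_x(u)\ell_x(v)=\ell_x(uv)$ by biadditivity, skew-symmetry and the Jacobi identity to the single relation $[[x,u],[x,v]]=0$ (i.e.\ \eqref{Eq:Wright2}), and conclude automorphy from $L_{x,y}=\ell_{yx}^{-1}\ell_y\ell_x$, $R_{x,y}=r_{xy}^{-1}r_yr_x$, $T_x=\ell_x^{-1}r_x$. Your observation that $r_x=\ell_{-x}$ by skew-symmetry, so that the right-translation case needs no separate computation, is a small tidy improvement over the paper's remark that ``the calculation for $r_x$ is similar.''
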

\begin{proof}
By Proposition \ref{Pr:Wright}, $(Q,\cdot)$ is a loop if and only if \eqref{Eq:Wright1} holds. We therefore assume that \eqref{Eq:Wright1} holds and investigate when the bijections $\ell_x$, $r_x$ are automorphisms of $(Q,\cdot)$. Using skew-symmetry and the Jacobi identity freely, we have
\begin{align*}
    \ell_x(u)\ell_x(v) &= \ell_x(u)+\ell_x(v) - [\ell_x(u),\ell_x(v)]\\
        &= u-[x,u]+v-[x,v]-[u-[x,u],v-[x,v]]\\
        &= (u+v-[u,v]) - [x,u+v] + ([u,[x,v]]+[[x,u],v]) - [[x,u],[x,v]]\\
        &= (u+v-[u,v]) - [x,u+v] + [x,[u,v]] - [[x,u],[x,v]]\\
        &= (u+v-[u,v]) - [x,u+v-[u,v]] - [[x,u],[x,v]]\\
        &= uv - [x,uv] - [[x,u],[x,v]] = \ell_x(uv) - [[x,u],[x,v]].
\end{align*}
Therefore $\ell_x\in\aut{Q,\cdot}$ if and only if \eqref{Eq:Wright2} holds. The calculation for $r_x$ is similar.

By Proposition \ref{Pr:Wright}, $\inn{Q,\cdot}\le \genof{\ell_x,\,r_x}{x\in Q}$. Therefore, if $\ell_x$, $r_x\in\aut{Q,\cdot}$ for every $x\in Q$, the loop $(Q,\cdot)$ is automorphic.
\end{proof}

Our eventual goal is to prove the Odd Order Theorem for automorphic loops, so we focus on the uniquely $2$-divisible case.

\begin{lemma}\label{Lm:CirclePlus}
Let $(Q,+,[.,.])$ be a uniquely $2$-divisible Lie ring satisfying \eqref{Eq:Wright1} and \eqref{Eq:Wright2}. Let $(Q,\cdot)$ be the (uniquely $2$-divisible automorphic) linear loop of $(Q,+,[.,.])$. Let $(Q,\circ)$ be the (uniquely $2$-divisible) left Bruck loop associated with $(Q,\cdot)$. Then $(Q,\circ)=(Q,+)$ is an abelian group.
\end{lemma}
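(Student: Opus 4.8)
The plan is to verify directly that the operation $\circ$ defined by the formula in Proposition~\ref{Pr:AutomorphicBruck}, namely $x\circ y = (x^{-1}\ldiv y^2 x)^{1/2}$, collapses to ordinary addition $x+y$ in $(Q,+)$. To do this I first need explicit formulas, in terms of the algebra structure, for the ingredients of $\circ$: the inverse $x^{-1}$, the squaring operation, the left division $x^{-1}\ldiv(\cdot)$, and the square root. Using Proposition~\ref{Pr:Wright} and the fact that the Lie ring is alternating (so $x\cdot x = 2x$ and the inverse of $x$ in $(Q,\cdot)$ is $-x$, since $x\cdot(-x) = x + (-x) - [x,-x] = 0$), I would reduce each piece to a term in $(Q,+,[.,.])$. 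The square-root step is the one that requires unique $2$-divisibility: since $w\cdot w = 2w$, the square root of $2w$ is $w$, so $z^{1/2} = \tfrac{1}{2}z$ where $\tfrac12$ denotes the inverse of the doubling bijection on $(Q,+)$.

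The central computation will be to evaluate $x^{-1}\ldiv y^2 x$. Here $y^2 = 2y$ and $x^{-1} = -x$, so I must compute $(-x)\ldiv((2y)\cdot x)$. Using $R_x(2y) = x + r_x(2y) = x + 2y - [2y,x]$ from Proposition~\ref{Pr:Wright}, and then applying $L_{-x}^{-1}$ via the formula $L_x^{-1}(y) = \ell_x^{-1}(y-x)$, the whole expression becomes $\ell_{-x}^{-1}$ applied to a suitable element of $(Q,+)$. The hope is that after expanding, the inner-mapping/adjoint maps $\ell_{-x}^{-1}$ and the $[\cdot,\cdot]$ corrections cancel in such a way that $x\circ y$ simplifies to exactly $x+y$. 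I expect the cancellation to hinge critically on condition~\eqref{Eq:Wright2}, $[[x,Q],[x,Q]]=0$, which kills the second-order commutator terms that would otherwise obstruct the collapse, together with skew-symmetry and the Jacobi identity.

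The main obstacle will be the bookkeeping in inverting $\ell_{-x}$. Rather than inverting it abstractly, the cleaner route is to guess that $x\circ y = x+y$ and verify it by applying $\ell_{-x}$ (equivalently the doubling map composed with $\ell_{-x}$) to both sides, turning the claim into an identity with no inverse maps, which can then be checked by straightforward expansion using biadditivity, skew-symmetry, \eqref{Eq:Wright2}, and Jacobi. Concretely, I would show that $\ell_{-x}\big((x+y)\cdot(x+y)\big) $ --- or more precisely the element whose square root under $\circ$ must be taken --- matches $R_x(2y)$ translated by $-x^{-1} = x$, reducing everything to a polynomial identity in the bracket. Once $x\circ y = x+y$ is established for all $x,y$, commutativity and associativity of $(Q,\circ)$ are immediate, so $(Q,\circ)=(Q,+)$ is an abelian group, as desired.

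Finally, I would remark that this identification is exactly what one wants for the Odd Order Theorem program: it shows the Lie-ring construction lands inside the class of automorphic loops whose associated Bruck loop is an abelian group, matching the hypotheses of the correspondence in Theorem~\ref{Th:AutLie}.
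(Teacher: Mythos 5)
Your plan is correct and is essentially the paper's own proof: one computes $x^{-1}=-x$, $x^2=2x$ and $z^{1/2}=z/2$, then verifies $x\circ y = x+y$ by applying $L_{-x}$ to both sides (exactly your trick of avoiding the inverse map), which reduces the claim to the polynomial identity $2y+x-[2y,x] = -x+2(x+y)-[-x,2(x+y)]$. One small correction to your expectations: that final identity follows from biadditivity and alternation alone --- condition \eqref{Eq:Wright2} and the Jacobi identity play no role in the cancellation; they are needed only upstream, to guarantee via Proposition \ref{Pr:AutomorphicBruck} that $(Q,\cdot)$ is an automorphic loop so that the associated left Bruck loop $(Q,\circ)$ is defined in the first place.
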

\begin{proof}
We have $x^2 = x+x-[x,x]=2x$, so $x^{1/2} = x/2$. Also, $x(-x) = x+(-x)+[x,-x] = 0$ shows $x^{-1}=-x$. Then $x\circ y = (x^{-1}\ldiv y^2x)^{1/2} = ((-x)\ldiv (2y)x)/2$. Therefore, the condition $x\circ y = x+y$ is equivalent to $(2y)x = (-x)\cdot (2(x+y))$, which is equivalent to $2y+x-[2y,x] = -x + 2(x+y) - [-x,2(x+y)]$, which follows easily because $[.,.]$ is alternating and biadditive.
\end{proof}

We have shown how to construct uniquely $2$-divisible automorphic loops from certain uniquely $2$-divisible Lie rings. In order to build a correspondence, we now need to return from uniquely $2$-divisible automorphic loops $(Q,\cdot)$ to Lie rings, i.e., we need to build operations $+$ and $[.,.]$ on $(Q,\cdot)$. Lemma \ref{Lm:CirclePlus} suggests to restrict our attention to the class of uniquely $2$-divisible automorphic loops whose associated left Bruck loop is an abelian group, and set $x+y=x\circ y$. This approach works. See \cite{KiKuPhVo} for a proof.

\begin{theorem}[{{\cite[Theorem 5.7]{KiKuPhVo}}}]\label{Th:AutLie}
Suppose that $(Q,+,[\cdot,\cdot])$ is a uniquely $2$-divisible Lie ring satisfying \eqref{Eq:Wright1} and \eqref{Eq:Wright2}. Then $(Q,\cdot)$ defined by \eqref{Eq:WrightDot} is a uniquely $2$-divisible automorphic loop whose associated left Bruck loop $(Q,\circ)$ is an abelian group (in fact, $(Q,\circ) = (Q,+)$).

Conversely, suppose that $(Q,\cdot)$ is a uniquely $2$-divisible automorphic loop whose associated left Bruck loop $(Q,\circ)$ is an abelian group. Then $(Q,\circ,[\cdot,\cdot])$ defined by
\begin{equation}\label{Eq:Bracket}
    [x,y] = x\circ y\circ (xy)^{-1}
\end{equation}
is a uniquely $2$-divisible Lie ring satisfying \eqref{Eq:Wright1} and \eqref{Eq:Wright2}.

Furthermore, the two constructions are inverse to one another. Subrings (resp. ideals) of the Lie ring are subloops (resp. normal subloops) of the corresponding automorphic loop, and subloops (resp. normal subloops) closed under square roots are subrings (resp. ideals) of the corresponding Lie ring.
\end{theorem}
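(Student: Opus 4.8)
The plan is to treat the forward construction, the backward construction, and their mutual inverseness as three separate tasks, pushing essentially all the work into the backward direction and reducing the other two to results already in hand. For the forward direction, assume $(Q,+,[\cdot,\cdot])$ is a uniquely $2$-divisible Lie ring satisfying \eqref{Eq:Wright1} and \eqref{Eq:Wright2}. Then the lemma immediately preceding this theorem shows that $(Q,\cdot)$ defined by \eqref{Eq:WrightDot} is an automorphic loop, Lemma \ref{Lm:CirclePlus} shows its associated left Bruck loop is $(Q,+)$, and unique $2$-divisibility of $(Q,\cdot)$ follows from that of $(Q,+)$ because the algebra is alternating (so $x\cdot x=2x$); this direction is pure assembly. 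For the mutual inverseness I would work additively, writing $(Q,\circ)=(Q,+)$ and recalling from Proposition \ref{Pr:AutomorphicBruck} that inverses and powers in $(Q,\circ)$ and $(Q,\cdot)$ coincide. The only observation needed is that the defining formula \eqref{Eq:Bracket}, $[x,y]=x\circ y\circ(xy)^{-1}$, is just $xy=x+y-[x,y]$ read the other way, i.e.\ \eqref{Eq:WrightDot}; hence the round trip loop $\to$ Lie ring $\to$ loop returns the original product by one substitution, and Lie ring $\to$ loop $\to$ Lie ring returns the original bracket once Lemma \ref{Lm:CirclePlus} identifies the intermediate Bruck loop as $(Q,+)$.

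The substance is the backward direction. Starting from a uniquely $2$-divisible automorphic loop $(Q,\cdot)$ whose associated Bruck loop $(Q,\circ)=(Q,+)$ is an abelian group, set $[x,y]=x+y-xy$; I must check that $(Q,+,[\cdot,\cdot])$ is a uniquely $2$-divisible Lie ring satisfying \eqref{Eq:Wright1} and \eqref{Eq:Wright2}. Several pieces are easy: unique $2$-divisibility of $(Q,+)$ is inherited from $(Q,\cdot)$ because powers coincide; the bracket is alternating since $x\cdot x=x^2=x\circ x=2x$, so $[x,x]=2x-2x=0$; and once biadditivity is known, $(Q,+,[\cdot,\cdot])$ is an algebra whose linear groupoid is the given loop, whence \eqref{Eq:Wright1} from the ``only if'' part of Proposition \ref{Pr:Wright}.

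The main obstacle is therefore proving that the bracket is \emph{biadditive} and satisfies the \emph{Jacobi identity}, and then deducing \eqref{Eq:Wright2}. This is exactly where both hypotheses must be used in an essential way: the automorphic axioms \eqref{Eq:Al}, \eqref{Eq:Ar}, \eqref{Eq:Am} applied through the explicit maps $L_x(y)=x+(y-[x,y])$ and the expressions for $L_{x,y}$, $R_{x,y}$, $T_x$ in Proposition \ref{Pr:Wright}, together with the commutativity and associativity of $x\circ y=(x^{-1}\ldiv y^2x)^{1/2}$. I expect biadditivity to be the hardest point, since it amounts to each $\ell_x\colon y\mapsto y-[x,y]$ being an endomorphism of $(Q,+)$, and I would derive it by combining the abelianness of $\circ$ with the automorphy of the inner mappings rather than by blind expansion. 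Given biadditivity and alternation, the Jacobi identity and \eqref{Eq:Wright2} should follow by running the computation of the preceding lemma in reverse: there the identity $\ell_x(u)\ell_x(v)=\ell_x(uv)-[[x,u],[x,v]]$ was \emph{derived} from the Lie axioms and \eqref{Eq:Wright2}, whereas here \eqref{Eq:Wright2}, equivalently $\ell_x,r_x\in\aut{Q,\cdot}$, is what one extracts from the automorphy of $(Q,\cdot)$.

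Finally, the substructure correspondence falls out of the mutual inverseness together with the explicit formulas. An additive subgroup closed under $[\cdot,\cdot]$ is closed under $xy=x+y-[x,y]$, under $x^{-1}=-x$, and under square roots, so it is a subloop closed under square roots, and conversely a subloop closed under square roots is closed under $+=\circ$ and under $[x,y]=x+y-xy$; the ideal condition $[Q,I],[I,Q]\subseteq I$ then matches normality of $I$ through the description of $\inn{Q,\cdot}$ in terms of the maps $\ell_x$, $r_x$ in Proposition \ref{Pr:Wright}.
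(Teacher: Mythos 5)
Your forward direction and the inverseness bookkeeping are correct and coincide with the paper's own assembly: the unnumbered lemma preceding the theorem plus Lemma \ref{Lm:CirclePlus} handle the Lie ring $\to$ loop direction, and since \eqref{Eq:Bracket} is literally \eqref{Eq:WrightDot} solved for the bracket once $(Q,\circ)=(Q,+)$, both round trips are one-line substitutions. Be aware, however, that the paper does not prove the converse at all --- it says ``This approach works. See \cite{KiKuPhVo} for a proof'' --- so the converse is exactly where a blind proof must deliver content, and that is where your proposal has a genuine gap.

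Two concrete holes in the backward direction. First, biadditivity of $[x,y]=x+y-xy$, i.e.\ additivity of each $\ell_x$ on $(Q,+)$, is correctly identified as the hardest point but is then only ``expected'' to follow from abelianness of $\circ$ plus automorphy; no argument is given, and this is precisely the technical core of \cite[Theorem 5.7]{KiKuPhVo}, where it takes several lemmas built on identities for the maps $P_x$ of Proposition \ref{Pr:AutomorphicBruck} and the hypothesis that $(Q,\circ)$ is a group. Second, your route to \eqref{Eq:Wright2} rests on the claim that $\ell_x,r_x\in\aut{Q,\cdot}$ ``is what one extracts from the automorphy of $(Q,\cdot)$'' --- but automorphy says only that \emph{inner} mappings are automorphisms, and $\ell_x$, $r_x$ are not inner: by Proposition \ref{Pr:Wright} the inner mappings are the products $\ell_{yx}^{-1}\ell_y\ell_x$, $r_{xy}^{-1}r_yr_x$, $\ell_x^{-1}r_x$, and these being automorphisms does not formally give that each factor is. Moreover, the equivalence of \eqref{Eq:Wright2} with $\ell_x,r_x\in\aut{Q,\cdot}$ in the preceding lemma presupposes the Lie axioms, so invoking it before Jacobi is established is circular as written. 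Even granting biadditivity and $\ell_x\in\aut{Q,\cdot}$, running that computation in reverse yields only the combined identity $[u,[x,v]]+[[x,u],v]-[x,[u,v]]=[[x,u],[x,v]]$; you still need a step to split it into Jacobi and \eqref{Eq:Wright2} separately (for instance, replace $x$ by $-x$: the left side is odd in $x$, the right side even, so both must vanish --- using unique $2$-divisibility). Finally, a small slip: a subring need not be closed under square roots, since $x^{1/2}=x/2$ and additive subgroups of uniquely $2$-divisible groups need not be closed under halving (consider $\mathbb Z$ in $\mathbb Q$); luckily your subring $\Rightarrow$ subloop direction never needs that closure --- only the converse does, which is why the theorem states the hypothesis asymmetrically.
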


Figure \ref{Fg:Correspondences} summarizes what we have learned so far. In the figure, all algebras are of odd order, left Bruck loops are blue, $\Gamma$-loops are red, automorphic loops are green, and Lie rings satisfying \eqref{Eq:Wright1} and \eqref{Eq:Wright2} are cyan. Dotted lines represent abelian groups. Automorphic loops whose associated left Bruck loops are associative are dashed green. Shaded regions represent one-to-one correspondences. Except for the associated operation $x\cdot y = L_xL_y[L_y,L_x]^{1/2}(1)$, all associated operations make sense in the uniquely $2$-divisible case, too.

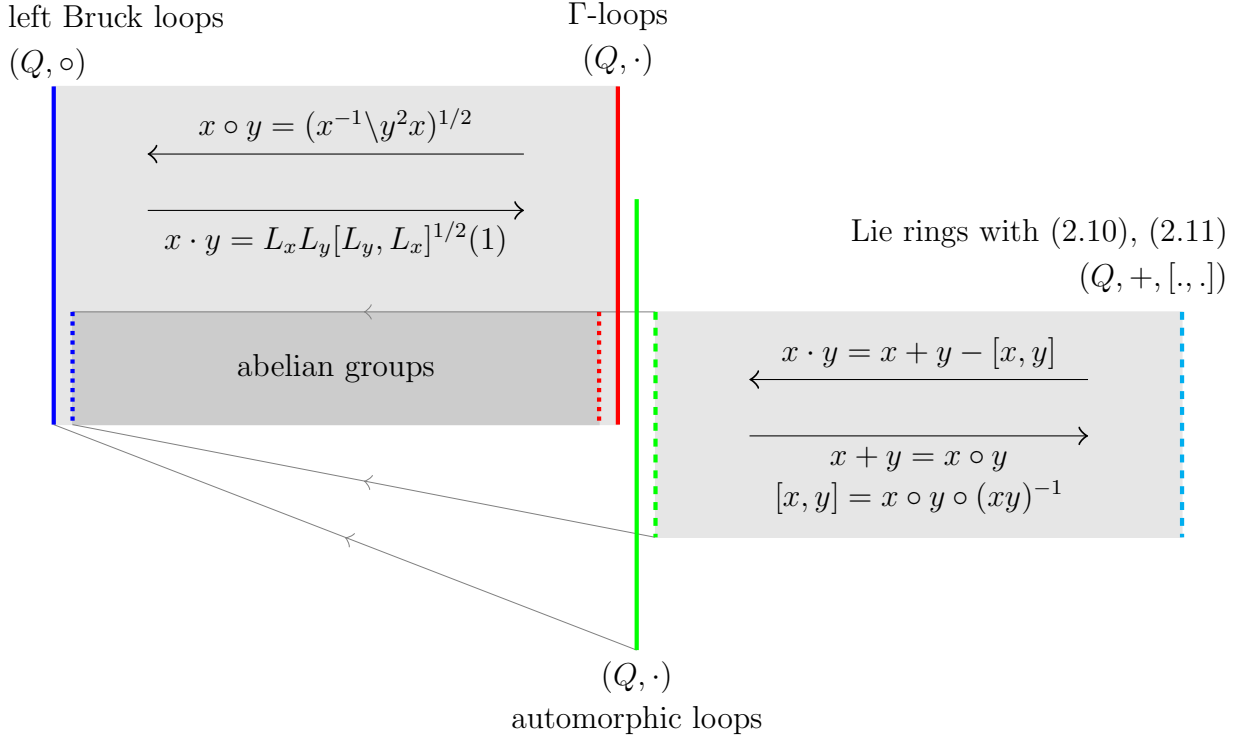
\begin{figure}
\tikzset{->-/.style={decoration={markings,mark=at position .5 with {\arrow[scale=2]{>}}},postaction={decorate}}}
\tikzset{-->/.style={decoration={markings,mark=at position 1 with {\arrow[scale=2]{>}}},postaction={decorate}}}
\begin{tikzpicture}[yscale = 0.15, xscale=0.25]
\draw [white!90!black, fill=white!90!black] (0,0) rectangle (30,-30);       
\draw [white!80!black, fill=white!80!black] (1,-20) rectangle (29,-30);     
\draw [->-, help lines] (32,-20) to (1,-20);                                
\draw [->-, help lines] (32,-40) -- (1,-30);
\draw [->-, help lines] (31,-50) -- (0,-30);                                
\draw [white!90!black, fill=white!90!black] (32,-20) rectangle (60,-40);    
\draw [blue, ultra thick] (0,0) -- (0,-30);                     
\draw [blue, ultra thick, dotted] (1,-20) -- (1,-30);           
\draw [red, ultra thick] (30,0) -- (30,-30);                    
\draw [red, ultra thick, dotted] (29,-20) -- (29,-30);          
\draw [green, ultra thick] (31,-10) -- (31,-50);                
\draw [green, ultra thick, dashed] (32,-20) -- (32,-40);        
\draw [cyan, ultra thick, dashed] (60,-20) -- (60,-40);         
\draw [-->] (25,-6) to (5,-6);
\node [above] at (15,-6) {$x\circ y = (x^{-1}\ldiv y^2x)^{1/2}$};
\draw [-->] (5,-11) to (25,-11);
\node [below] at (15,-11) {$x\cdot y = L_xL_y[L_y,L_x]^{1/2}(1)$};
\draw [-->] (55,-26) to (37,-26);
\node [above] at (46,-26) {$x\cdot y = x+y-[x,y]$};
\draw [-->] (37,-31) to (55,-31);
\node [below] at (46,-31) {$x+y=x\circ y$};
\node [below] at (46,-34) {$[x,y]=x\circ y\circ (xy)^{-1}$};
\node at (15,-25) {abelian groups};
\node [right] at (-3,2) {$(Q,\circ)$};
\node [right] at (-3,6) {left Bruck loops};
\node [above] at (30,0) {$(Q,\cdot)$};
\node [above] at (30,4) {$\Gamma$-loops};
\node [below] at (31,-50) {$(Q,\cdot)$};
\node [below] at (31,-54) {automorphic loops};
\node [left] at (63,-17) {$(Q,+,[.,.])$};
\node [left] at (63,-13) {Lie rings with \eqref{Eq:Wright1}, \eqref{Eq:Wright2}};
\end{tikzpicture}
\caption{Associated operations between left Bruck loops, $\Gamma$-loops, automorphic loops and Lie rings of odd order.}\label{Fg:Correspondences}
\end{figure}

We now work toward the Odd Order Theorem for automorphic loops.

\begin{lemma}[{{\cite[Lemma 5.8]{KiKuPhVo}}}]\label{Lm:2Solvable}
Let $(Q,+,[.,.])$ be a uniquely $2$-divisible Lie ring. Then \eqref{Eq:Wright2} holds if and only if $(Q,+,[.,.])$ is solvable of length $2$, that is, $[[Q,Q],[Q,Q]]=0$.
\end{lemma}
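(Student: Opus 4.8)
The plan is to prove the nontrivial direction, that \eqref{Eq:Wright2} implies $[[Q,Q],[Q,Q]]=0$; the converse is immediate, since $[x,Q]\subseteq[Q,Q]$ gives $[[x,Q],[x,Q]]\subseteq[[Q,Q],[Q,Q]]=0$. Because the bracket is biadditive, $[Q,Q]$ is the additive subgroup generated by the elements $[a,b]$, so it suffices to show that $[[a,b],[c,d]]=0$ for all $a$, $b$, $c$, $d\in Q$. I will write $g(a,b,c,d)=[[a,b],[c,d]]$; this expression is additive in each of its four arguments.

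First I would linearize the hypothesis. Expanding $[[x+y,u],[x+y,v]]=0$ by biadditivity and discarding the two ``pure'' terms $[[x,u],[x,v]]$ and $[[y,u],[y,v]]$, which vanish by \eqref{Eq:Wright2}, yields the polarized identity
\[
    [[x,u],[y,v]] + [[y,u],[x,v]] = 0,
\]
that is, $g$ changes sign when its first and third arguments are interchanged. This is the only relation the hypothesis provides, and by itself it is too weak to force $g=0$.

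Next I would combine this single relation with the skew-symmetry relations valid in any alternating algebra: $g$ changes sign under interchanging the two entries of either inner bracket, and under interchanging the two inner brackets (the outer skew-symmetry $[[a,b],[c,d]]=-[[c,d],[a,b]]$). The key observation is that interchanging the \emph{second and fourth} arguments of $g$ can be forced to carry sign $+1$ and sign $-1$ simultaneously. Applying the polarized identity once and then the outer skew-symmetry gives $[[a,b],[c,d]]=[[a,d],[c,b]]$; on the other hand, a short chain (five steps) alternating the polarized identity with the inner skew-symmetries gives $[[a,b],[c,d]]=-[[a,d],[c,b]]$. Comparing the two yields $2[[a,b],[c,d]]=0$.

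Finally, unique $2$-divisibility makes $y\mapsto y+y$ injective on $(Q,+)$, so $2g=0$ forces $g=0$, whence $[[Q,Q],[Q,Q]]=0$ by biadditivity. I expect the middle step to be the real obstacle: the crux is recognizing that the outer skew-symmetry corresponds to an \emph{even} permutation of the four slots yet carries sign $-1$, which clashes with the \emph{odd} permutation produced by the inner skew-symmetries once the polarized identity supplies the missing transposition; phrased as a parity argument, the induced sign function cannot extend to a homomorphism on the symmetric group generated, and the discrepancy is exactly the factor of $2$ that $2$-divisibility removes. Notably, I do not anticipate needing the Jacobi identity anywhere: only biadditivity, alternation, and unique $2$-divisibility enter.
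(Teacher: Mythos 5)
Your proof is correct, and there is nothing internal to compare it against: the paper states this lemma with a citation to \cite[Lemma 5.8]{KiKuPhVo} and omits the proof entirely. Your argument is the expected linearization argument, and every step checks: polarizing $[[x+y,u],[x+y,v]]=0$ does yield $[[x,u],[y,v]]+[[y,u],[x,v]]=0$; your five-step chain (inner skew, polarized identity, inner skew, polarized identity, inner skew) correctly produces $[[a,b],[c,d]]=-[[a,d],[c,b]]$, while the polarized identity followed by outer skew-symmetry gives $[[a,b],[c,d]]=+[[a,d],[c,b]]$, so $2[[a,b],[c,d]]=0$ and injectivity of doubling finishes; biadditivity then kills $[[Q,Q],[Q,Q]]$, and the converse is indeed trivial. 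Your structural diagnosis is also sound: the three transposition relations together with the polarized one generate all of $S_4$ acting on the slots, and the required sign function (value $-1$ on $(12)$, $(13)$, $(34)$ and on the even permutation $(13)(24)$) is not a character of $S_4$, which is exactly why a relation $g=-g$ must appear. One streamlining worth knowing: applying inner skew-symmetry twice to \eqref{Eq:Wright2} gives $[[u,x],[v,x]]=0$ for all $u,v,x$, and polarizing \emph{this} identity in $x$ yields $[[u,x],[v,y]]+[[u,y],[v,x]]=0$ directly---the $(2,4)$-swap with sign $-1$ in a single step---so the five-step chain and the parity discussion can be bypassed. Your closing observation is also correct and mildly sharpens the statement: the Jacobi identity is never used, so the lemma holds for any uniquely $2$-divisible alternating biadditive algebra, not just Lie rings.
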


\begin{lemma}[{{\cite[Lemma 6.5]{KiKuPhVo}}}]\label{Lm:NormalInDot}
Let $(Q,\cdot)$ be an automorphic loop of odd order, let $(Q,\circ)$ be the associated left Bruck loop, and let $S$ be a characteristic subloop of $(Q,\circ)$. Then $S$ is a normal subloop of $(Q,\cdot)$.
\end{lemma}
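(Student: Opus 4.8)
The plan is to exploit that $\circ$ is built from $\cdot$, so that symmetries of $(Q,\cdot)$ descend to symmetries of $(Q,\circ)$, and then to read off normality of $S$ from the characteristic hypothesis. First I would record that $(Q,\cdot)$ is uniquely $2$-divisible: by power-associativity (Proposition \ref{Pr:PowerAssociative}) and Theorem \ref{Th:ACauchyLagrange} every element has order dividing the odd number $|Q|$, so Lemma \ref{Lm:OddDiv} applies and the loop $(Q,\circ)$ of Proposition \ref{Pr:AutomorphicBruck} is defined. Recalling from the proof of Lemma \ref{Lm:CharNormal} that a subloop is normal exactly when it is invariant under $\inn{Q,\cdot}$, the task splits into two parts: (A) every inner mapping of $(Q,\cdot)$ is an automorphism of $(Q,\circ)$, and (B) $S$ is a subloop of $(Q,\cdot)$.

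Part (A) is the routine half. The formula \eqref{Eq:AutomorphicBruck}, $x\circ y = (x^{-1}\ldiv y^2x)^{1/2}$, expresses $\circ$ using only multiplication, inverses, left division, squaring and the (now unique) square root. Any $\varphi\in\aut{Q,\cdot}$ preserves each of these: it respects $\ldiv$ by the homomorphism argument of the Introduction, it satisfies $\varphi(x^{-1})=\varphi(x)^{-1}$ and $\varphi(y^2)=\varphi(y)^2$, and, squaring being a bijection, it preserves square roots. Hence $\varphi(x\circ y)=\varphi(x)\circ\varphi(y)$, so $\aut{Q,\cdot}\le\aut{Q,\circ}$. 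Since $(Q,\cdot)$ is automorphic, $\inn{Q,\cdot}\le\aut{Q,\cdot}\le\aut{Q,\circ}$. As $S$ is characteristic in $(Q,\circ)$, it is invariant under $\aut{Q,\circ}$, hence under $\inn{Q,\cdot}$, and in fact under the larger group $\aut{Q,\cdot}$.

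Part (B) is the crux, and I expect it to be the main obstacle. A subloop of $(Q,\circ)$ need not be a subloop of $(Q,\cdot)$: Proposition \ref{Pr:AutomorphicBruck} gives the implication in one direction only, and the reverse fails because $x\cdot y$ cannot be recovered from $\circ$ alone (distinct automorphic loops share one associated Bruck loop). So the characteristic hypothesis must genuinely be used to force $\cdot$-closure. The route I would take is to pass to $M=\langle S\rangle$, the subloop of $(Q,\cdot)$ generated by $S$. Because $S$ is $\aut{Q,\cdot}$-invariant by Part (A), so is $M$ (as $\varphi(M)=\langle\varphi(S)\rangle=\langle S\rangle$ for every $\varphi\in\aut{Q,\cdot}$); thus $M$ is a characteristic, hence normal, subloop of $(Q,\cdot)$ by Lemma \ref{Lm:CharNormal}, and by Proposition \ref{Pr:AutomorphicBruck} it is a subloop of $(Q,\circ)$ containing $S$. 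Everything then reduces to proving $M=S$, i.e.\ that the $\aut{Q,\cdot}$-invariant $\circ$-subloop $S$ is already closed under $\cdot$.

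This last equality is the delicate point, and naive attempts are circular. Since $P_b(y)=(b\circ y^{1/2})^2$, the $\circ$-subloop $S$ satisfies $P_b(S)=S$ for every $b\in S$; but the automorphic relation $P_b=L_{b^{-1}}^{-1}R_b$, i.e.\ $R_b=L_{b^{-1}}P_b$, then only rewrites $ab=R_b(a)=b^{-1}\cdot P_b(a)$ as another $\cdot$-product of two elements of $S$, so it never certifies $ab\in S$. To break the circularity one must use the square-root structure together with the $\inn{Q,\cdot}$-invariance of $S$ for \emph{all} $x$, $y\in Q$; in the cited work of Kinyon, Kunen, Phillips and the author this is packaged as a correspondence between normal subloops of $(Q,\cdot)$ and the relevant characteristic subloops of $(Q,\circ)$, which yields $M=S$. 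Granting $M=S$, the set $S$ is a subloop of $(Q,\cdot)$ invariant under $\inn{Q,\cdot}$, and the normality criterion completes the proof.
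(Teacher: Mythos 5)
Your Part (A) is correct and is exactly the paper's opening move: since $x\circ y=(x^{-1}\ldiv y^2x)^{1/2}$ is a term in the operations of $(Q,\cdot)$ (square roots being unique), $\aut{Q,\cdot}\le\aut{Q,\circ}$, so the characteristic subloop $S$ is invariant under $\inn{Q,\cdot}$. You also correctly diagnose why the naive route via $P_b(y)=(b\circ y^{1/2})^2$ is circular. But there is a genuine gap at precisely the point you flag: the $\cdot$-closure of $S$ (your equality $M=S$) is never proved. Writing that in the cited work ``this is packaged as a correspondence between normal subloops of $(Q,\cdot)$ and the relevant characteristic subloops of $(Q,\circ)$, which yields $M=S$'' is an appeal to the source of the very lemma being proved, not an argument. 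Moreover, no such correspondence is available in the generality needed here: the only correspondence theorem of this kind (Theorem \ref{Th:AutLie}) requires the associated left Bruck loop to be an abelian group, which is not a hypothesis of the lemma, and in \cite{KiKuPhVo} the statement is itself established by a direct computation, so there is nothing independent to fall back on.

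The missing idea is to apply an inner mapping of $(Q,\cdot)$ to a $\circ$-expression whose arguments lie in $S$, thereby manufacturing a genuinely new element of $S$, and then to use oddness of element orders (not merely unique $2$-divisibility) to recover $R_u$ from even powers of $R_u$. Concretely, for $u$, $v\in S$ put $w=v^{1/2}\in S$ (powers in the two loops coincide), so $(u\circ w)^2\in S$. Since $T_u\in\inn{Q,\cdot}\le\aut{Q,\circ}$ and $T_u(u)=u$,
\begin{displaymath}
    T_u((u\circ w)^2) = (u\circ T_u(w))^2 = u^{-1}\ldiv T_u(v)u = L_{u^{-1}}^{-1}L_u^{-1}R_u^2(v),
\end{displaymath}
the last equality by Proposition \ref{Pr:PowerAssociative}. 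This element lies in $S$, and since $L_uL_{u^{-1}}\in\inn{Q,\cdot}$ preserves $S$, one gets $R_u^2(v)\in S$, hence by induction $R_u^{2m}(v)\in S$ for all $m$. Writing $|u|=2m+1$ (Lemma \ref{Lm:OddDiv}), the map $R_u^{2m+1}$ fixes $1$ and is therefore inner, so $vu=R_u^{-2m}R_u^{2m+1}(v)\in S$ and $v\rdiv u=R_u^{-2m-2}R_u^{2m+1}(v)\in S$, with left division then handled via the antiautomorphic inverse property. Note that this is where the odd-order hypothesis truly enters; your draft uses it only to define square roots, which is why it cannot close the argument.
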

\begin{proof}
Since $x\circ y = (x^{-1}\ldiv y^2x)^{1/2}$, we have $\aut{Q,\cdot}\le\aut{Q,\circ}$. Thus $S$ is invariant under $\inn{Q,\cdot}\le\aut{Q,\cdot}$. Let $u$, $v\in S$. We will show that $vu$ and $v\rdiv u\in S$. Let $w=v^{1/2}$. Since powers in $(Q,\cdot)$ and $(Q,\circ)$ coincide, $w\in S$. Then $T_u((u\circ w)^2) = (T_u(u\circ w))^2 = (T_u(u)\circ T_u(w))^2 = (u\circ T_u(w))^2 = u^{-1}\ldiv T_u(w)^2u = u^{-1}\ldiv T_u(v)u = L_{u^{-1}}^{-1}R_uT_u(v) = L_{u^{-1}}^{-1}L_u^{-1}R_u^2(v)$ is an element of $S$, where we have used Proposition \ref{Pr:PowerAssociative} in the last equality. Since $L_uL_{u^{-1}}\in\inn{Q,\cdot}$, it follows that $R_u^2(v)\in S$. By induction, $R_u^{2m}(v)\in S$ for every $m$. By Lemma \ref{Lm:OddDiv}, $|u|=2m+1$ for some $m$. Then $R_u^{2m+1}\in\inn{Q,\cdot}$, so also $R_u^{-2m}R_u^{2m+1}(v) = vu$ and $R_u^{-2m-2}R_u^{2m+1}(v)=v\rdiv u\in S$. By the antiautomorphic inverse property for $(Q,\cdot)$, $v\ldiv u\in S$, too.

We have shown that $S$ is a subloop of $(Q,\cdot)$. It is a normal subloop because $S$ is invariant under $\inn{Q,\cdot}$.
\end{proof}

\begin{theorem}[{{\cite[Theorem 6.6]{KiKuPhVo}}}]\label{Th:AOddOrderTheorem}
Automorphic loops of odd order are solvable.
\end{theorem}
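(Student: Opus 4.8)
The plan is to induct on $|Q|$ and to reduce, via the associated left Bruck loop, either to strictly smaller automorphic loops or to the Lie-ring correspondence of Theorem \ref{Th:AutLie}. First I would note that an automorphic loop $Q$ of odd order is uniquely $2$-divisible: it is power-associative, and by Theorem \ref{Th:ACauchyLagrange} the order of each element divides $|Q|$, so Lemma \ref{Lm:OddDiv} applies and forces every element (hence $Q$) to be uniquely $2$-divisible. Proposition \ref{Pr:AutomorphicBruck} then supplies the associated left Bruck loop $(Q,\circ)$, which has the same odd order and, by Glauberman's Theorem \ref{Th:Glauberman}, is solvable.

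The key object is $N=(Q,\circ)'$, the derived (commutator--associator) subloop of $(Q,\circ)$. Since every automorphism of $(Q,\circ)$ permutes commutators and associators, $N$ is characteristic in $(Q,\circ)$, so Lemma \ref{Lm:NormalInDot} shows that $N$ is a normal subloop of $(Q,\cdot)$; and because $(Q,\circ)$ is solvable and nontrivial (assuming $|Q|>1$), its abelianization is nontrivial, so $N$ is a \emph{proper} subloop. I would then split according to whether $N$ is trivial. If $N\neq 1$, then $N$ is a proper nontrivial normal subloop of $(Q,\cdot)$; both $N$ and $Q/N$ are automorphic loops of odd order strictly smaller than $|Q|$, hence solvable by induction, and since solvability of loops is preserved under extensions, $Q$ is solvable.

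The substance lies in the case $N=1$, where $(Q,\circ)$ is an abelian group. Here I would invoke Theorem \ref{Th:AutLie}: defining $[x,y]=x\circ y\circ(xy)^{-1}$ as in \eqref{Eq:Bracket} turns $(Q,\circ,[.,.])$ into a uniquely $2$-divisible Lie ring satisfying \eqref{Eq:Wright1} and \eqref{Eq:Wright2}. By Lemma \ref{Lm:2Solvable}, condition \eqref{Eq:Wright2} is exactly solvability of length $2$, so the derived subring $[Q,Q]$ is an abelian ideal with abelian quotient. Transporting this two-term ideal series back through the correspondence of Theorem \ref{Th:AutLie}---ideals correspond to normal subloops, and in odd order every subloop is automatically closed under square roots, square roots being powers---yields a normal series of $(Q,\cdot)$ whose factors are abelian groups, so $(Q,\cdot)$ is solvable.

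The main obstacle I anticipate is the bookkeeping in the case $N=1$: one must verify that the ideal-to-normal-subloop dictionary of Theorem \ref{Th:AutLie} genuinely converts the metabelian structure $[[Q,Q],[Q,Q]]=0$ of the Lie ring into a normal series of $(Q,\cdot)$ with abelian-group factors, and that the required square-root closure is free in the odd-order setting. The other delicate point, easy to overlook, is confirming that $N=(Q,\circ)'$ is \emph{characteristic} in $(Q,\circ)$ rather than merely normal, since that is precisely the hypothesis needed to invoke Lemma \ref{Lm:NormalInDot}.
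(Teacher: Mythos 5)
Your proof is correct, and while it shares the paper's skeleton (unique $2$-divisibility via Lemma \ref{Lm:OddDiv}, the associated left Bruck loop, Glauberman's Theorem \ref{Th:Glauberman}, and Lemma \ref{Lm:NormalInDot} applied to the characteristic derived subloop), its endgame is genuinely different. The paper argues by minimal counterexample: after forcing $(Q,\cdot)$ simple and $(Q,\circ)$ abelian, it needs two further steps you avoid entirely --- it shows $(Q,\circ)$ is an elementary abelian $p$-group (via the characteristic subloops $Q_p=\setof{x\in Q}{x^p=1}$ and the Cauchy theorem from Theorem \ref{Th:ACauchyLagrange}), so that the Lie ring of Theorem \ref{Th:AutLie} becomes a finite-dimensional Lie algebra over $GF(p)$, and then plays simplicity against Lemma \ref{Lm:2Solvable}: the algebra must be simple or abelian, solvability of length $2$ rules out simple, and abelian gives $x\cdot y=x\circ y$, a contradiction. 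Your induction framing extracts solvability directly from the metabelian Lie ring, with no simplicity, no $Q_p$ reduction, and no Lie-algebra dichotomy; this is arguably more direct and shows the $GF(p)$ detour is dispensable. The ``bookkeeping'' you flag is a one-line verification: with $I=[Q,Q]$, an ideal and hence by Theorem \ref{Th:AutLie} a normal subloop of $(Q,\cdot)$, equation \eqref{Eq:WrightDot} gives $x\cdot y = x+y-[x,y]$ (where $+$ is $\circ$), so on $I$ the bracket vanishes by $[[Q,Q],[Q,Q]]=0$ and $(I,\cdot)=(I,+)$ is an abelian group, while loop cosets agree with additive cosets ($x\cdot I = x+I$ since $[x,I]\subseteq I$) and $(x+I)\cdot(y+I)=x+y-[x,y]+I=x+y+I$, so $(Q/I,\cdot)$ is the abelian group $(Q,+)/I$; thus $1\trianglelefteq I\trianglelefteq Q$ has abelian-group factors. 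One small inefficiency: your appeal to square-root closure of subloops is unnecessary, since you only use the forward direction of the dictionary in Theorem \ref{Th:AutLie} (ideals are normal subloops), which carries no closure hypothesis --- the square-root condition is only needed for the converse direction, which your argument never invokes.
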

\begin{proof}
Let $(Q,\cdot)$ be a minimal counterexample. If $S$ is a nontrivial, proper normal subloop of $(Q,\cdot)$ then, by minimality, both $S$ and $(Q,\cdot)/S$ are solvable automorphic loops of odd order. This contradicts the nonsolvability of $(Q,\cdot)$. Therefore $(Q,\cdot)$ is simple.

Let $(Q,\circ)$ be the associated left Bruck loop. By Theorem \ref{Th:Glauberman}, $(Q,\circ)$ is solvable and so the derived subloop $D=(Q,\circ)'$ is a proper subloop of $(Q,\circ)$. Since $D$ is a characteristic subloop of $(Q,\circ)$, Lemma \ref{Lm:NormalInDot} shows that $D$ is normal in $(Q,\cdot)$. Since $(Q,\cdot)$ is simple, $D = 1$ and $(Q,\circ)$ is an abelian group.

Recall that powers in $(Q,\cdot)$ and $(Q,\circ)$ agree. Let $p$ be a prime divisor of $|Q|$ and let $Q_p=\setof{x\in Q}{x^p=1}$. Then $Q_p$ is a characteristic subloop of $(Q,\circ)$, hence a normal subloop of $(Q,\cdot)$. By Theorem \ref{Th:ACauchyLagrange}, $Q_p$ is nontrivial, so $Q_p = Q$ because $(Q,\cdot)$ is simple. Thus $(Q,\cdot)$ has exponent $p$, $(Q,\circ)$ has exponent $p$, and $(Q,\circ)$ is an elementary abelian $p$-group.

By Theorem \ref{Th:AutLie}, $(Q,\circ,[\cdot,\cdot])$ defined by \eqref{Eq:Bracket} is a Lie ring satisfying \eqref{Eq:Wright1} and \eqref{Eq:Wright2}. By Lemma \ref{Lm:2Solvable}, $(Q,\circ,[\cdot,\cdot])$ is solvable (of class $2$). Since $(Q,\circ)$ is an elementary abelian $p$-group, we may view $(Q,\circ,[\cdot,\cdot])$ as a finite dimensional Lie algebra over $GF(p)$. Since $(Q,\cdot)$ is simple, Theorem \ref{Th:AutLie} also implies that $(Q,\circ,[\cdot,\cdot])$ is either a simple Lie algebra or an abelian Lie algebra (that is, $[Q,Q]=0$). The former case contradicts solvability of $(Q,\circ,[\cdot,\cdot])$, and so $(Q,\circ,[\cdot,\cdot])$ is an abelian Lie algebra. But then $x\cdot y = x\circ y\circ [x,y] = x\circ y$, so $(Q,\cdot)$ is an abelian group, a contradiction with nonsolvability of $(Q,\cdot)$.
\end{proof}

\section*{Lecture 3: Enumerations and constructions}
\setcounter{section}{3}
\setcounter{theorem}{0}
\setcounter{subsection}{0}

In this section we first show how to efficiently search for finite simple automorphic loops, temporarily suspending the notation $\circ$ and $*$ from previous sections. Then we discuss (commutative) automorphic loops of order $pq$ and $p^3$. Finally, we give two useful constructions of automorphic loops.

\subsection{Enumerating all left automorphic loops}

Let $G$ be a permutation group on a finite set $Q=\{1,\dots,d\}$, and let $H\le G$. The first goal of this section is to present a naive algorithm for constructing all loops $(Q,*)$ on $Q$ with identity element $1$ so that $\lmlt{Q,*}\le G$ and $H\le\aut{Q,*}$. Since $\lmlt{Q,*}$ acts transitively on $Q$ and $\varphi(1)=1$ holds for every $\varphi\in H$, let us assume from the start that $G$ is transitive on $Q$ and $H\le G_1$.

We then specialize this algorithm to construct all left automorphic loops $(Q,*)$ on $Q$ satisfying $\lmlt{Q,*}=G$. In the next subsection we will add the requirement that $(Q,*)$ be simple. The exposition follows \cite{JoKiNaVo}.

\begin{lemma}\label{Lm:LeftTranslations}
Let $Q=\{1,\dots,d\}$ be a finite set and let $L = \setof{\ell_x}{x\in Q}$ be a subset of $\sym{Q}$. Then $(Q,*)$ defined by $x*y = \ell_x(y)$ is a loop with identity element $1$ if and only if
\begin{enumerate}
\item[(i)] $\ell_1$ is the identity mapping on $Q$, and
\item[(ii)] $\ell_x(1)=x$ for every $x\in Q$, and
\item[(iii)] $\ell_x^{-1}\ell_y$ is fixed-point free for every $x$, $y\in Q$ with $x\ne y$.
\end{enumerate}
\end{lemma}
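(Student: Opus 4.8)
The plan is to characterize exactly when the binary operation $x*y = \ell_x(y)$, built from a prescribed set of row-permutations $L = \setof{\ell_x}{x\in Q}$, yields a loop with identity $1$. The statement is a biconditional, so I would prove the two directions separately, but the forward direction (loop $\Rightarrow$ (i)--(iii)) is essentially bookkeeping, so the substance lies in showing that conditions (i)--(iii) force $(Q,*)$ to be a loop.

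First I would recall what must be verified: a loop with identity element $1$ is a set with a binary operation such that $1*x = x = x*1$ for all $x$, and such that for each $a,b\in Q$ the equations $a*y = b$ and $x*a = b$ have unique solutions $y$ and $x$ respectively. Since $x*y = \ell_x(y)$ and each $\ell_x$ is already a permutation of $Q$ (as $L\subseteq\sym{Q}$), the equation $a*y = b$, i.e. $\ell_a(y) = b$, automatically has the unique solution $y = \ell_a^{-1}(b)$. Thus left division is free of charge from the hypothesis $L\subseteq\sym{Q}$, and the only nontrivial loop axioms to check are the two-sided identity property and the \emph{unique} solvability of $x*a = b$ in $x$ --- equivalently, that the map $x\mapsto x*a = \ell_x(a)$ is a bijection of $Q$ for each fixed $a$.

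Next I would match the three conditions to these requirements. Condition (ii), $\ell_x(1)=x$, says exactly that $x*1 = x$ for all $x$, giving $1$ as a right identity. Condition (i), $\ell_1 = \id$, says $1*y = \ell_1(y) = y$, giving $1$ as a left identity; together (i) and (ii) secure the identity element. The heart of the argument is condition (iii): I claim it is equivalent to the statement that for each fixed $a$ the column map $c_a\colon x\mapsto \ell_x(a)$ is injective (hence bijective, as $Q$ is finite). Indeed, $c_a(x) = c_a(y)$ means $\ell_x(a) = \ell_y(a)$, i.e. $a$ is a common value, i.e. $\ell_x^{-1}\ell_y$ fixes the point $\ell_y^{-1}$ of\ldots here I must be careful: $\ell_x(a)=\ell_y(a)$ rewrites as $\ell_y^{-1}\ell_x(a) = a$, so $\ell_y^{-1}\ell_x$ has a fixed point. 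Thus $c_a$ is injective for all $a$ exactly when $\ell_y^{-1}\ell_x$ is fixed-point-free for all $x\ne y$, which is condition (iii). This is the step I expect to require the most care --- not because it is deep, but because it is easy to get the direction of composition and the role of $a$ versus the fixed point slightly wrong.

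Finally I would assemble the pieces in both directions. For the converse (loop $\Rightarrow$ (i)--(iii)), I read the same equivalences backwards: the identity property of $1$ gives (i) and (ii), and the bijectivity of each column map $c_a$ (which holds in any loop, since the right-translation tables are Latin) gives (iii) via the fixed-point reformulation above. For the forward direction, (i) and (ii) give the two-sided identity, $L\subseteq\sym{Q}$ gives unique left division, and (iii) gives unique right division; these are precisely the loop axioms, so $(Q,*)$ is a loop with identity $1$. I would close by noting that finiteness of $Q$ is what lets ``fixed-point-free'' (an injectivity statement) upgrade to genuine bijectivity of the columns, completing the verification that the multiplication table is a Latin square.
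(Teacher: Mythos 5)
Your proposal is correct and follows essentially the same route as the paper: both match (i) and (ii) to the two-sided identity, note that $L\subseteq\sym{Q}$ makes left division automatic, translate (iii) into injectivity of the right translations $x\mapsto\ell_x(a)$ via the fixed-point reformulation, and invoke finiteness of $Q$ to upgrade injectivity to bijectivity. Your brief worry about the direction of composition is harmless, since $\ell_x^{-1}\ell_y$ and $\ell_y^{-1}\ell_x$ are mutually inverse and hence have the same fixed points.
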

\begin{proof}
Condition (i) holds iff $x = \ell_1(x)= 1*x$ for every $x\in Q$. Condition (ii) hold iff $x = \ell_x(1) = x*1$ for every $x\in Q$. So (i) and (ii) together are equivalent to $(Q,*)$ having $1$ as the identity element. Since $L\subseteq\sym{Q}$, all the left translations of $(Q,*)$ are bijections. Let $z\in Q$. Then $z$ is not a fixed point of $\ell_x^{-1}\ell_y$ if and only if $x*z\ne y*z$. Therefore condition (iii) holds if and only if all right translations of $(Q,*)$ are one-to-one. We are done by finiteness of $Q$.
\end{proof}

We therefore have the following naive algorithm for constructing all loops on $Q$ with identity element $1$: Construct all subsets $\setof{\ell_x}{x\in Q}$ of $\sym{Q}$ and check that conditions (i)--(iii) of Lemma \ref{Lm:LeftTranslations} hold.

We will show how to speed up the algorithm if we are only interested in left automorphic loops, essentially by adding left translation not one at a time but rather one conjugacy class at a time.

\begin{lemma}\label{Lm:ConjClasses}
Let $Q$ be a loop.
\begin{enumerate}
\item[(i)] A bijection $\varphi:Q\to Q$ is an automorphism of $Q$ if and only if $\varphi L_x\varphi^{-1} = L_{\varphi(x)}$ for every $x\in Q$.
\item[(ii)] If $\varphi\in\aut{Q}$ fixes $x$ then $L_x\varphi = \varphi L_x$.
\end{enumerate}
\end{lemma}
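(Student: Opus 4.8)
The plan is to prove the two parts of Lemma \ref{Lm:ConjClasses} separately, since (i) is an equivalence that must be argued in both directions and (ii) is a quick consequence of the harder direction of (i). For part (i), the forward implication is essentially Lemma \ref{Lm:Action}, which already establishes $\varphi L_x\varphi^{-1} = L_{\varphi(x)}$ for any automorphism $\varphi$; I would simply cite it. The real content is the converse: assuming $\varphi L_x\varphi^{-1} = L_{\varphi(x)}$ for every $x\in Q$, I must show $\varphi$ respects multiplication, after which the fact proved in the Introduction (that a multiplication-respecting bijection between loops is automatically an automorphism) finishes the job.

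For this converse direction, the key step is to unwind the hypothesis by evaluating both sides at a generic element. The plan is to rewrite $\varphi L_x\varphi^{-1} = L_{\varphi(x)}$ as $\varphi L_x = L_{\varphi(x)}\varphi$ and then apply both sides to an arbitrary $y\in Q$. The left side gives $\varphi(L_x(y)) = \varphi(xy)$, and the right side gives $L_{\varphi(x)}(\varphi(y)) = \varphi(x)\varphi(y)$. Equating these yields $\varphi(xy) = \varphi(x)\varphi(y)$ for all $x$, $y$, which is exactly the statement that $\varphi$ respects the product. I would then invoke the Introduction's remark that such a $\varphi$ is a homomorphism of loops, and being a bijection, an automorphism.

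For part (ii), I plan to specialize part (i) to the fixed point $x$. Since $\varphi\in\aut{Q}$, part (i) gives $\varphi L_x\varphi^{-1} = L_{\varphi(x)}$; using $\varphi(x)=x$ this becomes $\varphi L_x\varphi^{-1} = L_x$, and multiplying on the right by $\varphi$ yields $\varphi L_x = L_x\varphi$, as desired.

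I do not anticipate a genuine obstacle here, as both parts reduce to manipulations already prepared earlier in the excerpt. The only point requiring care is to make sure I invoke the correct prior result for the forward direction of (i)---namely Lemma \ref{Lm:Action} rather than reproving it---and to cleanly cite the Introduction's homomorphism criterion for the converse rather than re-deriving that a product-respecting bijection preserves divisions and the identity.
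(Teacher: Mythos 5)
Your proposal is correct and follows essentially the same route as the paper: the paper proves (i) by the single reversible chain of equivalences $\varphi L_x\varphi^{-1}=L_{\varphi(x)}$ iff $\varphi(x\varphi^{-1}(y))=\varphi(x)y$ iff $\varphi(xy)=\varphi(x)\varphi(y)$ (implicitly using the Introduction's criterion, as you do explicitly), which is exactly your evaluate-at-a-generic-element computation with the forward direction folded in rather than cited from Lemma \ref{Lm:Action}. Your derivation of (ii) by specializing (i) at the fixed point is equivalent to the paper's one-line computation $L_x\varphi(y)=x\varphi(y)=\varphi(x)\varphi(y)=\varphi(xy)=\varphi L_x(y)$, so no gap remains.
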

\begin{proof}
The following conditions, universally quantified for $y\in Q$, are equivalent: $\varphi L_x \varphi^{-1} = L_{\varphi(x)}$, $\varphi(x\varphi^{-1}(y)) = \varphi(x)y$, $\varphi(xy) = \varphi(x)\varphi(y)$. To prove (ii), consider $\varphi\in\aut{Q}$ that fixes $x$, and note that $L_x\varphi(y) = x\varphi(y) = \varphi(x)\varphi(y) = \varphi(xy) = \varphi L_x(y)$ for every $y\in Q$.
\end{proof}

\begin{algorithm}\label{Al:Basic}\ \\
\noindent\emph{Input:} A set $Q=\{1,\dots,d\}$, a transitive permutation group $G$ on $Q$, and $H\le G_1$.\medskip

\noindent\emph{Output:} All loops $(Q,*)$ on $Q$ with identity element $1$ such that $\lmlt{Q,*}\le G$ and $H\le\aut{Q,*}$.\medskip

\noindent\emph{Step 1:} Let $\ell_1 = 1_G$, and let $X\subseteq Q\setminus\{1\}$ be a set of orbit representatives for the natural action of $H$ on $Q\setminus\{1\}$. (The condition $\ell_1=1_G$ is forced by Lemma \ref{Lm:LeftTranslations}(i).)\medskip

\noindent\emph{Step 2:} For all $x\in X$, let
\begin{displaymath}
    \mathcal L_x = \setof{\ell_x\in G}{\ell_x(1)=x,\,\ell_x\text{ is fixed-point free, and }\ell_x\in C_G(H_x)}.
\end{displaymath}
If $\mathcal L_x=\emptyset$, stop with failure. (This is a set of candidates for $\ell_x$. The first two conditions are necessary by Lemma \ref{Lm:LeftTranslations}. The last condition is necessary by Lemma \ref{Lm:ConjClasses}(ii). Note that if $\mathcal L_x$ is nonempty, it suffices to find one $\ell\in\mathcal L_x$ and set $\mathcal L_x = \ell( C_G(H_x)_1 )$.)\medskip

\noindent\emph{Step 3:} For all $x\in X$, let
\begin{displaymath}
    \mathbb L_x = \setof{\ell_x^H}{\ell_x\in\mathcal L_x,\,|\ell_x^H| = |H(x)|,\,\ell_x^{-1}\ell\text{ is fixed-point free for every }\ell\in\ell_x^H\text{ with }\ell\ne\ell_x}.
\end{displaymath}
If $\mathbb L_x = \emptyset$, stop with failure. (By Lemma \ref{Lm:ConjClasses}, the desired $L=\setof{\ell_x}{x\in Q}$ is a union of $H$-conjugacy classes in $G$. The set $\mathbb L_x$ is a set of candidates for the $H$-conjugacy class containing $\ell_x$. The condition $|\ell_x^H| = |H(x)|$ is forced by Lemma \ref{Lm:ConjClasses}(i). The second condition is forced by Lemma \ref{Lm:LeftTranslations}(iii).)\medskip

\noindent\emph{Step 4:} Construct a graph $\Gamma$ on $V=\bigcup_{x\in X}\mathbb L_x$ by letting $(\ell_x^H,\ell_y^H)\in\mathbb L_x\times\mathbb L_y$ to be an edge if and only if $(\ell_x^H)^{-1}(\ell_y^H)$ consists of fixed-point free permutations. (Note that it suffices to check that $\ell_x^{-1}\ell_y^H$ consists of fixed-point free permutations. Indeed, if $\psi\ell_x\psi^{-1}(z) = \varphi\ell_y\varphi^{-1}(z)$ for some $z\in Q$, then $\ell_x(\psi^{-1}(z)) = (\psi^{-1}\varphi)\ell_y(\psi^{-1}\varphi)^{-1}(\psi^{-1}(z))$.)\medskip

\noindent\emph{Step 5:} Find all subsets $C$ of $V$ such that $C$ is a clique in $\Gamma$ and $\sum_{v\in C}|v| = |Q|-1$. If there are no such $C$, stop with failure. Else return all loops $Q(L) = (Q,*)$, where $L=L(C)=\{\ell_1\}\cup \bigcup_{v\in C}v = \setof{\ell_x}{x\in Q}$ and $x*y = \ell_x(y)$. (The clique property accounting for $|Q|-1$ left translations is at this stage necessary and sufficient by Lemmas \ref{Lm:LeftTranslations} and \ref{Lm:ConjClasses}.)
\end{algorithm}

\medskip

Denote by $\mathcal A_\ell^\le(Q,G)$ all left automorphic loops $(Q,*)$ defined on $Q$ with identity element $1$ and satisfying $\lmlt{Q,*}\le G$, by $\mathcal A_\ell^=(Q,G)$ all loops $(Q,*)\in \mathcal A_\ell^\le(Q,G)$ with $\lmlt{Q,*}=G$, and by $\mathcal A^=(Q,G)$ all loops $(Q,*)\in\mathcal A_\ell^\le(Q,G)$ that are automorphic and satisfy $\mlt{Q,*}=G$. Let also $\mathcal C(Q,G,H)$ be the set of all loops $(Q,*)$ obtained by Algorithm \ref{Al:Basic} with input $Q$, $G$ and $H$.

\begin{lemma}\label{Lm:LeftAlgorithm}
Let $G$ be a transitive permutation group on $Q=\{1,\dots,d\}$. Then $\mathcal A_\ell^=(Q,G) \subseteq \mathcal C(Q,G,G_1) \subseteq \mathcal A_\ell^\le(Q,G)$. Moreover, $\mathcal A^=(Q,G)\subseteq C(Q,G,G_1)$.
\end{lemma}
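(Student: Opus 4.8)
The plan is to reduce all three inclusions to a single characterization of the algorithm's output, specialized at $H=G_1$. Precisely, I will show that $\mathcal C(Q,G,H)$ consists of exactly those loops $(Q,*)$ on $Q$ with identity $1$ for which $\lmlt{Q,*}\le G$ and $H\le\aut{Q,*}$. Two background facts will be used repeatedly: that the stabilizer of $1$ in $\lmlt{Q,*}$ is the left inner mapping group $\genof{L_{x,y}}{x,y\in Q}$ (the left analogue of Bruck's description of $\inn{Q}$), so that $(Q,*)$ is left automorphic if and only if $(\lmlt{Q,*})_1\le\aut{Q,*}$; and that a loop is recovered from its set of left translations via the normalization $L_x(1)=x$, so the indexing $x\mapsto L_x$ is canonical.

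For the soundness half of the characterization, which yields the middle inclusion $\mathcal C(Q,G,G_1)\subseteq\mathcal A_\ell^\le(Q,G)$, I take a loop $Q(L)$ returned in Step 5. Its translation set $L=\{1_G\}\cup\bigcup_{v\in C}v$ lies in $G$ and, being a clique of $H$-conjugacy classes with $\sum_{v\in C}|v|=|Q|-1$, satisfies conditions (i)--(iii) of Lemma \ref{Lm:LeftTranslations}; hence $(Q,*)$ is a loop with $\lmlt{Q,*}\le G$. Because $L$ is a union of $H$-conjugacy classes and each $\ell\in L$ is normalized by $\ell(1)=z$, every $\varphi\in H$ satisfies $\varphi L_x\varphi^{-1}=L_{\varphi(x)}$, so $H\le\aut{Q,*}$ by Lemma \ref{Lm:ConjClasses}(i). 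Taking $H=G_1$ then gives $(\lmlt{Q,*})_1\le G_1\le\aut{Q,*}$, so $(Q,*)$ is left automorphic.

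For the completeness half, which yields the two outer inclusions, I start with a loop $(Q,*)$ satisfying $\lmlt{Q,*}\le G$ and $H\le\aut{Q,*}$ and verify that the algorithm reaches and returns it without failing. For each orbit representative $x\in X$ I check $L_x\in\mathcal L_x$ (fixed-point freeness from Lemma \ref{Lm:LeftTranslations}(iii) with $y=1$, and $L_x\in C_G(H_x)$ from Lemma \ref{Lm:ConjClasses}(ii)); that $L_x^H=\{L_y:y\in H(x)\}\in\mathbb L_x$ with $|L_x^H|=|H(x)|$ by Lemma \ref{Lm:ConjClasses}(i); and that $\{L_x^H:x\in X\}$ is a clique whose class sizes sum to $|Q|-1$, using that translations indexed by distinct elements have fixed-point-free quotients. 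The reconstruction in Step 5 then returns exactly $(Q,*)$. Applying this with $H=G_1$: if $(Q,*)\in\mathcal A_\ell^=(Q,G)$ then $G_1=(\lmlt{Q,*})_1\le\aut{Q,*}$; and if $(Q,*)\in\mathcal A^=(Q,G)$ then $G_1=\inn{Q,*}\le\aut{Q,*}$ while $\lmlt{Q,*}\le\mlt{Q,*}=G$. In both cases the hypotheses of the completeness half hold, giving $(Q,*)\in\mathcal C(Q,G,G_1)$.

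The main obstacle is the bookkeeping in the completeness half: one must confirm that the combinatorial data the algorithm assembles (the candidate sets $\mathcal L_x$, the class sets $\mathbb L_x$, and the clique $C$) are exactly populated by the pieces $L_x^H$ of the target loop, and that no step stops with failure. The conceptual content is light---each condition in the algorithm is precisely one of the necessary conditions extracted from Lemmas \ref{Lm:LeftTranslations} and \ref{Lm:ConjClasses}---but care is needed to see that the $H$-conjugacy classes partition the nonidentity left translations and that distinct orbits contribute index-disjoint translations, so that the clique edges and the size count $\sum_{x\in X}|H(x)|=|Q|-1$ come out correctly.
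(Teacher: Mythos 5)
Your proof is correct and follows essentially the same route as the paper: both arguments reduce the three inclusions to the input/output specification of Algorithm \ref{Al:Basic} specialized at $H=G_1$, using $\linn{Q,*}=\lmlt{Q,*}_1$ and, for $\mathcal A^=(Q,G)$, that $G_1=\inn{Q,*}\le\aut{Q,*}$. The only difference is one of bookkeeping: the paper takes the algorithm's correctness as already established by the parenthetical justifications inside Steps 1--5 (via Lemmas \ref{Lm:LeftTranslations} and \ref{Lm:ConjClasses}), whereas you re-verify that soundness and completeness explicitly, which is harmless and fills in the same details.
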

\begin{proof}
First let $(Q,*)\in \mathcal A_\ell^=(Q,G)$. Then $\linn{Q,*} = \lmlt{Q,*}_1 = G_1$, and therefore $(Q,*)\in \mathcal C(Q,G,G_1)$. Now let $(Q,*)\in C(Q,G,G_1)$. Then $\lmlt{Q,*}\le G$ because every left translation of $(Q,*)$ is in $G$. Since $\linn{Q,*} = \lmlt{Q,*}_1\le G_1\le\aut{Q,*}$, the loop $(Q,*)$ is left automorphic. Finally, let $(Q,*)\in \mathcal A^=(Q,G)$. Then $\lmlt{Q,*}\le G$ and $G_1=\mlt{Q,*}_1 = \inn{Q,*}\le\aut{Q,*}$. Thus $(Q,*)\in\mathcal C(Q,G,G_1)$.
\end{proof}

Lemma \ref{Lm:LeftAlgorithm} can be used to find all left automorphic loops on the set $Q=\{1,\dots,d\}$ with identity element $1$. It suffices to apply the lemma to all transitive groups $G$ in $Q$ and discard duplicate loops.

\subsection{Searching for finite simple automorphic loops}

Recall that a loop $Q$ is said to be \emph{simple} if it has no normal subloops except for $Q$ and $1$.

In principle, Algorithm \ref{Al:Basic} returns all finite left automorphic loops, and hence also all finite simple automorphic loops. In practice, the algorithm is too slow to get to even moderately large orders. In this section we will describe improvements to the algorithm so that it can check for simple automorphic loops of order up to several thousands.

The key results are due to Albert and Vesanen. Albert's result is easy to prove, Vesanen's not so much.

\begin{theorem}[{{\cite[Theorem 8]{Albert}}}]
A loop $Q$ is simple if and only if its multiplication group $\mlt{Q}$ acts primitively on $Q$.
\end{theorem}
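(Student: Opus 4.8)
The statement to prove is Albert's theorem: a loop $Q$ is simple if and only if $\mlt{Q}$ acts primitively on $Q$. The plan is to translate loop-theoretic normality into the language of block systems for the multiplication group, exploiting the fact that normal subloops are exactly the fibers of loop congruences, and that congruences on a loop correspond precisely to $\mlt{Q}$-invariant partitions of $Q$.

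First I would recall the bridge between normal subloops and partitions. A subloop $N$ is normal in $Q$ precisely when the left cosets $xN$ partition $Q$ and this partition is a congruence, equivalently when the partition $\{xN : x\in Q\}$ is preserved by all the translations $L_x$ and $R_x$ (and hence by all of $\mlt{Q}$). The key observation is that the block containing the identity element $1$ recovers $N$ itself, since $N = 1\cdot N$. Thus every normal subloop gives rise to a $\mlt{Q}$-invariant partition (a block system), with $N$ as the block through $1$.

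Next I would prove the two directions. For the forward direction, suppose $Q$ is simple and $\mlt{Q}$ is \emph{not} primitive; then there is a nontrivial block system $\mathcal{B}$, and I let $B$ be the block containing $1$. I would argue that $B$ is a subloop: it is closed under the operations because the invariance of $\mathcal{B}$ under $L_x, R_x$ and their inverses (for $x\in B$) forces products, left quotients and right quotients of elements of $B$ back into $B$ (the block through $1$ is fixed setwise by any translation by an element of $B$). Then $B$ is a normal subloop, nontrivial and proper because the block system is nontrivial, contradicting simplicity. For the converse, suppose $\mlt{Q}$ acts primitively and let $N$ be a normal subloop; the coset partition $\{xN\}$ is a $\mlt{Q}$-invariant partition, hence a block system, so by primitivity it is trivial, forcing $N = 1$ or $N = Q$; thus $Q$ is simple.

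\textbf{Main obstacle.} The routine algebraic verifications (that the identity-block is closed under multiplication and both divisions) are straightforward but must be done carefully, since in a loop one cannot cancel freely; the real conceptual content is the correspondence between normal subloops and $\mlt{Q}$-invariant partitions. The one step deserving genuine care is showing that an arbitrary $\mlt{Q}$-invariant block system is \emph{the coset partition of its identity-block}, i.e.\ that blocks are exactly the translates $L_x(B)$ of the identity-block $B$; this uses transitivity of $\mlt{Q}$ together with the fact that blocks are either disjoint or equal. Once that structural fact is in hand, both implications follow immediately, so I expect the heart of the argument to be pinning down this correspondence rather than either direction of the final equivalence.
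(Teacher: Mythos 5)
Your proof is correct. Note first that the paper itself offers no argument for this theorem --- it is quoted from Albert's \emph{Quasigroups I} with the remark that it ``is easy to prove'' --- so there is no internal proof to compare against; your write-up is essentially the standard one, and all of its steps go through. The closure of the identity block $B$ under the three loop operations is sound: for $x\in B$, the images $L_x(B)$ and $R_x(B)$ are blocks containing $L_x(1)=x\in B$, resp.\ $R_x(1)=x\in B$, hence equal $B$ by the disjoint-or-equal property, and likewise $L_x^{-1}(B)=B=R_x^{-1}(B)$ gives closure under $\ldiv$ and $\rdiv$. Two small points are worth making explicit. First, the normality of $B$, which you assert in one clause, is quickest not via identifying the whole block system with the coset partition of $B$, but via the characterization the paper itself uses in the proof of Lemma \ref{Lm:CharNormal}: a subloop is normal iff it is fixed setwise by every inner mapping, and any $\varphi\in\inn{Q}$ fixes $1$ and permutes blocks, hence fixes $B$. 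Second, the step you flag as the main obstacle --- that blocks are exactly the translates of $B$ --- is indeed needed, but only to convert nontriviality of the partition into $1<|B|<|Q|$ (if $B=\{1\}$, transitivity of $\mlt{Q}$ forces all blocks to be singletons; if $B=Q$ the partition is $\{Q\}$); it follows in one line, since the block through any $x$ contains $L_x(1)=x$ and hence equals $L_x(B)$. The converse direction (the coset partition of a normal subloop is $\mlt{Q}$-invariant, so primitivity forces $N=1$ or $N=Q$) is routine as you describe, using that $\mlt{Q}$ is transitive because $L_x(1)=x$.
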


\begin{theorem}[\cite{Vesanen}]\label{Th:Vesanen}
Let $Q$ be a finite loop. If $\mlt{Q}$ is solvable then $Q$ is solvable.
\end{theorem}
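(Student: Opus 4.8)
The plan is to induct on $|Q|$, exploiting the correspondence between normal subgroups of $G=\mlt{Q}$ and normal subloops of $Q$, and to drive everything down to a single genuinely hard case.

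First, the reductions. Assume $Q\ne 1$ (the base case is immediate) and pick a minimal nontrivial normal subgroup $M$ of the solvable group $G$; then $M$ is elementary abelian. Since $\inn{Q}=G_1$ is core-free in $G$ (the action of $G$ on $Q$ is faithful and transitive, so $\bigcap_g G_1^{\,g}=1$), a nontrivial $M\lhd G$ cannot lie in $G_1$, and hence $M$ has a nontrivial orbit. The orbit $N=\{m(1):m\in M\}$ of the identity is the block of $1$ in the $G$-invariant partition of $Q$ into $M$-orbits, so $N$ is a normal subloop of $Q$ with $|N|>1$. I would then record the two transfer facts that feed the induction: (a) the action of $G$ on the cosets of $N$ presents $\mlt{Q/N}$ as a quotient of $G$; and (b) for the subloop $N$ the subgroup $\langle L_a,R_a:a\in N\rangle\le G$ stabilizes $N$ setwise and restricts onto $\mlt{N}$ (since $L_a(b)=ab\in N$ for $a,b\in N$). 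As subquotients of a solvable group, both $\mlt{Q/N}$ and $\mlt{N}$ are solvable.

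Now split on the size of $N$. If $N\ne Q$, then $|N|,|Q/N|<|Q|$, so by induction $N$ and $Q/N$ are solvable, and $Q$ is solvable because solvability of loops is closed under extensions. Thus we may assume $N=Q$, i.e. $M$ is transitive. A transitive abelian permutation group is regular, so $M$ is a regular elementary abelian normal subgroup; hence $|Q|=|M|=p^n$, $G=M\rtimes G_1$, and $M$ is self-centralizing, $C_G(M)=M$. (Equivalently, by Albert's theorem $Q$ is now simple, $G$ is primitive, and $M$ is the socle of the solvable primitive group $G$.)

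It remains to prove the crux: in this regular case $Q$ is a group. Write $m_x\in M$ for the unique element with $m_x(1)=x$ and identify $Q$ with $M$ via $x\mapsto m_x$; transporting the abelian group law of $M$ gives an operation $*$ with $m_x(z)=x*z$, and since $M$ is regular one has $L_x\in M\iff L_x=m_x\iff x\cdot z=x*z$ for all $z$, so $Q=(Q,*)$ is an abelian group precisely when $L_Q\subseteq M$. Both $L_x$ and $R_x$ lie in the coset $m_xG_1$, so $L_x=m_xh_x$ with $h_x\in G_1$, and the target is $h_x=1$ for every $x$. \emph{This is the main obstacle, and the only nonformal step:} the desired conclusion can fail for an arbitrary group possessing a regular abelian normal subgroup (a full holomorph, say), so associativity cannot follow from this group-theoretic data alone. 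One must instead use that $L_Q$ and $R_Q$ are $\inn{Q}$-connected transversals to the core-free subgroup $\inn{Q}=G_1$, i.e. $[L_Q,R_Q]\le\inn{Q}$, and combine this connected-transversal structure with the self-centralizing regular socle $M$ to force $L_Q\subseteq M$. Once $Q$ is shown to be a group it is an abelian group of order $p^n$; its simplicity (equivalently minimality of $M$) then forces $p^n=p$, so $Q\cong\mathbb Z_p$ is solvable, and the induction closes.
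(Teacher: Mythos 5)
Your reductions are essentially correct and follow the standard frame for this theorem: orbits of a normal subgroup $M\le\mlt{Q}$ are blocks, the orbit of $1$ is a normal subloop, $\mlt{Q/N}$ is a quotient of $\mlt{Q}$ and $\mlt{N}$ is a subquotient, loop solvability is closed under extensions, so induction reduces everything to a simple loop $Q$ whose multiplication group $G$ is primitive (Albert) and solvable, with regular elementary abelian socle $M$ and $G=M\rtimes G_1$. One slip in the bookkeeping: from $N=Q$ you conclude parenthetically that ``by Albert's theorem $Q$ is now simple,'' but transitivity of this particular minimal normal $M$ does not imply simplicity of $Q$ --- $Q$ could still have a proper nontrivial normal subloop $N'$ not arising as an $M$-orbit. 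The fix is easy (if $Q$ is not simple, your facts (a) and (b) applied to $N'$ close the induction, so assume $Q$ simple first and then invoke Albert for primitivity), but the order of quantifiers matters.

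The genuine gap is the step you yourself label the crux: forcing $L_Q\subseteq M$, i.e.\ $h_x=1$, in the simple/primitive case. You correctly observe that the bare group-theoretic data (solvable $G$ with self-centralizing regular elementary abelian normal subgroup) cannot suffice, and you correctly name the extra loop-theoretic input ($L_Q$ and $R_Q$ are transversals to the core-free $G_1$ with $[L_Q,R_Q]\le\inn{Q}$, in the sense of Niemenmaa--Kepka). But you then assert, without argument, that combining these ``forces $L_Q\subseteq M$.'' That implication is not a routine deduction and is not a citable lemma at this level of generality; it is the entire content of Vesanen's paper, whose proof runs through a detailed case analysis of solvable primitive permutation groups --- equivalently, of irreducible solvable subgroups $G_1\le GL(n,p)$ acting on $M$ --- interleaved with the transversal identities. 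So what you have is an accurate strategy and a correct identification of where the difficulty lives, but not a proof: the one nonformal step is precisely the theorem. For calibration, note that the surveyed paper does not prove this statement either; it cites Vesanen and explicitly remarks that, in contrast with Albert's theorem, this result is not easy to prove.
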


Recall that a partition of $Q$ is said to be \emph{trivial} if it is of the form $\{Q\}$ or of the form $\setof{\{x\}}{x\in Q}$. A group $G\le\sym{Q}$ \emph{preserves} a partition $\{B_1,\dots,B_n\}$ of $Q$ if for every $\varphi\in G$ and every $1\le i\le n$ there is $1\le j\le n$ such that $\varphi(B_i)=B_j$. A transitive permutation group $G\le\sym{Q}$ is \emph{primitive} if it does not preserve any nontrivial partition of $Q$. The \emph{degree} of $G$ is the cardinality of $Q$.

It is easy to see that every $2$-transitive group $G\le\sym{Q}$ is primitive. (Consider a nontrivial partition $\{B_1,\dots,B_n\}$ with $n\ge 1$, $B_1$ containing distinct elements $x$, $y$, and let $z\in B_2$. Let $\varphi\in G$ be such that $\varphi(x)=x$ and $\varphi(y)=z$. Then $\varphi(B_1)\ne B_j$ for every $1\le j\le n$.) Unlike finite $2$-transitive groups, finite primitive groups are not classified \cite{DiMo}. \texttt{GAP} contains a library of all primite groups of degree $<2500$. \texttt{MAGMA} \cite{BoCaPl} contains a library of all primitive groups of degree $<4096$.

\begin{lemma}\label{Lm:3Transitive}
If $Q$ is a loop of order bigger than $4$ and $H\le\aut{Q}$ then $H$ is not $3$-transitive on $Q\setminus\{1\}$.
\end{lemma}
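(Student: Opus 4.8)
The plan is to argue by contradiction, exploiting the single structural fact available about $H$: since $H\le\aut{Q}$, every $\varphi\in H$ satisfies $\varphi(xy)=\varphi(x)\varphi(y)$, so multiplication is an $H$-equivariant operation. Combined with high transitivity, equivariance forces the product $xy$ to behave \emph{uniformly} across all pairs or triples of points, and this uniformity is incompatible with the cancellation laws of a loop once $Q^{*}:=Q\setminus\{1\}$ is large enough. So I would suppose that $H$ is $3$-transitive on $Q^{*}$, write $n=|Q^{*}|=|Q|-1\ge 4$, and drive toward an absurdity.

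The first (and I expect the main) step is to show that for distinct $x,y\in Q^{*}$ the product $xy$ again lies in $Q^{*}\setminus\{x,y\}$. That $xy\ne x$ and $xy\ne y$ is immediate from cancellation: $xy=x=x\cdot 1$ forces $y=1$ since $L_x$ is a bijection, and dually $xy=y$ forces $x=1$ via $R_y$, both contradicting $x,y\in Q^{*}$. The delicate point is $xy\ne 1$. Here I would use only the $2$-transitivity implied by $3$-transitivity: if some distinct pair $x_0,y_0\in Q^{*}$ had $x_0y_0=1$, then for \emph{every} distinct pair $(x,y)$ there is $\varphi\in H$ with $\varphi(x_0)=x$, $\varphi(y_0)=y$, giving $xy=\varphi(x_0y_0)=\varphi(1)=1$. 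Fixing $x$, this says $L_x$ sends every $y\in Q^{*}\setminus\{x\}$ to $1$; since $n\ge 3$ there are at least two such $y$, contradicting injectivity of $L_x$. I regard this ``one value forces all values, which collides with a translation being a bijection'' argument as the crux of the proof.

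With that in hand the conclusion is quick. Choose any distinct $x_0,y_0\in Q^{*}$ and set $z_0=x_0y_0$; by the previous step $(x_0,y_0,z_0)$ is a triple of pairwise distinct elements of $Q^{*}$. Given an arbitrary triple $(a,b,c)$ of distinct elements of $Q^{*}$, $3$-transitivity supplies $\varphi\in H$ with $\varphi(x_0)=a$, $\varphi(y_0)=b$, $\varphi(z_0)=c$, whence
\[
    ab=\varphi(x_0)\varphi(y_0)=\varphi(x_0y_0)=\varphi(z_0)=c.
\]
Thus $ab=c$ for \emph{every} triple of distinct elements. But fixing distinct $a,b$, the product $ab$ is a single determined element, while $c$ is free to range over the $n-2$ elements of $Q^{*}\setminus\{a,b\}$. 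Since $|Q|>4$ gives $n-2\ge 2$, there are at least two admissible values of $c$, which is the desired contradiction; hence $H$ is not $3$-transitive on $Q\setminus\{1\}$. It is worth noting that the hypothesis $|Q|>4$ enters precisely to guarantee $n-2\ge 2$ in this final count.
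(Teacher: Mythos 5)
Your proof is correct and is essentially the paper's argument: both rest on producing a triple $(x,y,xy)$ of pairwise distinct elements of $Q\setminus\{1\}$ and then using $3$-transitivity together with $\varphi(xy)=\varphi(x)\varphi(y)$ to force $xy$ to take two different values, with $|Q|>4$ supplying the extra point needed to move the third coordinate. The only cosmetic difference is that you justify $xy\ne 1$ by a $2$-transitivity argument, where the paper implicitly uses the simpler fact that injectivity of $L_x$ allows at most one $y$ with $xy=1$.
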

\begin{proof}
Suppose that $H$ is $3$-transitive on $Q\setminus\{1\}$. Let $x$, $y\in Q$ be such that $|\{1$, $x$, $y\}|=3$ and $z=xy\ne 1$. Then $\{x,y,z\}$ is a subset of $Q\setminus\{1\}$ of cardinality $3$. Let $\varphi\in H$ be such that $\varphi(x)=x$, $\varphi(y)=y$ and $\varphi(z)\ne z$. (Here we use $|Q|>4$.) We reach a contradiction with $\varphi(z) = \varphi(xy) = \varphi(x)\varphi(y)=xy=z$.
\end{proof}

\begin{proposition}\label{Pr:BetterAlg}
All finite simple nonassociative automorphic loops are found in the set $\bigcup \mathcal C(Q,G,G_1)$, where the union is taken over sets $Q$ of even order and over primitive groups $G\le\sym{Q}$ that are not solvable and not $4$-transitive.
\end{proposition}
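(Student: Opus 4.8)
The plan is to take an arbitrary finite simple nonassociative automorphic loop $Q$, put $G=\mlt{Q}$, and show that this single choice of $G$ witnesses the membership of $Q$ in the asserted union. First I would note that since $Q$ is automorphic and $\mlt{Q}=G$, we have $Q\in\mathcal A^=(Q,G)$, so Lemma \ref{Lm:LeftAlgorithm} immediately gives $Q\in\mathcal C(Q,G,G_1)$. Because $Q$ is simple, Albert's theorem (\cite[Theorem 8]{Albert}) shows that $G$ acts primitively on $Q$. It then remains to verify the three restrictions on $G$: that $|Q|$ is even, that $G$ is not solvable, and that $G$ is not $4$-transitive.

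For the parity and nonsolvability conditions I would exploit a single observation, namely that a simple solvable loop must be an abelian group. Indeed, solvability supplies a series of normal subloops with abelian group factors, and simplicity collapses this series to $1\trianglelefteq Q$, forcing $Q$ itself to be an abelian group and hence associative. Now, if $|Q|$ were odd, then Theorem \ref{Th:AOddOrderTheorem} would make $Q$ solvable, contradicting nonassociativity; thus $|Q|$ is even. Likewise, if $G$ were solvable, then Vesanen's Theorem \ref{Th:Vesanen} would make $Q$ solvable and again force associativity; thus $G$ is not solvable.

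To rule out $4$-transitivity I would pass to the point stabilizer. Since $Q$ is automorphic, $G_1=\mlt{Q}_1=\inn{Q}\le\aut{Q}$. If $G$ were $4$-transitive on $Q$, then by the standard equivalence between $(k+1)$-transitivity of a group and $k$-transitivity of a point stabilizer, $G_1$ would be $3$-transitive on $Q\setminus\{1\}$. But $Q$ is nonassociative and automorphic, so $|Q|\ge 6>4$ (the smallest nonassociative automorphic loop being $Q_6$), and Lemma \ref{Lm:3Transitive} then forbids any subgroup of $\aut{Q}$ from being $3$-transitive on $Q\setminus\{1\}$. This contradiction shows $G$ is not $4$-transitive, and the proof is complete. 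The one genuinely delicate point is the reduction ``simple plus solvable implies abelian group''; everything else is a direct application of the cited results, so I would be careful to spell out that simplicity leaves no room for a nontrivial series and that the lone abelian group factor is therefore $Q$ itself.
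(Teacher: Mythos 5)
Your proposal is correct and follows essentially the same route as the paper's proof: take $G=\mlt{Q}$, get membership in $\mathcal C(Q,G,G_1)$ from Lemma \ref{Lm:LeftAlgorithm}, rule out odd order via Theorem \ref{Th:AOddOrderTheorem}, solvable $G$ via Theorem \ref{Th:Vesanen} (both through ``simple $+$ solvable $\Rightarrow$ abelian group''), and $4$-transitivity via Lemma \ref{Lm:3Transitive}. You are in fact slightly more careful than the paper in two spots--explicitly invoking Albert's theorem for primitivity and checking $|Q|>4$ before applying Lemma \ref{Lm:3Transitive}--both of which the paper leaves implicit.
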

\begin{proof}
Let $(Q,*)$ be a finite simple nonassociative automorphic loop of order $d>1$ with the identity element $1$. Let $G=\mlt{Q,*}$. If $(Q,*)$ is solvable then it is an abelian group, a contradiction. By Theorem \ref{Th:AOddOrderTheorem}, we can assume that $d$ is even. By Theorem \ref{Th:Vesanen}, $G$ is not solvable. If $G$ is $4$-transitive, then $G_1\le\aut{Q,*}$ is $3$-transitive on $Q\setminus\{1\}$, a contradiction with Lemma \ref{Lm:3Transitive}. It remains to show that $(Q,*)\in\mathcal C(Q,G,G_1)$. This follows from Lemma \ref{Lm:LeftAlgorithm}.
\end{proof}

Let $(Q,*)\in \bigcup \mathcal C(Q,G,G_1)$, where the union is as in Proposition \ref{Pr:BetterAlg}. Suppose that we run the algorithm by incrementally increasing the cardinality of $Q$, and, for a fixed $d=|Q|$, by incrementally increasing the order of $G$. When should we catalog $(Q,*)$ as a newly found finite simple nonassociative automorphic loop? We first calculate the order of $M=\mlt{Q,*}\le G$. If $|M|<|G|$ then $(Q,*)$ is guaranteed to be automorphic (since $\inn{Q,*}=M_1\le G_1\le\aut{Q,*}$) but either $M$ is not as in Proposition \ref{Pr:BetterAlg} or we have already seen $(Q,*)$ in $\mathcal C(Q,M,M_1)$, so we do not store $(Q,*)$. If $|M|>|G|$ then $(Q,*)$ is either not automorphic (checking this is expensive), or we will see the same loop later in $\mathcal C(Q,M,M_1)$, so we again do not store it. If $|M|=|G|$ then $(Q,*)$ is a finite simple nonassociative automorphic loop and we store it (upon checking for isomorphism against all already stored loops with the same multiplication group).

This search has been carried out in \cite{JoKiNaVo} for $d<2500$ and recently by Cameron and Leemans \cite{CaLe} for $d<4096$. The result is somewhat surprising:

\begin{proposition}
There are no finite simple nonassociative automorphic loops of order less than $4096$.
\end{proposition}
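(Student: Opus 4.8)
The statement is fundamentally a computer-verification result, so the plan is to organize an exhaustive but feasible search rather than to derive a closed-form argument. The starting point is Proposition \ref{Pr:BetterAlg}, which guarantees that every finite simple nonassociative automorphic loop $(Q,*)$ appears in some $\mathcal C(Q,G,G_1)$ with $|Q|$ even and $G\le\sym{Q}$ a primitive group that is neither solvable nor $4$-transitive. Thus it suffices to run Algorithm \ref{Al:Basic} on input $(Q,G,G_1)$ for every such $G$ of degree $d=|Q|<4096$ and to verify that none of the loops it produces is simple, nonassociative and automorphic with full multiplication group $G$.

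First I would assemble the list of admissible groups. The libraries of primitive groups---\texttt{GAP} \cite{GAP} for degree $<2500$ and \texttt{MAGMA} \cite{BoCaPl} for degree $<4096$---provide complete catalogues, from which one discards the odd-degree groups (harmless by the Odd Order Theorem \ref{Th:AOddOrderTheorem}), the solvable groups (harmless by Vesanen's Theorem \ref{Th:Vesanen}), and the $4$-transitive groups (harmless by Lemma \ref{Lm:3Transitive}). These three reductions are exactly what makes the search tractable, trimming an otherwise enormous list down to a manageable size.

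Next, for each surviving group $G$ I would execute Algorithm \ref{Al:Basic}. By Lemma \ref{Lm:LeftAlgorithm} the output $\mathcal C(Q,G,G_1)$ contains every automorphic loop with $\mlt{Q,*}=G$, so no candidate is missed. For each returned loop $(Q,*)$ I would then compute $M=\mlt{Q,*}$ and apply the cataloguing criterion described after Proposition \ref{Pr:BetterAlg}: only the case $|M|=|G|$ needs genuine inspection, since $|M|<|G|$ and $|M|>|G|$ either duplicate a loop already seen for a smaller group or will resurface later in the enumeration. In the relevant case one checks nonassociativity, the automorphic property, and simplicity (equivalently, primitivity of $M$ by Albert's theorem \cite{Albert}), discarding any loop that fails.

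The main obstacle is purely computational, not conceptual: it is the clique-enumeration in Step~5 of Algorithm \ref{Al:Basic}, whose cost grows sharply with the number and size of the candidate $H$-conjugacy classes in the sets $\mathbb L_x$. Controlling this blow-up---by exploiting the reduction $\mathcal L_x=\ell(C_G(H_x)_1)$ in Step~2, by pruning the graph $\Gamma$ aggressively with the fixed-point-free edge condition, and by treating the very largest primitive actions (such as high-degree alternating and symmetric groups) with tailored arguments---is where essentially all the effort lies. Once the search over all admissible $G$ of degree below $4096$ terminates with no surviving loop, the claim follows. This is precisely the computation carried out in \cite{JoKiNaVo} for $d<2500$ and extended by Cameron and Leemans \cite{CaLe} to $d<4096$.
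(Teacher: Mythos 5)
Your proposal is correct and follows essentially the same route as the paper: the paper likewise reduces the search via Proposition \ref{Pr:BetterAlg} to even-degree primitive groups that are neither solvable nor $4$-transitive, runs Algorithm \ref{Al:Basic} with the cataloguing criterion comparing $|\mlt{Q,*}|$ with $|G|$, and then cites the completed computations of \cite{JoKiNaVo} for degree $d<2500$ and of Cameron and Leemans \cite{CaLe} for $d<4096$. Nothing is missing; your account of where the computational cost lies (the clique enumeration in Step~5) is a fair elaboration of what the paper leaves implicit.
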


We remark that Algorithm \ref{Al:Basic} finds numerous finite simple nonassociative left automorphic loops.

\medskip

Are there any finite simple nonassociative commutative automorphic loops? The search for finite simple commutative automorphic loops can be reduced to orders $2^k$ by the following result (whose proof, incidentally, required another associated operation to show that a product of two squares is a square):

\begin{theorem}[\cite{JeKiVoStructure}]\label{Th:Decomposition}
Let $Q$ be a finite commutative automorphic loop. Then $Q$ is a direct product $A\times B$, where $A=\setof{x\in Q}{|x| = 2^n\text{ for some }n}$ and $B=\setof{x\in Q}{|x|\text{ is odd}}$. Morever, $|A|=2^m$ for some $m$ and $|B|$ is odd.
\end{theorem}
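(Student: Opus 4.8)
The plan is to exhibit $A$ and $B$ as normal subloops with $A\cap B=1$ and $AB=Q$, and then to invoke the isomorphism theorems for loops: if $A$ and $B$ are normal subloops of a finite loop with $A\cap B=1$ and $AB=Q$, then the product of the two canonical projections $Q\to Q/B\times Q/A$ has kernel $A\cap B=1$, hence is injective, and since $|Q|=|A||B|=|Q/B|\,|Q/A|$ it is an isomorphism; as $A\cong Q/B$ and $B\cong Q/A$ this gives $Q\cong A\times B$. Thus no associator bookkeeping is needed once the four set-theoretic facts are in hand. Two of them are immediate: $A\cap B=1$ because only $1$ has order that is simultaneously a power of $2$ and odd; and $AB=Q$ because, by power-associativity (Proposition \ref{Pr:PowerAssociative}), each $\langle x\rangle$ is a finite cyclic group, so $x$ factors in $\langle x\rangle$ as $x=a_xb_x$ with $a_x$ the $2$-part and $b_x$ the odd part, giving $a_x\in A$, $b_x\in B$. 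It remains to show that $A$ and $B$ are subloops (they are then characteristic, since automorphisms preserve orders, hence normal by Lemma \ref{Lm:CharNormal}) and to pin down $|A|$ and $|B|$.

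The heart of the argument, and the step I expect to be the main obstacle, is the closure of $B$ under multiplication. Here I would first prove the lemma flagged in the statement: in a finite commutative automorphic loop the set of squares $\setof{x^{2}}{x\in Q}$ is a subloop, equivalently the product of two squares is again a square. As the statement hints, I do not expect this to fall out of direct manipulation of \eqref{Eq:Al}--\eqref{Eq:Am}; instead I would introduce an auxiliary associated operation on $Q$ (in the spirit of Lecture 2) under which squaring becomes tractable, and read off closure of the set of squares from the structure of that operation. Granting this lemma, apply it inside each term of the descending chain $Q\supseteq Q^{2}\supseteq (Q^{2})^{2}\supseteq\cdots$ of iterated square subloops; the chain stabilizes by finiteness, and its stable value is exactly $B$. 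Indeed, $x$ lies in every iterated square subloop iff $x$ is a repeated square arbitrarily often, and since $Q$ is finite with bounded $2$-exponent this forces $|x|$ to be odd, while every odd-order $x$ is such a repeated square because squaring bijects the odd cyclic group $\langle x\rangle$. Each iterated square set is preserved by every $\varphi\in\aut Q$ because $\varphi(x^{2})=\varphi(x)^{2}$, so $B$ is characteristic, hence normal by Lemma \ref{Lm:CharNormal}. Finally $|B|$ is odd: the inversion map restricts to an involution of the finite set $B$ whose only fixed point is $1$, exactly as in the proof of Lemma \ref{Lm:OddDiv}.

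For the $2$-part I would argue with one extra input. Passing to $\bar Q=Q/B$, every element has $2$-power order, since $x^{2^{e}}\in B$ for $e$ the $2$-exponent of $Q$ and hence $\bar x^{2^{e}}=\bar 1$. The remaining technical point---that such a loop has order a power of $2$, and correspondingly that the set $A$ of $2$-power-order elements is a subloop complementing $B$---I would obtain from a Cauchy-type statement for commutative automorphic loops: an element of odd prime order necessarily lies in $B$, so no odd prime divides $|\bar Q|$ and $|\bar Q|=2^{m}$. By induction on $|Q|$ this reduces to a commutative automorphic loop of exponent $2$, where a short separate analysis gives $2$-power order; together with $|B|$ odd and $\gcd(|\bar Q|,|B|)=1$ it also yields $A$ as a normal subloop with $|A|=|Q|/|B|=2^{m}$. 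With $A$ and $B$ normal, $A\cap B=1$ and $AB=Q$, the isomorphism from the first paragraph delivers $Q\cong A\times B$ with $|A|=2^{m}$ and $|B|$ odd.

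I expect essentially all the difficulty to be concentrated in the squares lemma of the second paragraph: once it is available, the odd part splits off cleanly and the odd-order theory (Theorems \ref{Th:ACauchyLagrange} and \ref{Th:Greer}) applies to $B$, while the $2$-part argument, though it needs the auxiliary Cauchy statement, is structurally routine.
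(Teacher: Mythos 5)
You should first note what you are up against: the paper itself contains no proof of Theorem \ref{Th:Decomposition} --- it is quoted from \cite{JeKiVoStructure}, with only the parenthetical hint that the proof ``required another associated operation to show that a product of two squares is a square'' --- so the comparison is against that source and that hint. Your scaffolding is correct and matches it: the internal-direct-product reduction is sound (with the small supplement that $|Q|=|A||B|$ follows by combining surjectivity of $(a,b)\mapsto ab$ with your injectivity count), $A\cap B=1$ and $AB=Q$ are as you say, the iterated-squares chain does stabilize at exactly $B$, and characteristicity plus Lemma \ref{Lm:CharNormal} and the inversion-involution parity argument for $|B|$ are fine. But the squares lemma, which you yourself identify as the heart, is entirely black-boxed: ``I would introduce an auxiliary associated operation under which squaring becomes tractable'' restates the goal without naming the operation or doing a single computation. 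Nothing from Lecture 2 works off the shelf --- the operation $x\circ y=(x^{-1}\ldiv y^2x)^{1/2}$ presupposes unique $2$-divisibility, which is precisely what fails on the $2$-part, and direct attacks via the $P$-maps stall because $P_xP_y$ is not of the form $P_z$ in general. In \cite{JeKiVoStructure} this lemma required a bespoke operation and a genuinely nontrivial argument; a proposal that defers exactly the step the paper flags as the content of the proof has a gap at its center.

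The $2$-part contains a second, independent gap, and here a step fails as written. The ``Cauchy-type statement'' you lean on cannot be quoted: Theorem \ref{Th:ACauchyLagrange} requires odd order, and Theorem \ref{Th:EvenCauchyLagrange} is derived in the paper \emph{from} the decomposition theorem, so any appeal in that direction is circular --- indeed the inference ``no element of odd prime order, hence no odd prime divides $|\bar Q|$'' \emph{is} Cauchy's theorem for $\bar Q$, not a reduction of it. Your induction via the characteristic (hence normal) subloop $\bar Q^2$ legitimately reduces to exponent $2$, but the promised ``short separate analysis'' there is absent, and it is not routine. Worse, even granting $|Q/B|=2^m$, your final inference --- ``together with $|B|$ odd and $\gcd(|\bar Q|,|B|)=1$ it also yields $A$ as a normal subloop'' --- does not follow: for $x,y\in A$ these numerical hypotheses give only that $(xy)^{2^c}\in B$ for some $c$, i.e., that the odd part of $xy$ lies in $B$, not that it is trivial. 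Compare $S_3$, where the quotient by the odd normal subgroup has order $2$ and yet the elements of $2$-power order form no subgroup: closure of $A$ cannot come from counting and gcd's alone, it must come from commutative-automorphic structure theory, and that is the second place where \cite{JeKiVoStructure} does real work that your proposal leaves unperformed.
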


With this decomposition at hand, we easily get:

\begin{theorem}[\cite{JeKiVoStructure}]\label{Th:EvenCauchyLagrange}
Let $Q$ be a finite commutative automorphic loop. Then the Cauchy and Lagrange theorems hold for $Q$.
\end{theorem}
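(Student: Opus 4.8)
The plan is to push everything through the splitting supplied by Theorem \ref{Th:Decomposition}. Write $Q = A\times B$, where $A = \{x\in Q : |x| = 2^n \text{ for some } n\}$ and $B = \{x\in Q : |x| \text{ is odd}\}$, with $|A| = 2^m$ and $|B|$ odd. Both factors are again commutative automorphic loops, $B$ has odd order, and the orders $|A| = 2^m$ and $|B|$ are coprime, so $|Q| = 2^m|B|$. The guiding idea is that all odd-prime behaviour is confined to $B$, where the odd-order result Theorem \ref{Th:ACauchyLagrange} applies verbatim, while the prime $2$ is confined to $A$.

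For Cauchy, let $p$ be a prime dividing $|Q|$. If $p$ is odd then $p$ divides $|B|$, and Theorem \ref{Th:ACauchyLagrange} applied to the odd-order automorphic loop $B$ yields an element of order $p$ lying in $B\le Q$. If $p = 2$ then $2\mid 2^m|B|$ forces $m\ge 1$, so $A$ is a nontrivial commutative automorphic loop of even order $2^m$. Here I would reuse the involution count from the proof of Lemma \ref{Lm:OddDiv}: since $A$ is flexible with two-sided inverses (Proposition \ref{Pr:Flexible}), the inversion map $J$ is a bijection of $A$ with $J^2 = \mathrm{id}$, and its fixed points are precisely the solutions of $x^2 = 1$. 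As $|A|$ is even, the number of fixed points of $J$ is even, so $1$ cannot be the only one; some $x\ne 1$ satisfies $x^2 = 1$ and thus has order exactly $2$. This disposes of Cauchy.

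For Lagrange, let $S\le Q$. Rather than wrestle with subloops of a direct product by hand, I would apply Theorem \ref{Th:Decomposition} to $S$ itself, obtaining $S = S_2\times S_{\mathrm{odd}}$ with $|S_2|$ a power of $2$ and $|S_{\mathrm{odd}}|$ odd. By the defining descriptions of $A$ and $B$, every element of $S$ of $2$-power order lies in $A$ and every element of odd order lies in $B$, so $S_2\le A$ and $S_{\mathrm{odd}}\le B$. Hence $|S_2|$ is a power of $2$ not exceeding $2^m$, so it divides $|A| = 2^m$, while $|S_{\mathrm{odd}}|$ divides $|B|$ by the Lagrange clause of Theorem \ref{Th:ACauchyLagrange}. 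Therefore $|S| = |S_2|\,|S_{\mathrm{odd}}|$ divides $2^m|B| = |Q|$.

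The real content has been front-loaded into Theorem \ref{Th:Decomposition}; once that splitting is in hand the argument is short. The only two points that need genuine care are producing an element of order exactly $2$ (not merely of even order) in the $p = 2$ case, for which the involution-counting argument is the cleanest route, and the observation that applying the decomposition theorem to $S$ makes its two parts nest into $A$ and $B$ respectively, replacing the usual coprimality bookkeeping for subgroups of a direct product. I do not expect either to present a serious obstacle; the substantive work lies entirely in the quoted decomposition theorem.
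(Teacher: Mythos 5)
Your proposal is correct and follows exactly the route the paper intends: the paper states this theorem with only the remark ``with this decomposition at hand, we easily get,'' i.e.\ it derives Cauchy and Lagrange from Theorem \ref{Th:Decomposition} together with the odd-order Theorem \ref{Th:ACauchyLagrange}, which is precisely your argument. Your two fill-in details are also sound: the involution count on $A$ (valid since $A$, being a commutative automorphic loop, is power-associative with two-sided inverses) produces an element of order exactly $2$, and applying the decomposition to $S$ itself makes $|S_2|$ a power of $2$ bounded by $2^m$, hence a divisor of it, so no Lagrange-type statement for $2$-loops is ever needed.
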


It is much harder to deduce solvability in the even case. Grishkov, Kinyon and Nagy used advanced results on Lie algebras to prove:

\begin{theorem}[\cite{GrKiNa}]\label{Th:EvenSolvable}
Every finite commutative automorphic loop is solvable.
\end{theorem}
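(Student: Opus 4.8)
The plan is to reduce the statement to the case of a $2$-loop and then to rule out nonabelian finite simple commutative automorphic loops of $2$-power order. First I would invoke the decomposition. By Theorem \ref{Th:Decomposition}, a finite commutative automorphic loop $Q$ splits as a direct product $Q = A\times B$ with $|A|=2^m$ and $|B|$ odd. Since every commutative automorphic loop is a $\Gamma$-loop (as justified in the discussion surrounding Figure \ref{Fg:BGA}), the odd factor $B$ is solvable by Theorem \ref{Th:Greer}. A direct product of two solvable loops is solvable, so it suffices to prove that every commutative automorphic loop of order $2^m$ is solvable.

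Next I would argue by minimal counterexample. Let $Q$ be a commutative automorphic loop of order $2^m$ of least order that fails to be solvable. If $N$ is a proper nontrivial normal subloop, then $N$ and $Q/N$ are again commutative automorphic loops (subloops and quotients of such are such) of smaller $2$-power order, hence solvable by minimality; since an extension of a solvable loop by a solvable loop is solvable, $Q$ would be solvable, a contradiction. Therefore $Q$ is simple and, being nonsolvable, is not an abelian group. The goal thus becomes: no finite simple nonabelian commutative automorphic loop of $2$-power order exists.

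At this point none of the correspondence machinery of Lecture 2 applies directly: the associated Bruck loop, $\Gamma$-loop, and Lie ring constructions (Propositions \ref{Pr:AutomorphicBruck} and \ref{Pr:Greer}, and Theorem \ref{Th:AutLie}) all require unique $2$-divisibility, which fails completely for a $2$-loop. This is precisely the source of the difficulty, and the reason new tools are needed. The approach I would take is to attach to the simple loop $Q$ a Lie algebra $\mathfrak{g}$ over $GF(2)$, built from the elementary abelian sections of a suitable central or derived series of $Q$ with bracket induced by the associator data carried by the inner mappings $L_{x,y}$, arranged so that normal subloops of $Q$ correspond to ideals of $\mathfrak{g}$ and so that the commutative automorphic identities (together with the antiautomorphic inverse property of Proposition \ref{Pr:AAIP}) force strong defining relations on $\mathfrak{g}$. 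Simplicity of $Q$ should then force $\mathfrak{g}$ to be either abelian or simple.

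The main obstacle is the analysis of $\mathfrak{g}$: ruling out the simple alternative requires advanced structural and classification results for modular Lie algebras, showing that the relations inherited from the loop are incompatible with simplicity in characteristic $2$. Once $\mathfrak{g}$ is forced to be abelian, the associator data trivializes and $Q$ collapses to an elementary abelian group, contradicting its nonsolvability and completing the argument. I expect the construction of $\mathfrak{g}$ with the correct functorial behavior (normal subloops $\leftrightarrow$ ideals) and the classification step eliminating the simple case to absorb essentially all of the work, while the two reductions above are routine.
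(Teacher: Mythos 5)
Your two reductions are correct and they do match the known route to this theorem: Theorem \ref{Th:Decomposition} splits $Q=A\times B$ with $|A|=2^m$ and $|B|$ odd, commutative automorphic loops of odd order are $\Gamma$-loops so Theorem \ref{Th:Greer} makes $B$ solvable, and the minimal-counterexample argument (normal subloops and quotients inherit the variety, extensions of solvable by solvable are solvable) reduces everything to showing no finite simple nonassociative commutative automorphic $2$-loop exists. You could even push one step further for free, as in the proof of Theorem \ref{Th:AOddOrderTheorem}: $Q_2=\setof{x\in Q}{x^2=1}$ is characteristic, hence normal by Lemma \ref{Lm:CharNormal}, and nontrivial by Theorem \ref{Th:EvenCauchyLagrange}, so a simple counterexample has exponent $2$. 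Be aware, though, that the paper you are working from states this theorem \emph{without proof}, citing \cite{GrKiNa} and remarking only that the proof uses ``advanced results on Lie algebras''; so there is no in-paper argument your sketch could be matching beyond these reductions.

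And that is the problem: past the reductions, your text is a research program, not a proof. The entire content of \cite{GrKiNa} lies in actually constructing the characteristic-$2$ Lie-algebraic object and actually citing and applying the deep modular Lie algebra results that kill the simple case, and you supply neither. Worse, the one concrete recipe you do give is unworkable: you propose to build $\mathfrak{g}$ ``from the elementary abelian sections of a suitable central or derived series,'' but your $Q$ is simple, so $Z(Q)=1$ and $Q'=Q$ --- there are no nontrivial central or derived sections to build from. Nor can the machinery of Theorem \ref{Th:AutLie} be routinely adapted: the bracket \eqref{Eq:Bracket} and the passage to $(Q,\circ)$ rest on unique $2$-divisibility (square roots), which fails totally in a loop of exponent $2$; this is exactly why the even case resisted the Lecture 2 methods and required new ideas. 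As written, your proposal correctly locates the difficulty and then defers it wholesale (``I expect the construction \dots and the classification step \dots to absorb essentially all of the work''), so the heart of the theorem remains unproved.
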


Thus there are no finite simple nonassociative commutative automorphic loops.

\subsection{Commutative automorphic loops of order $pq$}

Recall that a power-associative loop $Q$ is a $p$-loop if every element of $Q$ has order that is a power of $p$. From Theorem \ref{Th:EvenCauchyLagrange} we easily deduce that, for an odd prime $p$, a finite automorphic loop is a $p$-loop if and only if $|Q|$ is a power of $p$.

Let us now consider finite commutative automorphic loops. Unlike in abelian groups, the direct factor $B$ from Theorem \ref{Th:Decomposition} does not necessarily decompose as a direct product of $p$-loops. In fact, for certain odd primes $p>q$, Dr\'apal constructed a nonassociative commutative automorphic loop $Q$ of order $pq$, which therefore does not factor as a direct product of an automorphic loop of order $p$ and an automorphic loop of order $q$. We will discuss his construction at the end of this subsection. First we have a look at commutative automorphic loops of order $pq$ in general.

\begin{lemma}\label{Lm:NonCyclic}
Let $Q$ be a power-associative loop. Then $Q/Z(Q)$ is never a nontrivial cyclic group.
\end{lemma}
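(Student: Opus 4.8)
This is the loop-theoretic analogue of the classical fact that a group $G$ with $G/Z(G)$ cyclic must be abelian, and the plan is to imitate that argument, with power-associativity and the nuclear nature of central elements compensating for the absence of global associativity. Assuming for contradiction that $Q/Z(Q)$ is a nontrivial cyclic group, I will show that $Q$ is in fact an abelian group, whence $Z(Q)=Q$ and $Q/Z(Q)$ is trivial---a contradiction.

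First I would fix a generator $xZ(Q)$ of the cyclic quotient. Since $Q$ is power-associative, the powers $x^n$ are well defined and satisfy $x^ix^j=x^{i+j}$; and since $Z(Q)$ is a normal subloop while the quotient is a group, every coset equals $x^nZ(Q)$ for some $n\in\mathbb Z$. Hence every element of $Q$ can be written in the normal form $x^nz$ with $z\in Z(Q)$.

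The technical heart of the argument is a normal-form multiplication rule: for $z_1,z_2\in Z(Q)$,
\begin{displaymath}
    (x^iz_1)(x^jz_2)=x^{i+j}(z_1z_2).
\end{displaymath}
I would prove this by pushing the central factors past the powers of $x$ one containment at a time: using $z_2\in N_r(Q)$ to peel it off on the right, then $z_1\in N_m(Q)$ together with $z_1x^j=x^jz_1$ to move $z_1$ across $x^j$, then $z_1\in N_r(Q)$ and power-associativity to collapse $x^ix^j$ to $x^{i+j}$, and finally the fact that $z_1,z_2$ lie in the (associative) nucleus to combine $z_1z_2$. This is the step I expect to be the main obstacle: each re-association must be justified by the \emph{precise} nucleus containing the central element being moved, and it is exactly here that one cannot appeal to associativity in general.

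With the normal-form rule in hand the rest is routine. Commutativity of $Q$ follows because central elements commute with one another, so $(x^iz_1)(x^jz_2)=x^{i+j}(z_1z_2)=x^{i+j}(z_2z_1)=(x^jz_2)(x^iz_1)$. Associativity follows by applying the rule twice to a triple product and invoking associativity inside the group $Z(Q)$, giving both $a(bc)$ and $(ab)c$ the common value $x^{i+j+k}(z_1z_2z_3)$. Thus $Q$ is a commutative group, so $N(Q)=Q$ and every element commutes with every other, forcing $Z(Q)=Q$ and contradicting the assumed nontriviality of $Q/Z(Q)$.
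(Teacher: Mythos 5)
Your proof is correct and follows essentially the same route as the paper's: write every element in the normal form $x^iz$ with $z\in Z(Q)$, then use centrality (central elements lie in all nuclei and commute with everything) together with power-associativity of $\langle x\rangle$ to re-associate arbitrary triple products. The only difference is cosmetic --- the paper stops once associativity is established and invokes the classical fact that in a group $G/Z(G)$ is never a nontrivial cyclic group, whereas you additionally extract commutativity from your normal-form rule and conclude $Z(Q)=Q$ directly, a slightly more self-contained finish.
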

\begin{proof}
Suppose that $Q/Z(Q)$ is cyclic of order $m>1$. Then there is $x\in Q\setminus Z(Q)$ such that $xZ(Q)$ has order $m$ in $Q/Z(Q)$ and $Q = \bigcup_{0\le i<m} x^iZ(Q)$. Therefore any element of $Q$ can be written as $x^ia$ for some $0\le i<m$ and $a\in Z(Q)$. With three elements of $Q$ written in this form, we calculate
\begin{displaymath}
    (x^ia\cdot x^jb)\cdot x^kc = (x^ix^j)x^k\cdot abc = x^i(x^jx^k)\cdot abc = x^ia\cdot (x^jb\cdot x^kc),
\end{displaymath}
where we have used $a$, $b$, $c\in Z(Q)$ and power-associativity for $\langle x\rangle$. Hence $Q$ is a group, and the result follows from the well-known fact that, in groups, $Q/Z(Q)$ is never a nontrivial cyclic group.
\end{proof}

Niederreiter and Robinson proved the following result while studying Bol loops of order $pq$:

\begin{proposition}[\cite{NiRo}]\label{Pr:NR}
Let $Q$ be a left Bol loop of order $pq$ with odd primes $p>q$. Then $Q$ contains a unique subloop of order $p$.
\end{proposition}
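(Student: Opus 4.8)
The plan is to transport the problem to the associated left Bruck loop and there reduce it to a single counting statement. Since $|Q|=pq$ is odd, $(Q,\cdot)$ is uniquely $2$-divisible, so Proposition \ref{Pr:BolBruck} furnishes the associated left Bruck loop $(Q,\circ)$, in which powers agree with those of $(Q,\cdot)$ and every subloop of $(Q,\cdot)$ is again a subloop. The decisive observation is that, because powers coincide, the set $S=\setof{x\in Q}{x^p=1}$ is literally the \emph{same} subset of $Q$ in $(Q,\cdot)$ and in $(Q,\circ)$; moreover any subloop of order $p$ (in either loop) consists of elements satisfying $x^p=1$ and so lies in $S$, hence equals $S$ as soon as $|S|=p$. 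Thus the whole proposition reduces to the single assertion $|S|=p$, which I am free to verify in the more tractable Bruck loop $(Q,\circ)$.

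First, existence. By Theorem \ref{Th:Glauberman} (Cauchy) there is $x\in(Q,\circ)$ with $|x|=p$; since powers agree, $x$ has order $p$ in $(Q,\cdot)$ as well, and $\langle x\rangle$ is a subloop of order $p$, so $|S|\ge p$. It remains to prove $|S|\le p$, which I do inside $(Q,\circ)$. By Theorem \ref{Th:Glauberman} the Bruck loop $(Q,\circ)$ is solvable, and since $|Q|=pq$ is composite it is not simple, so it has a nontrivial proper normal subloop $N$, necessarily of order $p$ or $q$ by Lagrange.

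If $|N|=p$, I claim $N$ is the unique subloop of order $p$. Indeed, any other subloop $H$ of order $p$ meets $N$ only in $1$ (their intersection is a subloop whose order divides $p$), so the product $NH$, a subloop because $N$ is normal, has $|NH|=|N|\,|H|/|N\cap H|=p^2$; this is impossible since $p>q$ forces $p^2>pq=|Q|$. Hence $S=N$ and $|S|=p$, completing the proof via the reduction above. The case $|N|=q$ is the genuine obstacle: here $(Q,\circ)/N$ has prime order $p$, hence (being a centrally nilpotent Bruck loop, by the last line of Theorem \ref{Th:Glauberman}) is the cyclic group of order $p$, and the product argument no longer yields a size contradiction, since $q^2<pq$. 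What must be shown is that a Sylow $p$-subloop of $(Q,\circ)$ is then still normal, equivalently $|S|=p$.

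I expect controlling the case $|N|=q$ to be the hard part of the argument. The natural approach mirrors the group case: the extension of $N\cong\mathbb Z_q$ by $(Q,\circ)/N\cong\mathbb Z_p$ should be forced to be ``untwisted'' because $p>q$ implies $p\nmid q-1=|\aut{\mathbb Z_q}|$, so the (loop-theoretic) action of the quotient on $N$ is trivial and $(Q,\circ)$ is the cyclic group $\mathbb Z_{pq}$, which has a unique subloop of order $p$. Making this precise requires an extension/action theory for Bruck loops in place of group cohomology, or, alternatively, extracting from Glauberman's finer analysis \cite{Glauberman1} underlying Theorem \ref{Th:Glauberman} the Sylow congruence $n_p\equiv 1\pmod p$ for the number $n_p$ of subloops of order $p$; combined with the elementary bound $n_p(p-1)\le pq-1$, which gives $n_p\le q<p$, this congruence would immediately yield $n_p=1$ and hence $|S|=p$. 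Either way, the crux is supplying, for loops, the counting or normality input that conjugation of Sylow subgroups provides for free in group theory.
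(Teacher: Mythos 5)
First, a point of comparison: the paper does not prove Proposition \ref{Pr:NR} at all --- it is quoted from Niederreiter and Robinson \cite{NiRo}, whose argument is a direct analysis of Bol loops of order $pq$ and is not a formal consequence of anything stated in these notes. So your proposal is measured against the cited literature, not against an in-paper proof. Your reduction is sound as far as it goes: since powers in $(Q,\cdot)$ and $(Q,\circ)$ coincide (Proposition \ref{Pr:BolBruck}), the set $S=\{x\in Q : x^p=1\}$ is indeed the same subset in both loops, every subloop of order $p$ of $(Q,\cdot)$ is a subloop of $(Q,\circ)$ and lies in $S$ (Lagrange from Theorem \ref{Th:Glauberman}, transported), and existence follows from Cauchy in $(Q,\circ)$. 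The case where the nontrivial proper normal subloop $N$ of $(Q,\circ)$ has order $p$ is also correctly dispatched: the isomorphism theorems hold for loops, so $|NH|=|N||H|/|N\cap H|=p^2>pq$ gives the contradiction you want.

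The genuine gap is the case $|N|=q$, which you explicitly leave unproven --- and it is not a corner case but the entire content of the proposition, since a priori the minimal normal subloop of $(Q,\circ)$ could well have order $q$. Neither of your proposed rescues is available off the shelf. Theorem \ref{Th:Glauberman}, as stated, gives existence of Sylow $p$-subloops but says nothing about their conjugacy or number; the congruence $n_p\equiv 1\pmod p$ is exactly what conjugation of Sylow subgroups buys you for free in groups, and supplying a loop-theoretic substitute is precisely the nontrivial work, not a citation. Likewise, the ``$p\nmid q-1$ forces an untwisted extension'' argument presupposes an extension/action theory for Bruck loops in place of group cohomology, which neither this paper nor Theorem \ref{Th:Glauberman} provides; one would have to descend into Glauberman's correspondence between Bruck loops of odd order and groups with an involutory automorphism (or into the multiplication-group analysis that \cite{NiRo} actually carries out) to extract it. As submitted, then, this is a correct reduction plus half of a case analysis, not a proof: the conclusion $|S|=p$ in the $|N|=q$ case --- equivalently, normality of the Sylow $p$-subloop --- remains an honest open step, and it is the step for which the paper defers to \cite{NiRo}.
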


\begin{lemma}\label{Lm:pq}
Let $Q$ be a nonassociative commutative automorphic loop of order $pq$ with odd primes $p>q$. Then $Z(Q)=1$, $Q$ contains a normal subgroup $S$ of order $p$, and all elements of $Q\setminus S$ have order $q$.
\end{lemma}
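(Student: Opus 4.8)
The plan is to treat the three assertions in turn, with the associated left Bruck loop serving as the main tool. First I would show that $Z(Q)=1$. The center is a normal subloop, so by the Lagrange theorem (Theorem \ref{Th:EvenCauchyLagrange}) its order divides $pq$ and hence lies in $\{1,p,q,pq\}$. The value $pq$ is impossible, since then $Q=Z(Q)$ would be a commutative group, contradicting nonassociativity. If $|Z(Q)|\in\{p,q\}$, then $Q/Z(Q)$ would be a nontrivial group of prime order, hence cyclic, which is forbidden for the power-associative loop $Q$ by Lemma \ref{Lm:NonCyclic}. Therefore $Z(Q)=1$.

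Next I would produce the normal subgroup $S$ of order $p$. Since $Q$ has odd order it is uniquely $2$-divisible (Lemma \ref{Lm:OddDiv} together with the Lagrange theorem of Theorem \ref{Th:ACauchyLagrange}), so by Proposition \ref{Pr:AutomorphicBruck} the associated left Bruck loop $(Q,\circ)$ is a left Bol loop of order $pq$ in which powers agree with those of $(Q,\cdot)$. By the Niederreiter--Robinson result (Proposition \ref{Pr:NR}), $(Q,\circ)$ has a unique subloop $S$ of order $p$; uniqueness forces every automorphism of $(Q,\circ)$ to fix $S$ setwise, so $S$ is characteristic in $(Q,\circ)$, and Lemma \ref{Lm:NormalInDot} then yields that $S$ is a normal subloop of $(Q,\cdot)$. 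As a subloop of the automorphic loop $(Q,\cdot)$ of prime order $p$, $S$ is itself automorphic, hence associative by Corollary \ref{Cr:pAut}; thus $S$ is a normal subgroup of order $p$.

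Finally I would pin down the orders of the remaining elements. No element of $Q$ can have order $pq$, for otherwise it would generate a cyclic group equal to all of $Q$, making $Q$ associative. By Cauchy and Lagrange (Theorem \ref{Th:EvenCauchyLagrange}) the only surviving possibilities are $1$, $p$ and $q$. If $y$ has order $p$, then since powers agree in $(Q,\cdot)$ and $(Q,\circ)$, the element $y$ generates a subloop of order $p$ in $(Q,\circ)$, which must be $S$ by uniqueness, whence $y\in S$. As $S$ is a group of order $p$, it consists precisely of $1$ together with all elements of order $p$, so every element of $Q\setminus S$ has order $q$. The routine part here is the center computation; the substantive input is the chain linking $(Q,\cdot)$ to the left Bol loop $(Q,\circ)$, where the Niederreiter--Robinson uniqueness and the transfer of normality from $(Q,\circ)$ back to $(Q,\cdot)$ via Lemma \ref{Lm:NormalInDot} carry the weight.
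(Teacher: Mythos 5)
Your proof is correct, but it takes a genuinely different route from the paper in the key middle step. The paper first invokes solvability of $\Gamma$-loops of odd order (Theorem \ref{Th:Greer}) to get a nontrivial proper normal subloop $S=Q'$, notes that $S$ and $Q/S$ are then cyclic of prime orders $s,t$ with $\{s,t\}=\{p,q\}$, shows via the quotient that every element outside $S$ has order $t$, and only at the end uses Niederreiter--Robinson (Proposition \ref{Pr:NR}) together with the coincidence of powers in $(Q,\cdot)$ and $(Q,\circ)$ to count the elements of order $p$ and conclude $s=p$. You instead bypass solvability entirely: you extract $S$ directly as the unique order-$p$ subloop of the associated left Bruck loop, observe that uniqueness makes it characteristic in $(Q,\circ)$, and transfer normality back to $(Q,\cdot)$ via Lemma \ref{Lm:NormalInDot}; uniqueness then also pins down the element orders, since by Proposition \ref{Pr:AutomorphicBruck} the set of powers of any order-$p$ element is a subloop of $(Q,\circ)$ and hence equals $S$. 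Your route produces $S$ with $|S|=p$ in one stroke and needs neither Theorem \ref{Th:Greer} nor the derived-subloop detour, at the cost of leaning on Lemma \ref{Lm:NormalInDot}, a nontrivial lemma the paper proves en route to the Odd Order Theorem; the paper's version uses lighter per-step machinery but must separately rule out $|S|=q$. (Two minor remarks: your citation of Theorem \ref{Th:EvenCauchyLagrange} could everywhere be replaced by Theorem \ref{Th:ACauchyLagrange}, since $|Q|$ is odd; and your justification that odd order implies unique $2$-divisibility via Lemma \ref{Lm:OddDiv} plus Lagrange matches the paper's own implicit reasoning, so no complaint there.)
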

\begin{proof}
We have $Z(Q)<Q$ by assumption. If $1<Z(Q)$ then $Q/Z(Q)$ is isomorphic to $\mathbb Z_p$ or to $\mathbb Z_q$ by Corollary \ref{Cr:pAut}, a contradiction with Lemma \ref{Lm:NonCyclic}. Hence $Z(Q)=1$.

By Theorem \ref{Th:Greer}, $Q$ is solvable. Let $S=Q'<Q$. We have $1<S$, else $Q$ is an abelian group. Let $|S|=s$ and $\{s,t\}=\{p,q\}$. Then $|Q/S|=t$, and both $S$ and $Q/S$ are cyclic groups of prime order. Let $x\in Q\setminus S$. Then $|\langle xS\rangle| = |Q/S|=t$, so $t$ divides $|x|$. By Theorem \ref{Th:ACauchyLagrange}, either $|x|=st=pq$ or $|x|=t$. If $|x|=pq$ then $Q=\langle x\rangle$ is a group, a contradiction. Hence $|x|=t$.

Let $(Q,\circ)$ be the associated left Bruck loop. By Proposition \ref{Pr:NR}, $(Q,\circ)$ contains a unique subloop of order $p$. Since powers in $(Q,\circ)$ and $(Q,\cdot)$ coincide, it follows that $Q$ contains precisely $p-1$ elements of order $p$. Hence $s=p$.
\end{proof}

We will need the following two results:

\begin{theorem}[\cite{KeNi}]\label{Th:KN}
Let $Q$ be a loop such that $\inn{Q}$ is a cyclic group. Then $Q$ is an abelian group.
\end{theorem}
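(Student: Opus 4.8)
The plan is to translate the statement into the language of the multiplication group and its connected transversals, reduce the conclusion to a purely group-theoretic normality assertion, and then invoke the structure theory of connected transversals to a cyclic subgroup.

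First I would set up the dictionary. Put $G = \mlt{Q}$ and $H = \inn{Q} = G_1$, the stabiliser of $1$. Since $\mlt{Q}\le\sym{Q}$ acts faithfully and transitively, the core $\bigcap_{g\in G}H^g = \bigcap_{x\in Q}G_x$ is the kernel of this action, hence trivial; thus $H$ is core-free in $G$. The sets $A = \setof{L_x}{x\in Q}$ and $B = \setof{R_x}{x\in Q}$ are left transversals to $H$ in $G$ (distinct $x$ give translations in distinct cosets because $L_x(1)=x$), they generate $G$, and they are $H$-connected: for all $x$, $y$ the commutator $L_x^{-1}R_y^{-1}L_xR_y$ fixes $1$, as $R_y(1)=y$, $L_x(y)=xy$, $R_y^{-1}(xy)=x$, $L_x^{-1}(x)=1$, so it lies in $H=G_1$.

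Next I would reduce the conclusion. The key observation is that $\inn{Q}=1$ is equivalent to $Q$ being an abelian group: if $H=1$ then every $T_x=L_x^{-1}R_x$ is trivial, giving $xy=yx$, and every $L_{x,y}$ is trivial, giving $(yx)z=y(xz)$, so $Q$ is commutative and associative (the converse is immediate). Hence it suffices to prove $H=1$. Because $H$ is core-free, a \emph{normal} $H$ is automatically trivial. So the entire theorem follows from one group-theoretic claim: a cyclic subgroup $H$ admitting $H$-connected transversals $A$, $B$ with $\langle A,B\rangle = G$ and trivial core must satisfy $H\trianglelefteq G$.

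This normality claim is the technical heart of the Kepka--Niemenmaa theory, and is where I expect all the difficulty to concentrate. I would argue in a minimal counterexample and assemble the standard tools of the subject: (i) $H$-connected transversals to an abelian subgroup force $G$ to be solvable, so $G$ has a minimal normal subgroup $M$, necessarily elementary abelian; (ii) the normalizer lemma, showing that $A\cap N_G(H)$ and $B\cap N_G(H)$ are $H$-connected transversals in $N_G(H)$ with $N_G(H)=H(A\cap N_G(H))(B\cap N_G(H))$, which localises the problem at $N_G(H)$; and (iii) that the images of $A$, $B$ in $G/M$ are connected transversals to $HM/M$, so the induction runs through quotients rather than subgroups. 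The cyclic hypothesis enters through $\aut{H}$ being abelian: this forces $N_G(H)/C_G(H)$ to be abelian, and combined with $[A,B]\le H$ it lets one reduce to the case $|H|=p$ prime and obtain a direct contradiction with non-normality.

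The main obstacle is precisely the induction bookkeeping in step (iii). Since $H$ is normal, hence maximally non-core-free, in $N_G(H)$, one cannot induct on subgroups while preserving the core-free hypothesis; instead one must descend along the quotients $G/M$. The delicate point is to verify that both the connectedness of the transversals and the cyclicity of the relevant image of $H$ survive these reductions, so that the minimal counterexample collapses to the prime-order case where an explicit computation with the relation $[A,B]\le H$ forces normality, and the core-free hypothesis then yields $H=1$.
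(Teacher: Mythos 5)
The paper offers no proof of Theorem \ref{Th:KN} to compare against: it is imported verbatim from Kepka and Niemenmaa \cite{KeNi}. Your translation into that framework is correct and is exactly the standard one from the cited source: $G=\mlt{Q}$, $H=\inn{Q}=G_1$ is core-free since $G$ acts faithfully and transitively and the conjugates of $G_1$ are the point stabilizers; $A=\setof{L_x}{x\in Q}$ and $B=\setof{R_x}{x\in Q}$ are $H$-connected transversals generating $G$; and the final reduction is sound ($H$ normal and core-free forces $H=1$, and triviality of all $T_x$ gives commutativity while triviality of all $L_{x,y}$ gives associativity).

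The genuine gap is that everything past this reduction is an appeal to the literature rather than an argument. The claim you isolate as ``one group-theoretic claim'' --- that a cyclic subgroup admitting $H$-connected transversals generating $G$ must be normal --- \emph{is} the content of \cite{KeNi}; and your supporting tools (i)--(iii) are themselves major theorems of the Niemenmaa--Kepka theory (for instance, solvability of a finite group admitting connected transversals to an abelian subgroup is a hard standalone result), so as a blind proof the proposal is circular: it reduces the theorem to the main result of the paper being cited, plus comparably deep inputs. There is also a scope mismatch: the theorem as stated in the survey (and in \cite{KeNi}) carries no finiteness hypothesis, yet your sketch of the heart runs through a minimal counterexample, a minimal normal subgroup $M$ (elementary abelian), solvability of $G$, and induction through $G/M$ --- machinery available only for finite $Q$, since for an infinite loop $\mlt{Q}$ need not possess minimal normal subgroups and the solvability input fails. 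A complete proof would have to actually carry out the commutator analysis with $[A,B]\le H$ and the cyclic structure of $H$ (e.g., starting from the easy observation that the images of $A$ and $B$ cover $G$ modulo the normal closure of $H$, so that $G'$ lies in that closure), or else restrict honestly to finite loops and prove, not name, the ingredients (i)--(iii).
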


\begin{theorem}[Albert]\label{Th:Albert}
Let $S$ be a normal subgroup of $Q$, and let $L_S = \setof{L_x}{x\in S}$. For a permutation group $G$ on $Q$, let $G_S = \setof{\varphi\in G}{\varphi|_S=\mathrm{id}_S}$ and $G_{Q/S} = \setof{\varphi\in G}{\varphi(xS) = xS\text{ for every }x\in Q}$. Then $\mlt{Q}_S = L_S\cdot \inn{Q}$, $\mlt{Q}_{Q/S} = L_S\cdot\inn{Q}_{Q/S}$ and $\inn{Q/S}\cong (\mlt{Q}_S)/(\mlt{Q}_{Q/S})$.
\end{theorem}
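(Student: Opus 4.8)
The plan is to derive all three identities from a single homomorphism: the action of $\mlt{Q}$ on the cosets of $S$. First I would record the coset arithmetic furnished by normality of $S$, namely that the projection $\pi\colon Q\to Q/S$, $x\mapsto xS$, is a loop homomorphism, so that $\pi L_x=L_{\pi(x)}\pi$ and $\pi R_x=R_{\pi(x)}\pi$ for every $x\in Q$, where $L_{\pi(x)},R_{\pi(x)}$ are translations in $Q/S$. Since $\mlt{Q}$ is generated by the $L_x$ and $R_x$, every $\varphi\in\mlt{Q}$ preserves the partition of $Q$ into cosets of $S$ and descends to a unique permutation $\bar\varphi$ of $Q/S$ with $\pi\varphi=\bar\varphi\pi$. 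This defines a group homomorphism $\Phi\colon\mlt{Q}\to\sym{Q/S}$, $\varphi\mapsto\bar\varphi$.

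Next I would compute the image and kernel of $\Phi$. As $\Phi(L_x)=L_{\pi(x)}$, $\Phi(R_x)=R_{\pi(x)}$, and every coset is $\pi(x)$ for some $x$, the image $\Phi(\mlt{Q})$ is generated by all translations of $Q/S$, i.e. $\Phi(\mlt{Q})=\mlt{Q/S}$. Moreover $\bar\varphi$ is the identity precisely when $\varphi(xS)=xS$ for all $x$, so $\ker\Phi=\mlt{Q}_{Q/S}$. The first isomorphism theorem then gives $\mlt{Q}/\mlt{Q}_{Q/S}\cong\mlt{Q/S}$. For the third identity I note that $\inn{Q/S}$ is, by definition, the stabilizer in $\mlt{Q/S}$ of the identity coset $S$; pulling this stabilizer back through $\Phi$ and using the equivalence ``$\varphi(S)=S$ (as sets) $\iff\varphi(1)\in S$'' --- valid because a partition-preserving $\varphi$ carries the coset $S$ onto the coset containing $\varphi(1)$ --- identifies the preimage with $\mlt{Q}_S$. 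Since $\ker\Phi\le\mlt{Q}_S$, restricting $\Phi$ to $\mlt{Q}_S$ and applying the isomorphism theorem yields $\inn{Q/S}\cong\mlt{Q}_S/\mlt{Q}_{Q/S}$, the third claim.

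It remains to justify the two stabilizer descriptions. For $\mlt{Q}_S=L_S\cdot\inn{Q}$ I would argue both inclusions: $L_a(S)=aS=S$ for $a\in S$ and $\psi(S)=S$ for $\psi\in\inn{Q}$ (normality, via Lemma~\ref{Lm:CharNormal}) give $L_S\cdot\inn{Q}\subseteq\mlt{Q}_S$; conversely, if $\varphi$ stabilizes $S$ then $a:=\varphi(1)\in S$, so $L_a^{-1}\varphi$ fixes $1$, whence $L_a^{-1}\varphi\in\inn{Q}=\mlt{Q}_1$ and $\varphi\in L_S\cdot\inn{Q}$. For $\mlt{Q}_{Q/S}=L_S\cdot\inn{Q}_{Q/S}$ the crucial extra fact is that $L_a$ with $a\in S$ fixes \emph{every} coset, not merely $S$: $L_a(xS)=(ax)S=xS$ because $ax\in Sx=xS$ by normality, so $L_S\subseteq\mlt{Q}_{Q/S}$. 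Combining this with the previous formula --- writing $\varphi\in\mlt{Q}_{Q/S}\subseteq\mlt{Q}_S$ as $L_a\psi$ and observing that $\psi=L_a^{-1}\varphi\in\inn{Q}\cap\mlt{Q}_{Q/S}=\inn{Q}_{Q/S}$ --- gives the second formula.

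The main obstacle is not a single deep step but the loop-theoretic bookkeeping: in a nonassociative loop the coset identities $x(yS)=(xy)S$ and $Sx=xS$ are exactly the content of normality rather than consequences of associativity, so they must be invoked deliberately. The linchpin separating the two stabilizer formulas is the normality-driven observation that left translations by elements of $S$ act trivially on $Q/S$; verifying this, together with confirming that the partition-preserving property makes ``$\varphi(S)=S$'' and ``$\varphi(1)\in S$'' interchangeable, are the points that require genuine care.
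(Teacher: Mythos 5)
Your proof is correct, but there is nothing in the paper to compare it against: Theorem \ref{Th:Albert} is stated without proof, as a citation of Albert. Your argument is the standard one for this result --- the homomorphism $\Phi\colon\mlt{Q}\to\mlt{Q/S}$ induced by the projection $\pi$, the computation $\ker\Phi=\mlt{Q}_{Q/S}$, the identification of $\mlt{Q}_S$ with the $\Phi$-preimage of the point stabilizer of the coset $S$, and the first isomorphism theorem --- and all the loop-theoretic pivots are handled properly: $aS=S$ for $a\in S$ (closure of the subloop under left division), $\varphi(S)=S$ for $\varphi\in\inn{Q}$ (this is the very definition of normality; your citation of Lemma \ref{Lm:CharNormal} is slightly off, since that lemma concerns characteristic subloops of automorphic loops, but its one-line proof records exactly the fact you need, valid in any loop), and $\inn{Q}=\mlt{Q}_1$.

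One point deserves to be made explicit: you silently repaired the statement. As printed, $G_S=\setof{\varphi\in G}{\varphi|_S=\mathrm{id}_S}$ is the \emph{pointwise} stabilizer of $S$, and with that reading the theorem is false --- for $Q$ an abelian group and $1<S<Q$, the pointwise stabilizer of $S$ in $\mlt{Q}\cong Q$ is trivial while $L_S\cdot\inn{Q}=L_S$ is not, and $\mlt{Q}_{Q/S}$ would not even be contained in $\mlt{Q}_S$, so the quotient in the third claim would be meaningless. Your reading, $\mlt{Q}_S=\setof{\varphi\in\mlt{Q}}{\varphi(S)=S}$ (setwise stabilizer), is the intended one and is what Proposition \ref{Pr:Innpq} actually uses. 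The only cosmetic blemish is the line $L_a(xS)=(ax)S$: in a nonassociative loop $a(xs)$ need not equal $(ax)s$, so the clean justification is the one already in your setup, namely that $L_a$ descends through $\pi$ to $L_{\pi(a)}=\id_{Q/S}$ because $\pi(a)$ is the identity of $Q/S$; your conclusion stands in any case, since $\pi(a(xs))=\pi(x)$ places every $a(xs)$ in the coset $xS$.
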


\begin{proposition}\label{Pr:Innpq}
Let $Q$ be a nonassociative commutative automorphic loop of order $pq$ with odd primes $p>q$. Then there is a normal subgroup $C\cong \mathbb Z_p$ of $\inn{Q}$ such that $\inn{Q}/C$ is a cyclic group of order dividing $p-1$.
\end{proposition}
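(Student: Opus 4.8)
The plan is to establish the two pieces of the claimed structure of $\inn{Q}$ separately: the cyclic quotient will come from the action of inner mappings on the normal subgroup $S\cong\mathbb Z_p$ provided by Lemma \ref{Lm:pq}, while the kernel $C$ of that action will be shown to have order exactly $p$ by a fixed-point-freeness argument on a nontrivial coset of $S$.

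First I would record the quotient $Q/S$. Since $|Q/S|=q$ is prime and $Q/S$ is automorphic, Corollary \ref{Cr:pAut} gives $Q/S\cong\mathbb Z_q$, an abelian group, so $\inn{Q/S}=1$. By Albert's Theorem \ref{Th:Albert}, $\inn{Q/S}\cong\mlt{Q}_S/\mlt{Q}_{Q/S}$, whence $\mlt{Q}_S=\mlt{Q}_{Q/S}$; since $\inn{Q}\le\mlt{Q}_S$ (inner mappings preserve the normal subloop $S$), every inner mapping of $Q$ fixes each coset of $S$ setwise. Because $S$ is normal and $\inn{Q}\le\aut{Q}$, restriction to $S$ defines a homomorphism $\rho\colon\inn{Q}\to\aut{S}\cong\mathbb Z_{p-1}$, whose image is cyclic of order dividing $p-1$. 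Setting $C=\ker\rho\trianglelefteq\inn{Q}$ therefore already yields the desired quotient $\inn{Q}/C\cong\rho(\inn{Q})$.

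It remains to prove $C\cong\mathbb Z_p$, which is where the main work lies. Nontriviality is quick: if $C=1$ then $\inn{Q}\cong\rho(\inn{Q})$ is cyclic, so Theorem \ref{Th:KN} would force $Q$ to be an abelian group, contradicting nonassociativity; hence $C\neq 1$. For the order, fix $g\in Q\setminus S$ and consider the coset $S_1=gS$, which $C$ fixes setwise by the previous paragraph. For $\varphi\in C$ and $s\in S$ we have $\varphi(gs)=\varphi(g)\varphi(s)=\varphi(g)\cdot s$, so $\varphi$ fixes the point $gs$ if and only if $\varphi(g)=g$ (right cancellation). Thus a nonidentity $\varphi\in C$ fixing any point of $S_1$ would fix $g$ together with all of $S$ pointwise, hence would fix the subloop $\langle S\cup\{g\}\rangle$ pointwise; but this subloop properly contains $S$, so by Lagrange (Theorem \ref{Th:ACauchyLagrange}) its order is a divisor of $pq$ strictly larger than $p$, and since $q<p$ the only such divisor is $pq$ itself, forcing $\langle S\cup\{g\}\rangle=Q$ and $\varphi=\mathrm{id}$. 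Therefore $C$ acts \emph{semiregularly} on the $p$-element set $S_1$, so $|C|$ divides $p$; combined with $C\neq 1$ this gives $|C|=p$ and $C\cong\mathbb Z_p$.

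The main obstacle is the computation of $|C|$. The two ingredients that make the semiregularity argument work---that inner mappings fix every coset of $S$ setwise, and that $S$ together with a single outside element generates all of $Q$---are precisely where Albert's Theorem \ref{Th:Albert}, the structure from Lemma \ref{Lm:pq}, and the Lagrange property of Theorem \ref{Th:ACauchyLagrange} (using $p>q$) enter; the cyclic quotient, by contrast, is essentially automatic once $S$ is known to be normal. Two routine but essential points should be checked: that $Q/S$ is genuinely a group of order $q$, so that $\inn{Q/S}=1$, and that $\mlt{Q}_S$ in Theorem \ref{Th:Albert} is read as the setwise stabilizer of $S$, which is the reading under which the inclusion $\inn{Q}\le\mlt{Q}_S$ holds.
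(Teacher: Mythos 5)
Your proof is correct, and while its first half coincides with the paper's, your computation of $|C|$ takes a genuinely different and shorter route. Both arguments set up the restriction homomorphism $\inn{Q}\to\aut{S}\cong\mathbb Z_{p-1}$ with kernel $C$, get $C\neq 1$ from Theorem \ref{Th:KN}, and use Theorem \ref{Th:Albert} to see that inner mappings fix each coset of $S$ setwise (your caveat about reading $\mlt{Q}_S$ as the setwise stabilizer is well taken: as printed, $G_S$ is a pointwise stabilizer, under which $\mlt{Q}_S=L_S\cdot\inn{Q}$ would fail; the intended definition is $\setof{\varphi\in G}{\varphi(S)=S}$, and the paper's own proof uses that reading). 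From there the paper works harder: fixing $t\notin S$ and writing $S=\langle s\rangle$, it proves by a conjugation trick that $\psi=L_s|_{St}$ is a $p$-cycle---if not, some power $\psi^i$ has between $2$ and $p-1$ fixed points, and taking $t$ to be one of them makes $\alpha=L_t^{-1}L_s^iL_t$ an inner mapping whose restriction to $S\cong\mathbb Z_p$ is an automorphism with a forbidden proper nontrivial subgroup of fixed points---and then shows every $1\neq\varphi\in C$ satisfies $\varphi|_{St}=\psi^j$ for some $0<j<p$ (using $L_{s^k}\psi^j=\psi^jL_{s^k}$ from Proposition \ref{Pr:PowerAssociative}), whence $C=\{\varphi,\varphi^2,\dots,\varphi^p=1\}\cong\mathbb Z_p$. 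You bypass the $p$-cycle lemma entirely: your observation that $C$ acts semiregularly on the $p$-element coset $gS$---since a $\varphi\in C$ with a fixed point in $gS$ fixes $g$ by cancellation, hence fixes the subloop $\langle S\cup\{g\}\rangle$ pointwise (fixed points of an automorphism form a subloop), and this subloop equals $Q$ because by Theorem \ref{Th:ACauchyLagrange} its order is a divisor of $pq$ exceeding $p$, hence $pq$ as $q<p$---gives $|C|\mid p$ by orbit counting, and then $|C|=p$ with $C\cong\mathbb Z_p$ automatic from primality. The paper's freeness-type facts (``$\varphi$ is determined by the value on $t$'' and $t\neq\varphi(t)$) are implicitly your semiregularity, but the paper never converts them into an orbit count, going instead through an explicit generator; what its longer route buys is the finer structural fact that the elements of $C$ act on $St$ precisely as the powers of $L_s$, which your argument does not produce but the proposition does not require.
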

\begin{proof}
Let $S$ be the unique normal subgroup of order $p$ in $Q$, whose existence is guaranteed by Lemma \ref{Lm:pq}. Consider the mapping $f:\inn{Q}\to\aut{S}$, $f(\varphi) = \varphi|_S$. Since $\varphi|_S\psi|_S(x) = \varphi|_S(\psi(x)) = \varphi(\psi(x)) = (\varphi\psi)|_S(x)$ for every $x\in S$, the mapping $f$ is a homomorphism. Its kernel is equal to $C=\setof{\varphi\in\inn{Q}}{\varphi|_S=\mathrm{id}_S}$. Now, $\aut{S}\cong\aut{\mathbb Z_p}\cong\mathbb Z_{p-1}$ is cyclic, so $\inn{Q}/C\le\aut{S}$ is a cyclic group of order dividing $p-1$. If $C$ is trivial, we deduce that $\inn{Q}$ is cyclic and Theorem \ref{Th:KN} then implies that $Q$ is an abelian group, a contradiction. Thus $C$ is nontrivial.

Let $S=\langle s\rangle$ and fix $t\in Q\setminus S$. Since $L_s(St) = s(St) = (sS)t = St$, the mapping $\psi = L_s|_{St}$ is a bijection on $St$. We claim that $\psi$ is a $p$-cycle. Suppose this is not the case. Since $\psi$ has no fixed points and $p$ is a prime, $\psi$ must contain nontrivial cycles of distinct lengths. Then a suitable power of $\psi$, say $\psi^i$, has more than $1$ but less than $p$ fixed points. Without loss of generality, let $t$ be a fixed point of $\psi^i$. Then $\alpha = L_t^{-1}L_s^i L_t\in\mlt{Q}$ fixes $1$. Thus $\alpha\in\inn{Q}\le\aut{Q}$, and $\alpha|_S\in\aut{S}$. Moreover, since $\alpha|_S$ is conjugate to $\psi^i$, they have the same cycle structure. The fixed points of $\alpha|_S$ then determine a nontrivial proper subgroup of $S\cong\mathbb Z_p$, a contradiction.

Since $Q/S$ is of prime order $q$, it is an abelian group and $\inn{Q/S}=1$. Then Theorem \ref{Th:Albert} gives $1 = \inn{Q/S}\cong (L_S\cdot\inn{Q})/(L_S\cdot\inn{Q}_{Q/S})$, so $\inn{Q} = \inn{Q}_{Q/S}$. In other words, every $\varphi\in\inn{Q}$ satisfies $\varphi(xS)=xS$ for every $x\in Q$.

Consider $1\ne\varphi\in C$. Then $\varphi$ is determined by the value on $t$, and $t\ne\varphi(t)\in St$. Because $\psi = L_s|_{St}$ is a $p$-cycle, there exists some $0<j<p$ such that $\psi^j(t) = \varphi(t)$. Furthermore, $\varphi(s^kt) = s^k\varphi(t) = L_{s^k}\psi^j(t) = \psi^jL_{s^k}(t) = \psi^j(s^kt)$ by Proposition \ref{Pr:PowerAssociative}, so $\varphi|_{St} = \psi^j$. Because $\psi^j$ is a $p$-cycle and $\varphi^k|_{St} = \psi^{jk}$ for every $k$, the elements $\varphi$, $\varphi^2$, $\dots$, $\varphi^p=1$ are distinct and account for all elements of $C$. Hence $C\cong\mathbb Z_p$.
\end{proof}

\begin{construction}[{{\cite[Propositions 3.1 and 3.6]{DrapalMetacyclic}}}]\label{Co:Drapal}
Let $p$ be an odd prime and $t\in\mathbb Z_p$. Define a partial map $f_t:\mathbb Z_p\to\mathbb Z_p$ by $f_t(x) = (x+1)(tx+1)^{-1}$. Suppose that for every $i\ge 1$ the value $f_t^i(0)$ is defined and there is a unique $x\in\mathbb Z_p$ such that $f_t^i(x)=0$. Let $d = |\setof{f_t^i(0)}{i\ge 1}|$. Then $\mathbb Z_p\times\mathbb Z_d$ with multiplication
\begin{displaymath}
    (i,a)(j,b) = (i+j,(a+b)(1+tf_t^i(0)f_t^j(0))^{-1})
\end{displaymath}
is a commutative automorphic loop.
\end{construction}

\begin{proposition}[\cite{JeSi}]
Construction \ref{Co:Drapal} yields a nonassociative commutative automorphic loop of order $pq$ for odd primes $p>q$ if and only if $q$ divides $p^2-1$, in which case it yields only one such loop up to isomorphism.
\end{proposition}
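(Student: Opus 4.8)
The plan is to treat $f_t$ as a Möbius transformation and read off everything from a single closed form for its orbit. The loop produced by Construction~\ref{Co:Drapal} has order $pd$, where $d=|\{f_t^i(0):i\ge 1\}|$ is the orbit length of $0$, so asking for order $pq$ is exactly asking that $d=q$. Since the fixed points of $f_t$ solve $x^2=1/t$, the point $0$ is never fixed; and the degenerate value $t=0$ gives $f_0(x)=x+1$, hence $d=p$, which is excluded once $d=q<p$. So I may assume $t\neq 0$ and regard $f_t$ as the invertible transformation with matrix $\left(\begin{smallmatrix}1&1\\ t&1\end{smallmatrix}\right)$, whose eigenvalues are $1\pm s$ for $s=\sqrt{t}$.

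The crucial computation is to diagonalize. Conjugating $f_t$ to the scaling $z\mapsto \mu z$ (over $\mathbb{F}_p$ if $t$ is a square, over $\mathbb{F}_{p^2}$ otherwise), where $\mu=(1+s)/(1-s)$ is the ratio of eigenvalues, yields
\[
  f_t^i(0)=\frac{1-\mu^i}{s(1+\mu^i)}.
\]
From this the orbit is transparent: $f_t^i(0)=f_t^j(0)$ holds iff $\mu^i=\mu^j$, so the orbit length is exactly $\mathrm{ord}(\mu)$ and thus $d=\mathrm{ord}(\mu)$. When $t$ is a nonzero square, $\mu\in\mathbb{F}_p^{*}$ and $d\mid p-1$; when $t$ is a nonsquare, $s\in\mathbb{F}_{p^2}\setminus\mathbb{F}_p$ satisfies $\bar s=-s$, so $\mu$ has norm $1$ and $d\mid p+1$ (and one checks $\overline{f_t^i(0)}=f_t^i(0)$, so the orbit really lies in $\mathbb{F}_p$). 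This already gives the necessity direction: $d=q$ forces $q\mid p-1$ or $q\mid p+1$, i.e.\ $q\mid p^2-1$.

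For sufficiency I reverse the reduction. If $q\mid p^2-1$ then, $q$ being odd, exactly one of $q\mid p-1$, $q\mid p+1$ holds, and the corresponding cyclic group---$\mathbb{F}_p^{*}$ of order $p-1$, or the norm-one subgroup of $\mathbb{F}_{p^2}^{*}$ of order $p+1$---contains an element $\mu$ of order $q$. Setting $s=(\mu-1)/(\mu+1)$ and $t=s^2$ recovers a parameter with $\mathrm{ord}(\mu)=q$, hence $d=q$. The single parity fact $-1\notin\langle\mu\rangle$ (valid since $|\langle\mu\rangle|=q$ is odd) guarantees $\mu^i\neq -1$ for every $i$, which keeps all denominators $1+\mu^i$ nonzero and, through the identity $1+t f_t^i(0)f_t^j(0)=2(1+\mu^{i+j})/((1+\mu^i)(1+\mu^j))$, keeps every twisting factor nonzero as well; this verifies both hypotheses of Construction~\ref{Co:Drapal} and the well-definedness of the product. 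Nonassociativity then follows from one instance: writing elements as $(i,a)$ as in the construction, with $i$ the exponent of $f_t$, the two bracketings of $(0,1),(1,0),(1,0)$ have components $(1+t)^{-1}$ and $1$, which differ precisely because $t\neq 0$.

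It remains to prove that all admissible parameters give one loop up to isomorphism, and here the key observation is that replacing $\mu$ by another generator $\mu^m$ ($\gcd(m,q)=1$) of the same order-$q$ subgroup changes nothing. Writing $g_i=f_t^i(0)$ for $\mu$ and $g_i'$ for $\mu^m$, the closed form gives $g_i'=\kappa\,g_{mi}$ with $\kappa=s(\mu)/s(\mu^m)$, whence $t(\mu^m)\kappa^2=t(\mu)$ and therefore $1+t(\mu^m)g_i'g_j'=1+t(\mu)g_{mi}g_{mj}$; consequently the map sending exponent $i$ to $mi$ and fixing the $\mathbb{Z}_p$-component is a loop isomorphism from the $\mu^m$-loop to the $\mu$-loop. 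Since the order-$q$ elements of a cyclic group are exactly the powers of any one of them, all choices yield isomorphic loops. The main obstacle is the diagonalization producing the closed form for $f_t^i(0)$: once it is in hand, the orbit length, the dichotomy between $p-1$ and $p+1$, the verification of the construction's hypotheses, and the uniqueness isomorphism all reduce to short computations, so the whole argument is organized around expressing the orbit through the single scalar $\mu$.
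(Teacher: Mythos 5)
The paper does not actually prove this proposition: it is quoted from \cite{JeSi} without proof, so there is no internal argument to compare against. Judged on its own, your proof is correct and complete in all essentials, and it follows the same route as the cited literature (Dr\'apal's and Jedli\v{c}ka--Simon's analysis also treats $f_t$ as a fractional linear map and reads the orbit of $0$ off the multiplier $\mu$). Your key computations check out: the closed form $f_t^i(0)=(1-\mu^i)/(s(1+\mu^i))$ is what diagonalization gives (up to the harmless replacement $s\mapsto -s$, $\mu\mapsto\mu^{-1}$, which fixes $t$ and the orbit); the identity $1+t\,f_t^i(0)f_t^j(0)=2(1+\mu^{i+j})/\bigl((1+\mu^i)(1+\mu^j)\bigr)$ is correct and, together with $-1\notin\langle\mu\rangle$ for odd $q$, does verify the hypotheses of Construction \ref{Co:Drapal} (for the uniqueness-of-preimage hypothesis note that the unique $\mathbb{P}^1$-preimage of $0$ under $f_t^i$ corresponds to $\mu^{-i}$, and $\mu^{-i}\ne -1$ is again covered by $-1\notin\langle\mu\rangle$); the nonassociativity witness $(0,1),(1,0),(1,0)$ gives components $(1+t)^{-1}$ versus $1$ as you say; and since $\gcd(p-1,p+1)=2$ forces exactly one of $q\mid p-1$, $q\mid p+1$, all admissible $\mu$ lie in a single cyclic subgroup of order $q$, so your isomorphism $(i,a)\mapsto(mi,a)$ between the $\mu^m$- and $\mu$-loops, based on $g_i'=\kappa g_{mi}$ with $t'\kappa^2=t$, does yield uniqueness up to isomorphism.

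One small omission to patch in the necessity direction: you pass from ``$t\ne 0$'' to ``regard $f_t$ as the invertible transformation,'' but invertibility requires $\det\begin{pmatrix}1&1\\ t&1\end{pmatrix}=1-t\ne 0$, and at $t=1$ your multiplier $\mu=(1+s)/(1-s)$ is undefined ($s=1$). This case needs one sentence: for $t=1$ one has $f_1(x)=1$ wherever defined, so $0$ has no $f_1^i$-preimage and the hypotheses of Construction \ref{Co:Drapal} fail (alternatively, the orbit has length $1\ne q$), so no loop of order $pq$ arises. In the sufficiency direction $t=1$ cannot occur, since $s=(\mu-1)/(\mu+1)=\pm 1$ is impossible; your argument is untouched there. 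With that sentence added, the proof is complete.
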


Thanks to Proposition \ref{Pr:Innpq}, all commutative automorphic loops of order $pq$ could be classified by the \emph{tour de force} of classifying all loops with trivial center and metacyclic inner mapping group, a program of Dr\'apal that is nearing completion (see, for instance, \cite{DrapalOrbits}). Another, perhaps easier approach, is to classify all left Bruck loops of order $pq$, and then use Theorem \ref{Th:GreerEquiv}. In particular, if there is a unique nonassociative left Bruck loop of order $pq$ and $q$ divides $p^2-1$, then it must correspond to a unique nonassociative commutative automorphic loop of order $pq$, constructed by Construction \ref{Co:Drapal}.

\subsection{Commutative automorphic loops of order $p^3$}

\begin{proposition}[\cite{JeKiVoConstructions}]\label{Pr:4p2}
Let $p$ be an odd prime and $Q$ a commutative automorphic loop. If $|Q|\in\{p$, $2p$, $4p$, $p^2$, $2p^2$, $4p^2\}$ then $Q$ is an abelian group.
\end{proposition}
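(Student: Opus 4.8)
The plan is to use the primary decomposition of Theorem \ref{Th:Decomposition} to split $Q$ into its $2$-part and its odd part, and then to dispatch the two factors by entirely different arguments: the odd factor by central nilpotency of $\Gamma$-loops of prime power order, and the small even factor by a direct combinatorial check. First I would write $Q = A\times B$ as in Theorem \ref{Th:Decomposition}, where $A$ consists of the elements of $2$-power order (so $|A|=2^m$) and $B$ of the elements of odd order (so $|B|$ is odd). Since a direct product of loops is an abelian group precisely when both factors are, it suffices to prove that $A$ and $B$ are abelian groups. For each order in the list, $|A|\,|B|=|Q|$ forces $|A|\in\{1,2,4\}$ and $|B|\in\{p,p^2\}$, because $|A|$ is a power of $2$ and $|B|$ is odd.

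For the odd factor I would note that $B$, being a direct factor of the commutative automorphic loop $Q$, is itself commutative automorphic, hence a $\Gamma$-loop. As $|B|=p^k$ with $k\le 2$ and $p$ odd, Theorem \ref{Th:Greer} gives that $B$ is centrally nilpotent; in particular $Z(B)\ne 1$ whenever $B\ne 1$. By Lagrange (Theorem \ref{Th:EvenCauchyLagrange}) the order of the subloop $Z(B)$ divides $|B|=p^k$. If $|Z(B)|=|B|$ then $B$ is an abelian group and we are done. The only remaining possibility (occurring when $k=2$) is $|Z(B)|=p$, in which case $B/Z(B)$ has order $p$ and is cyclic; since $B$ is power-associative by Proposition \ref{Pr:PowerAssociative}, this contradicts Lemma \ref{Lm:NonCyclic}. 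Hence $Z(B)=B$ and $B$ is abelian. (For $k=1$ this is immediate, or one may simply invoke Corollary \ref{Cr:pAut}.)

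For the even factor $A$, orders $1$ and $2$ are trivially cyclic groups. For order $4$ I would argue directly: $A$ is a finite commutative (hence two-sided inverse, flexible by Proposition \ref{Pr:Flexible}) power-associative loop, so every element has order $1$, $2$, or $4$ by Lagrange. If some element $a$ has order $4$, then $\langle a\rangle$ is a cyclic group of order $4=|A|$, so $A\cong\mathbb Z_4$. Otherwise every nonidentity element is an involution, equal to its own inverse; for two distinct involutions $a$, $b$ the product $ab$ cannot be $1$, $a$, or $b$ (by cancellation together with $a^{-1}=a$, $b^{-1}=b$), so $ab$ must be the fourth element, and commutativity then forces the entire multiplication table to be that of $\mathbb Z_2\times\mathbb Z_2$. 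Either way $A$ is an abelian group, which completes the proof.

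I expect the real content to sit in the odd $p^2$ case, where central nilpotency coming out of the $\Gamma$-loop/Bruck-loop machinery (packaged in Theorem \ref{Th:Greer}) combines with the ``no nontrivial cyclic central quotient'' fact of Lemma \ref{Lm:NonCyclic} to pin down $Z(B)$; the decomposition step and the order-$4$ bookkeeping are routine by comparison.
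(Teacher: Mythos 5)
Your proposal is correct and follows essentially the same route as the paper: decompose via Theorem \ref{Th:Decomposition}, kill the odd factor of order $p$ by Corollary \ref{Cr:pAut} and of order $p^2$ by combining the central nilpotency from Theorem \ref{Th:Greer} with Lemma \ref{Lm:NonCyclic}. The only difference is that you spell out the order-$4$ case for the $2$-part by a direct table check, whereas the paper silently relies on the folklore fact that every loop of order at most $4$ is a group---a detail worth making explicit, as you did.
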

\begin{proof}
By Theorem \ref{Th:Decomposition}, it suffices to prove that all commutative automorphic loops $Q$ of odd order $p$ and $p^2$ are groups. For $|Q|=p$ this is a special case of Corollary \ref{Cr:pAut}, for instance. When $|Q|=p^2$ then $Z(Q)$ is nontrivial by Theorem \ref{Th:Greer}, and the case $|Z(Q)|=p$ is excluded by Lemma \ref{Lm:NonCyclic}.
\end{proof}

In view of Proposition \ref{Pr:4p2}, commutative automorphic loops of order $p^3$ (for any prime $p$) are of interest. As above, we can easily show that if such a loop is nonassociative of odd order $p^3$ then $Z(Q)\cong\mathbb Z_p$ and $Q/Z(Q)\cong\mathbb Z_p\times\mathbb Z_p$. There are commutative automorphic loops of order $8$ with trivial center \cite{JeKiVoConstructions}.

Consider the following construction of \cite{JeKiVoConstructions}. Let $n\ge 2$ be an integer. The \emph{overflow indicator} $(.,.)_n:\mathbb Z_n\times\mathbb Z_n\to \{0,1\}$ is defined by
\begin{displaymath}
    (x,y)_n=\left\{\begin{array}{ll}1,&\text{ if $x+y\ge n$},\\0,&\text{otherwise}.\end{array}\right.
\end{displaymath}
For $a$, $b\in\mathbb Z_n$, define $\mathcal Q_{a,b}(\mathbb Z_n)$ on $\mathbb Z_n\times\mathbb Z_n\times \mathbb Z_n$ by
\begin{displaymath}
    (x_1,x_2,x_3)(y_1,y_2,y_3) = (x_1+y_1+(x_2+y_2)x_3y_3+a(x_2,y_2)_n+b(x_3,y_3)_n,\,x_2+y_2,\,x_3+y_3).
\end{displaymath}
Then $\mathcal Q_{a,b}(\mathbb Z_n)$ is a commutative automorphic loop of order $n^3$, $Z(Q)=N_\ell(Q)=\mathbb Z_n\times 0\times 0$, and $N_m(Q) = \mathbb Z_n\times\mathbb Z_n\times 0$.

It turns out that all nonassociative commutative automorphic loops of odd order $p^3$ are of the form $\mathcal Q_{a,b}(\mathbb Z_p)$. This was shown by De Barros, Grishkov and the author, who studied quotients of free $2$-generated nilpotent class $2$ commutative automorphic loops and also proved:

\begin{theorem}[\cite{BaGrVo}]
For every prime $p$, there are precisely $7$ commutative automorphic loops of order $p^3$ up to isomorphism, including the three abelian groups $\mathbb Z_{p^3}$, $\mathbb Z_{p^2}\times\mathbb Z_p$ and $\mathbb Z_p\times\mathbb Z_p\times\mathbb Z_p$.
\end{theorem}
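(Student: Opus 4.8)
The plan is to treat odd primes and $p=2$ separately and, in each case, to isolate the nonassociative loops: the three abelian groups $\mathbb Z_{p^3}$, $\mathbb Z_{p^2}\times\mathbb Z_p$ and $\mathbb Z_p\times\mathbb Z_p\times\mathbb Z_p$ are always present and pairwise nonisomorphic, so it suffices to prove that there are exactly four nonassociative commutative automorphic loops of order $p^3$ up to isomorphism. For odd $p$ I would first pin down the coarse structure. Such a loop $Q$ has odd order, so by Theorem~\ref{Th:Greer} it is centrally nilpotent and $Z(Q)\ne 1$. If $Q$ is nonassociative then $Z(Q)\ne Q$, and Lemma~\ref{Lm:NonCyclic} forbids $Q/Z(Q)$ from being cyclic; together with Corollary~\ref{Cr:pAut} this rules out both $|Z(Q)|=p^2$ and $Q/Z(Q)\cong\mathbb Z_{p^2}$, forcing $Z(Q)\cong\mathbb Z_p$ and $Q/Z(Q)\cong\mathbb Z_p\times\mathbb Z_p$. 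Thus $Q$ is nilpotent of class $2$, and since $Q/Z(Q)$ is generated by two elements and the associator of any two lifts lands in $Z(Q)\cong\mathbb Z_p$, the loop $Q$ itself is generated by two elements. Consequently $Q$ is a homomorphic image of the free $2$-generated nilpotent class $2$ commutative automorphic loop $F$.

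The heart of the argument is to compute $F$ explicitly and to read off its quotients of order $p^3$. Writing $Q$ in coordinates over $Q/Z(Q)\cong\mathbb Z_p\times\mathbb Z_p$ with central fibre $Z(Q)\cong\mathbb Z_p$, the multiplication is governed by a factor system split into an associator part and a squaring (power) part, and the commutative A-loop identities \eqref{Eq:Am} and \eqref{Eq:Al} constrain these data severely. After choosing a normal form for the two generators, I would show that the operation is forced into the shape
\begin{displaymath}
    (x_1,x_2,x_3)(y_1,y_2,y_3) = \bigl(x_1+y_1+(x_2+y_2)x_3y_3+a(x_2,y_2)_p+b(x_3,y_3)_p,\;x_2+y_2,\;x_3+y_3\bigr)
\end{displaymath}
for some $a,b\in\mathbb Z_p$, that is, $Q\cong\mathcal Q_{a,b}(\mathbb Z_p)$. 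Because every $\mathcal Q_{a,b}(\mathbb Z_p)$ has central fibre of order $p$, all loops in this family are nonassociative, so the family is \emph{exactly} the set of nonassociative commutative automorphic loops of order $p^3$.

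It then remains to count the $\mathcal Q_{a,b}(\mathbb Z_p)$ up to isomorphism. The admissible isomorphisms act on the parameters through base changes of $Q/Z(Q)$ lying in $GL_2(\mathbb Z_p)$ together with rescaling of the central coordinate by $\mathbb Z_p^\times$; I would track how the associator form $(x_2+y_2)x_3y_3$ and the overflow coefficients $(a,b)$ transform under this action and compute the resulting orbits, the expected count being four. Combined with the three abelian groups this gives $7$ for every odd $p$. The prime $p=2$ must be handled separately, because a nonassociative commutative automorphic loop of order $8$ may have trivial centre and so escapes the nilpotency reduction above; here I would instead run the enumeration of Lecture~3 (Algorithm~\ref{Al:Basic} together with Lemma~\ref{Lm:LeftAlgorithm}) over the relevant transitive groups of degree $8$, or argue directly, to verify that there are again exactly four nonassociative examples, hence $7$ in total.

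The main obstacle is the middle step: determining $F$ precisely enough to \emph{force} the normal form $\mathcal Q_{a,b}(\mathbb Z_p)$ and to guarantee that no further loops occur, and then carrying out the orbit computation without error. This is where the automorphic identities do the real work and where the bookkeeping is heaviest, since one must verify simultaneously that the associator form is standard and that the squaring data are captured by the two overflow coefficients. The degenerate behaviour at $p=2$, where the centre can collapse, is a secondary but genuine complication that prevents a uniform treatment.
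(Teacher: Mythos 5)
Your outline is essentially the route of the paper's cited proof: the paper attributes the theorem to \cite{BaGrVo} and describes exactly your strategy---for odd $p$ the reduction to $Z(Q)\cong\mathbb Z_p$ and $Q/Z(Q)\cong\mathbb Z_p\times\mathbb Z_p$, realizing $Q$ as a quotient of the free $2$-generated nilpotency class $2$ commutative automorphic loop via the associator calculus of Lemma \ref{Lm:KeyAssoc}, forcing the normal form $\mathcal Q_{a,b}(\mathbb Z_p)$, counting isomorphism orbits, and treating $p=2$ separately since trivial centers occur at order $8$. The steps you defer (determining the free loop explicitly and the orbit computation) are precisely the content of \cite{BaGrVo}, so your plan is correct and matches the paper's approach.
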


The structure of the free $2$-generated commutative automorphic loop of nilpotency class $2$ can be found in \cite[Theorem 2.3]{BaGrVo}, which is proved by careful associator calculus. Lemma \ref{Lm:KeyAssoc} below gives some insight, and once again shows that the middle nucleus is of key importance in automorphic loops.

Recall that the \emph{associator} $(x,y,z)$ is defined by $(xy)z = x(yz)\cdot (x,y,z)$.

\begin{lemma}[{{\cite[Lemmas 2.1 and 2.2]{BaGrVo}}}]\label{Lm:KeyAssoc}
Let $Q$ be a commutative loop of nilpotency class $2$ (that is, $Q/Z(Q)$ is an abelian group). Then $(x,y,x)=1$, $(x,y,z)=(z,y,x)^{-1}$ and $(x,y,z)(y,z,x)(z,x,y)=1$ for every $x$, $y$, $z\in Q$. Moreover, $Q$ is automorphic if and only if $(xy,u,v) = (x,u,v)(y,u,v)$ for every $x$, $y$, $u$, $v\in Q$.

In the automorphic case, we have $(xy,u,v) = (x,u,v)(y,u,v)$, $(x,y,uv)=(x,y,u)(x,y,v)$, and $(x,yu,v) = (x,v,y)(x,v,u)(y,x,v)(u,x,v)$.
\end{lemma}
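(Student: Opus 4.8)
\textbf{The plan is to} prove the four statements by a careful associator calculus, exploiting commutativity and the nilpotency class 2 hypothesis throughout. First I would record the basic consequences of the hypotheses: since $Q/Z(Q)$ is abelian, every associator $(x,y,z)$ lies in $Z(Q)$, so associators commute with everything and may be freely shuffled. Commutativity gives $(x,y,z)=(z,y,x)^{-1}$ directly by comparing $(xy)z=x(yz)\cdot(x,y,z)$ with the reversed product $z(yx)$ and using $xy=yx$. The identity $(x,y,x)=1$ should follow from flexibility (which holds here, as commutative loops are trivially flexible), or as the special case $z=x$ of the skew-symmetry relation once one checks $(x,y,x)=(x,y,x)^{-1}$. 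For the cyclic relation $(x,y,z)(y,z,x)(z,x,y)=1$, I would expand the triple product $((xy)z)$ along two different rebracketings and track how the three associators accumulate; because they are central this becomes a bookkeeping identity in $Z(Q)$.

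\textbf{Next} I would handle the equivalence characterizing the automorphic case. The forward direction is essentially the definition: if $Q$ is automorphic then the inner mapping $L_{u,v}$ (say) is an automorphism, and applying it to a product $xy$ and expanding in terms of associators should yield $(xy,u,v)=(x,u,v)(y,u,v)$ after peeling off the central factors. For the converse I would use Theorem \ref{Th:LMEnough}: a commutative loop is automorphic as soon as it is, say, middle automorphic, and middle-automorphicity can be phrased as $T_u$ respecting products. Translating $T_u(xy)=T_u(x)T_u(y)$ into associator language, and using that $T_u$ acts by conjugation-type shifts on the central associators, I expect the single identity $(xy,u,v)=(x,u,v)(y,u,v)$ to suffice to verify the relevant axiom. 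Here I would lean on the flexibility and power-associativity already established (Propositions \ref{Pr:Flexible}, \ref{Pr:PowerAssociative}) to keep the translation honest.

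\textbf{Finally}, in the automorphic case the remaining two multiplicativity identities follow by combining the first with the symmetry relations. From $(xy,u,v)=(x,u,v)(y,u,v)$ and $(x,y,z)=(z,y,x)^{-1}$ I can transport additivity from the first slot to the third: $(x,y,uv)=(uv,y,x)^{-1}=((u,y,x)(v,y,x))^{-1}=(x,y,u)(x,y,v)$. The mixed identity $(x,yu,v)=(x,v,y)(x,v,u)(y,x,v)(u,x,v)$ is the delicate one; I would derive it by expanding $(x,yu,v)$ using additivity in the outer slots together with the cyclic relation to swap the middle argument out, writing $yu$ in terms that let me apply the already-proven single-slot additivities and then reassembling via $(x,yu,v)=(x,v,yu)^{-1}$ and the cyclic identity.

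\textbf{The hard part will be} the mixed identity $(x,yu,v)=(x,v,y)(x,v,u)(y,x,v)(u,x,v)$: additivity in the \emph{middle} slot is not a hypothesis and must be manufactured from the outer-slot additivities plus the cyclic and skew-symmetry relations, so the main obstacle is finding the right sequence of rebracketings that expresses a middle-slot split as a controlled combination of outer-slot splits without introducing uncancellable associator residues. Keeping all associators central and tracking signs (inverses) carefully through the cyclic relation is where the calculation is most error-prone.
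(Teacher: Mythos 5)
The paper itself states this lemma without proof, citing \cite{BaGrVo}, so your proposal has to stand on its own; most of it does, but one step in your converse direction would genuinely fail. You invoke Theorem \ref{Th:LMEnough} in the form ``a commutative loop is automorphic as soon as it is middle automorphic'' and plan to verify $T_u(xy)=T_u(x)T_u(y)$. The theorem says no such thing: it requires middle automorphicity \emph{together with} left (or right) automorphicity. Worse, in any commutative loop $T_u(y)=u\ldiv (yu)=u\ldiv(uy)=y$, so $T_u=\id_Q$ and the middle axiom holds vacuously in \emph{every} commutative loop; translating $T_u(xy)=T_u(x)T_u(y)$ into associator language yields $1=1$, not $(xy,u,v)=(x,u,v)(y,u,v)$, and commutative non-automorphic loops of nilpotency class $2$ exist, so this check cannot establish automorphicity. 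The correct route is the one you already use in the forward direction: in a commutative loop $R_{x,y}=L_{x,y}$ and $T_x=\id_Q$, so $Q$ is automorphic if and only if every $L_{u,v}$ is an automorphism. Since associators are central, $L_{u,v}(x)=(vu)\ldiv(v(ux))=x\cdot(v,u,x)^{-1}$, and multiplicativity of $L_{u,v}$ is then literally equivalent to $(v,u,xy)=(v,u,x)(v,u,y)$, which by your skew-symmetry relation is $(xy,u,v)=(x,u,v)(y,u,v)$. With this fix both directions of the equivalence collapse to the same one-line computation.

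The rest of the proposal checks out. Centrality of associators follows from class $2$; commutativity gives flexibility and hence $(x,y,x)=1$ --- but note your fallback argument, deducing $(x,y,x)=1$ from $(x,y,x)=(x,y,x)^{-1}$, only yields $(x,y,x)^2=1$ and would require $Z(Q)$ to be $2$-torsion-free, so keep the flexibility route. The skew and cyclic relations follow by the rebracketing bookkeeping you describe. Finally, the ``hard part'' you flag is actually immediate: the cyclic relation gives $(x,yu,v)=(yu,v,x)^{-1}(v,x,yu)^{-1}$; expanding the two factors by first- and third-slot additivity and applying skew-symmetry termwise produces exactly $(x,v,y)(x,v,u)(y,x,v)(u,x,v)$, with no uncancelled residues, since everything lives in the abelian group $Z(Q)$.
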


The structure of the free $2$-generated commutative automorphic loop of nilpotency class $3$ is also known, cf. \cite[Theorem 5.4]{BaGrVoClass3}.

\subsection{Two constructions of automorphic loops}

We conclude the lecture notes with two constructions of automorphic loops.

\begin{construction}[\cite{GrRaVo}]\label{Co:Module}
Let $R$ be a commutative ring, $V$ an $R$-module and $E=\mathrm{End}_R(V)$ the ring of $R$-endomorphisms of $V$. Let $(W,+)\le (E,+)$ be such that
\begin{enumerate}
\item[(i)] $ab=ba$ for every $a$, $b\in W$, and
\item[(ii)] $1+a$ is invertible for every $a\in W$.
\end{enumerate}
Define multiplication on $W\times V$ by
\begin{displaymath}
    (a,u)(b,v) = (a+b,\,(1+b)(u) + (1-a)(v)).
\end{displaymath}
Then $(W\times V,\cdot)$ is an automorphic loop.
\end{construction}

A special case of this construction was first given in \cite{JeKiVoNilp} in an effort to shed some light on automorphic loops of order $p^3$. (Automorphic loops of order $p^2$ are known to be groups by \cite{Csorgo} or by \cite[Theorem 6.1]{KiKuPhVo}.) A slight variation on Construction \ref{Co:Module} was also given in \cite{Nagy} in characteristic $2$.

An important special case of Construction \ref{Co:Module} can be given as follows: Let $R=k<K=V$, where $k<K$ is a field extension. Let $W$ be a $k$-subspace of $K$ such that $k1\cap W=0$. We can identify $a\in W$ with the $k$-endomorphism of $K$ given by $b\mapsto ba$ (the right translation by $a$ in $(K,\cdot)$). Then it is easy to see (cf. \cite{GrRaVo}) that the conditions (i) and (ii) of Construction \ref{Co:Module} are satisfied, and we obtain an automorphic loop $Q_{k<K}(W) = Q_{R,V}(W)$ on $W\times K$.

Let us come back to automorphic loops of order $p^3$. In order to obtain them as loops $Q_{k<K}(W)$, we choose $k=\mathbb F_p$ to be the field of order $p$ and $K=\mathbb F_{p^2}$ a quadratic field extension of $k$. If $p$ is odd, we can find all suitable $k$-subspaces $W$ as follows: The field $K$ can be identified with $\setof{x+y\sqrt{d}}{x,\,y\in k}$, where $d\in k$ is not a square. Let
\begin{displaymath}
    W_0 = k\sqrt d\text{ and } W_a= k(1+a\sqrt{d})\text{ for $0\ne a\in k$.}
\end{displaymath}
Then every $W_a$ is a $1$-dimensional $k$-subspace of $K$ such that $k1\cap W_a=0$. Conversely, if $W$ is a $1$-dimensional $k$-subspace of $K$ such that $k1\cap W=0$, there is $a+b\sqrt{d}$ in $W$ with $a$, $b\in k$, $b\ne 0$. If $a=0$ then $W=W_0$. Otherwise $a^{-1}(a+b\sqrt{d}) = 1+a^{-1}b\sqrt{d}\in W$, and $W=W_{a^{-1}b}$. Hence there is a one-to-one correspondence between the elements of $k$ and $1$-dimensional $k$-subspaces $W$ of $K$ satisfying $k1\cap W=0$, given by $a\mapsto W_a$.

\begin{proposition}[\cite{GrRaVo}]\label{Pr:p3}
Let $p$ be a prime and $\mathbb F_p=k<K=\mathbb F_{p^2}$.
\begin{enumerate}
\item[(i)] Suppose that $p$ is odd. If $a$, $b\in k$, then the automorphic loops $Q_{k<K}(W_a)$, $Q_{k<K}(W_b)$ of order $p^3$ are isomorphic if and only if $a=\pm b$. In particular, there are $(p+1)/2$ pairwise nonisomorphic automorphic loops of order $p^3$ of the form $Q_{k<K}(W)$, where we can take $W\in \{W_a:0\le a\le (p-1)/2\}$.
\item[(ii)] Suppose that $p=2$. Then there are $2$ pairwise nonisomorphic automorphic loops of order $p^3$ of the form $Q_{k<K}(W)$.
\end{enumerate}
\end{proposition}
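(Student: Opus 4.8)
The plan is to locate inside $Q = Q_{k<K}(W)$ a canonical subloop carrying a conjugation invariant that recovers $W$ up to the Galois action of $\mathrm{Gal}(K/k)$. Write elements of $Q$ as pairs $(w,u) \in W \times K$, so that
\[
(w,u)(w',u') = \bigl(w+w',\ u(1+w') + u'(1-w)\bigr),
\]
all products being taken in the field $K$ (note $1\pm w$ is invertible since $k1\cap W = 0$). First I would compute the nuclei by a direct associativity check: one finds $N_\ell(Q)=N_r(Q)=1$ while the middle nucleus is $N_m(Q)=0\times K$, because $(w,u)\in N_m(Q)$ forces $w(vc-sb)=0$ for all admissible $b,c\in W$ and $v,s\in K$, hence $w=0$, and conversely every $(0,u)$ lies in $N_m(Q)$. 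Thus $N_m(Q)\cong(K,+)\cong\mathbb Z_p^2$, with $Q/N_m(Q)\cong(W,+)\cong\mathbb Z_p$. Since an isomorphism of loops carries middle nucleus to middle nucleus, any isomorphism $\phi\colon Q_{k<K}(W_a)\to Q_{k<K}(W_b)$ restricts to an element $\phi|_{N_m}\in GL_k(K)=GL_2(\mathbb F_p)$ and induces a group isomorphism $\bar\phi\colon(W_a,+)\to(W_b,+)$.

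Next I would identify the invariant living on $N_m(Q)$. A short computation with $T_{(w,u)}=L_{(w,u)}^{-1}R_{(w,u)}$ gives $T_{(w,u)}(0,n)=(0,\,n\mu(w))$ for every $n\in K$, where
\[
\mu(w)=(1+w)(1-w)^{-1}\in K^\times
\]
depends only on the image $w$ of $g=(w,u)$ in $Q/N_m(Q)=W$; that is, $T_g|_{N_m}$ is multiplication by $\mu(w)$ on $K$. Because $Q$ is automorphic, $T_g\in\inn{Q}\le\aut{Q}$ preserves $N_m(Q)$, and because the proof of Lemma~\ref{Lm:Action} applies verbatim to an isomorphism of loops, $\phi T_g\phi^{-1}=T_{\phi(g)}$. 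Restricting to $N_m(Q)$ shows that $\phi|_{N_m}$ conjugates multiplication by $\mu_a(s)$ into multiplication by $\mu_b(\bar\phi(s))$ for every $s\in W_a$. Now the operators $\{$multiplication by $\lambda:\lambda\in K\}$ form a copy of the field $K$ inside $\mathrm{End}_k(K)$, and since $s\notin k1$ for $s\ne0$ one checks $\mu_a(s)\notin k$, so these particular operators already generate this copy of $K$. Hence $\phi|_{N_m}$ normalizes it and induces a $k$-algebra automorphism $\tau\in\mathrm{Gal}(K/k)=\{\id,\sigma\}$, giving the key relation $\mu_b(\bar\phi(s))=\tau(\mu_a(s))$ for all $s\in W_a$.

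From here the odd case is immediate. The Cayley transform $\mu$ is a bijection of $W$ onto $\mu(W)$ that intertwines the Galois action, $\tau\circ\mu=\mu\circ\tau$, so the key relation yields $\mu_b(W_b)=\tau(\mu_a(W_a))=\mu_a(\tau(W_a))$ and therefore $W_b=\tau(W_a)$. Since $p$ is odd and $d$ is a nonsquare, $\sigma(\sqrt d)=(\sqrt d)^{p}=-\sqrt d$, whence $\sigma(W_a)=W_{-a}$; thus $b=a$ when $\tau=\id$ and $b=-a$ when $\tau=\sigma$, proving that $Q_{k<K}(W_a)\cong Q_{k<K}(W_b)$ forces $a=\pm b$. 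For the converse I would exhibit the isomorphism explicitly: as $\sigma(1\pm w)=1\pm\sigma(w)$, the map $(w,u)\mapsto(\sigma(w),\sigma(u))$ is an isomorphism $Q_{k<K}(W_a)\to Q_{k<K}(W_{-a})$, and the identity handles $b=a$. Consequently the isomorphism classes are exactly the Galois orbits $\{a,-a\}$, of which there are $(p+1)/2$, with representatives $W_a$ for $0\le a\le(p-1)/2$. This establishes (i).

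For $p=2$ the whole invariant collapses, since $1-w=1+w$ makes $\mu(w)=1$ and the multiplication symmetric, so the loops become commutative of order $8$; here I would finish (ii) by a direct inspection of the two admissible one–dimensional subspaces of $K=\mathbb F_4$, a finite verification. I expect the main obstacle to be the structural bookkeeping of the second paragraph: proving cleanly that $N_m(Q)=0\times K$, that $T_g|_{N_m}$ is exactly multiplication by $\mu(w)$, and that the conjugating map $\phi|_{N_m}$ normalizes the embedded field $K$, so that the passage to $\tau\in\mathrm{Gal}(K/k)$—and hence to $W_b=\tau(W_a)$—is legitimate. Everything after that is a one–line Cayley–transform computation.
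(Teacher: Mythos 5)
The paper itself gives no proof of Proposition \ref{Pr:p3}; it quotes the result from the preprint \cite{GrRaVo}. So there is nothing in-paper to compare against, and I am judging your argument on its own merits.

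Your part (i) is correct, and every step checks out under scrutiny: the associativity computation does give $N_m(Q)=0\times K$ (the discrepancy is exactly $w(vc-sb)$, as you say, and conversely $0\times K\subseteq N_m$); solving $g\backslash((0,n)g)$ for $g=(w,u)$ gives $T_g(0,n)=(0,n(1+w)(1-w)^{-1})$, independent of $u$; the conjugation identity $\phi T_g\phi^{-1}=T_{\phi(g)}$ holds for isomorphisms between two loops by the same computation as in Lemma \ref{Lm:Action}; $\mu(s)=(1+s)(1-s)^{-1}\in k$ forces $s\in k1$ (here $p$ odd is used to rule out $\mu(s)=-1$), so a single operator $m_{\mu_a(s)}$, $s\ne 0$, generates the embedded copy of $K$ because $[K:k]=2$, and $\phi|_{N_m}$ (automatically $k$-linear, $k$ being the prime field) therefore normalizes it and induces $\tau\in\mathrm{Gal}(K/k)$. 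The Cayley transform is injective wherever defined when $p$ is odd, $\tau$ intertwines with $\mu$, $\sigma(\sqrt d)=-\sqrt d$ gives $\sigma(W_a)=W_{-a}$, and the Frobenius map $(w,u)\mapsto(\sigma(w),\sigma(u))$ supplies the converse isomorphism $Q_{k<K}(W_a)\cong Q_{k<K}(W_{-a})$. This is a clean, self-contained argument for (i).

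Part (ii), however, has a genuine gap, and your own machinery exposes it. The Frobenius isomorphism you exhibit at the end of (i) works verbatim in characteristic $2$: since the multiplication on $W\times K$ is built from field operations, $(w,u)\mapsto(\sigma(w),\sigma(u))$ is an isomorphism $Q_{k<K}(W)\to Q_{k<K}(\sigma(W))$ for any admissible $W$. For $p=2$ the only $k$-lines of $K=\mathbb F_4$ avoiding $k1$ are $\mathbb F_2\omega$ and $\mathbb F_2\omega^2$ (where $\omega^2=\omega+1$), and these are swapped by $\sigma$. So the ``direct inspection'' you defer to would return exactly \emph{one} isomorphism class of loops $Q_{k<K}(W)$ with $W\le K$ --- it cannot produce the stated count of two, and you never notice this conflict. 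The only way I can see to reach two classes for $p=2$ is to let $W$ range over lines of endomorphisms in Construction \ref{Co:Module} that do not lie in the embedded field: a general associativity computation in $Q_{R,V}(W)$ shows the obstruction is precisely $W\cdot W$ acting on $V$ (the $u$- and $s$-coefficients differ by $cb$ and $ab$ respectively), so the nilpotent line $W=\{0,a\}$ with $a^2=0$, $1+a$ invertible, yields the group $\mathbb Z_2^3$, while the two field lines yield a single nonassociative (commutative, centerless, exponent-$2$) loop of order $8$ --- two classes in total. Whether the source \cite{GrRaVo} intends that broader reading or the clause as transcribed simply miscounts, your plan for (ii) as written fails: the finite verification you promise would contradict the statement you are trying to prove, and resolving the $p=2$ case requires engaging with exactly this discrepancy.
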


We do not claim that Proposition \ref{Pr:p3} accounts for all automorphic loops of order $p^3$.

\bigskip

Finally, we present a construction reminiscent of generalized dihedral groups.

\begin{construction}[\cite{Aboras}]\label{Co:Dihedral}
Let $(G,+)$ be an abelian group and $m>1$ an even integer. Let $\alpha\in\aut{G}$. Define multiplication on $\mathbb Z_m\times G$ by\begin{displaymath}
    (i,u)(j,v) = (i+j, \alpha^{ij}((-1)^ju+v)).
\end{displaymath}
Then the resulting loop $\mathrm{Dih}(m,G,\alpha)$ is automorphic if and only if $m=2$ or $\alpha^2=1$.
\end{construction}

Aboras \cite{AborasThesis} obtained many structural properties of the dihedral-like automorphic loops $\mathrm{Dih}(m,G,\alpha)$, which are of interest because they account for many small automorphic loops.

The special case of Construction \ref{Co:Dihedral} with $m=2$ was originally introduced in \cite{KiKuPhVo}, and the following result was obtained there:

\begin{theorem}[{{\cite[Corollary 9.9]{KiKuPhVo}}}]
Let $p$ be an odd prime, and let $Q$ be a loop of order $2p$. Then $Q$ is automorphic if and only if it is isomorphic to the cyclic group $\mathbb Z_{2p}$ or to a dihedral-like loop $\mathrm{Dih}(2,\mathbb Z_p,\alpha)$ for some $\alpha\in\aut{\mathbb Z_p}$. There are precisely $p$ pairwise nonisomorphic automorphic loops of order $2p$.
\end{theorem}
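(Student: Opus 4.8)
The plan is to prove both implications and then count. The ``if'' direction is immediate: $\mathbb{Z}_{2p}$ is a group, hence automorphic, and each $\mathrm{Dih}(2,\mathbb{Z}_p,\alpha)$ is automorphic by the clause $m=2$ of Construction \ref{Co:Dihedral}. So the work is to show that every automorphic loop $Q$ of order $2p$ is isomorphic to one of these, and then to enumerate the isomorphism classes. I first record a fact used twice: an automorphic loop of exponent $2$ has order a power of $2$. Indeed, exponent $2$ means the inversion map $J$ is the identity, so by the antiautomorphic inverse property (Proposition \ref{Pr:AAIP}) the loop is commutative, and then Theorem \ref{Th:Decomposition} forces the odd part to be trivial. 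Applying this to $Q$ itself: since $2p$ is not a power of $2$, $Q$ does not have exponent $2$, so $Q$ has an element of order $>2$. Meanwhile $J$ is an involutive bijection whose fixed points are exactly the solutions of $x^2=1$, so their number is congruent to $|Q|=2p$ modulo $2$; hence $Q$ has an odd number of involutions, in particular at least one, which I call $t$.

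The heart of the proof is to produce a characteristic subloop $S\cong\mathbb{Z}_p$ of index $2$. I would take $S$ to be the subloop generated by all squares $x^2$. Since automorphisms send squares to squares, $S$ is characteristic, hence normal by Lemma \ref{Lm:CharNormal}, and every square dies in $Q/S$, so $Q/S$ has exponent $2$. By the fact above, $|Q/S|=2^m$ divides $2p$, so $|Q/S|\in\{1,2\}$. The decisive step is to rule out $|Q/S|=1$, for then $|S|=p$ and $S\cong\mathbb{Z}_p$ by Corollary \ref{Cr:pAut}; as $p$ is odd, $S$ contains no involution, so all involutions lie in the nontrivial coset $tS$ and $Q=S\sqcup tS$.

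\emph{This is the main obstacle.} Establishing $|S|=p$ is tantamount to a Cauchy/Lagrange-type statement guaranteeing a (characteristic, hence normal) subgroup of order $p$ in the even, noncommutative setting, where the tools of Lecture~2 do not apply directly: the associated left Bruck loop of Proposition \ref{Pr:AutomorphicBruck} requires unique $2$-divisibility, and the Cauchy/Lagrange results available here (Theorems \ref{Th:ACauchyLagrange} and \ref{Th:EvenCauchyLagrange}) cover only the odd-order and commutative cases. Excluding the degenerate possibility that $Q$ is generated by its squares thus requires a careful analysis of the possible element orders in $Q$ (equivalently, of the transitive permutation group $\mlt{Q}$ of degree $2p$); this is the one place where I expect to lean on the finer structure theory rather than on a one-line deduction from the stated results.

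Granting $S\cong\mathbb{Z}_p$ normal of index $2$, the reconstruction is routine. Conjugation by $t$ gives $T_t\in\aut{Q}$ with $T_t^2=1$ (Lemma \ref{Lm:LRT}), restricting to an involutory automorphism of $S$, while the left translation $L_t$ interchanges $S$ and $tS$, so $\alpha:=L_t^2|_S$ is an automorphism of $S\cong\mathbb{Z}_p$ governing how two elements of $tS$ multiply back into $S$. For $g\in tS$ we have $g^2\in S$; if $g^2\neq1$ for some such $g$ then $g$ has order $2p$ and $Q\cong\mathbb{Z}_{2p}$, while otherwise every element of $tS$ is an involution. In the latter case one coordinatizes $Q=S\sqcup tS$, identifies $S\cong\mathbb{Z}_p$, and reads off from the four products $S\cdot S$, $S\cdot tS$, $tS\cdot S$, $tS\cdot tS$ — using power-associativity (Proposition \ref{Pr:PowerAssociative}) and the antiautomorphic inverse property — that the multiplication is exactly that of $\mathrm{Dih}(2,\mathbb{Z}_p,\alpha)$. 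Finally, for the count: $\mathbb{Z}_{2p}$ has a unique involution whereas each $\mathrm{Dih}(2,\mathbb{Z}_p,\alpha)$ has $p$, so $\mathbb{Z}_{2p}$ is not of dihedral type. Any isomorphism $\mathrm{Dih}(2,\mathbb{Z}_p,\alpha)\to\mathrm{Dih}(2,\mathbb{Z}_p,\beta)$ must carry the characteristic subgroup $S$ onto $S$, hence restrict to multiplication by some $c\in\mathbb{Z}_p^{\times}$ and be affine with linear part $c$ on $tS$; substituting into the involution-product rule $\alpha(n)=t\cdot(t\cdot n)$ forces $c\alpha=c\beta$, so $\alpha=\beta$. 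Thus the $p-1$ automorphisms $\alpha\in\aut{\mathbb{Z}_p}$ yield pairwise nonisomorphic loops, giving together with $\mathbb{Z}_{2p}$ exactly $1+(p-1)=p$ classes.
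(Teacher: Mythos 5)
Your proposal is incomplete at precisely the point you flag yourself, and that point is not a technicality but the entire content of the theorem. Everything surrounding it is fine: the parity argument via the involution $J$ giving an odd number of involutions, the observation that exponent $2$ forces commutativity (via the antiautomorphic inverse property) and hence $2$-power order (via Theorem \ref{Th:Decomposition}), the dichotomy $g^2\ne 1$ for some $g\in tS$ versus all of $tS$ consisting of involutions, and the isomorphism count $1+(p-1)=p$ are all correct and recoverable from the paper's stated results. But the step ``$\langle x^2 : x\in Q\rangle$ is a proper subloop,'' equivalently the existence of a (normal) subgroup of order $p$, is exactly a Cauchy/Lagrange statement in the even, possibly noncommutative case, and as you correctly observe none of the available machinery applies: Theorem \ref{Th:ACauchyLagrange} needs odd order, Theorem \ref{Th:EvenCauchyLagrange} needs commutativity, and the associated Bruck loop of Proposition \ref{Pr:AutomorphicBruck} needs unique $2$-divisibility, which fails here. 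Writing that you ``expect to lean on the finer structure theory'' is an admission that the proof is missing, not a proof. For calibration: the paper itself gives no proof at all, citing \cite{KiKuPhVo}, and in that source the classification of automorphic loops of order $2p$ occupies an entire section whose bulk is devoted to exactly this existence and structure analysis, carried out by a delicate direct study of translations and inner mappings in the absence of the odd-order toolkit; there is no shortcut of the kind your outline hopes for.

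A secondary, smaller gloss: the final ``one reads off that the multiplication is exactly that of $\mathrm{Dih}(2,\mathbb Z_p,\alpha)$'' hides genuine derivations. You need, for instance, that for an involution $t'\in tS$ and $s\in S$ one has $t's=s^{-1}t'$ (this one is cheap: $t's\in tS$ is an involution, so $t's=(t's)^{-1}=s^{-1}t'$ by the antiautomorphic inverse property, which produces the sign $(-1)^j$ in the formula), but also that $(tu)(tv)$ depends on $u,v$ only through $v-u$ via a \emph{single} automorphism $\alpha=L_t^2|_S$ independent of the chosen coset representatives; the latter genuinely uses that the maps $L_{x,y}$ and $T_x$ are automorphisms and is several lemmas' worth of work in \cite{KiKuPhVo}, not a formality. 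So: correct skeleton, correct counting, but a genuine gap at the self-identified crux, plus an underestimated reconstruction step.
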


Coming back full circle, the automorphic loop $Q_6$ from the introduction is isomorphic to the loop $\mathrm{Dih}(2,\mathbb Z_3,\alpha)$, where $\alpha$ is the unique nontrivial automorphism of $\mathbb Z_3$.

\section*{Open problems}
\setcounter{section}{4}
\setcounter{theorem}{0}
\setcounter{subsection}{0}

\begin{problem}
Is there a finite simple nonassociative automorphic loop?
\end{problem}

\begin{problem}
Is there an automorphic loop of odd order with trivial middle nucleus?
\end{problem}

\begin{problem}
If $Q$ is a finite automorphic loop and $H\le Q$, does $|H|$ divide $|Q|$?
\end{problem}

Let $p$ be a prime.

\begin{problem}
Find an elementary proof of the fact that automorphic loops of order $p^2$ are groups.
\end{problem}

\begin{problem}
Classify automorphic loops of order $p^3$.
\end{problem}

\begin{problem}
Classify commutative automorphic loops of order $p^4$.
\end{problem}

\begin{problem}
Classify left Bruck loops of order $pq$ and $p^2q$, where $p$, $q$ are distinct odd primes.
\end{problem}

\begin{problem}
Classify (commutative) automorphic loops of order $pq$ and $p^2q$, where $p$, $q$ are distinct odd primes.
\end{problem}

\begin{problem}
Study free commutative automorphic loops with $k$ free generators and of nilpotency class $n$. Already the cases $(k,n)=(2,4)$ and $k\ge 3$ are open.
\end{problem}

\begin{problem}
Study in detail the mapping $\Phi:(Q,\cdot)\mapsto (Q,\circ)$ that associates a uniquely $2$-divisible left Bruck loop $(Q,\circ)$ to a uniquely $2$-divisible automorphic loop $(Q,\cdot)$ via $x\circ y = (x^{-1}\ldiv y^2x)^{1/2}$. In particular, what is the image of $\Phi$? If $(Q,\circ)\in\mathrm{im}(\Phi)$, is there also a commutative automorphic loop $(Q,\cdot)$ such that $(Q,\circ) = \Phi(Q,\cdot)$?
\end{problem}

\begin{problem}
Can Proposition \ref{Pr:Greer} be extended from left Bruck loops of odd order to uniquely $2$-divisible left Bruck loops, perhaps under different correspondence?
\end{problem}

\begin{problem}
Let $(Q,+,[.,.])$ be an algebra in which the condition \eqref{Eq:Wright1} holds, and let $(Q,\cdot)$ be the associated linear loop with multiplication $x\cdot y = x+y-[x,y]$. Characterize when $(Q,\cdot)$ is an automorphic loop (beyond the obvious equational characterization). Are there interesting classes of algebras for which $(Q,\cdot)$ is always automorphic?
\end{problem}

\begin{problem}
Let $(Q,+,[.,.])$ be a Lie ring satisfying \eqref{Eq:Wright1}. Characterize when the associated linear loop $(Q,\cdot)$ is automorphic (beyond the obvious equational characterization).
\end{problem}

An alternative theory of solvability in loop theory has been developed in \cite{StVo}, based on concepts from universal algebra (congruence modular varieties). Let us call this solvability \emph{congruence solvability}. Congruence solvability is in general a stronger concept than solvability. To see whether congruence solvability is the right concept for loops, theorems previously proved for (classical) solvability in loops should be revisited. In particular:

\begin{problem}
Are left Bruck (Moufang, commutative automorphic, automorphic) loops of odd order congruence solvable?
\end{problem}

\section*{Acknowledgment}

I thank Michael Kinyon for useful conversations on the Greer correspondence, P\v{r}emysl Jedli\v{c}ka for comments on Construction \ref{Co:Drapal}, Ale\v{s} Dr\'apal for the key idea in the proof of Proposition \ref{Pr:Innpq}, and an anonymous referee for a few improvements to the presentation.

\end{document}